\newcommand{\rstr}{\:\mbox{\rule{0.1ex}{1.2ex}\rule{1.1ex}{0.1ex}}\:}
\newcommand{\slice}[3]{\langle#1,#2,#3\rangle}
\newcommand{\mass}[2][]{{\mathbf M_{#1}}(#2)}
\newdimen\LineSpace
\tikzset{
    line space/.code={\LineSpace=#1},
    line space=10pt
}
\numberwithin{equation}{section}
\newtheorem{thm}{Theorem}[section]
\newtheorem{prop}[thm]{Proposition}
\newtheorem{cor}[thm]{Corollary}
\newtheorem{lem}[thm]{Lemma}
\theoremstyle{definition}
\newtheorem{defn}[thm]{Definition}
\newtheorem{claim}[thm]{Claim}
\newtheorem{question}[thm]{Question}
\theoremstyle{remark}
\newtheorem{rem}[thm]{Remark}
\theoremstyle{example}
\newtheorem{ex}[thm]{Example}
\newcommand{\SL}{\mathrm{SL}}
\newcommand{\RR}{\mathbb{R}}
\newcommand{\goodgap}{%
\hspace{\subfigtopskip}%
\hspace{\subfigbottomskip}}
\begin{document}
\title{Coarse embeddings of symmetric spaces and Euclidean buildings}
\author{Oussama Bensaid}

\maketitle
\begin{abstract}
Introduced by Gromov in the 80's, coarse embeddings are a generalization of quasi-isometric embeddings when the control functions are not necessarily affine. In this paper, we will be particularly interested in coarse embeddings between symmetric spaces and Euclidean buildings. The quasi-isometric case is very well understood thanks to the rigidity results for symmetric spaces and buildings of higher rank by Anderson--Schroeder, Kleiner, Kleiner--Leeb, Eskin--Farb and Fisher--Whyte. In particular, it is well known that the rank of these spaces is  monotonous under quasi-isometric embeddings. This is no longer the case for coarse embeddings as shown by horospherical embeddings. However, we show that in the absence of a Euclidean factor in the domain, the rank is monotonous under coarse embeddings. This answers a question by David Fisher and Kevin Whyte. This still holds when we replace the target space by a proper cocompact CAT(0) space or by a mapping class group. Between symmetric spaces and Euclidean buildings, we can also relax the condition on the domain by allowing it to contain a Euclidean factor of dimension $1$, answering a question by Gromov.
\end{abstract}
\maketitle
\tableofcontents

\section{Introduction}
\noindent The purpose of the present paper is to provide obstructions to the existence of coarse embeddings from symmetric spaces and Euclidean buildings into CAT(0) spaces and mapping class groups. In order to put our main results in perspective, we start this introduction with a brief survey on the notion of coarse embedding and its applications in geometric group theory.

\subsection{Coarse embeddings in geometric group theory}

\noindent We recall that a map $f :(X,d_X)\to (Y,d_Y)$ is a coarse embedding if there exist functions $\rho_{\pm}:[0,\infty)\to [0,\infty)$ such that $\rho_-(r)\to\infty$ as $r\to\infty$ and for all $x,y\in X$
$$\rho_-(d_X(x,y)) \leq d_Y(f(x),f(y)) \leq \rho_+(d_X(x,y)).$$
\\
\\Coarse embeddings have been introduced
under the name of “placements” or “placings” in \cite{gromov33rigid}. They also appear in the litterature as uniform embeddings in \cite{gromov1993asymptotic},\cite{shalom2004harmonic},\cite{fisher2018quasi}, effectively proper Lipschitz maps in \cite{block1992aperiodic} or uniformly proper embeddings \cite{mosher2003quasi}.
\\
\\Coarse embeddings from groups or infinite graphs into Hilbert spaces (or more general Banach spaces) have received a lot of attention due to their connection with the Baum--Connes and the Novikov conjecture (see \cite{yu00coarse}, \cite{skandalis2002coarse}, \cite{kasparov2006coarse}). However, they have originally been much less studied than quasi-isometric embeddings among finitely generated groups, probably due to their much greater flexibility. As an example, a subgroup inclusion between finitely generated groups is always a coarse embedding, while it is a quasi-isometric embedding only when the subgroup in question is undistorted.  Very few geometric invariants are known to be monotonous under coarse embeddings; the only examples (which takes infinitely many different values) being the volume growth, the asymptotic dimension \cite{gromov1993asymptotic}, the separation profile \cite{benjamini2012separation} and the Poincaré profiles \cite{hume2020poincare}.
\\
\\The volume growth allows one to say for example that a metric space with exponential growth cannot be coarsely embedded into a space with polynomial growth, but it does not distinguish between spaces of exponential growth.  More recently the F\o lner function has been shown to be monotonous under coarse embeddings between amenable groups \cite{delabie2020quantitative}. This provides a much more refined invariant to distinguish among solvable groups.
\\
\\The asymptotic dimension was introduced by Gromov in \cite{gromov33rigid} and \cite{gromov1993asymptotic} and is a large-scale analogue of Lebesgue covering dimension. For symmetric spaces of nonpositive curvature and Euclidean buildings, it simply coincides with the dimension. For instance it allows one to rule out coarse embeddings from $\mathbb{H}^n$ to $\mathbb{H}^p$ for $p<n$, but it does not prevent for instance the existence of a coarse embedding of the complex hyperbolic plane to $\mathbb{H}^4$. 
\\
\\
The separation profile is a powerful monotone coarse invariant introduced by Benjamini, Schramm and Timár \cite{benjamini2012separation}.
The separation profile of an infinite, bounded degree graph at $n \in \mathbb{N}$ is the supremum
over all subgraphs of size $\leq n$, of the number of vertices needed to be
removed from the subgraph, in order to cut it into connected pieces of
size at most $n/2$. The separation profile of a metric space with bounded geometry can be defined as the separation profile of any graph that is quasi-isometric to it.  
For rank 1 symmetric spaces, it has been shown to detect the conformal dimension of the boundary \cite{hume2020poincare}, providing for instance an obstruction to the existence of a coarse embedding from the complex hyperbolic plane to  $\mathbb{H}^4$. 
Later Hume--Mackay--Tessera \cite{hume2020poincare} introduced the $L^p$-Poincaré profiles, generalizing the separation profile. These invariants are very efficient in distinguishing between spaces of the form $X \times \mathbb{R}^n$ where $X$ is a hyperbolic space. 
However, for higher rank symmetric spaces all $L^p$-Poincaré profiles are $\simeq n/\log n$: this provides an obstruction to coarse embeddings from higher rank to $X \times \mathbb{R}^n$, where $X$ is rank one, but does not provide any obstruction among  higher rank symmetric spaces. 
\\
\\Hume--Sisto gave in \cite{hume2017groups} an obstruction for a group to admit a coarse embedding into a real hyperbolic space: admitting exponentially many fat bigons.
 It is shown for instance that direct products of two infinite groups one of which has exponential growth, solvable
groups that are not virtually nilpotent, and uniform higher-rank lattices do not coarsely embed into a hyperbolic space.
\\
\\Using a combination of results among the ones mentioned above and a result by Le Coz and Gournay \cite{coz2019separation}, Tessera recently proved the following statement  \cite{tessera2020coarse}:  an amenable group admits a coarse embedding into a hyperbolic group if and only if it is virtually nilpotent. We conclude this survey by mentioning an interesting application of the previous result in pseudo-riemmanian geometry. The story starts with an early observation of Gromov \cite[Section 4.1]{gromov33rigid} according to which the isometry group of a compact $(n + 1)$-dimensional Lorentzian manifold admits a coarse embedding into the real hyperbolic space $\mathbb{H}^n$. Frances \cite{frances2021isometry} exploited this point of view and the above result to prove a Tits alternative for discrete subgroups of the isometry group of compact Lorentz manifolds. Namely, a discrete, finitely generated, subgroup of the isometry group of a compact Lorentz manifold of dimension $n\geq2$ either contains a free
subgroup in two generators, in which case it is virtually isomorphic to a discrete
subgroup of $\textup{PO}(1, d)$, or is virtually nilpotent of growth degree $\leq n-1$.
\\
\\ We deduce from this short survey that the only known obstruction for coarse embeddings among higher rank symmetric spaces is given by their dimension. Our goal is to address this problem by showing that, under certain conditions, the rank is also a coarse monotonous invariant. 
\\
\subsection{Main results}\label{sec:mainresults}

\

\noindent
 Let $S$ be a product of symmetric spaces of non-compact type and $B$ a product of thick Euclidean buildings with cocompact affine Weyl group, with bounded geometry and no Euclidean factor. Let us call the spaces of the form $X = \mathbb{R}^n \times S \times B$ \textit{model spaces}. Below are a few known examples of embeddings between such model spaces:
 \begin{itemize}
\item Every regular tree $T_d$ quasi-isometrically embeds into the hyperbolic plane $\mathbb{H}^2$.
\item There are quasi-isometric embeddings $\mathbb{H}^3 \to \mathbb{H}^2 \times \mathbb{H}^2$ and $\mathbb{H}^5 \to \mathbb{H}^3 \times \mathbb{H}^3$ (see \cite{brady1998filling} for a more general statement).
\item There exist coarse embeddings $\mathbb{H}^2 \to \mathbb{H}^3 $ with arbitrarily small lower control \cite{benoist2021harmonic}.
\item$\mathbb{H}^n$ quasi-isometrically embeds into a product of $n$ binary trees $T_3 \times \dots \times T_3 $ \cite{buyalo2007embedding}.
\item There exist quasi-isometric embeddings of the product of $n$ copies of the hyperbolic plane $\mathbb{H}^2 \times \dots \times \mathbb{H}^2$ into the symmetric spaces $\SL_{n+1}(\mathbb{R})/\textup{SO}_{n+1}(\mathbb{R})$ and $\textup{Sp}_{2n}(\mathbb{R})/\textup{U}_{n}(\mathbb{R})$ \cite{fisher2018quasi}.
\end{itemize}

\noindent We will also consider general CAT(0) spaces that are proper (i.e.\ where closed balls are compact) and cocompact, and mapping class groups $\mathcal{MCG}(S_{g,p})$ of orientable compact connected surfaces of genus
$g$ and $p$ boundary components. 
Recall that the mapping class group of a surface $S$, $\mathcal{MCG}(S)$, is defined to be the group of orientation-preserving 
homeomorphisms up to isotopy. It is finitely-generated, and for any finite
generating set one considers the word metric, whence yielding a metric space which is 
unique up to quasi-isometry. 
\\
\\ We define the \textit{rank} of a metric space as the maximal dimension of an isometrically embedded copy of a Euclidean space. Similarly, we define the \textit{geometric rank} (or the \textit{quasi-flat rank} or the \textit{quasi-rank}) of a metric space $X$ as the maximal dimension of a quasi-isometrically embedded copy of a Euclidean space. We will denote it by $\textup{grank}(X)$. We introduce the geometric rank especially for mapping class groups since they are only defined up to quasi-isometry. For all the other spaces, the geometric rank is actually equal to the rank as we will see below. Therefore, by abuse of notation, when we consider the rank of a metric space, it should be understood that it is the geometric rank when the metric space is a mapping class group.

We are interested in the following natural question.
\begin{question}
Is the rank monotonous under coarse embeddings? In other words, if $X$ and $Y$ are such spaces and there is a coarse embedding $f:X \to Y$, does it imply that  $\textup{rank}(X) \leq \textup{rank}(Y)$?
\end{question}
\noindent  The answer is positive and well-known if one replaces the word coarse by quasi-isometric. It was shown by Anderson--Schroeder \cite{anderson1986existence} that if $X$ is a symmetric space of non-compact type and $f : \mathbb{R}^n \to X$ is a quasi-isometric embedding, then $X$ contains an $n$-flat, i.e.\ there exists an isometric embedding of $\mathbb{R}^n$ into $X$. Later, Kleiner \cite{kleiner1999local} generalized this to all locally compact cocompact Hadamard spaces.
In particular, for proper cocompact CAT(0) spaces, the geometric rank is equal to the rank. This answers the question for quasi-isometric embeddings. Indeed, if $f:X \to Y$ is a quasi-isometric embedding between two proper cocompact CAT(0) and $\textup{rank}(X) = p$ then $\mathbb{R}^p$ quasi-isometrically embeds into $Y$, hence $\textup{rank}(Y) \geq p$. Moreover, it was shown by Kleiner--Leeb \cite{kleiner1997rigidity} and by Eskin--Farb \cite{eskin1997quasi} that maximal quasi-flats in higher rank symmetric spaces of non-compact type satisfy a generalization of the Morse Lemma: if $Y$ is a symmetric space of non-compact type of rank $n \geq 2$, and $f:\mathbb{R}^n \to Y $ is a quasi-isometric embedding then there exist $\delta >0$ and $k\in \mathbb{N}$ (that depends on the quasi-isometry constants) such that $f(\mathbb{R}^n)$ lies in the $\delta$-neighborhood of a union of $k$ flats in $Y$. Note that this is a general phenomenon that occurs in all asymphoric hierarchically hyperbolic spaces \cite{behrstock2021quasiflats}.
\\
\\The monotonicity of the rank is no longer satisfied in the setting of coarse embeddings in general. Counter-examples are given by horospherical embeddings. Let $\mathbb{H} ^2$ denote the upper half-plane model
for the hyperbolic plane.
We have
\begin{equation*}
d_{\mathbb{H} ^2}((x,1),(x+r,1)) \, = \,
\arg\cosh (1 + r^2 /2)
\, \underset{\ell \to\infty}{\sim} \, 2 \ln r .
\end{equation*}
So the map $f : \mathbb{R} \longrightarrow \mathbb{H} ^2, \hskip.1cm x \longmapsto (x,1)$,
which is a parametrization of a horocycle, is a coarse embedding that is exponentially distorted. This carries over to the hyperbolic space of any dimension $n \ge 1$,
and provides coarse embeddings $f : \mathbb{R}^{n} \longrightarrow \mathbb{H}^{n+1}$
whose images are horospheres. These embeddings do not respect the monotonicity of the rank since $\textup{rank}(\mathbb{R}^n) = n$ and $\textup{rank}(\mathbb{H}^{n+1}) = 1$. Therefore, as long as the domain $X$ has a Euclidean factor $\mathbb{R}^p$ with $p\geq 2$, it can be embedded into a space with lower rank. Indeed, if $X= \mathbb{R}^p \times Z$ and $\textup{rank}(X) = p+r$, then it coarsely embeds into $ \mathbb{H}^{p+1} \times Z$, whose rank is $1+r$. This naturally leads us to ask the following questions.

\begin{question}
When $X$ has no Euclidean factor, can we still have a coarse embedding from $X$ to $Y$ such that $\textup{rank}(X) > \textup{rank}(Y)$? What about when $X$ has a Euclidean factor of dimension $1$? i.e.\ if $X = \mathbb{R} \times Z$, can we have a coarse embedding from $X$ to $Y$ such that $\textup{rank}(X) > \textup{rank}(Y)$?
\end{question}
\noindent We can show that, in the absence of a Euclidean factor in the domain $X$, the rank is monotonous under coarse embeddings in many cases, especially between symmetric spaces, thus answering a question of Fisher and Whyte \cite{fisher2018quasi}, see section 5.
\\
\\ In all the following results, the Euclidean buildings in the model space in the target are not necessarily thick, nor have cocompact affine Weyl group. It is only required for the Euclidean buildings in the domain.
\begin{thm} [See Theorem \ref{Thm 1 v2}]\label{Thm 1 v1}
Let $X= X_1 \times \dots \times X_k$ be a product of geodesic metric spaces of exponential growth. If $Y$ is a proper cocompact CAT(0) space of rank $< k$, or a mapping class group of geometric rank $<k$, then there is no coarse embedding from $X$ to $Y$.
\end{thm}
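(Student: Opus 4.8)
The plan is to argue by contradiction: assume a coarse embedding $f\colon X = X_1\times\cdots\times X_k\to Y$ with controls $\rho_\pm$ and show $\textup{rank}(Y)\geq k$. First I would record the standard reduction that, since each $X_i$ (hence $X$) is geodesic, subdividing geodesics into unit segments lets one take the upper control $\rho_+$ affine; thus $f$ is coarsely Lipschitz and all the information is carried by the lower bound $d_Y(f(x),f(x'))\geq\rho_-(d_X(x,x'))$, where $\rho_-\to\infty$ but may grow arbitrarily slowly --- this slow growth is exactly what the horospherical embeddings exploit and what makes the statement subtle.

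The strategy is to isolate a quantity that, for geodesic spaces, does not increase under coarse embeddings, is at least $k$ for a $k$-fold product of exponential-growth spaces, and is at most the rank for the targets under consideration. The natural candidate measures the largest number of ``independent exponential directions'' in a space. On the source side the $k$ factors give $k$ such directions, and the point is that they survive in $Y$: the $f$-images of the factors pass through a common point, pairwise coarsely commute, and each has exponential volume growth in the ambient metric of $Y$ (because $\rho_+$ is affine). The key lemma, and the step where exponential growth --- rather than mere unboundedness --- is indispensable, should be that one cannot coarsely embed a product of $k$ exponential-growth geodesic spaces into a space admitting only $j<k$ independent exponential directions: matching volumes along such an embedding would force two points that are far apart in $X$ (in two different factor directions) to have bounded image-distance, contradicting $\rho_-\to\infty$. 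On the target side the deep rigidity enters: a proper cocompact CAT(0) space of rank $r$, or a mapping class group of geometric rank $r$, has at most $r$ independent exponential directions --- for CAT(0) this rests on Kleiner's work on the rank of CAT(0) spaces \cite{kleiner1999local} (and the coincidence of rank and geometric rank), and for mapping class groups on the structure theory of hierarchically hyperbolic spaces \cite{behrstock2021quasiflats} together with the computation of the topological dimension of their asymptotic cones. Combining these three facts yields $k\leq\textup{rank}(Y)$, the desired contradiction.

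The heart of the matter, and the main obstacle, is the key lemma: promoting ``exponential volume growth in each of the $k$ factors'' into a genuinely $k$-dimensional feature of $Y$ that survives an arbitrary sublinear lower control. I expect this to be organized by induction on $k$ --- peeling off one exponential direction at a time and showing, modulo the rigidity input for the target, that the $k$-th coarsely-commuting exponential direction raises the rank by one --- with the inductive step breaking down precisely when a factor has only polynomial growth, in accordance with the coarse embeddings $\mathbb{R}^p\hookrightarrow\mathbb{H}^{p+1}$ that violate the conclusion once $p\geq2$. The mapping class group case runs along the same lines as the CAT(0) case, using that $\mathcal{MCG}(S_{g,p})$ is a space of bounded geometry whose asymptotic cones have topological dimension equal to its geometric rank.
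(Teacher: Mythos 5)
Your proposal identifies the right high-level shape — argue by contradiction, use exponential growth of each factor, somehow make "$k$ exponential directions" a coarse obstruction that the target cannot accommodate — but it never supplies the actual mechanism, and the places where you defer to an unproved "key lemma" are precisely where the paper's real work happens. Specifically: you never define what "independent exponential directions" means, and your key lemma ("one cannot coarsely embed a product of $k$ exponential-growth geodesic spaces into a space admitting only $j<k$ independent exponential directions") is, as stated, just a rephrasing of the theorem itself. You acknowledge this yourself ("The heart of the matter, and the main obstacle, is the key lemma"), so the proposal is an outline of a hoped-for proof, not a proof.

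What is missing is the entire technical apparatus the paper relies on. The paper's central input is that homological filling functions detect the rank: for a proper cocompact CAT(0) space (Wenger) or a mapping class group (Behrstock--Dru\c{t}u), $\textup{FV}_k^Y(\ell)=o\bigl(\ell^{k/(k-1)}\bigr)$ once $k>\textup{rank}(Y)$. This is the precise rigidity statement that replaces your vague "at most $r$ independent exponential directions"; Kleiner's flat-rank theorem and the HHS quasi-flats theorem, which you cite, give you flats, not a filling inequality, and by themselves do not produce the contradiction you want. The paper then works with Lipschitz chains promoted to Ambrosio--Kirchheim integral currents, and the actual engine is the slicing theorem / co-area formula: one builds $k$-dimensional rectangles in $X$ with carefully chosen sublinear side lengths $\varphi_2(d)\gg\cdots\gg\varphi_k(d)$, observes that the sub-Euclidean filling bound in $Y$ forces the filling of $f(\partial R)$ to be too small, and slices that filling to show the rectangle's "width" direction is highly compressed by $f$; that compression is then played against the coarse volume-preservation Lemma (Lemma~\ref{coarse volume}) and exponential growth of the factor. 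None of this — filling functions, currents, slicing, the quantitative choice of $\varphi_i$, the volume argument — appears in your proposal. Your intuition "matching volumes would force far points to have bounded image-distance" is the right picture, but without the slicing argument there is no way to turn the filling inequality into that compression. Finally, your description of the mapping class group case (asymptotic cone dimension, HHS structure theory) is not what the paper uses; the paper uses Behrstock--Dru\c{t}u's filling-function result so that both targets are handled by the single Theorem~\ref{Thm 1 v2}.
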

\noindent This result has been originally announced by Gromov in \cite{gromov1993asymptotic} section $7.E_2$, where he outlined a strategy of proof. Our proof relies mainly on his sketch. Moreover, we can extend this strategy to prove a similar result when $X$ is a general model space. Namely, we show
\begin{thm}[see Theorem \ref{Thm 2 v2}]\label{Thm 2 v1}
Let $X= S \times B$ a model space of rank $k$. If $Y$ is a proper cocompact CAT(0) space of rank $< k$, or a mapping class group of geometric rank $<k$, then there is no coarse embedding from $X$ to $Y$.
\end{thm}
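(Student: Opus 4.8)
The plan is to deduce the theorem from Theorem~\ref{Thm 1 v1}. The key step will be to show that every model space $X=S\times B$ of rank $k$ (no Euclidean factor) admits a coarse embedding $Z_1\times\cdots\times Z_k\hookrightarrow X$ in which each $Z_i$ is a geodesic metric space of exponential growth. Granting this, if $f\colon X\to Y$ is a coarse embedding with $\textup{rank}(Y)<k$ (resp.\ $\textup{grank}(Y)<k$), then composition yields a coarse embedding $Z_1\times\cdots\times Z_k\to Y$, contradicting Theorem~\ref{Thm 1 v1}. To build the coarse product we will first split $X=S_1\times\cdots\times S_m\times B_1\times\cdots\times B_l$ into irreducible factors, so that $\sum_a\textup{rank}(S_a)+\sum_b\textup{rank}(B_b)=k$; since a product of coarse embeddings is a coarse embedding of the product (for the $\ell^1$ metric, bi-Lipschitz to the Riemannian/simplicial one, provided the lower/upper controls are taken monotone), it is enough to coarsely embed a product of $r$ exponential-growth geodesic spaces into each irreducible factor of rank $r$.

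For an irreducible symmetric space $S=G/K$ of rank $r$ we will use the subgroup mechanism underlying the embeddings $(\mathbb{H}^2)^n\hookrightarrow\SL_{n+1}(\mathbb{R})/\mathrm{SO}(n+1)$ and $(\mathbb{H}^2)^n\hookrightarrow\mathrm{Sp}_{2n}(\mathbb{R})/\mathrm U(n)$ of \cite{fisher2018quasi}. Fix an Iwasawa decomposition $G=KAN$, with restricted-root space decomposition $\mathfrak n=\bigoplus_{\beta\in\Sigma^+}\mathfrak g_\beta$ over $\mathfrak a\cong\mathbb{R}^r$. One checks, case by case on the type of $\Sigma$, that there are positive roots $\beta_1,\dots,\beta_r$ forming a basis of $\mathfrak a^\ast$ which are pairwise strongly non-summable, meaning $a\beta_i+b\beta_j\notin\Sigma$ whenever $i\neq j$ and $a,b\geq 1$ (a ``first row'', $\beta_1\in\{e_1,2e_1\}$ and $\beta_i=e_1+e_i$, works for every type, including the non-reduced $BC_r$ and the exceptional ones). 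Put $\mathfrak w_i=\mathfrak g_{\beta_i}\oplus\mathfrak g_{2\beta_i}$ (with $\mathfrak g_{2\beta_i}=0$ unless $2\beta_i\in\Sigma$): each $\mathfrak w_i$ is an at most two-step nilpotent subalgebra, $[\mathfrak w_i,\mathfrak w_j]=0$ for $i\neq j$, and $\mathfrak w=\bigoplus_i\mathfrak w_i$ is an $\mathfrak a$-invariant subalgebra of $\mathfrak n$. Then $H:=A\ltimes\exp(\mathfrak w)$ is a closed subgroup of $G$ which, decomposing $A$ along the dual basis of $(\beta_1,\dots,\beta_r)$, splits as a direct product $H\cong H_1\times\cdots\times H_r$ with each $H_i=\mathbb{R}\ltimes\exp(\mathfrak w_i)$ a Heintze group, hence a geodesic metric space of exponential growth. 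Finally the orbit map $H\to S$, $h\mapsto h\cdot o$, is $H$-equivariant, injective (as $AN\cap K=\{e\}$), $1$-Lipschitz, and metrically proper (as $H$ is closed in $G$ and $G\to G/K$ is proper), hence a coarse embedding; this realizes $H_1\times\cdots\times H_r$ as a coarse product inside $S$.

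For a thick irreducible Euclidean building $B$ of rank $s$ with cocompact affine Weyl group and bounded geometry we will run the analogous argument. If $B$ is Bruhat--Tits — the building of $\mathbf G(\mathbb k)$ over a non-archimedean local field $\mathbb k$, which is automatic for $s\geq 3$ by Tits's classification — then the $\mathbb k$-points of the algebraic subgroup $\mathbf A\ltimes\exp(\mathfrak w)$ form a closed subgroup splitting into $s$ rank-one factors, each acting properly and cocompactly on a rank-one Bruhat--Tits building, i.e.\ a biregular tree; thickness forces all valences $\geq 3$ (so exponential growth) and bounded geometry bounds them, so the orbit map coarsely embeds a product of $s$ thick biregular trees into $B$. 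For the remaining (exotic, non-Bruhat--Tits) case, which has rank $s=2$, we will instead extract the two transverse trees directly from the wall structure: a wall of $B$ is itself a thick rank-one Euclidean building, i.e.\ a tree, and a transverse family of walls assembles into a second tree, producing the required coarse embedding. Taking the product of these embeddings over all irreducible factors of $X$ then yields the coarse product of $k$ exponential-growth geodesic spaces in $X$, and the theorem follows from Theorem~\ref{Thm 1 v1}.

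The heart of the proof is thus the construction of this coarse product, and its two slightly delicate points are the structural fact about roots — to be verified uniformly across all types — and the treatment of the exotic rank-$2$ Euclidean buildings. The rest (the reduction to Theorem~\ref{Thm 1 v1}, the splitting into irreducible factors, and the observation that equivariant, coarsely Lipschitz, metrically proper orbit maps of closed Lie or algebraic subgroups are coarse embeddings) is routine.
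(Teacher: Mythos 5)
Your route is genuinely different from the paper's. You propose to reduce Theorem \ref{Thm 2 v1} to Theorem \ref{Thm 1 v1} by exhibiting a coarse embedding of a product $Z_1\times\cdots\times Z_k$ of exponential-growth geodesic spaces into $X$. The paper does not do this: it proves the more general Theorem \ref{Thm 2 v2} directly via Theorem \ref{CE of model space}, working inside the irreducible factors of $X$ with parallelograms along maximally singular directions in a Weyl sector, the decomposition result (Proposition \ref{decomp result}), parallel sets $P_X(F')\cong\mathbb{R}^k\times C_X(F')$ of singular flats (only the cross section $C_X(F')$ has exponential growth, not both factors), and the slicing theorem. That route is chosen precisely because it never needs a coarse product of exponential-growth spaces inside an irreducible factor. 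Your strategy, if it could be completed, would be conceptually cleaner, but as written it has a real gap.

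The gap is the treatment of exotic irreducible rank-$2$ Euclidean buildings. The sentence ``a wall of $B$ is itself a thick rank-one Euclidean building, i.e.\ a tree'' is false: a wall in a rank-$2$ building is a singular line, isometric to $\mathbb{R}$. What is a thick tree is the \emph{wall tree}, i.e.\ the cross section $CS_X(W)$ of the wall, exactly as in Proposition \ref{prop cross section growth}; but then the parallel set is $P_X(W)\cong\mathbb{R}\times T$, a product in which only one factor has exponential growth. You would need to produce a coarse product $Z_1\times Z_2$ with \emph{both} factors of exponential growth inside an arbitrary thick irreducible rank-$2$ Euclidean building, and I do not see how to get this out of the wall/parallel-set structure, nor do I know a reference establishing it for exotic $\tilde A_2$, $\tilde C_2$, $\tilde G_2$ buildings (where there may be no useful group action to run your solvable-subgroup argument). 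This is not a cosmetic issue: without it the reduction to Theorem \ref{Thm 1 v1} simply does not cover all model spaces, whereas the paper's direct argument via Theorem \ref{CE of model space} does.

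Two secondary points. First, in the symmetric-space step, the claimed uniform choice of roots $\beta_1\in\{e_1,2e_1\}$, $\beta_i=e_1+e_i$ does not make sense for restricted root systems of type $A_r$ (whose roots are $e_i-e_j$ in the hyperplane $\sum x_i=0$); there one would use $\beta_i=e_1-e_{i+1}$. More importantly, the assertion that a ``strongly non-summable'' basis exists ``for every type'' is exactly the kind of case-by-case combinatorial lemma that must actually be carried out (including the exceptional types and the non-reduced $BC_r$), and it goes beyond what is proved in \cite{fisher2018quasi}, which the paper only cites for $\SL_{n+1}(\mathbb{R})/\mathrm{SO}(n+1)$ and $\mathrm{Sp}_{2n}(\mathbb{R})/\mathrm{U}(n)$. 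Second, for the Bruhat--Tits case, one should be explicit that the properness of the orbit map uses that $H\cap K$ is compact (not trivial, as stated in the archimedean discussion), and that this is harmless since one passes to $H/(H\cap K)$; this is routine but worth saying. Modulo the fixable root-system verification and the genuinely problematic exotic rank-$2$ case, the rest of the reduction is sound.
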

\noindent This implies for example that there is no coarse embedding from $T_3 \times T_3 \times T_3 $ into $\mathbb{H}^p \times \mathbb{H}^q$ for any integers $p,q \geq 1$, nor into the symmetric space $\SL_{3}(\mathbb{R})/\textup{SO}_{3}(\mathbb{R})$.
\\
\\ When $X$ has a Euclidean factor of dimension 1, we can show that the monotonicity of the rank still holds when the target is a model space, answering a question by Gromov \cite{gromov1993asymptotic}, section $7.E_2$ remark (a). 
\begin{thm}[see Theorem \ref{Thm 3 v2}]\label{Thm 3 v1}
Let $X$ be either $X_1 \times \dots \times X_k$ as in Theorem \ref{Thm 1 v1}, or a model space $S \times B$ of rank $k$ as in Theorem \ref{Thm 2 v1}, and let $Y = \mathbb{R}^n \times S' \times B'$ be of rank $\leq k$. 
\\Then there is no coarse embedding from $X \times \mathbb{R}$ to $Y$.
\end{thm}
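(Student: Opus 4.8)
Assume, for a contradiction, that $f:X\times\RR\to Y$ is a coarse embedding with control functions $\rho_{\pm}$, where $Y=\RR^n\times S'\times B'$ has rank $\le k$. The plan is to re-run the argument behind Theorems~\ref{Thm 1 v1} and \ref{Thm 2 v1} — Gromov's scheme from $7.E_2$, which at every large scale $L$ exhibits in the domain a ``$k$-dimensional'' combinatorial pattern whose image cannot be accommodated in a target of rank $<k$ — but now feeding it the additional $\RR$ as a \emph{thin} extra coordinate, so that a rank-$k$ domain pattern thickened by $\RR$ behaves like a rank-$(k{+}1)$ pattern and is obstructed by the rank-$(\le k)$ target. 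The heuristic to be made quantitative is the one already visible in the horospherical examples of the introduction: a single horospherical compression can absorb a Euclidean factor of dimension $\ge 2$ but never one of dimension $1$; since $X$ has no Euclidean factor and by itself supplies the full rank $k$, there is no room left for even one more flat direction. For the slicing data: for $s\in\RR$ put $f_s:=f(\cdot,s):X\to Y$ and $\gamma_x:=f(x,\cdot):\RR\to Y$; every $f_s$ is a coarse embedding of $X$ with the \emph{same} pair $\rho_\pm$, every $\gamma_x$ is a uniform coarse line, and $f$ is coarsely Lipschitz from above separately in each variable. These uniform families are the material out of which the patterns are assembled.

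For the thickened patterns: for a product $X_1\times\cdots\times X_k$ of exponential-growth geodesic spaces (resp.\ a model space $S\times B$ of rank $k$) one produces, for each large $L$, a set $P_L\subseteq X$ of $\asymp e^{cL}$ points carrying a $k$-dimensional incidence/separation structure at scale $L$ (a coarse product of $k$ trees, resp.\ a coarse maximal flat densely dusted with such a product pattern). I would instead work with $P_L\times N_L\subseteq X\times\RR$, where $N_L$ is an $L$-net in an interval $[0,\ell(L)]$; the length $\ell(L)$ must be chosen large enough that, under the lower control $\rho_-$, the image of the interval is genuinely spread out in $Y$. This is the single place where a calibration is needed, because $\rho_-$ may be as weak as logarithmic (the horocycle computation in the introduction), so $\ell(L)$ has to be a suitable inverse-type function of $\rho_-$ evaluated at $L$. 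The point is simply that $P_L\times N_L$ has one ``direction'' more than $P_L$.

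One then feeds $f(P_L\times N_L)$ into the rank-$(\le k)$ estimate that powers Theorems~\ref{Thm 1 v1} and \ref{Thm 2 v1}. Applied to $P_L$ alone, that estimate is only just satisfiable when $\textup{rank}(Y)=k$; the thin $\RR$-direction tips it over exactly as an extra Euclidean coordinate would, because the relevant separation/covering bound in a model space of rank $r$ depends on $r$ and not on how ``thin'' the offending direction is. Concretely, one shows that $f(P_L\times N_L)$ cannot be separated, nor covered by patches compatible with rank $\le k$, with the economy that rank $\le k$ forces, giving a contradiction for $L\gg 0$; together with the upgraded (model-space-target) forms of Theorems~\ref{Thm 1 v1} and \ref{Thm 2 v1} this yields the claim. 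When $n\ge 1$ there is also a shortcut worth recording: if one can show that on some slice $X\times\{s\}$ the $\RR^n$-coordinate of $f$ stays bounded, then composing $f_s$ with the coordinate projection $Y\to\RR^{n-1}\times S'\times B'=:Y'$ is a coarse embedding of $X$ into the model space $Y'$ of rank $k-1<k$, contradicting Theorem~\ref{Thm 1 v1} or \ref{Thm 2 v1}. But this ``$\RR$-cancellation'' is delicate without linearity of the control functions, so the direct pattern argument is the safer backbone and the cancellation route, at best, disposes of the split case.

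The main obstacle is keeping the thin direction visible throughout. A coarse line can be compressed to logarithmic scale — this is precisely the horocycle phenomenon — so a priori the adjoined $\RR$ might contribute nothing to any counting estimate, and one could fear the statement is false. One must therefore extract from the proof of Theorems~\ref{Thm 1 v1} and \ref{Thm 2 v1}, in quantitative form, that the governing estimate is sensitive to the \emph{number} of independent directions of the pattern and not to their individual thickness; equivalently, that even a badly compressed $\RR$ still consumes one full unit of rank in the target. This is exactly the assertion that ``one extra Euclidean dimension cannot be absorbed'', and carrying it out — reconciling an exponentially fat $P_L$ with a merely linearly growing and possibly logarithmically squeezed $N_L$ inside the same rank bound — is where the real work lies.
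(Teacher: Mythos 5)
You have correctly identified the heuristic the paper is after: one extra Euclidean dimension cannot be absorbed. But your proposal stops at the heuristic and never supplies the mechanism, and the mechanism you gesture towards is not the one the paper uses. Taken literally, ``re-run Theorems~\ref{Thm 1 v1}--\ref{Thm 2 v1} for the rank-$(k+1)$ space $X\times\mathbb{R}$'' is circular: those theorems hypothesize that the domain has \emph{no} Euclidean factor, precisely because the horospherical examples break them otherwise, and $X\times\mathbb{R}$ fails that hypothesis. Your suggestion that ``the governing estimate is sensitive to the number of independent directions and not their thickness'' is exactly what is \emph{not} true in general --- that is what horospheres illustrate --- so it cannot serve as a drop-in replacement; one needs a genuinely new input, and you do not name it.

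The paper's proof of this theorem runs through two ingredients neither of which appears in your sketch. First, Leuzinger's theorem that a model space $Y$ of rank $\le k$ has \emph{linear} filling functions above the rank, in particular $\textup{FV}_{k+1}^{Y}(\ell)\sim\ell$. Second, Theorem~\ref{CE of euclidean spaces}: given any coarse embedding of a Euclidean space of dimension $\ge k+1$ into a space with linear $(k+1)$-filling, a slicing/co-area argument forces the $Y$-filling of images of $(k-1)$-parallelograms to be sublinear in their Euclidean $(k-1)$-volume. The $\mathbb{R}$ factor of the domain is not used to ``thicken a pattern'' as you propose; it is used to extend a maximal flat $F$ of $X$ to a $(k+1)$-dimensional flat $E=F\times\mathbb{R}\subset X\times\mathbb{R}$ so that Theorem~\ref{CE of euclidean spaces} applies to $f|_E$. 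Because the generating vectors $u_1,\dots,u_k$ of the resulting $(k-1)$-parallelograms may be chosen inside $F\subset X$, this yields precisely the sublinear-filling hypothesis that Theorems~\ref{CE of product} and~\ref{CE of model space} need about $f|_{X\times\{z\}}$, and those theorems then give the contradiction. None of this is visible in a $P_L\times N_L$-style counting scheme: what makes the extra $\mathbb{R}$ direction ``cost a rank'' is the linear filling above the rank, not a covering or separation bound. Your ``shortcut'' (bounding the $\mathbb{R}^n$-coordinate on a slice and projecting to $\mathbb{R}^{n-1}\times S'\times B'$) is also unjustified: there is no reason that coordinate should remain bounded, and the paper does not argue that way.
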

\noindent We also give a result about coarse embeddings of Euclidean spaces into symmetric spaces of lower rank, which uses the generalization of the Morse lemma for higher-rank symmetric spaces: 
\begin{thm}\label{Thm 4}
Let $p> k \geq 1$ be integers. Let $Y$ be a symmetric space of non-compact type such that $\textup{rank}(Y) = k$, and let $f : \mathbb{R}^{p} \to Y $ be a coarse embedding.
Then no Euclidean subspace $E \simeq \mathbb{R}^{k} \subset \mathbb{R}^{p} $ is sent quasi-isometrically by $f$. 
\end{thm}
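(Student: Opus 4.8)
The plan is to argue by contradiction, and I would first reduce to $p=k+1$: restricting $f$ to a $(k+1)$-dimensional affine subspace $E'$ with $E\subset E'\subset\RR^{p}$ yields a coarse embedding $g:=f|_{E'}\colon\RR^{k+1}\to Y$ that still sends $E$ quasi-isometrically, and ruling this out suffices. Writing $E'=E\times\RR$ with $E=\RR^{k}\times\{0\}$ and $g_t:=g|_{\RR^{k}\times\{t\}}$, I would observe that $g_0(E)$ is a maximal quasi-flat (as $\dim E=k=\textup{rank}(Y)$), so the quasi-flat rigidity of Kleiner--Leeb and Eskin--Farb — for $k\ge 2$; for $k=1$ the Morse lemma for quasi-geodesics in the Gromov hyperbolic space $Y$ — provides $\delta\ge 0$, $N\ge 1$ depending only on the quasi-isometry constants and maximal flats $F_1,\dots,F_N$ of $Y$ with $g_0(E)\subseteq N_\delta(\Phi)$, where $\Phi:=F_1\cup\dots\cup F_N$; in particular $\partial_\infty\Phi$ is a finite union of apartments of the spherical Tits building $\partial_\infty Y$.

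Next I would control every slice. For all $t$ and all $x,x'$, $d(g_t(x),g_0(x))\le\rho_+(|t|)$, so the quasi-isometry inequalities for $g_0$ give $\tfrac1\lambda|x-x'|-c-2\rho_+(|t|)\le d(g_t(x),g_t(x'))\le\lambda|x-x'|+c+2\rho_+(|t|)$; hence each $g_t$ is a quasi-isometric embedding, and $g_t(\RR^{k})$ lies within finite Hausdorff distance of a finite union $\Phi_t$ of maximal flats. Since $g_t(\RR^{k})\subseteq N_{\rho_+(|t|)+\delta}(\Phi)$ sits in a finite neighbourhood of $\Phi$, one gets $\partial_\infty\Phi_t\subseteq\partial_\infty\Phi$; and because maximal flats of a symmetric space of non-compact type correspond bijectively to apartments of its Tits building, while only finitely many apartments lie in the finite subcomplex $\partial_\infty\Phi$, the flats occurring in the various $\Phi_t$ all belong to one finite family $\cF=\{F_1',\dots,F_M'\}$, each $F_j'$ being contained in $N_{R_0}(\Phi)$ for a uniform $R_0$. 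So every slice stays within $N_{\rho_+(|t|)+\delta+R_0}\!\big(\bigcup\cF\big)$ of this fixed finite collection of maximal flats.

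The hard part, and where I expect the real work to lie, is to extract a contradiction. The two facts to play against each other are: (i) the lower control, $d(g_t(x),g_0(x))\ge\rho_-(|t|)$ for every $x$, so the slices genuinely drift to infinity as $t\to\infty$; and (ii) the rigidity of maximal flats in $Y$, which are pinned to the finite family $\cF$ and, being maximal, have trivial parallel sets, so $\Phi$ cannot be "translated" within a neighbourhood of itself. Projecting $g_t$ onto the convex flats $F_j'$ by nearest-point projections, rigidity should force the horizontal parts $\pi_{F_j'}\circ g_t$ to remain quasi-isometries close to the identity of $\RR^{k}$; the drift of (i) must then be absorbed either by a uniform horizontal shearing along the flats — which collapses the $\RR$-factor and contradicts $d(g_t(x),g_{t'}(x))\ge\rho_-(|t-t'|)$ — or by a transverse drift of size $\sim\rho_-(|t|)$, which, combined with the quasi-isometric behaviour of $g_t$ in the $E$-directions, violates the upper control $\rho_+$ by producing points at bounded $\RR^{k+1}$-distance with images $\gtrsim\rho_-(|t|)$ apart. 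The subtlety absent from the quasi-isometric case is exactly that $f(\RR^{k+1})$ need not lie in a bounded neighbourhood of $\bigcup\cF$ — the slices may escape at rate $\rho_+$ — so one cannot simply invoke monotonicity of asymptotic dimension for the image; that escape has to be excluded by hand, combining (i) and (ii).
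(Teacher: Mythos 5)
Your setup is sensible as far as it goes: the reduction to $p=k+1$, the slice maps $g_t=f|_{\mathbb{R}^k\times\{t\}}$, and the observation that quasi-flat rigidity together with the finiteness of apartments in $\partial_\infty\Phi$ and the triviality of parallel sets of maximal flats confine all the $\Phi_t$ to one fixed finite family $\cF$. But the proof then stops precisely at the step where a proof is required, and you flag this yourself. The ``final contradiction'' is offered only as an informal dichotomy between horizontal shearing and transverse drift, with neither branch made quantitative, no argument that the two branches are exhaustive, and no control over how the degrading additive constant $c+2\rho_+(|t|)$ of $g_t$ interacts with the rigidity you invoke. Nothing in what you wrote excludes the slices escaping at rate $\rho_+(|t|)$ while remaining in a bounded-for-each-$t$ neighbourhood of $\bigcup\cF$ in a manner compatible with both $\rho_-$ and $\rho_+$; the phrase ``rigidity should force the horizontal parts to remain quasi-isometries close to the identity'' is an assertion of the thing to be proved, not a proof. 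Indeed, the ``transverse drift'' branch as you describe it does not produce a contradiction: if $(x,t)$ and $(x,t')$ map $\gtrsim\rho_-(|t-t'|)$ apart, that is exactly what the lower control permits — you would need points at \emph{bounded} $\mathbb{R}^{k+1}$-distance with unboundedly separated images, and you have not produced them. This is a genuine gap.

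The paper's argument is structurally different and avoids this difficulty. Its key ingredient is Theorem~\ref{CE of euclidean spaces}, which — using Leuzinger's linear filling function $\textup{FV}_{k+1}^{Y}(\ell)\sim\ell$ above the rank and the Ambrosio--Kirchheim slicing theorem — shows that for any coarse embedding $\mathbb{R}^p\to Y$ the $k$-filling volume of images of suitably scaled $(k{-}1)$-parallelograms is sublinear in their $(k{-}1)$-volume. Rather than tracking transverse slices, the paper uses a minimality argument on the flats $F_1,\dots,F_r$ given by Kleiner--Leeb/Eskin--Farb to produce balls $B_E(x_n,n)$ whose $f$-image lies entirely in the $\delta$-neighbourhood of a \emph{single} flat $F_r$; composing with the $1$-Lipschitz nearest-point projection $\pi_{F_r}$ yields a quasi-isometric embedding $g$ of those balls into a Euclidean space. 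It then feeds the sublinear filling bound from Theorem~\ref{CE of euclidean spaces} through $(k{-}1)$ successive applications of the slicing theorem against parallelograms with side lengths $\|u_i^n\|=n^{1/i}$, descending all the way to dimension one, where it forces $\frac{\lambda}{2}n\le d_{F_r}(g(a_n),g(b_n))\le\varphi(n)$ for a sublinear $\varphi$ — an outright contradiction with the QI property of $g$. The virtue of this route is that it localizes the contradiction inside one Euclidean flat and lets the co-area/filling machinery do the work, rather than requiring a new rigidity statement about how a one-parameter family of quasi-flats with degrading constants can sit relative to a fixed finite configuration of flats, which is what your approach would need and does not supply.
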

\noindent In the rank one case, we can extend the result to Gromov-hyperbolic geodesic metric spaces with bounded geometry thanks to a result by Bonk--Schramm \cite{bonk2011embeddings}. This result was also suggested by Gromov in \cite{gromov1993asymptotic}, section $7.E_2$.
\begin{cor} \label{cor of Thm 4}
Let $Y$ be a Gromov-hyperbolic geodesic metric space with bounded geometry, and let $p \geq 2$ be an integer.
A coarse embedding $f : \mathbb{R}^p \to Y $ is uniformly compressing, i.e.\ it admits a sublinear upper control function $\rho^+$.
\end{cor}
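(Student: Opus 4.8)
The plan is to reduce to Theorem \ref{Thm 4} by embedding the hyperbolic space $\mathbb{H}^N$ (for suitable $N$) into $Y$ and then pulling back a coarse embedding of $\mathbb{R}^p$. First I would invoke the Bonk--Schramm embedding theorem \cite{bonk2011embeddings}: any Gromov-hyperbolic geodesic metric space with bounded geometry admits a quasi-isometric embedding into a real hyperbolic space $\mathbb{H}^N$ for some large $N$. Precomposing such an embedding with the given coarse embedding $f : \mathbb{R}^p \to Y$ yields a coarse embedding $g : \mathbb{R}^p \to \mathbb{H}^N$; moreover, because the outer map is a quasi-isometric embedding, the upper (and lower) control functions of $g$ differ from those of $f$ only by an affine change, so $f$ is uniformly compressing if and only if $g$ is. Thus it suffices to prove the statement for $Y = \mathbb{H}^N$, a rank-one symmetric space of non-compact type.

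Next I would argue by contradiction: suppose $g : \mathbb{R}^p \to \mathbb{H}^N$ is a coarse embedding whose upper control $\rho^+$ is \emph{not} sublinear, i.e.\ $\limsup_{r\to\infty} \rho^+(r)/r > 0$. Since $\rho^+$ can always be taken non-decreasing, this means there is a constant $c>0$ and a sequence $r_n \to \infty$ with $\rho^+(r_n) \geq c\, r_n$, and hence pairs of points $x_n, y_n \in \mathbb{R}^p$ with $d(x_n,y_n) \to \infty$ and $d_{\mathbb{H}^N}(g(x_n),g(y_n)) \geq c\, d(x_n, y_n)$. The geodesic segment $[x_n,y_n]$ in $\mathbb{R}^p$ lies in some line $\ell_n$, and one should upgrade this ``one pair of far-apart points mapping far apart'' into genuine quasi-isometric behaviour of $f$ along a line. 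Here one combines the lower coarse bound (which gives properness along the line) with the linear lower bound just obtained along the chosen scales; a standard argument — rescaling and taking an ultralimit of the sequence of pointed spaces $(\mathbb{R}^p, x_n, d/r_n)$, where the limit is a Euclidean space containing a line on which the limiting map is bi-Lipschitz — produces, after a diagonal extraction, a line $E \simeq \mathbb{R} \subset \mathbb{R}^p$ (or a $1$-dimensional Euclidean subspace of a Euclidean cone) that is sent quasi-isometrically by $f$.

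Applying Theorem \ref{Thm 4} with $k = 1 < p$ to the coarse embedding $f : \mathbb{R}^p \to \mathbb{H}^N$ then gives a contradiction: no $\mathbb{R}^1 \subset \mathbb{R}^p$ can be sent quasi-isometrically. Hence $\rho^+$ must be sublinear, which is the assertion.

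The main obstacle I anticipate is the second step — making precise the passage from ``a sequence of far pairs with linear displacement'' to ``an actual Euclidean line mapped quasi-isometrically,'' since Theorem \ref{Thm 4} is stated for a fixed Euclidean subspace rather than for an asymptotic cone. Two routes seem viable: either (i) carry out the ultralimit/asymptotic-cone construction carefully and check that Theorem \ref{Thm 4}'s proof is robust enough to apply to the cone (the cone of $\mathbb{H}^N$ is an $\mathbb{R}$-tree, and one runs the same argument there), or (ii) observe that for a coarse embedding into $\mathbb{H}^N$, having a non-sublinear upper control already forces, via the exponential distortion of horospheres computed in the introduction, a genuine quasi-geodesic in the image, which by the Morse lemma in $\mathbb{H}^N$ must fellow-travel a geodesic; translating back yields the desired quasi-isometrically embedded line directly. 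I would pursue route (ii) first, as it avoids ultralimits entirely and stays within the rank-one toolkit already cited.
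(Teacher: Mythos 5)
Your first step (Bonk--Schramm reduction to $\mathbb{H}^N$) matches the paper's proof, but your second step has a genuine gap, and the paper actually takes a more direct route that avoids it entirely.

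The gap: a non-sublinear $\rho^+$ gives a sequence of pairs $(x_n,y_n)$ with $d(x_n,y_n)\to\infty$ and $d_Y(f(x_n),f(y_n))\geq c\,d(x_n,y_n)$, but this does not produce a single line $E\subset\mathbb{R}^p$ on which $f$ restricts to a quasi-isometric embedding, which is what Theorem \ref{Thm 4} talks about. The lines through $x_n, y_n$ vary with $n$; and even for a fixed line, linear displacement at a sparse sequence of scales does not give a linear lower bound at \emph{all} scales. Neither of your escape routes clearly closes this. For route (i), the asymptotic cone of $\mathbb{H}^N$ is an $\mathbb{R}$-tree, not a symmetric space of non-compact type, so Theorem \ref{Thm 4} does not apply as stated; one would have to rebuild its proof inside the $\mathbb{R}$-tree and verify the ultralimit map is still a coarse embedding in the needed sense — a new argument, not a citation. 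For route (ii), the Morse lemma governs quasi-geodesics, not Lipschitz paths with efficient endpoints: an $L$-Lipschitz path on a domain of length $r$ whose endpoints are distance $\geq cr$ apart can still deviate from the geodesic by an amount linear in $r$ (go straight part-way, detour perpendicularly for $\Theta(r)$, return, continue), so fellow-travelling does not follow; and even with fellow-travelling, pulling the geodesic back to a line in $\mathbb{R}^p$ is not automatic.

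The paper instead applies Theorem \ref{CE of euclidean spaces} directly (not Theorem \ref{Thm 4}) and not by contradiction. With $k=2$ it gives, for $h=g\circ f:\mathbb{R}^p\to\mathbb{H}^n$ and any $x,u$, that $\textup{FillVol}_1^{\mathbb{H}^n,\mathrm{cr}}(h(P^0(x;u)))/\|u\|\to 0$ uniformly as $\|u\|\to\infty$, i.e.\ bounded by a sublinear function of $\|u\|$. By Lemma \ref{filling current distance}, the filling volume of the $0$-cycle $[\![h(x+u)]\!]-[\![h(x)]\!]$ dominates $d_{\mathbb{H}^n}(h(x),h(x+u))$, so $h$ already has a sublinear upper control $\phi$; since $g$ is a $(\lambda,c)$-quasi-isometric embedding, $d_Y(f(x),f(y))\leq\lambda\,\phi(d(x,y))+\lambda c$ is sublinear. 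This sidesteps finding an undistorted line entirely — you should take this route.
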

\noindent Now, one can naturally ask: If $X$ does not coarsely embed into $Y$, does adding a Euclidean factor in the target makes the embedding possible?
\\ We can answer this question when $\textup{rank}(X) > \textup{rank}(Y)+1 $. In fact, when $\textup{rank}(X) > \textup{rank}(Y)+1 $, even adding a nilpotent factor in the target does not make this embedding possible:
\begin{cor}\label{cor 2}
Let $X$ be either $X_1 \times \dots \times X_k$ as in Theorem \ref{Thm 1 v1}, or a model space $S \times B$ of rank $k$ as in Theorem \ref{Thm 2 v1}, and let $Y = \mathbb{R}^n \times S' \times B'$ be a model space of rank $< k-1$. 
\\Then for any nilpotent connected Lie group $P$, there is no coarse embedding from $X$ to $Y \times P$.
\end{cor}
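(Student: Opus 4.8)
The plan is to absorb the nilpotent group $P$ into a single rank-one symmetric-space factor on the target side, which reduces the statement to Theorems \ref{Thm 1 v1} and \ref{Thm 2 v1}. The point of the hypothesis $\textup{rank}(Y)<k-1$, rather than $<k$, is precisely that this absorption raises the rank of the target by exactly one, so one unit of slack is needed.

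The key ingredient is the following: \emph{every connected nilpotent Lie group $P$ admits a coarse embedding into a real hyperbolic space $\mathbb{H}^m$ for some $m=m(P)$.} To prove it I would first reduce to the simply connected case: a connected nilpotent Lie group is quasi-isometric, via the quotient by its central maximal compact torus, to a simply connected nilpotent Lie group, because that projection is a surjective homomorphism with compact kernel, hence a $(1,C)$-quasi-isometry for suitable left-invariant metrics. For simply connected $P$, Birkhoff's embedding theorem (the nilpotent case of Ado's theorem) exhibits the Lie algebra $\mathfrak p$ as a subalgebra of the strictly upper-triangular algebra $\mathfrak u_N$, so $P$ becomes a closed subgroup of the unipotent group $U_N(\mathbb R)$; the inclusion of a closed subgroup of a Lie group is always a coarse embedding, since its distortion function is finite-valued (it is the $d_H$-radius about $e$ of the compact set $H\cap\bar B_G(e,r)$), which supplies the lower control $\rho_-$. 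Since $\mathfrak u_N$ is stratified by superdiagonals, $U_N(\mathbb R)$ is a Carnot group, and $H:=U_N(\mathbb R)\rtimes_\delta\mathbb R$, with $\mathbb R$ acting by the Carnot dilations $\delta$, is a Heintze group: the generating derivation has positive eigenvalues, so $H$ carries a left-invariant metric of pinched negative curvature, hence is Gromov-hyperbolic, proper, geodesic, and of bounded geometry. By Bonk--Schramm \cite{bonk2011embeddings}, $H$ quasi-isometrically embeds into some $\mathbb H^m$. As $U_N(\mathbb R)$ is itself a closed subgroup of $H$ (the kernel of $H\to\mathbb R$), composing the coarse embeddings $P\hookrightarrow U_N(\mathbb R)\hookrightarrow H\hookrightarrow\mathbb H^m$ yields the claim.

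Granting this, suppose for contradiction that there is a coarse embedding $f:X\to Y\times P$ with $Y=\mathbb R^n\times S'\times B'$ and $\textup{rank}(Y)<k-1$. Pick a coarse embedding $g:P\to\mathbb H^m$. Then $(\textup{id}_Y\times g)\circ f$ is a coarse embedding of $X$ into $Z:=Y\times\mathbb H^m=\mathbb R^n\times(S'\times\mathbb H^m)\times B'$, which is again a model space, with $\textup{rank}(Z)=\textup{rank}(Y)+1\le k-1<k$. If $X=X_1\times\dots\times X_k$ is as in Theorem \ref{Thm 1 v1}, a coarse embedding $X\to Z$ is ruled out by Theorem \ref{Thm 1 v1} (whose target may be taken to be such a model space, since it is in particular a proper cocompact CAT(0) space of rank $<k$ when $B'$ is locally finite, and the relaxed building hypothesis on the target applies otherwise); if $X=S\times B$ is a model space of rank $k$, it is ruled out by Theorem \ref{Thm 2 v1}. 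This contradiction proves the corollary.

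The only non-routine part is the key ingredient, and within it the two points needing care are: first, upgrading the purely algebraic inclusion $\mathfrak p\hookrightarrow\mathfrak u_N$ to an honest coarse embedding of the metric groups; and second, checking that $U_N(\mathbb R)\rtimes_\delta\mathbb R$ carries a left-invariant metric of negatively pinched curvature, so that Bonk--Schramm applies and one lands in a genuine rank-one symmetric space. Everything else — stability of coarse embeddings under products and composition, additivity of rank over direct products, and the identification of $Z$ as an admissible target for Theorems \ref{Thm 1 v1} and \ref{Thm 2 v1} — is bookkeeping.
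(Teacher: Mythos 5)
Your proof is correct, but the route you take to the key lemma --- that a connected nilpotent Lie group $P$ coarsely embeds into some $\mathbb{H}^m$ --- is genuinely different from, and considerably heavier than, the paper's. The paper's proof of the corollary is a two-liner: by Assouad's embedding theorem, $P$ (having polynomial growth, hence being doubling) coarsely embeds into some $\mathbb{Z}^d$, which in turn coarsely embeds into $\mathbb{H}^{d+1}$ via the horospherical embedding already discussed in the introduction. You instead argue structurally, keeping the Lie-theoretic data in view: quotient by the maximal compact central torus to reduce to the simply connected case, embed $P$ as a closed subgroup of $U_N(\mathbb{R})$ via Birkhoff/Ado, include $U_N(\mathbb{R})$ into the Heintze group $U_N(\mathbb{R})\rtimes_\delta\mathbb{R}$ (with $\delta$ the Carnot-dilation derivation, which has strictly positive eigenvalues), invoke Heintze's theorem to get pinched negative curvature --- hence Gromov-hyperbolicity, geodesicity, and bounded geometry --- and finish with Bonk--Schramm. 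Each link in your chain is sound; in particular your observation that a closed compactly generated subgroup inclusion has a finite-valued distortion function, and is therefore a coarse embedding even when badly distorted, is exactly what handles the exponential distortion of $U_N(\mathbb{R})$ inside $H$. The trade-off is that your route leans on substantially more machinery (Ado's theorem, Heintze's classification, Bonk--Schramm), whereas the Assouad route is shorter and also applies verbatim to any compactly generated group of polynomial growth, whether or not it is a Lie group. After the absorption step the two arguments coincide: one composes with $\mathrm{id}_Y\times g$ to coarsely embed $X$ into the model space $Y\times\mathbb{H}^m$, notes $\textup{rank}(Y\times\mathbb{H}^m)=\textup{rank}(Y)+1<k$, and applies Theorem \ref{Thm 1 v1} or Theorem \ref{Thm 2 v1}.
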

\begin{proof}
By Assouad's embedding theorem \cite{assouad1982distance}, $P$ coarsely embeds into some $\mathbb{Z}^d$, which in turn coarsely embeds into $\mathbb{H}^{d+1}$. Therefore, we get a coarse embedding from $X$ to $Y \times \mathbb{H}^{d+1}$, but $\textup{rank}(Y\times \mathbb{H}^{d+1}) = \textup{rank}(Y) +1 < k$. Such coarse embedding is not possible by Theorems \ref{Thm 1 v1} and \ref{Thm 2 v1}.
\end{proof}
\noindent The main tools that we are going to use are the homological filling functions, using Lipschitz chains. They measure the difficulty of filling cycles of a given size. The key point is that filling functions detect the rank of proper cocompact CAT(0) spaces, and the geometric rank of mapping class groups. In other words, they behave differently whether the dimension of the cycle is smaller or greater than the rank of the space. In fact, if $X$ is either a proper cocompact CAT(0) space or a mapping class group, a cycle $\Sigma$ in $X$ with dimension $k < \textup{rank}(X)$ and $\textup{Vol}_k^X(\Sigma) = \ell$ can be filled similarly as in a Euclidean space, i.e.\ it has a filling with $(k+1)$-volume $\precsim \ell^{\frac{k+1}{k}} $. On the other hand, if $k \geq \textup{rank}(X)$, then it can be filled in a sub-Euclidean way, i.e.\ it admits a filling with $(k+1)$-volume $= o\left(\ell^{\frac{k+1}{k}}\right)$. This result is due to Wenger \cite{wenger2011asymptotic} for proper cocompact CAT(0) spaces, and to Behrstock--Drutu \cite{behrstock2019higher} for mapping class groups, and is the main tool for the proof of Theorem \ref{Thm 1 v1} and Theorem \ref{Thm 2 v1}. For Theorem \ref{Thm 3 v1} and Theorem \ref{Thm 4}, we need a stronger condition on the filling of the target space, that is a linear filling above the rank. This is a result of Leuzinger \cite{leuzinger2014optimal} who proved that if $X$ is a model space, and $\Sigma$ is a $k$-cycle such that $k \geq \textup{rank}(X)$ and $\textup{Vol}_k^X(\Sigma) = \ell$, then it can be filled similarly as in a hyperbolic space, i.e.\ it has a filling with $(k+1)$-volume $\precsim \ell $.
\\
\\Our proof will also rely on the co-area formula which requires some slicing techniques. However, Lipschitz chains do not behave well under slicing, that is why we will consider metric currents in complete metric spaces that were introduced by Ambrosio--Kirchheim \cite{ambrosio2000currents}. They can be seen as a generalization of Lipschitz chains since any Lipschitz chain induces naturally a current, which behaves well under slicing.
\\
\\We denote by $\textup{FV}_{k}^{X}$ the $k$-filling function of a metric space $X$. We can restate our results in terms of filling functions, and in a more general framework, allowing the target space $Y$ to be any complete Lipschitz-connected metric space with at most exponential growth. A metric space $Y$ is \textit{Lipschitz-connected} if there is a $c\geq 1$ such that for any $d\in \mathbb{N}$ and any
$K$–Lipschitz map $f : S^d \to Y$ , there is a $cK$–Lipschitz extension $\Tilde{f} : D^{d+1} \to Y$ (see Section \ref{sect connect dots}). In particular, CAT(0) spaces and mapping class groups (up to a quasi-isometry) are Lipschitz-connected.
\begin{thm}\label{Thm 1 v2}
Let $X= X_1 \times \dots \times X_k$ be a product of geodesic metric spaces with exponential growth, $k\geq 2$ an integer, and let $Y$ be a Lipschitz-connected complete metric space with at most exponential growth. If $\textup{FV}_{k}^{Y}(\ell) = o\left(\ell^{\frac{k}{k-1}}\right)$, then there is no coarse embedding from $X$ to $Y$.
\end{thm}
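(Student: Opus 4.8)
The plan is to argue by contradiction: suppose $f:X\to Y$ is a coarse embedding with lower and upper control functions $\rho_-,\rho_+$. Since each $X_i$ has exponential growth and is geodesic, inside each $X_i$ we can find, for every scale $R$, a long geodesic segment (or a "thick" combinatorial structure of exponential size at scale $R$); taking the product over $i=1,\dots,k$ produces, for each $R$, a bi-Lipschitz embedded $k$-cube $Q_R\cong [0,R]^k$ in $X$ whose $k$-dimensional volume is $\asymp R^k$ and whose boundary sphere $\partial Q_R$ has $(k-1)$-volume $\asymp R^{k-1}$. The essential geometric input — this is Gromov's idea in $7.E_2$ — is that a coarse embedding of a product of exponential-growth spaces sends these product cubes to subsets of $Y$ which, although distorted, cannot be efficiently filled: the exponential growth forces the lower control $\rho_-$ to still "see" the full combinatorial size, so the image cycle $f_\#(\partial Q_R)$ is genuinely $k$-dimensional in a quantitative sense. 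First I would make this precise by replacing $f$ with a Lipschitz map at a fixed scale: compose $f$ with a "connect-the-dots" / coarsening construction (Section \ref{sect connect dots}, using Lipschitz-connectedness of $Y$) to get, for a chosen scale $s$, a Lipschitz map $f_s$ from a simplicial approximation of $X$ to $Y$ that agrees with $f$ up to bounded error and is $L$-Lipschitz with $L$ depending only on $s$ and the controls.

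Next I would push forward the cube boundaries. The sphere $\partial Q_R$, triangulated at scale $s$, maps under $f_s$ to a Lipschitz $(k-1)$-cycle $\Sigma_R$ in $Y$; converting to a metric current (as the excerpt flags, currents behave well under the slicing/co-area arguments, following Ambrosio–Kirchheim), we get $\mathbf{M}(\Sigma_R)\precsim L^{k-1}\cdot R^{k-1}$, but — and this is the crux — the lower control $\rho_-$ forces $\Sigma_R$ to have "width" at least $\rho_-(\text{const}\cdot R)\to\infty$ in $k$ independent directions, so that any filling $\Omega_R$ of $\Sigma_R$ in $Y$ must have mass bounded below by a genuinely $k$-dimensional quantity. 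Concretely, I would use a co-area / slicing argument: slice a putative filling $\Omega_R$ by $k$ families of "coordinate" functions coming from the $k$ factors (transported via $f$), and iterate $k$ times; exponential growth in each $X_i$ guarantees that at each slicing step the relevant level sets are large, yielding a lower bound $\mathbf{M}(\Omega_R)\succsim (\text{something growing like } R^k)$, while the filling-volume hypothesis $\mathrm{FV}_k^Y(\ell)=o(\ell^{k/(k-1)})$ applied to $\ell=\mathbf{M}(\Sigma_R)\asymp R^{k-1}$ gives an upper bound $\mathbf{M}(\Omega_R)=o(R^k)$. For $R$ large this is a contradiction.

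To run the slicing cleanly I would set it up inductively: define the statement "for a coarse embedding of $X_1\times\cdots\times X_j$ into a Lipschitz-connected space with $\mathrm{FV}_j = o(\ell^{j/(j-1)})$, the images of $j$-cubes admit no sub-Euclidean fillings," and reduce the $j$ case to the $(j-1)$ case by fixing a generic slice in the last factor $X_j$: a generic level set of the "$X_j$-coordinate" of the filling current is, by the co-area formula, a $(j-1)$-current of controlled mass which fills the image of a $(j-1)$-cube in $X_1\times\cdots\times X_{j-1}$, and one checks the filling-function hypothesis degrades in the right way (passing from $\mathrm{FV}_j$ to $\mathrm{FV}_{j-1}$ costs only the controlled exponents). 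The base case $j=1$ is the statement that $\rho_-\to\infty$ together with exponential growth forces the image of a long segment to be "long" — essentially immediate. The growth hypothesis on $Y$ (at most exponential) enters to guarantee that the coarsened Lipschitz map $f_s$ and the fillings stay within a region of $Y$ of controlled size, so that the constants in the filling inequalities are uniform.

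The main obstacle I expect is the co-area/slicing step: one has to verify that slicing a metric current filling of $f_\#(\partial Q_R)$ by functions transported through the (only coarsely defined, badly distorted) map $f$ actually produces currents that still fill the lower-dimensional pushed-forward cubes, and that the exponential-growth lower bounds survive each of the $k$ iterations without the accumulated distortion (the $\rho_-$ appearing nested $k$ times) destroying the estimate. Getting the bookkeeping of constants right — so that the lower bound on $\mathbf{M}(\Omega_R)$ genuinely beats the $o\!\left(\ell^{k/(k-1)}\right)$ upper bound with $\ell\asymp R^{k-1}$ — is where the real work lies; everything else (the cube construction, the currents formalism, the Lipschitz-connectedness coarsening) is setup.
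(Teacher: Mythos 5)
Your sketch has the right toolbox — the connect-the-dots coarsening, metric currents, slicing and co-area, induction on $k$ — but the engine does not start, for two interrelated reasons.

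The most serious gap is the claimed lower bound $\mathbf{M}(\Omega_R) \succsim R^k$ for any filling of $\Sigma_R = f_\#(\partial Q_R)$, and this is not merely a matter of ``getting the bookkeeping right.'' A coarse embedding's only lower control is $\rho_-$, which may be arbitrarily slowly growing (logarithmic, say). The image of a segment of length $R$ has diameter only $\gtrsim \rho_-(R)$, and each of your $k$ nested slicing steps degrades the estimate by $\rho_-$ once more. The best you could plausibly extract is on the order of $\rho_-(R)^k$, which is $o(R^k)$ precisely when $\rho_-$ is sublinear — which is exactly the regime in which a coarse embedding differs from a quasi-isometric one. The hypothesis $\mathrm{FV}_k^Y(\ell)=o(\ell^{k/(k-1)})$ applied to $\ell \asymp R^{k-1}$ gives an upper bound that is also $o(R^k)$. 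Both bounds live in $o(R^k)$, so there is no contradiction; a direct filling-versus-filling comparison cannot close the argument, and the paper never attempts one.

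A second, structural problem is the use of equal-side cubes $Q_R = [0,R]^k$. In the slicing identity $\partial(\Omega\rstr\{\pi\le t\}) = \slice{\Omega}{\pi}{t} + (\text{base face}) + H_t$, the side-face term $H_t$ has $(k-1)$-mass $\asymp R^{k-1}$, the \emph{same} order as the base face. You therefore cannot conclude that the slice is large from knowing that the boundary of the base face is hard to fill, because $H_t$ can swamp the estimate. The paper's rectangles (and parallelograms, in the model-space case) are deliberately anisotropic, with side lengths $\varphi_1(d)=d\gg\varphi_2(d)\gg\dots\gg\varphi_k(d)$ chosen so that every side face has strictly smaller order of mass than the base; this is what makes the claim $\mathbf{M}(\slice{S_n}{\pi}{t})\ge\tfrac{\lambda}{2}d_n$ (and its higher analogues) provable.

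The ingredient your outline is missing is where the growth hypotheses actually enter. The $k=1$ base case of Theorem \ref{CE of product} — that an exponential-growth space cannot be coarsely embedded with sublinear upper control into a space of at most exponential growth — is proved via the coarse volume-preservation Lemma \ref{coarse volume} and is not ``essentially immediate.'' Its output is a sequence of pairs $(a_n,b_n)$ in the $X_1$-direction with $d_X(a_n,b_n)\to\infty$ and $d_Y(f(a_n),f(b_n))\ge\lambda\,d_X(a_n,b_n)$ for a \emph{fixed} $\lambda>0$: genuinely undistorted pairs, not merely pairs whose image distance tends to infinity. That linear lower bound is what makes the slice masses quantitatively large. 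Finally, the closing contradiction in the paper is a volume comparison, not a filling comparison: the sublinear filling forces an exponentially large set $C_n$ (a ball in the transverse factor, whose $\varepsilon$-volume is controlled by $X_2$'s exponential growth) into a small tubular neighborhood of $f(\gamma_1)$ whose $\varepsilon$-volume is controlled via Lemmas \ref{coarse volume} and \ref{volume neighb}. Without the undistorted pairs, the anisotropic scales $\varphi_i$, and the volume-comparison endgame, your plan only produces two $o(R^k)$ bounds, which is not a contradiction.
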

\begin{thm}\label{Thm 2 v2}
Let $X= S \times B$ be a model space of rank $k \geq 2$, and let $Y$ be a Lipschitz-connected complete metric space with at most exponential growth.
\\If $\textup{FV}_{k}^{Y}(\ell) = o\left(\ell^{\frac{k}{k-1}}\right)$, then there is no coarse embedding from $X$ to $Y$.
\end{thm}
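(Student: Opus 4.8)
The plan is to follow the strategy behind Theorem~\ref{Thm 1 v2} (which realizes Gromov's sketch in \cite{gromov1993asymptotic}, $7.E_2$), the only genuinely new ingredient being that the ``Euclidean test cycles'' must now be produced inside the irreducible higher-rank factors of $X=S\times B$ rather than built as products of geodesics in the factors.

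\emph{Step 1 (Euclidean-hard cycles in $X$).} Since each factor of $S$ is a symmetric space of non-compact type and each factor of $B$ is a thick Euclidean building with cocompact affine Weyl group and no Euclidean factor, $X$ is a proper cocompact CAT(0) space whose maximal flats have dimension exactly $k=\textup{rank}(X)$. Fix a maximal flat $F\subset X$ and an isometry $\phi\colon F\to\mathbb{R}^k$; as $F$ is convex, nearest-point projection $\pi_F\colon X\to F$ is $1$-Lipschitz. For each $R$ let $\Sigma_R:=\phi^{-1}(\partial B_{\mathbb{R}^k}(0,R))$, with its standard triangulation into $\asymp R^{k-1}$ simplices of diameter $\le1$; it is a $(k-1)$-cycle with $\mathbf{M}(\Sigma_R)\asymp R^{k-1}=:\ell_R$. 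If $T$ is any $k$-current in $X$ with $\partial T=\Sigma_R$, then $(\pi_F)_\#T$ fills $\Sigma_R$ inside $F\cong\mathbb{R}^k$, hence equals $\mathbf 1_{B(0,R)}\mathcal{L}^k$ and, by $1$-Lipschitzness of $\pi_F$, $\mathbf{M}(T)\ge\mathbf{M}((\pi_F)_\#T)=\omega_kR^k=\omega_k\,\ell_R^{\,k/(k-1)}$. So these cycles are genuinely Euclidean-hard to fill in $X$.

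\emph{Step 2 (push forward and fill cheaply in $Y$).} Suppose, for contradiction, that $f\colon X\to Y$ is a coarse embedding with controls $\rho_\pm$. Applying $f$ to the vertices of the triangulation of $\Sigma_R$ and filling skeleton by skeleton using Lipschitz-connectedness of $Y$ (an edge image has diameter $\le\rho_+(1)$, and is filled by a $c\rho_+(1)$-Lipschitz path; inductively up to the $(k-1)$-skeleton) produces a genuine Lipschitz $(k-1)$-cycle $\Sigma'_R$ in $Y$, supported in a bounded neighbourhood of $f(\Sigma_R)$, with $\mathbf{M}(\Sigma'_R)\lesssim R^{k-1}=\ell_R$. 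By the hypothesis $\textup{FV}_k^Y(\ell)=o(\ell^{k/(k-1)})$ there is a $k$-current $T$ in $Y$ with $\partial T=\Sigma'_R$ and $\mathbf{M}(T)=o(R^k)$. Metric currents in the sense of Ambrosio--Kirchheim are used here precisely because, unlike Lipschitz chains, they can be sliced, which is what the next step needs.

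\emph{Step 3 (transport the contradiction back to $X$), the main obstacle.} The difficulty is that $f$ need not be a quasi-isometric embedding, so the naive push-forward of $T$ along a ``coarse inverse'' of $\phi\circ f$ is only coarsely Lipschitz and does not control mass. The remedy, as for Theorem~\ref{Thm 1 v2}, is a slicing/co-area argument: one slices $T$ in $Y$ by distance functions based at $f(F)$ and at $f(X)$, extracts good slicing radii (so that the slices are controlled $(k-1)$-cycles), and peels $T$ into pieces that can be compared with the model flat $F$; the exponential-growth hypotheses — at most exponential for $Y$, exponential for the factors of $X$ — are what make this peeling quantitatively efficient and allow one to reconstitute, out of the cheap filling $T$ of $\Sigma'_R$, a filling of $\Sigma_R$ in $X$ of $k$-volume $o(R^k)$, contradicting Step~1 for $R$ large. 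I expect this transport step to be the technical heart of the proof: fighting the a~priori unbounded distortion of a coarse embedding is exactly where the co-area formula, metric currents, Lipschitz-connectedness of $Y$, and the two growth hypotheses must all be combined. Relative to Theorem~\ref{Thm 1 v2} the extra work is localized to Step~1, namely producing the test flat $F$ and the associated slicing coordinates inside irreducible symmetric-space and Euclidean-building factors, for which one invokes the standard structure theory (maximal flats, apartments, Weyl chambers) and the fact that $X$ is then CAT(0) of rank $k$.
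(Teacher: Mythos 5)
Your Step 3 contains a fatal gap, and it is precisely the gap that makes this theorem nontrivial. You write that one should ``reconstitute, out of the cheap filling $T$ of $\Sigma'_R$, a filling of $\Sigma_R$ in $X$ of $k$-volume $o(R^k)$.'' No such reconstitution is possible: filling functions are emphatically \emph{not} monotone under coarse embeddings (this is the entire point of the paper, and is witnessed already by the horospherical embedding $\mathbb{R}^n\hookrightarrow\mathbb{H}^{n+1}$, which sends Euclidean-hard spheres to cycles with linear filling). A coarse embedding gives no control whatsoever on how to push a $k$-current from $Y$ back to $X$, and the ``coarse inverse'' you gesture at is not Lipschitz in any sense that lets you control mass. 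So the contradiction you hope to reach in Step 3 -- a small filling of $\Sigma_R$ in $X$ -- simply cannot be obtained along the route you describe, and the mention of slicing, co-area, and growth hypotheses does not bridge this; those tools do not manufacture a map $Y\to X$ with mass control.

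The paper's actual argument (Theorem~\ref{CE of model space}) never tries to transport a filling back to $X$. Instead, the cheap filling $T$ in $Y$ is sliced by the distance function to the image of one face of the test chain, and the co-area formula shows that the distance $D$ in $Y$ between the images of two opposite faces is at most a sublinear function of the separation in $X$. This says that $f$ \emph{compresses} a whole transverse slab, and the contradiction comes from Lemma~\ref{coarse volume}: a coarse embedding must preserve $\varepsilon$-volume up to multiplicative constants, but the transverse slab (a ball in the cross section $CS_X(F')$, which has exponential growth by Proposition~\ref{prop cross section growth}) is sent into a neighbourhood whose $\varepsilon$-volume is too small. For this to work one cannot use your isotropic spheres $\Sigma_R=\partial B(0,R)$: the argument requires test chains at sharply different scales in the different directions (the functions $\varphi_1\gg\varphi_2\gg\cdots$), so that the ``side'' contribution $H_t$ to the slice is negligible and the slice genuinely fills the undistorted base $(k-1)$-parallelogram. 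Moreover, because $X$ does not split as a product, the paper must first locate an \emph{undistorted maximally singular} direction (not merely an undistorted one) and inductively an undistorted maximally singular $(k-1)$-parallelogram; this is achieved by decomposing the test chain along the walls of a Weyl sector (Propositions~\ref{Weyl cone acute} and~\ref{decomp result}) and a pigeonhole argument, which is exactly the machinery your Step~1 omits and replaces with a single maximal flat. So the gap is twofold: the nonexistent pull-back in Step~3, and a choice of test cycles in Step~1 (isotropic spheres, no maximally singular structure) that is incompatible with the slicing/compression scheme that actually closes the argument.
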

\begin{thm}\label{Thm 3 v2}
Let $X$ be either $X_1 \times \dots \times X_{k}$ as in Theorem \ref{Thm 1 v1}, or a model space $S \times B$ of rank $k$, and let $Y$ be a Lipschitz-connected complete metric space with at most exponential growth. If $\textup{FV}_{k+1}^{Y}(\ell)\sim \ell$, then there is no coarse embedding from $X \times \mathbb{R}$ to $Y$.
\end{thm}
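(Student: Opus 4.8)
The plan is to lift the mechanism behind Theorems \ref{Thm 1 v2} and \ref{Thm 2 v2} by one dimension. First I would recall from the proofs of those theorems that, since $k-1<\mathrm{rank}(X)=k$, the space $X$ contains a sequence of integral $(k-1)$-cycles $\zeta_n$ with $\mathbf M(\zeta_n)=\ell_n\to\infty$, each supported in a ball of radius $\asymp\ell_n^{1/(k-1)}$, such that $\zeta_n=\partial c_n$ for an integral $k$-current $c_n$ with $\mathbf M(c_n)\asymp\ell_n^{k/(k-1)}$ (the Euclidean filling; existence by Wenger \cite{wenger2011asymptotic} and Leuzinger \cite{leuzinger2014optimal}), and moreover $\mathrm{Fillvol}^X(\zeta_n)\succsim\ell_n^{k/(k-1)}$, the Euclidean lower bound, obtained from a $1$-Lipschitz "coordinate" map $X\to\mathbb R^k$: the nearest-point retraction onto a maximal flat in the model case, and a map built from Busemann-type functions along $k$ geodesics in the product case. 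Setting $L_n:=\lceil\ell_n^{1/(k-1)}\rceil$, I would then form, inside $X\times\mathbb R$, the integral $k$-cycles
\[
\Sigma_n:=\partial\bigl(c_n\times[0,L_n]\bigr),\qquad V_n:=\mathbf M(\Sigma_n)\asymp\ell_n L_n+\mathbf M(c_n)\asymp\ell_n^{k/(k-1)},
\]
noting that the obvious filling $c_n\times[0,L_n]$ has mass $\mathbf M(c_n)\,L_n\asymp\ell_n^{(k+1)/(k-1)}=V_n^{(k+1)/k}$.

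The key step is the sharp lower bound $\mathrm{Fillvol}^{X\times\mathbb R}(\Sigma_n)\succsim\ell_n^{(k+1)/(k-1)}$, which is where metric currents (rather than Lipschitz chains) become indispensable, because one needs honest slices. Let $S$ be any integral $(k+1)$-current in $X\times\mathbb R$ with $\partial S=\Sigma_n$, and let $\pi:X\times\mathbb R\to\mathbb R$ be the $1$-Lipschitz projection. For a.e.\ $t\in(0,L_n)$ one has $\langle\Sigma_n,\pi,t\rangle=\pm(\zeta_n\times\{t\})$, hence $\partial\langle S,\pi,t\rangle=-\langle\partial S,\pi,t\rangle=\pm(\zeta_n\times\{t\})$; since $\langle S,\pi,t\rangle$ is supported in $X\times\{t\}$, under the obvious isometric identification it is a $k$-current in $X$ with boundary $\zeta_n$, so $\mathbf M(\langle S,\pi,t\rangle)\ge\mathrm{Fillvol}^X(\zeta_n)\succsim\ell_n^{k/(k-1)}$. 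The coarea/slicing inequality of Ambrosio--Kirchheim \cite{ambrosio2000currents} then yields $\mathbf M(S)\ge\int_0^{L_n}\mathbf M(\langle S,\pi,t\rangle)\,dt\succsim L_n\,\ell_n^{k/(k-1)}\asymp\ell_n^{(k+1)/(k-1)}$. In other words the $\Sigma_n$ fill "Euclideanly" in $X\times\mathbb R$, which is exactly the behaviour forced by $\mathrm{rank}(X\times\mathbb R)=k+1>k$.

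Now suppose, for contradiction, that $f:X\times\mathbb R\to Y$ is a coarse embedding. Arguing verbatim as in the proofs of Theorems \ref{Thm 1 v2} and \ref{Thm 2 v2} — replace $f$ by a coarsely Lipschitz map on a net and "connect the dots" using the Lipschitz-connectedness of $Y$ — one pushes $\Sigma_n$ forward to an integral $k$-cycle $\tilde\Sigma_n$ in $Y$ with $\mathbf M(\tilde\Sigma_n)\precsim V_n$. By the hypothesis $\mathrm{FV}_{k+1}^Y(\ell)\sim\ell$, the cycle $\tilde\Sigma_n$ bounds an integral $(k+1)$-current $\tilde S_n$ in $Y$ with $\mathbf M(\tilde S_n)\le\mathrm{FV}_{k+1}^Y(C V_n)\precsim V_n$. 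Transporting $\tilde S_n$ back through $f$ (the delicate point, discussed below) produces an integral $(k+1)$-current $S_n$ in $X\times\mathbb R$ with $\partial S_n=\Sigma_n$ up to a correction chain of mass $O(V_n)$, and with $\mathbf M(S_n)\precsim V_n$. Combined with the previous paragraph this gives $\ell_n^{(k+1)/(k-1)}\precsim V_n=\ell_n^{k/(k-1)}$, i.e.\ $\ell_n^{1/(k-1)}=O(1)$, contradicting $\ell_n\to\infty$.

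The hard part is the final transport step: pushing the efficient $Y$-filling $\tilde S_n$ back into $X\times\mathbb R$ at a bounded multiplicative cost, given that a coarse embedding has no genuine inverse. The horospherical examples are exactly the warning here: an optimal filling of $\tilde\Sigma_n$ in $Y$ need not lie in a bounded neighbourhood of $f(X\times\mathbb R)$, and composing a faraway, highly "crumpled" filling with a coarse quasi-inverse of $f$ blows up the volume uncontrollably. This is precisely why \emph{linearity} of $\mathrm{FV}_{k+1}^Y$ is assumed (and not merely $\mathrm{FV}_{k+1}^Y=o(\ell^{(k+1)/k})$): it lets one choose a filling of $\tilde\Sigma_n$ of controlled geometric complexity supported in a controlled neighbourhood of its own support, so that composing with a coarsely Lipschitz quasi-inverse of $f$ costs only a bounded factor — and this is carried out exactly as in Theorems \ref{Thm 1 v2} and \ref{Thm 2 v2}, using in addition the at-most-exponential growth of $X\times\mathbb R$ and of $Y$. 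I expect this neighbourhood-control/transport lemma, rather than any of the volume bookkeeping above, to be the only substantive obstacle.
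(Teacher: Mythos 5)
Your plan diverges substantially from the paper's, and the step you single out as the only obstacle is in fact a genuine gap that you never close.

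The paper does not attempt to transport a filling from $Y$ back to $X\times\mathbb R$ at all. Its proof of Theorem \ref{Thm 3 v2} goes through Theorem \ref{CE of euclidean spaces}: restrict $f$ to a maximal flat $F\times\mathbb R\simeq\mathbb R^{k+1}\subset X\times\mathbb R$, and use the linear $(k+1)$-filling of $Y$ together with \emph{slicing in $Y$} to show that the filling volume in $Y$ of $f(P^{k-1}(x;u_1,\dots,u_k))$ is sublinear relative to $\prod\|u_i\|$. That is precisely the input hypothesis to Theorem \ref{CE of product} (resp.\ Theorem \ref{CE of model space}), applied to the restriction $f|_{X\times\{z\}}$, and those theorems then produce the volume-compression contradiction. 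Nothing is ever pushed from $Y$ back to the domain; the only currents that get sliced live in $Y$, and the conclusion of the slicing is a distance bound (the quantity $D$), not a transported filling.

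Your proposal instead builds $k$-cycles $\Sigma_n$ in $X\times\mathbb R$ with a Euclidean lower filling bound, pushes them to $Y$, fills them linearly in $Y$, and then wants to pull that filling back. You acknowledge this last step is the obstacle, but your proposed mechanism does not hold up. First, the claim that ``Theorems \ref{Thm 1 v2} and \ref{Thm 2 v2} carry this out'' is a false analogy: those proofs never transport a filling from $Y$ to $X$ either; they slice in $Y$ and argue via volume preservation. Second, linearity of $\mathrm{FV}_{k+1}^Y$ does not give you a minimizing (or near-minimizing) filling supported in a controlled neighbourhood of its boundary; even if a neighbourhood-controlled filling exists (Wenger-type cone constructions give some control, but not the isoperimetric bound you quote), pushing it back requires a genuine Lipschitz coarse inverse of $f$ defined on the support of $\tilde S_n$, and the support need not lie near $f(X\times\mathbb R)$. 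Third, even on $f(X\times\mathbb R)$ itself, a coarse inverse is only coarsely Lipschitz; connecting the dots gives a Lipschitz map, but then $\partial(g_\#\tilde S_n)\neq\Sigma_n$, and controlling the correction homotopy requires the kind of bounded-geometry argument you gesture at without carrying out. None of this is present in the paper and none of it is done in your proposal, so the argument is incomplete exactly where you flag it.

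Separately, your lower bound $\mathrm{FillVol}^X(\zeta_n)\succsim\ell_n^{k/(k-1)}$ in the product case ``via Busemann-type functions'' would need an actual proof that the resulting $1$-Lipschitz map to $\mathbb R^k$ does not degenerate the cycle; the paper sidesteps this by phrasing everything in terms of distances $d_Y(f(a_n),f(b_n))$ and the slicing inequality rather than a Euclidean lower isoperimetric bound inside $X$. Your upper-bound bookkeeping for $\mathbf M(\Sigma_n)$ and the product filling, and the interior slicing argument that $\langle S,\pi,t\rangle$ fills $\zeta_n$, are fine; it is the structure of the contradiction, and in particular the transport step, that does not match the paper and does not work as written.
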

\noindent Theorem \ref{Thm 1 v1} and Theorem \ref{Thm 2 v1}  immediately follow from Theorem \ref{Thm 1 v2} and Theorem \ref{Thm 2 v2} by results of Wenger \cite{wenger2011asymptotic} and Behrstock-Drutu \cite{behrstock2019higher}. Theorem \ref{Thm 3 v2} follows from Theorem \ref{Thm 3 v1} by Leuzinger's result \cite{leuzinger2014optimal}.
\subsection{Idea of the proof} 
\noindent Let us first look at Theorem \ref{Thm 1 v2}. The case $k=1$ is a simple volume obstruction observation: a space with exponential growth cannot be coarsely embedded with sublinear upper control $\rho_+$ in a space with at most exponential growth. Let us give a sketch of the proof when $k=2$, which already contains most of the conceptual difficulties of the general case. When $X=X_1 \times X_2$ is a product of two geodesic metric spaces of exponential growth and the 2-dimensional filling function of $Y$ is sub-Euclidean, suppose that a coarse embedding $f : X \to Y$ exists. By our observation, no copy of $X_1$ inside of $X$ is sent sublinearly, which means that there exist ``undistorted directions'', more precisely  there exist pairs of points $(a_n)_n$, $(b_n)_n$ in $X_1$ such that $d_n := d_X(a_n,b_n) \to \infty$ and which are mapped in $Y$ quasi-isometrically. Now consider the rectangles $R_n$ with one geodesic segment from $a_n$ to $b_n$ in $X_1$ as one side, and a geodesic segment in a copy of $X_2$ of length at most $\varphi \left(d_n\right)$ as the orthogonal side, where $\varphi$ is a sublinear function that depends on $\textup{FV}_{2}^{Y}$ and such that $f(\partial R_n)$ can be filled in $Y$ by a chain of volume $= o\left( d_n \, \varphi(d_n)  \right)$. Since $(a_n)$ and $(b_n)$ are sent quasi-isometrically, the height of $f(\partial R_n)$ is comparable to that of $\partial R_n$, however the filling of $f(\partial R_n)$ in $Y$ is much smaller than that of $\partial R_n$ in $X$. This implies, by the co-area formula for currents, that the width of $f(\partial R_n)$ is highly compressed. From this observation, we construct a sequence of sets in $X$ whose ``coarse''-volume, that will be defined in the background material, is not coarsely preserved by $f$, which is not possible for coarse embeddings. For $k\geq 3$, we proceed by induction on $k$. Suppose that we have proved the result for some $k \geq 2$, and that there exists a coarse embedding $f$ from $X= X_1 \times \dots \times X_{k+1}$ to $Y$ that satisfies a sub-Euclidean $(k+1)$-filling. By the induction hypothesis, there exists a sequence of $k$-dimensional rectangles in a copy of $X_1 \times \dots \times X_{k}$, such that the image of their boundaries have fillings comparable to that in the domain. The sub-Euclidean $(k+1)$-filling implies that there exist, orthogonally to these $k$-rectangles, sets that are highly compressed by $f$, which contradicts the coarse volume preservation.
\\
\\To prove Theorem \ref{Thm 2 v2}, which implies Theorem \ref{Thm 2 v1}, we will adapt the previous proof. Since the domain is no longer a product space, we will consider parallelograms instead of rectangles, and adapt the size of its sides to make the proof work. More importantly, instead of looking for ``undistorted directions'', we need to find ``maximally singular undistorted directions''. Maximally singular directions are directions following which a space of rank $k$ contains a subset that factorizes as a product $\mathbb{R} \times X'$, where $X'$ has rank $k-1$. To find these directions, we will decompose the cycle along the walls of a suitable Weyl sector, and use a pigeonhole-like reasoning. We refer to section \ref{sect 4.3}, after the statement of Theorem \ref{CE of model space}, to a more detailed description of the idea of proof.
\\
\\Theorem \ref{Thm 3 v2} is a consequence of the first two theorems, using the linear filling above the rank and Theorem \ref{CE of euclidean spaces}, which is a result about coarse embeddings of Euclidean spaces into symmetric spaces of lower rank. This result allows us to derive an upper bound for fillings of images of $(k-1)$-parallelograms, starting from the linear filling of dimension $k+1$ in $Y$. 
\\
\\The proof of Theorem \ref{Thm 4} is done by contradiction. If there exists such quasi-isometric embedding, then by the quasi-flats Theorem of Kleiner--Leeb \cite{kleiner1997rigidity} and Eskin--Farb \cite{eskin1997quasi}, such a quasi-flat is a bounded distance from a finite union of maximal flats. This implies that there exist arbitrarily big balls that are sent uniformly close to a maximal flat. By considering parallelograms inside these balls and the filling of the projection of their images in this flat, we get a contradiction. Namely, we get a sequence of images of parallelograms that are sent quasi-isometrically in a Euclidean space, and that are filled linearly, which gives a contradiction by applying the co-area formula. Finally, a result of Bonk--Schramm \cite{bonk2011embeddings} says that any Gromov hyperbolic geodesic metric space with bounded geometry quasi-isometrically embeds into a hyperbolic space $\mathbb{H}^n$. By composing by this quasi-isometry and applying the previous theorem we get the Corollary \ref{cor of Thm 4}.
\subsection{Organization of the paper}
We will start in section 2 by giving some background material that will be used throughout the paper. More precisely, section 2.1 is dedicated to coarse embeddings. In section 2.2 we collect definitions of the model spaces and mapping class groups, and give some large-scale properties, like parallel sets and cross sections in CAT(0) spaces. In 2.3 we give a brief overview of the theory of metric currents introduced by Ambrosio--Kirchheim, and state the Slicing Theorem that will play a crucial role in the proofs. We define the filling functions in section 2.4 and state results of Wenger \cite{wenger2011asymptotic}, Behrstock-Drutu \cite{behrstock2019higher} and Leuzinger \cite{leuzinger2014optimal} that shows that the filling functions can detect the rank of the spaces we are considering. Finally, connect the dots argument in 2.5 will allow us to always assume that our coarse embeddings are Lipschitz, which is very convenient when working with Lipschitz chains or currents. Before proving Theorem \ref{Thm 1 v2} in section 3.2, which implies Theorem \ref{Thm 1 v1}, we will give some useful lemmas about coarse embeddings and volume preservation in section 3.1. In section 4, we give a preliminary result about Weyl sectors in either symmetric spaces of non-compact type or Euclidean buildings, and we prove a decomposition result for parallelograms that will be used in the proof of Theorem \ref{Thm 2 v2} right after. The goal of section 5 is to give a proof of Theorem \ref{Thm 3 v2}, thus implying Theorem \ref{Thm 3 v1}. It starts first by giving a result about coarse embeddings of Euclidean spaces into symmetric spaces of lower rank, which also implies Theorem \ref{Thm 4} and Corollary \ref{cor of Thm 4}. Finally, we collect some further questions in section 6.
\subsection{Acknowledgements}

I am extremely grateful to my advisor Romain Tessera for introducing me to this subject, for all the discussions that we had and all his support. I would like to thank Jason Behrstock and Yves Benoist for reading an earlier version and for several valuable comments. I wish also to thank Charles Frances for explaining to me the motivations and applications of coarse embeddings in Lorentzian geometry. I have benefited from many useful discussions with David Hume, John Mackay, Pierre Pansu, Stefan Wenger, Yves de Cornulier and Linus Kramer. I also thank them for their feedback on the previous version. Finally, I would like to thank Amandine Escalier and Mingkun Liu for many discussions and for helping me with the drawings.

\section{Background}
\subsection{Coarse embeddings}
A map $f :(X,d_X)\to (Y,d_Y)$ is a coarse embedding if there exist functions $\rho_{\pm}:[0,\infty)\to [0,\infty)$ such that $\rho_-(r)\to\infty$ as $r\to\infty$ and for all $x,y\in X$
$$\rho_-(d_X(x,y)) \leq d_Y(f(x),f(y)) \leq \rho_+(d_X(x,y)).$$
Equivalently, for every pair $(x_n)_{n \ge 0}$, $(x'_n)_{n \ge 0}$ 
of sequences of points in $X$,
$$\lim_{n \to \infty} d_X(x_n, x'_n) = \infty \iff \lim_{n \to \infty} d_Y(f(x_n), f(x'_n)) = \infty.$$
When the control functions $\rho_-, \rho_+$ are affine, $f$ is said to be a \textit{quasi-isometric embedding}.
\\The map $f$ is said to be \textit{large-scale Lipschitz} if it admits an affine upper control, without necessarily having a lower control.
\begin{defn}
 A metric space $(X,d)$ is \textit{large-scale geodesic} if there exist constants $\lambda,c > 0$ and $b\geq 0$ such that,
for every pair $(x,x')$ of points of $X$,
there exists a sequence $x_0=x, x_1, \dots, x_n=x'$ of points in $X$
such that $d(x_{i-1},x_i) \le c$ for $i = 1, \dots, n$ and $n \le \lambda d(x,x') + b$.
\end{defn}
If $X$ is large-scale geodesic, then $f$ admits a control function $\rho_+$ which is affine.
\begin{defn}\label{bounded geom}
A metric space $X$ has \textit{bounded geometry} 
if there exists $R_0 \ge 0$ such that, for every $R \ge 0$, 
there exists an integer $N$ such that every ball of radius $R$ in $X$ 
can be covered by $N$ balls of radius $R_0$.
\end{defn}
\begin{rem}
An example of a metric space that is not of bounded geometry is an infinite-dimensional Hilbert space, with the metric given by the norm. 
\end{rem}
\begin{defn}
Let $X$ be a metric space with bounded geometry and let $\varepsilon >0$. We define the \textit{$\varepsilon$-volume} of a subset $A \subset X$, that we will denote $\textup{Vol}_X^{\varepsilon}(A)$, as the minimal number of balls of radius $\varepsilon$ needed to cover $A$.
\end{defn}
We will denote by $B(x,r)$ the closed ball $B(x,r):= \{y\in X : d(y,x)\leq r\}$.
\begin{defn}
Let $\varepsilon >0$. We define the \textit{$\varepsilon$-growth function} of a metric space $(X,d)$ with bounded geometry to be
$$ \beta_X^{\varepsilon}(r) = \sup \{ \textup{Vol}_X^{\varepsilon}\left(B(x,r)   \right) \mid x \in X  \}.  $$
We say that $X$ has \textit{polynomial growth} if there exist $\varepsilon >0$ and $D\geq 0$ such that $\beta_X^{\varepsilon}(r) \preccurlyeq  r^D $.
\\We say that $X$ has \textit{exponential growth} if there exists $\varepsilon >0$ such that $\beta_X^{\varepsilon}(r) \approx e^r $.
\\The comparison on functions $\mathbb{N}\to \mathbb{R}$ is defined as follows: $f \preccurlyeq g$ if there exists a constant $c>0$ such that $f(n) \leq c g(cn+c)+c$ for all $n \in \mathbb{N}$, and $f\approx g$ if $f \preccurlyeq g$ and $g \preccurlyeq f$.
\end{defn}
\begin{rem}
If $X$ is large-scale geodesic and has bounded geometry, then it has at most exponential growth $\beta_X^{\varepsilon}(r) \preccurlyeq  e^r $.
\end{rem}
\subsection{Coarse geometry of model spaces and mapping class groups}
This section is devoted to recall the definition of the spaces we will be considering. We will define model spaces as products of symmetric spaces and Euclidean buildings with a Euclidean factor, and recall the definition of mapping class groups. We will give some properties of these spaces that will be useful.
\\Symmetric spaces and Euclidean buildings can be seen as leading examples of CAT(0) spaces. In fact, they seem to be the most rigid among all proper CAT(0) spaces \cite{caprace2009isometry}\cite{leeb326characterization}.
\subsubsection{Symmetric spaces}
A \textit{symmetric space} is a connected riemannian manifold $M$ such that for all $x\in M$, there exists an isometry $\sigma_x \in \textup{Isom}(M)$ such that $\sigma_x(x) = x$ and $T_x \sigma_x = -\textup{Id}_{T_xM}$. It is said to be \textit{of nonpositive curvature} if it has non-positive sectional curvature. If moreover it has no non-trivial Euclidean factor, $M$ is called a \textit{symmetric space of non-compact type}. All symmetric spaces of non-compact type can be obtained as coset spaces $G/K$, where $G$ is a connected semi-simple Lie group with trivial center and no compact factors, and $K$ is a maximal compact subgroup of $G$. The metric on $G/K$ comes from the Killing form of $\textup{Lie}(G)$. When $M$ is a symmetric space of non-compact type, $G$ is the identity component of the isometry group $\textup{Isom}_0(M)$. \\An important example of symmetric spaces of non-compact type is given by $\SL_n(\mathbb{R})/\textup{SO}_n(\mathbb{R})$. When $n=2$, it corresponds to the hyperbolic plane $\mathbb{H}^2$. More generally, if $M$ is a symmetric space of non-compact type such that $\textup{dim}(\textup{Isom}_0(M)) = n \geq 2$, then after rescaling the metrics of the de Rham factors of $M$ by positive constants, $M$ can be isometrically embedded in $\SL_n(\mathbb{R})/\textup{SO}_n(\mathbb{R})$ \cite{eberlein1996geometry}. The symmetric space $\SL_n(\mathbb{R})/\textup{SO}_n(\mathbb{R})$ can also be seen as the collection of scalar products on $\mathbb{R}^n$, for which the unit ball has volume $1$.
\subsubsection{Euclidean buildings}
Euclidean buildings are non-Archimedean analogues of symmetric spaces of non-compact type. In fact, to any semi-simple algebraic group over a local field, $\textup{SL}_n(\mathbb{Q}_p)$ for example, we can associate a Euclidean building, called its Bruhat-Tits Building, on which the group acts isometrically and co-compactly. Let us define Euclidean buildings in general. A general reference for buildings is \cite{abramenko2008buildings}, and for an introductory course see \cite{caprace2014lectures}.
\\
\\ Let $W < \textup{Isom}(\mathbb{R}^n) $ be a discrete  reflection group, i.e.\ a discrete subgroup of $\textup{Isom}(\mathbb{R}^n)$ generated by orthogonal reflections through a collection $\mathcal{H}$ of hyperplanes, called $\textit{walls}$, and such that this collection is locally finite. The pattern determined by the set of walls defines a cellular decomposition $\Sigma$ of $\mathbb{R}^n$, called a $\textit{Euclidean Coxeter complex}$. The group $W$ is called the $\textit{affine Weyl group}$ (or the $\textit{affine Coxeter group}$) of the Euclidean Coxeter complex $\Sigma$. A \textit{chamber} (or \textit{alcove}) in that complex is defined as a connected component of $\mathbb{R}^n - \cup_{h \in \mathcal{H}} h$. The affine Weyl group $W$ acts transitively on the set of chambers. The top-dimensional cells of $\Sigma$ are the closures of chambers, also called \textit{closed chambers}, which are compact if and only if the group $W$ acts cocompactly. A lower dimensional cell is the intersection of a closed chamber  with a set of walls.

\begin{defn}
Let $W$ be an affine Weyl group of $\mathbb{R}^n$, and $\Sigma$ its associated Euclidean Coxeter complex. A \textit{(discrete) Euclidean building} modeled on ($\Sigma,W)$ is a cell complex $B$, which is covered by subcomplexes all isomorphic to $\Sigma$, called the \textit{apartments} of $B$, and such that the following incidences properties hold:
\begin{enumerate}
    \item Any two cells of $B$ lie in some apartment.
    \item For any two apartments, there is an isomorphism between them fixing their intersection pointwise.
\end{enumerate}
\end{defn}
Euclidean buildings are frequently called \textit{Affine buildings} in the literature.
\\A Euclidean  building is \textit{thick} if each wall belongs to
at least 3 half-apartments with disjoint interiors. If the affine Weyl group acts cocompactly on the apartments, then the chambers of the Euclidean Coxeter complex are polysimplices. This induces
on the Euclidean building a structure of a polysimplicial complex. If the building is moreover irreducible, then this complex is a simplicial complex. 
\\
A Euclidean building always possesses a geometric realization, which is a complete CAT(0) metric space, and such that the restriction of its distance to each apartment is the Euclidean metric.
\subsubsection{Model spaces}
 We call \textit{model spaces} the spaces of the form $X = \mathbb{R}^n \times S \times B$, where $S$ is a product of symmetric spaces of non-compact type and $B$ a product of thick Euclidean buildings with cocompact affine Weyl group, with bounded geometry and no Euclidean factor. Note that such spaces are CAT(0), thus Lipschitz-connected.
 \\
 \subsubsection{Mapping class groups} References for this subsection are \cite{farb2011primer},\cite{hamenstaedt2005geometry}, \cite{behrstock2008dimension}.
\\We consider orientable compact connected surfaces $S=S_{g,p}$ of genus
$g$ and $p$ boundary components. 
The mapping class group, $\mathcal{MCG}(S)$, is defined to be 
$Homeo^{+}(S)/Homeo_0(S)$, the group of isotopy classes of homeomorphisms of $S$. Boundary components are not required to be fixed by the mapping class group, so each boundary should be considered as a puncture. This group is finitely-generated and for any finite
generating set one considers the word metric in the usual way, yielding a metric space which is 
unique up to quasi-isometry. It is known that mapping class groups are not CAT(0) because they cannot act geometrically on a CAT(0) space when the surface has genus $\geq 3$, or genus $2$ and at least one puncture \cite{kapovich1996actions}. Nonetheless, their filling functions behave like those of CAT(0) spaces, as we will see in section 2.4.
\\Throughout the remainder, we assume that $3g-3+p \geq 2$, i.e. we exclude the sphere with at most 4 punctures and the torus with at most 1 puncture. The mapping class group of these 7 surfaces is either finite or virtually free, so quasi-isometric to a point or a locally finite regular tree, which is already covered by the Euclidean buildings that we consider.
\\By a result of \cite{mosher1995mapping}, mapping class groups are automatic, so they are combable in the sens that their Cayley graphs admit a bounded quasi-geodesic combing, see \cite{bridson2013metric}. Therefore, they are of type $\mathcal{F}_{\infty}$ \cite{epstein1991word}. In particular, for every $n \in \mathbb{N}$, there exists an $n$-connected CW-complex $X_S$ on which $\mathcal{MCG}(S)$ acts freely, properly discontinuously and cocompactly. 
  \subsubsection{Rank and geometric rank}
\begin{defn}
Let $X$ be a metric space. We define its \textit{rank} as  
\\$\textup{rank}(X) = \max \{k \in \mathbb{N} \mid \exists \, g : \mathbb{R}^k \to X \textup{ an isometric embedding}     \}$.
\end{defn}
Similarly, we define the geometric rank, that we also call quasi-flat rank or quasi-rank.
\begin{defn}
Let $X$ be a metric space. We define its \textit{geometric rank} as  
\\$\textup{grank}(X) = \max \{k \in \mathbb{N} \mid \exists \, g : \mathbb{R}^k \to X \textup{ a quasi-isometric embedding}     \}$.
\end{defn}
We recall that, by a result of \cite{anderson1986existence} and \cite{kleiner1999local}, the geometric rank is equal to the rank for proper cocompact CAT(0) spaces. For mapping class groups, it was shown by \cite{behrstock2008dimension} and by \cite{hamenstaedt2005geometry} that the geometric rank of a mapping class group $\mathcal{MCG}(S_{g,p})$ is equal to the maximal rank of its free abelian subgroups, which is equal to $3g-3+p$ by \cite{birman1983abelian}. In particular, this rank is realized by 
any subgroup generated by Dehn twists on a maximal set of disjoint essential
simple closed curves.
\begin{ex}
For all $p\geq 1$, $\textup{rank}(\mathbb{R}^p) = p$. 
\\For all $n\geq 2$, $\textup{rank}(\mathbb{H}^n) = 1$ and $\textup{rank}(\SL_n(\mathbb{R})/\textup{SO}_n(\mathbb{R})) =\textup{rank}(\textup{BT}(\SL_n(\mathbb{Q}_p)))= n-1$, where $\textup{BT}(\SL_n(\mathbb{Q}_p))$ is the Bruhat-Tits building of $\SL_n(\mathbb{Q}_p)$.
\end{ex}
\begin{rem}
Note that the rank of a Euclidean building is equal to its dimension, and $\textup{rank}(\mathbb{R}^n \times S \times B) = n+\textup{rank}(S)+\textup{dim}(B)$.
\end{rem}
\subsubsection{Parallel sets in model spaces} \label{parallel sets}
References for this subsection are
\cite{eberlein1996geometry},\cite{helgason2001differential} for symmetric spaces, and \cite{kleiner1997rigidity}, \cite{leeb326characterization} for Euclidean buildings.
\\
\\Let $X$ be a model space of rank $k$. A subspace of $X$ is a \textit{flat} if it is
isometric to some Euclidean space. A \textit{maximal flat} is a flat of dimension $k$, and flats of dimension 1 are the geodesics. A geodesic $\gamma$ is said to be \textit{regular} if it is contained in a unique maximal flat. Otherwise it is called \textit{singular}. It is said to be \textit{maximally singular} if it belongs to $k$ half-flats of maximal dimension, with disjoint interiors. A \textit{singular flat} is a flat which is the intersection of maximal flats.
\\
\\
Let $X$ be a model space with no Euclidean factor, and let $F \subset X$ be a flat. If another flat $F'$ is at finite Hausdorff distance from $F$ then by the Flat Strip Theorem \cite{bridson2013metric}, since $X$ is CAT(0), there exists a segment $I$ such that the convex hull of the union $F \cup F'$ is isometric to $F \times I$. In that case, the flats $F$ and $F'$ are called \textit{parallel}. The flat $F$ is a closed convex subset with extendible geodesics (i.e.\ each geodesic segment is contained in a bi-infinite geodesic in $F$), therefore we can consider its \textit{parallel set} $P_X(F)$, which is the union of flats that are parallel to $F$. It is a closed convex subset of $X$ isometric to a product 
$$  P_X(F) = F \times CS_X(F)                   ,$$
where $CS_X(F)$ is a model space. It is called the \textit{cross section} of $F$. Moreover, if $F$ is a singular flat then its cross section has no Euclidean factor and $\textup{rank}(CS_X(F))= \textup{rank}(X)-\textup{dim}(F)$. The flats in $X$ being just products of flats in its factors, and singular flats being products of singular flats in the factors, the cross section of a product is the product of cross sections (see \cite{leeb326characterization} and \cite{kleiner1997rigidity}).
\begin{ex}Let $X$ be the Bruhat-Tits building associated to $\SL_3( \mathbb{Q}_p)$ and let $\gamma$ be a maximally singular geodesic. We then have that $CS_X(\gamma)$ is isometric to the $(p+1)$-regular simplicial tree $T_{p+1}$, and
\[P_X(\gamma) = \RR\times T_{p+1}.\]
\end{ex}
\begin{ex}
Let $Y=\SL(3,\RR)/\textup{SO}(3)$, and let $\gamma$ be a maximally singular geodesic. We then have that $CS_Y(\gamma)$ is isometric to the hyperbolic plane $\mathbb{H}^2$, and
\[P_Y(\gamma) = \RR\times \mathbb{H}^2.\]
\end{ex}
\hfill
\\In the proof of Theorem \ref{Thm 2 v2}, we will need a uniform lower bound on the volume growth of cross sections of singular flats. It is obvious for symmetric spaces since cross sections of singular flats are symmetric spaces of lower rank, and there are only finitely many isometry classes. For Euclidean buildings we need the following result.
\begin{prop} \label{prop cross section growth}
Let $X$ be a thick Euclidean building with cocompact affine Weyl group. Then for every singular flat $F$, there exists $D = D(F) >0$ such that the 3-regular tree of edge length $D$ isometrically embeds into the cross section of $F$. This implies that there exists a uniform lower bound on the volume growth of cross sections of singular flats in $X$.
\end{prop}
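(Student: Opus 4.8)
The plan is to reduce the statement to a local, combinatorial fact about the Coxeter complex $\Sigma$ on which $X$ is modeled, and then use thickness to build a tripod at the level of half-apartments. Fix a singular flat $F \subset X$. Since $X$ has a cocompact affine Weyl group $W$, the chambers of $\Sigma$ are polysimplices of uniformly bounded size, and there is a global bound on the number of $W$-orbits of cells; this is what will ultimately make the constant $D(F)$ depend only on $F$ (in fact only on the type of $F$) and will give the uniform lower bound on the volume growth. First I would pass to the cross section $CS_X(F)$, which by the material in Section \ref{parallel sets} is itself a thick Euclidean building with cocompact affine Weyl group, of rank $\mathrm{rank}(X)-\dim(F) \ge 1$. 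So it suffices to prove the following: in any thick Euclidean building $Y$ with cocompact affine Weyl group and $\mathrm{rank}(Y)\ge 1$, there is a constant $D>0$ such that the $3$-regular tree with edge length $D$ embeds isometrically.

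**Key steps.** (1) Choose a wall $m$ in some apartment $A$ of $Y$ and a geodesic segment $\sigma$ of length $D$ lying inside $m$, where $D$ is chosen small enough (e.g.\ at most the minimal distance between two parallel walls of $\Sigma$, or the in-radius of a chamber) that $\sigma$ does not meet any wall transverse to $m$; such a $D$ exists and is determined by $\Sigma$, hence by $F$, by cocompactness. (2) By thickness, $m$ bounds at least three half-apartments $H_1, H_2, H_3$ with pairwise disjoint interiors; each $H_i$ is isometric to a Euclidean half-space, and by the building axioms any two of them glue along $m$ to form an apartment, hence a flat $\mathbb{R}^{\mathrm{rank}(Y)}$. (3) Take in each $H_i$ the segment $\tau_i$ of length $D$ emanating orthogonally from the midpoint of $\sigma$ into $H_i$. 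The union $\tau_1 \cup \tau_2 \cup \tau_3$ is a tripod; I would check it is isometrically embedded by verifying that for $i \ne j$ the concatenation through the branch point is geodesic, which follows because $H_i \cup H_j$ is a flat and the two segments meet orthogonally at the wall. (4) Iterate: each $\tau_i$ can be extended and re-branched at its far endpoint by the same argument applied to a wall through that point (using cocompactness to relocate the configuration by an element of $W$, so the same $D$ works everywhere), producing at each stage three new edges of length $D$. Passing to the direct limit and invoking the Flat Strip / convexity arguments already cited, the resulting subset is an isometrically embedded $3$-regular tree with edge length $D$. (5) Finally, since an embedded $3$-regular tree has exponential volume growth with a rate controlled by $D$, and $D = D(F)$ takes only finitely many values as $F$ ranges over singular flats of $X$ (again by cocompactness of $W$), this yields the asserted uniform lower bound on the volume growth of cross sections.

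**Main obstacle.** The delicate point is step (3)–(4): checking that the concatenated segments genuinely form an \emph{isometrically} (not merely quasi-isometrically) embedded tree, and in particular that distances are realized by the obvious tripod paths rather than by shortcuts through other apartments. Here I would lean on the CAT(0) structure of $|Y|$: the branch point lies on the wall $m$, the two half-apartments $H_i, H_j$ form a genuine flat, and any geodesic of $|Y|$ between points of $H_i$ and $H_j$ must be the Euclidean geodesic in that flat, which crosses $m$; combined with the choice of $D$ small enough that $\sigma$ avoids transverse walls, this forces the branch structure. The iteration then requires a uniform statement — that \emph{every} point of $Y$ lies on a wall admitting the same half-apartment configuration at scale $D$ — and this is exactly where cocompactness of the affine Weyl group is essential, since it lets us conjugate the local picture around any vertex back to a fixed one. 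I would isolate this "local model around every wall point" as a lemma before assembling the tree.
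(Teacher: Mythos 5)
Your approach is genuinely different from the paper's, and there are two real gaps.

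The paper first reduces to the case where $F$ is a wall: if $\dim F = k < n-1$, pick a wall $F'\supset F$ and use $P_X(F')\subset P_X(F)$ to get $\mathbb{R}^{n-1-k}\times CS_X(F')\subset CS_X(F)$. It then identifies the cross section of a wall $F$ explicitly with the \emph{wall tree}: the graph whose vertices are the walls parallel to $F$ and whose edges join two parallel walls at the minimal wall spacing $D$ in a common apartment, with edge length the distance in $X$. Thickness of the wall tree is then read off directly from thickness of $X$ (each wall bounds three half-apartments with disjoint interiors, hence has at least three parallel neighbors at distance $D$), and the $3$-regular subtree of edge length $D$ is immediate, since the wall tree is already a genuine metric tree. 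Cocompactness is invoked only for the statement that parallel walls in an apartment occur at integer multiples of $D$, and for the finiteness of the values $D(F)$.

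You instead try to establish a more abstract claim — that $CS_X(F)$ is itself a thick Euclidean building with cocompact affine Weyl group — and then build a $3$-regular tree inside such a building by an iterated tripod construction. Two concrete problems with this. First, the claim that the cross section of an arbitrary singular flat is a \emph{thick} building with \emph{cocompact} affine Weyl group is not established by the background material in Section 2.2, which only says the cross section is a model space; thickness of the cross section is precisely the nontrivial content here, and the paper proves it only in the codimension-one case via the wall tree. Asserting it for all $F$ and quoting Section 2.2 is a circularity. Second, the iterated tripod construction has a real gap: to re-branch at the far endpoint of $\tau_i$ you need that endpoint to lie on a wall admitting the same local picture, which forces $D$ to be \emph{exactly} the minimal wall spacing (not ``at most''); and, more seriously, verifying that the resulting subset is an \emph{isometrically} embedded $3$-regular tree requires ruling out shortcuts along paths crossing arbitrarily many walls, not merely checking a single tripod. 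The CAT(0) convexity argument you sketch handles two adjacent branches inside one flat, but the full tree lives across infinitely many apartments and the no-shortcut statement needs an inductive argument organized by the separating walls — which is exactly the bookkeeping the paper's wall tree formalism does automatically, by reducing the geometry to a tree before any embedding is attempted. Your Step (5) on uniformity of $D(F)$ matches the paper.
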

\begin{proof}
Let $X$ be such Euclidean building, and let $n = \dim X$. If $n = 1$, $X$ is a tree an the singular flats are the vertices. Thus the cross sections are the whole building. Let $D>0$ be the distance between vertices in an apartment. Since $X$ is thick, it contains a 3-regular tree of edge length $D$. 
\\Suppose that $n \geq 2$, and let $F$ be a singular flat of dimension $k \leq n-1$. Let $F'$ be a singular $(n-1)$-flat (i.e.\ a wall) that contains $F$. Any $(n-1)$-flat parallel to $F'$ contains a $k$-flat parallel to $F$, therefore $P_X(F') \subset P_X(F)$, which implies that $\mathbb{R}^{n-1-k} \times CS_X(F') \subset CS_X(F)$. So it is enough to show it for $k=n-1$. 
\\The affine Weyl group $W$ is a semi-direct product $W = W_r \ltimes T$, where $W_r$ is the associated finite Coxeter group and $T$ is a group of translations generated by vectors orthogonal to some walls. Since it is cocompact, $T$ is isomorphic to $\mathbb{Z}^n$. Let $F \subset X$ be a wall, and fix a generating set of $T$. Since the affine Weyl group is cocompact, there exists $D = D(F) >0$ such that the walls parallel to $F$ in an apartment are at distance an integer multiple of $D$, see also \cite{kleiner1997rigidity} Corollary 5.1.3. In particular, every wall parallel to $F$ admits at least three parallel walls in $X$ at distance $D$. Consider the graph $G$ whose vertex set is the set of walls parallel to $F$, and whose edges are pairs
of walls that differ by a translation by a generator of $T$ in an apartment. If moreover the length of the edges is the distance in $X$ between the corresponding walls, then the graph $G$ is isometric the the cross section of $F$, which is called the \textit{wall tree} of $F$. It is a thick tree, see \cite{weiss2008structure} Chapter 10 and \cite{kramer2014coarse}, and contains a subtree which is 3-regular of edge length D, obtained by only considering for each vertex two parallel walls at distance D.
\\Moreover, since there is only a finite number of singular flats up to translation, there is only a finite number of values $D(F)$. Therefore, there is a uniform lower bound on the volume growth of such wall trees: if we fix $\varepsilon >0$, there exists $\mu >0$ such that for any singular flat $F$ and for all $R>0$
$$        \textup{Vol}^{\varepsilon} \left( B_{CS_X(F)}(R)  \right) \geq \exp(\mu R)  .              $$
\end{proof}
\subsection{Metric currents}
References for this section are \cite{ambrosio2000currents},\cite{huang2022morse} and  \cite{wenger2003isoperimetric}.
\\Ambrosio and Kirchheim extended the classical theory of normal and integral currents developped by Federer and Fleming \cite{federer1960normal} to arbitrarily complete metric spaces.
\\Let $(X,d)$ be a complete metric space and $k\geq 0$ and let $\mathcal D^k(X)$ denote the set of $(k+1)$-tuples $(f,\pi_1,\dots,\pi_k)$ 
of Lipschitz functions on $X$ with $f$ bounded.
\\
\begin{defn}
A \textit{$k$-dimensional metric current}  $T$ on $X$ is a multi-linear functional on $\mathcal D^k(X)$ satisfying the following
properties:
\begin{enumerate}
 \item If $\pi^j_i$ converges point-wise to $\pi_i$ as $j\to\infty$ and if $\sup_{i,j}\textup{Lip}(\pi^j_i)<\infty$ then
       \begin{equation*}
         T(f,\pi^j_1,\dots,\pi^j_k) \to T(f,\pi_1,\dots,\pi_k).
       \end{equation*}
 \item If $\{x\in X:f(x)\not=0\}$ is contained in the union $\bigcup_{i=1}^kB_i$ of Borel sets $B_i$ and if $\pi_i$ is constant 
       on $B_i$ then
       \begin{equation*}
         T(f,\pi_1,\dots,\pi_k)=0.
       \end{equation*}
 \item There exists a finite Borel measure $\mu$ on $X$ such that
       \begin{equation}\label{equation:mass-def}
        |T(f,\pi_1,\dots,\pi_k)|\leq \prod_{i=1}^k\textup{Lip}(\pi_i)\int_X|f|d\mu
       \end{equation}
       for all $(f,\pi_1,\dots,\pi_k)\in\mathcal{D}^k(X)$.
\end{enumerate}\end{defn}
The space of $k$-dimensional metric currents on $X$ is denoted by $\mathbf M_k(X)$ and the minimal Borel measure $\mu$
satisfying \eqref{equation:mass-def} is called \textit{mass} of $T$ and written as $\|T\|$. Let us denote by $\mass{T} = \|T\|(X) $ the \textit{total mass} of $T$. 
\textit{The support of $T$} is, by definition, the closed set $\textup{spt} T$ of points $x\in X$ such that $\|T\|(B(x,r))>0$ for all $r>0$. 
\\Every function $g \in L_{loc}^1(\mathbb{R}^k)$ induces a current $[\![ g ]\!] \in \mathbf M_k(\mathbb{R}^k) $. Indeed, by Rademacher's theorem, Lipschitz function are differentiable almost everywhere, so if   $(f,\pi_1,\dots,\pi_k) \in \mathcal{D}^k(\mathbb{R}^k) $, $[\![ g ]\!]$ is defined by: :
$$ [\![ g ]\!](f,\pi_1,\dots,\pi_k) :=   \int_{\mathbb{R}^k} g f\det\left(\frac{\partial\pi_i}{\partial x_j}\right)d{\mathcal L}^k          $$
Which corresponds to the integration of the differential form $f d\pi_1 \wedge \dots \wedge d\pi_k $, weighted by $f$. Therefore, every borel set $A \subset \mathbb{R}^k$ induces a current $[\![ A ]\!] := [\![ \chi_A ]\!]$.
\\
\\The restriction of $T\in\mathbf{M}_k(X)$ to a Borel set $A\subset X$ is given by 
\begin{equation*}
  (T\rstr A)(f,\pi_1,\dots,\pi_k):= T(f\chi_A,\pi_1,\dots,\pi_k).
\end{equation*}
This expression is well-defined since $T$ can be extended to a functional on tuples for which the first argument lies in 
$L^\infty(X,\|T\|)$.
\\
\\\textit{The boundary of} $T\in\mathcal{D}_k(X)$ is defined by analogy with Stokes formula:
\begin{equation*}
 \partial T(f,\pi_1,\dots,\pi_{k-1}):= T(1,f,\pi_1,\dots,\pi_{k-1}).
\end{equation*}
It is clear that $\partial T$ satisfies conditions (1) and (2) in the above definition. If $\partial T$ also has 
finite mass (condition (3)) then $T$ is called a \textit{normal current}. The space of normal currents is denoted by $\mathbf{N}_k(X)$.
\\ The push-forward of $T\in\mathbf{M}_k(X)$ 
under a Lipschitz map $\varphi$ from $X$ to another complete metric space $Y$ is given by
\begin{equation*}
 \varphi_\# T(g,\tau_1,\dots,\tau_k):= T(g\circ\varphi, \tau_1\circ\varphi,\dots,\tau_k\circ\varphi),
\end{equation*}
for $(g,\tau_1,\dots,\tau_k)\in\mathcal{D}^k(Y)$. This defines a $k$-dimensional current on $Y$.
\\
\\Let $\mathcal{H}^k$ denote the Hausdorff $k$-dimensional 
measure. An $\mathcal{H}^k$-measurable set $A\subset X$
is said to be \textit{countably $\mathcal{H}^k$-rectifiable} if there exist Lipschitz maps $f_i :B_i\longrightarrow X$ from subsets
$B_i\subset \mathbb{R}^k$ such that
\begin{equation*}
\mathcal{H}^k(A - \bigcup f_i(B_i))=0.
\end{equation*}
\begin{defn}
A current $T\in\mathbf{M}_k(X)$ with $k\geq 1$ is said to be \textit{rectifiable} if
\begin{enumerate}
 \item $\|T\|$ is concentrated on a countably $\mathcal{H}^k$-rectifiable set and
 \item $\|T\|$ vanishes on $\mathcal{H}^k$-negligible sets.
\end{enumerate}
$T$ is called \textit{integer rectifiable} if, in addition, the following property holds:
\begin{enumerate}
 \item[(3)] For any Lipschitz map $\varphi\colon X\longrightarrow \mathbb{R}^k$ and any open set $U\subset X$ there exists 
       $\theta\in L^1(\mathbb{R}^k,\mathbb{Z})$ such that
       \begin{equation*}
        \varphi_\#(T\rstr U)=[\![ \theta ]\!].
       \end{equation*}
\end{enumerate}
\end{defn}
\hfill
\\We denote the space of rectifiable currents by $\mathcal{R}_k(X)$, and $\mathcal{I}_k(X)$ for integer rectifiable currents.
Integer rectifiable normal currents are called \textit{integral currents}, denoted by $\mathbf{I}_k(X)$.
An element $T\in\mathbf{I}_k(X)$ is called a \textit{cycle}  $\partial T=0$.
\\
\\The main motivation for using currents instead of Lipschitz chains is the following Slicing Theorem due to Ambrosio--Kirchheim \cite{ambrosio2000currents} that will play a crucial role in this paper.
\begin{thm}\label{slicing thm}
Let be $T\in\mathbf{I}_k(X)$ and $\pi$ a Lipschitz function on $X$. Then there exists for almost every $r\in\mathbb{R}$ an integral current
$\slice{T}{\pi}{r}\in\mathbf{I}_{k-1}(X)$ with the following properties:
\begin{enumerate}
 \item $\slice{T}{\pi}{r}= \partial(T\rstr\{\pi \leq r\}) - (\partial T)\rstr\{\pi\leq r\}$,
 \item $\|\slice{T}{\pi}{r}\|$ and $\|\partial\slice{T}{\pi}{r}\|$ are concentrated on $\pi^{-1}(\{r\})$,
 \item $\mass{\slice{T}{\pi}{r}}\leq\textup{Lip}(\pi)\frac{d}{dr}\mass{T\rstr\{\pi\leq r\}}$, which is just a reformulation of the co-area formula.
\end{enumerate}
\end{thm}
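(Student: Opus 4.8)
The plan is to follow the construction of the slice from \cite{ambrosio2000currents}, establishing in turn: the formula (1) together with finiteness of mass, the localization (2), the co-area bound (3), and finally integrality. First I would set $T_r:=T\rstr\{\pi\le r\}$; since $\|S\rstr A\|=\|S\|\rstr A$ for Borel $A$, the function $m(r):=\mass{T_r}=\|T\|(\{\pi\le r\})$ is nondecreasing with total variation $\le\mass{T}$, hence differentiable for a.e.\ $r$ with $\int_{\mathbb{R}}m'(r)\,dr\le\mass{T}$. The heart of (1) is that $\partial T_r$ and $(\partial T)\rstr\{\pi\le r\}$ differ by a finite-mass current. To see this, fix $t<s$, let $\gamma_{t,s}\colon\mathbb{R}\to[0,1]$ be $1$ on $(-\infty,t]$, $0$ on $[s,\infty)$ and affine in between (so $\textup{Lip}(\gamma_{t,s})=1/(s-t)$), and put $g:=\gamma_{t,s}\circ\pi$. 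Using the extension of currents to bounded Borel first arguments, $(T\rstr g)(f,\pi_1,\dots,\pi_k)=T(gf,\pi_1,\dots,\pi_k)$, and the Leibniz rule for metric currents (a consequence of the locality and continuity axioms) gives, up to sign,
\begin{equation*}
\partial(T\rstr g)-(\partial T)\rstr g=R_{t,s},\qquad R_{t,s}(f,\pi_1,\dots,\pi_{k-1}):=T(f,g,\pi_1,\dots,\pi_{k-1}).
\end{equation*}
By \eqref{equation:mass-def} and locality, $\|R_{t,s}\|$ is carried by $\{t\le\pi\le s\}$ and $\mass{R_{t,s}}\le\textup{Lip}(g)\,\|T\|(\{t\le\pi\le s\})\le\textup{Lip}(\pi)\,(m(s)-m(t^-))/(s-t)$.

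Next I would let $s\downarrow t$: the $g$'s converge pointwise to $\chi_{\{\pi\le t\}}$, so by the continuity axiom $T\rstr g\to T_t$ and $(\partial T)\rstr g\to(\partial T)\rstr\{\pi\le t\}$, whence $R_{t,s}\to\slice{T}{\pi}{t}:=\partial T_t-(\partial T)\rstr\{\pi\le t\}$ weakly; for a.e.\ $t$ the mass bound converges to $\textup{Lip}(\pi)\,m'(t)$. By lower semicontinuity of mass, $\slice{T}{\pi}{t}$ has finite mass with $\mass{\slice{T}{\pi}{t}}\le\textup{Lip}(\pi)\,m'(t)$, which is (3); integrating in $t$ gives the co-area inequality $\int_{\mathbb{R}}\mass{\slice{T}{\pi}{r}}\,dr\le\textup{Lip}(\pi)\,\mass{T}$. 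Since $\partial\slice{T}{\pi}{r}=-\partial\big((\partial T)\rstr\{\pi\le r\}\big)=-\slice{\partial T}{\pi}{r}$, applying the same argument to $\partial T$ shows this has finite mass for a.e.\ $r$, so $\slice{T}{\pi}{r}\in\mathbf{N}_{k-1}(X)$. For (2), I would also write $\slice{T}{\pi}{r}=(\partial T)\rstr\{\pi>r\}-\partial\big(T\rstr\{\pi>r\}\big)$ (using $\partial T=\partial T_r+\partial(T\rstr\{\pi>r\})$): on the open set $\{\pi<r\}$ the currents $T$ and $T_r$ agree, so by locality the first expression has no mass there, and symmetrically the second kills the mass on $\{\pi>r\}$; thus $\|\slice{T}{\pi}{r}\|$ is concentrated on $\pi^{-1}(r)$, and $\partial\slice{T}{\pi}{r}=-\slice{\partial T}{\pi}{r}$ gives the same for $\|\partial\slice{T}{\pi}{r}\|$.

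It remains to upgrade ``normal'' to ``integral''. For rectifiability, $\|T\|$ is concentrated on a countably $\mathcal{H}^k$-rectifiable set $E$; by the co-area formula for rectifiable sets, $E\cap\pi^{-1}(r)$ is countably $\mathcal{H}^{k-1}$-rectifiable for a.e.\ $r$, and one checks $\|\slice{T}{\pi}{r}\|$ is concentrated on it and vanishes on $\mathcal{H}^{k-1}$-null sets. For integer multiplicity, one uses that slicing commutes with Lipschitz push-forward: for Lipschitz $\psi\colon X\to\mathbb{R}^{k-1}$ and open $U\subset X$,
\begin{equation*}
\psi_\#\big(\slice{T}{\pi}{r}\rstr U\big)=\slice{(\psi,\pi)_\#(T\rstr U)}{\textup{pr}_k}{r},
\end{equation*}
with $\textup{pr}_k$ the last coordinate on $\mathbb{R}^k$. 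Since $T$ is integer rectifiable, $(\psi,\pi)_\#(T\rstr U)=[\![\Theta]\!]$ for some $\Theta\in L^1(\mathbb{R}^k,\mathbb{Z})$, and a Fubini computation identifies $\slice{[\![\Theta]\!]}{\textup{pr}_k}{r}$ with $[\![\Theta(\cdot,r)]\!]$, where $\Theta(\cdot,r)\in L^1(\mathbb{R}^{k-1},\mathbb{Z})$ for a.e.\ $r$. This gives condition (3) of integer rectifiability, so $\slice{T}{\pi}{r}\in\mathbf{I}_{k-1}(X)$ for a.e.\ $r$.

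The hard parts will be, first, the Leibniz rule for metric currents and the precise continuity statements needed to pass to the limit $s\downarrow t$, which use the full strength of the Ambrosio--Kirchheim axioms and the extension of $T$ to first arguments in $L^\infty(X,\|T\|)$; and second, the compatibility of slicing with push-forward together with the structure theorem identifying integer rectifiable currents on $\mathbb{R}^k$ with $L^1(\mathbb{R}^k,\mathbb{Z})$ densities, which is the real source of integrality. The remaining ingredients --- differentiation of monotone functions, lower semicontinuity of mass, Fubini, and the co-area formula for rectifiable sets --- are classical. Since this is precisely the slicing theorem of \cite{ambrosio2000currents}, in the body of the paper we simply cite it.
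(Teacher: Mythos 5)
The paper does not prove this theorem; it states it and cites Ambrosio--Kirchheim \cite{ambrosio2000currents}, and you correctly note this at the end of your write-up. Your sketch is nonetheless a faithful outline of the argument from the cited source: smoothing the indicator with the affine cutoff $\gamma_{t,s}$, invoking the Leibniz rule to produce the remainder $R_{t,s}$ concentrated on $\{t\le\pi\le s\}$ with mass bounded by $\textup{Lip}(\pi)(m(s)-m(t^-))/(s-t)$, passing to the limit $s\downarrow t$ for a.e.\ $t$ using continuity of $T$ in its first argument together with lower semicontinuity of mass, obtaining localization on $\pi^{-1}(r)$ from the two representations of the slice via locality, and finally upgrading ``normal'' to ``integral'' by pushing forward to $\mathbb{R}^k$, using the $L^1(\mathbb{R}^k,\mathbb{Z})$ description of integer rectifiable currents there, and Fubini. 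The steps you flag as ``hard'' (the Leibniz rule for the extended functional, the compatibility of slicing with push-forward, and the rectifiability of the slices of a rectifiable carrier) are exactly the ones that carry the technical weight in Ambrosio--Kirchheim, so your triage is accurate. Given that the paper itself offers only a citation, your decision to do likewise is the right call; the sketch would only need the usual care about signs in the Leibniz identity and about $m(t^-)=m(t)$ holding for a.e.\ $t$ if it were to be expanded.
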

\subsection{Homological filling functions}
The basic idea of a filling function is to measure the difficulty of
filling a boundary of a given size.  There are several ways to make
this rigorous, depending on the type of boundary and the type of
filling. We will use the Homological filling, as
in \cite{gromov1993asymptotic}\cite{leuzinger2014optimal}, which consists on filling Lipschitz cycles by Lipschitz chains, instead of the homotopical filling where we fill spheres by balls.
\\
\\\textit{An integral Lipschitz $d$-chain} in a complete metric space $X$
is a finite linear combination $\Sigma=\sum_i a_i\sigma_i$, with $a_i\in \mathbb Z$, of
Lipschitz maps $\sigma_i:\Delta^k\to X$ from the Euclidean $k$-dimensional simplex $\Delta^k$
to $X$.  We will often call this simply a \textit{Lipschitz $k$-chain}. The boundary operator is defined as in the case of singular chains.
\\Note that by the push-forward of metric currents, every Lipschitz chain in X induces an integral current. Indeed, if $\sigma_i:\Delta^k\to X$ is Lipschitz, then ${\sigma_i}_\#( [\![ \Delta^k ]\!] )$ is an integral $k$-current in $X$. We define the \textit{$k$-volume of $\sigma_i$} as the total mass of ${\sigma_i}_\#( [\![ \Delta^k ]\!] )$: 
$$\textup{Vol}_k\sigma_i = \mass{{\sigma_i}_\#( [\![ \Delta^k ]\!] )}. $$
Note that if $X$ is a riemannian manifold then, by Rademacher's theorem, $\sigma_i$ is differentiable almost everywhere and  $\textup{Vol}_k\sigma_i$ is also equal to the integral of the magnitude of its Jacobian. Note also that it is not necessarily equal to the volume of the image (its hausdorff measure), unless the map is injective, because the volume is counted with multiplicity.
\\We then define the $k$-\emph{volume} of a Lipschitz $k$-chain as
$$\textup{Vol}_k^X\, \Sigma:=\sum_i|a_i|\textup{vol}_k\sigma_i.$$
We wish to  measure the difficulty to fill Lipschitz $k$-cycles by Lipschitz $(k+1)$-chains. 
\\To ensure that such a filling exists, the space considered must be $k$-connected, i.e.\ all  homotopy groups $\pi_k(X)$ are trivial (which is the case for CAT(0) spaces and for the corresponding CW-complexes $X_S$ associated to $\mathcal{MCG}(S)$) hence  the corresponding homology groups are also trivial.
\\More precisely,  for  an integral Lipschitz $k$-cycle $\Sigma$ in $k$-connected space we define its \emph{filling volume}
$$
\textup{FillVol}_{k+1}^X(\Sigma):=\inf\{\textup{Vol}_{k+1}\, (\Omega) \mid \Omega \textup{ is a Lipschitz}\  (k+1)\textup{-chain with}\ \partial \Omega=\Sigma\}.
$$
The \emph{ $(k+1)$-dimensional 
filling function} of $X$ is  then  given by
$$
\textup{FV}_{k+1}^{X}(\ell):=\sup\{\textup{FillVol}_{k+1}^X(\Sigma)\mid \Sigma \textup{ is a Lipschitz}\  k\textup{-cycle in X with}\ \textup{Vol}_k^X (\Sigma) \leq \ell\}.
$$
We are only interested in the asymptotic behaviour of these filling functions. We have that if $X$ and $Y$ are two $k$-connected manifolds or simplicial complexes which are quasi-isometric, then by \cite{pride1999higher}
$$\textup{FV}_{k}^{X}(\ell)\sim \textup{FV}_{k}^{Y}(\ell).$$
The equivalence relation
on functions $\mathbb R\to \mathbb R$ is define as follows: we write $f\precsim g$ if there is a constant $C>0$ such that 
$f(x)\leq Cg(Cx+C)+Cx+C.$
We write $f\sim g$ and say that they are \textit{asymptotically equivalent} if  $f\precsim g$ and $g \precsim f$.
\begin{ex} \hfill
\\• A hyperbolic space $X = \mathbb{H}^n$ satisfies for all $2 \leq k \leq n$, $\textup{FV}_k^{X}(\ell)\sim \ell$  \cite{gromov1987hyperbolic} \cite{lang2000higher}.
\\• A Euclidean space $X = \mathbb{R}^n$ satisfies for all $2 \leq k \leq n$, $\textup{FV}_k^{X}(\ell)\sim  \ell^\frac{k}{k-1}$  \cite{federer1960normal}.
\\• More generally, \cite{wenger2003isoperimetric} showed that a complete CAT(0) space satisfies $\textup{FV}_k^{X}(\ell)\precsim  \ell^\frac{k}{k-1}$ for all dimensions $k$.
\\• Mapping class groups are combable, so by \cite{behrstock2019combinatorial} they satisfy for every $k$,  $\textup{FV}^{k}(\ell)\precsim  \ell^\frac{k}{k-1}$. The filling functions are defined in the corresponding CW-complex associated to the mapping class group.
\end{ex}

\hfill
\\The following key theorems say that the filling functions can detect the rank of these spaces.
\begin{thm}[\cite{wenger2011asymptotic}]
\hfill\\
Let $X$ be a proper cocompact CAT(0) space. If $k> \textup{rank}(X)$, then $\textup{FV}_{k}^{X}(\ell) = o\left(\ell^{\frac{k}{k-1}}\right)$.
\end{thm}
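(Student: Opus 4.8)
The plan is to pass to the asymptotic cones of $X$, where the hypothesis $k > \textup{rank}(X)$ turns into the statement that no cone contains an isometric copy of $\mathbb{R}^k$, and to reformulate the conclusion as a scale-invariant isoperimetric statement about those cones. Since $X$ is proper, cocompact and CAT(0), Kleiner's rank theorem \cite{kleiner1999local} (resting on \cite{kleiner1997rigidity}) gives that every asymptotic cone $X_\omega$ of $X$ is a complete CAT(0) space whose maximal flat dimension equals $\textup{rank}(X)$: quasi-flats of $X$ limit onto honest flats of $X_\omega$ and no larger flat appears. Being CAT(0), $X$ already satisfies the Euclidean filling bound $\textup{FV}_k^X(\ell) \precsim \ell^{k/(k-1)}$ \cite{wenger2003isoperimetric}; the whole content is to upgrade $\precsim$ to $o$.

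I would argue by contradiction. If $\textup{FV}_k^X(\ell) \neq o(\ell^{k/(k-1)})$, then there are $\varepsilon > 0$ and integral $(k-1)$-cycles $T_i$ in $X$ with $r_i := \mass{T_i} \to \infty$ and $\textup{FillVol}_k^X(T_i) \geq \varepsilon\, r_i^{k/(k-1)}$; an iterated filling estimate via the CAT(0) isoperimetric inequality lets us take each $T_i$ with support of diameter comparable to $r_i^{1/(k-1)}$. Rescaling the metric by $\lambda_i := r_i^{-1/(k-1)}$, inside $X_i := (X,\lambda_i d_X)$ the cycle $T_i$ becomes $T_i'$ with $\mass{T_i'} = 1$, $\textup{FillVol}_k^{X_i}(T_i') \geq \varepsilon$ (the exponents match, since $\varepsilon r_i^{k/(k-1)}$ is precisely the worst-case Euclidean rate), and uniformly bounded support diameter. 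Picking basepoints $p_i \in \textup{spt}(T_i')$, the ultralimit $(X_\omega,p_\omega) := \lim_\omega (X_i,p_i)$ is an asymptotic cone of $X$ — complete, CAT(0), and with no isometric $\mathbb{R}^k$.

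Next I would promote this to a Euclidean lower bound inside the cone. By the compactness and closure theory for integral currents in complete metric spaces \cite{ambrosio2000currents}, the currents $T_i'$ — of uniformly bounded mass and with zero boundary — subconverge weakly to an integral $(k-1)$-cycle $T_\omega$ in $X_\omega$ with $\mass{T_\omega} \leq 1$. One checks that $\textup{FillVol}_k^{X_\omega}(T_\omega) \geq \varepsilon/2$, so in particular $T_\omega \neq 0$: a filling of $T_\omega$ of mass $< \varepsilon/2$ can be pushed into $X_i$ for $\omega$-large $i$ by approximating its simplices, with a bounded boundary correction supplied by the scale-one isoperimetric inequality, yielding a filling of $T_i'$ of $k$-volume $< \varepsilon$. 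Since $X_\omega$ is an asymptotic cone of itself it admits dilations, so scaling $T_\omega$ produces $(k-1)$-cycles of every mass with filling volume at the Euclidean rate, i.e.\ $\textup{FV}_k^{X_\omega}(\ell) \geq (\varepsilon/2)\,\ell^{k/(k-1)}$ for all $\ell$. The final, rigidity step is then: a complete CAT(0) space $Y$ whose $k$-th filling function is bounded below by a Euclidean rate contains an isometric $\mathbb{R}^k$, which contradicts the paragraph above and closes the proof. Here I would take near-minimal fillings $U_j$ of unit-mass $(k-1)$-cycles realizing the lower bound at scales tending to infinity; using the scale-invariance of $Y$ once more they become essentially mass-minimizing $k$-currents in $Y$ itself, of mass bounded below and with nonzero boundary. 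A weak limit $U_\infty$ is a mass-minimizing $k$-current; the monotonicity formula in CAT(0) forces the density of $\|U_\infty\|$ to be everywhere at least the Euclidean value, the mass lower bound pins it to exactly the Euclidean value on a set of positive measure, and the rigidity of the equality case — a $k$-current of Euclidean density in a CAT(0) space is supported on a union of $k$-flats — produces the flat.

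I expect the rigidity step to be the main obstacle: turning a purely metric lower bound on the $k$-th filling function into an honest flat needs the full regularity and rigidity theory of mass-minimizing integral currents in CAT(0) spaces (monotonicity, the sharp Euclidean isoperimetric constant, classification of its equality case). The second delicate point is the current-transfer in the third paragraph — moving fillings between the rescaled spaces $X_i$ and their ultralimit while controlling supports — which is exactly where properness, cocompactness, the diameter control on the bad cycles, and the Ambrosio--Kirchheim compactness theorem are used.
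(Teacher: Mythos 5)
The paper does not prove this statement: it is imported wholesale from Wenger \cite{wenger2011asymptotic} (combined with Kleiner's identification of the rank with the asymptotic rank for proper cocompact CAT(0) spaces \cite{kleiner1999local}), so the only meaningful comparison is with Wenger's proof. Your first three paragraphs do follow the skeleton of that proof: contradiction, reduction to ``round'' cycles of diameter $\approx \mathbf{M}(T_i)^{1/(k-1)}$, rescaling to unit mass, passage to an ultralimit, nonvanishing of the limit cycle, and a scale-invariant Euclidean-rate lower bound in an asymptotic cone. Two of the points you flag as delicate are genuinely theorems rather than remarks: Ambrosio--Kirchheim compactness \cite{ambrosio2000currents} applies to currents with uniformly compact supports in a \emph{fixed} space, whereas your $T_i'$ live in varying rescaled copies of $X$ whose ultralimit is not proper, so one needs the ultralimit-of-currents machinery (and the accompanying semicontinuity of filling volumes under this convergence) that Wenger developed for exactly this purpose; and a fixed asymptotic cone need not admit dilations onto itself, so ``scaling $T_\omega$'' really produces cycles in \emph{other} asymptotic cones of $X$ (which suffices, but should be said).

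The genuine gap is your final rigidity step: the claim that a complete CAT(0) space whose $k$-th filling function admits a Euclidean-rate lower bound contains an isometric $\mathbb{R}^k$, proved via a sharp monotonicity formula for mass-minimizing integral $k$-currents and an equality-case classification (``Euclidean density forces the support to be a union of $k$-flats''). Neither ingredient is available in this generality: the known density/monotonicity estimates for minimizers in metric spaces satisfying Euclidean-type isoperimetric inequalities (e.g.\ those underlying \cite{wenger2003isoperimetric}) come with a non-sharp constant depending on the isoperimetric constant, not the Euclidean one, and there is no regularity/rigidity theory for higher-dimensional minimizers in general CAT(0) spaces that classifies the equality case (such results exist essentially only for two-dimensional minimal surfaces). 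This step is the actual heart of the theorem, and Wenger's proof goes a different way: from the hard-to-fill cycles and near-minimal fillings he extracts, by slicing/coarea estimates and a Gromov--Hausdorff compactness argument carried out at many scales, a sequence of subsets of $X$ whose rescalings converge to a compact piece of a $k$-dimensional \emph{normed} space with nonempty interior, i.e.\ he bounds the asymptotic rank below by $k$; Kleiner's theorem then says the asymptotic rank of a proper cocompact CAT(0) space equals its rank (and a normed $k$-plane sitting in a CAT(0) cone is automatically Euclidean). As written, your proposal asserts the hardest part rather than proving it, so the argument is incomplete at precisely the decisive point.
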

Wenger Actually proved it for all complete quasi-geodesic metric spaces admitting cone-type inequalities up to dimension $k$. He also showed that the filling functions are asymptotically equal to the Euclidean filling functions below the rank.
\begin{thm}[\cite{behrstock2019higher}]
\hfill\\
Let $X=\mathcal{MCG}(S_{g,p})$. If $k> \textup{grank}(X) = 3g-3+p$, then $\textup{FV}_{k}^{X}(\ell) = o\left(\ell^{\frac{k}{k-1}}\right)$.
\end{thm}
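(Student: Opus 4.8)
\emph{Proof plan.} Since $\mathcal{MCG}(S_{g,p})$ is only a quasi-isometry class, I would first replace it by the $(k-1)$-connected CW-complex $X_S$ on which the group acts freely, properly discontinuously and cocompactly (such $X_S$ exists because mapping class groups are of type $\mathcal{F}_\infty$), so that the homological filling functions $\textup{FV}_j^{X_S}$ are well defined for $j\le k$ and, by the quasi-isometry invariance of filling functions, $\textup{FV}_k^{\mathcal{MCG}(S_{g,p})}(\ell)\sim\textup{FV}_k^{X_S}(\ell)$. The plan is then to check that $X_S$ satisfies the hypotheses of Wenger's asymptotic rank theorem \cite{wenger2011asymptotic} — namely that it is a complete quasi-geodesic metric space admitting cone-type inequalities up to dimension $k$, and that its asymptotic rank is strictly less than $k$ — and to read off the conclusion $\textup{FV}_k^{X_S}(\ell)=o(\ell^{k/(k-1)})$ directly from that theorem.

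For the cone-type inequalities I would invoke combability: by \cite{mosher1995mapping} the group is automatic, hence admits a bounded quasi-geodesic combing of its Cayley graph, and the combinatorial coning construction of \cite{behrstock2019combinatorial} upgrades such a combing to cone-type filling inequalities in every dimension. This already gives the Euclidean upper bound $\textup{FV}_j^{X_S}(\ell)\precsim\ell^{j/(j-1)}$ for all $j$; what passing through Wenger's theorem buys us over stopping here is precisely the \emph{strict} improvement $o(\cdot)$ above the rank, which is where the hypothesis $k>3g-3+p$ enters.

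The substantive step is to identify the asymptotic rank with the geometric rank, i.e.\ to show that an asymptotic cone $\mathrm{Cone}_\omega(\mathcal{MCG}(S_{g,p}))$ contains a bi-Lipschitz copy of $\RR^m$ exactly when $m\le 3g-3+p$. The lower bound is the Dehn-twist flat: a maximal multicurve yields an undistorted $\ZZ^{3g-3+p}$ subgroup \cite{birman1983abelian}\cite{behrstock2008dimension}, which bi-Lipschitzly embeds $\RR^{3g-3+p}$ into the cone. For the upper bound I would use the description of these asymptotic cones as tree-graded spaces whose pieces are bi-Lipschitz to products of asymptotic cones of mapping class groups of proper essential subsurfaces together with cones of the (hyperbolic) curve complexes, and argue by induction on the complexity $\xi(S)=3g-3+p$: a bi-Lipschitz cube has no cut points, so it lies inside a single piece; and a cube embedded in a product $A\times B$ projects to cubes in $A$ and in $B$ whose dimensions sum to at least that of the cube, while each factor has strictly smaller complexity. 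The induction then forbids an embedded $(\xi(S)+1)$-cube, so $\mathrm{asrk}(\mathcal{MCG}(S_{g,p}))=3g-3+p<k$; Wenger's theorem \cite{wenger2011asymptotic} applied to $X_S$, together with $\textup{FV}_k^{\mathcal{MCG}(S_{g,p})}(\ell)\sim\textup{FV}_k^{X_S}(\ell)$, finishes the proof.

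The main obstacle is this last induction on complexity: it requires the subsurface-projection machinery (the Masur–Minsky distance formula and Behrstock's constraints) to pin down the cut points and product structure of the asymptotic cones, plus a careful bookkeeping of how cube dimensions add across product pieces — essentially all the mapping-class-group-specific content lives here. A secondary, more routine point is verifying that the combinatorial coning inequalities of \cite{behrstock2019combinatorial} are literally of the "cone-type up to dimension $k$" form demanded as input by \cite{wenger2011asymptotic}, which is a matter of matching definitions rather than a genuine difficulty.
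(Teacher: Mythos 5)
The paper does not give its own proof of this theorem; it cites Behrstock--Drutu \cite{behrstock2019higher} directly, so there is no in-paper argument to compare against. Your proposal is a plausible reconstruction of the general shape of that argument: pass to a cocompact $(k-1)$-connected complex $X_S$, get cone-type inequalities from automaticity together with \cite{behrstock2019combinatorial}, identify the asymptotic rank using the tree-graded structure of the asymptotic cones, and feed everything into Wenger's theorem \cite{wenger2011asymptotic}. The Dehn-twist lower bound and the appeal to quasi-isometry invariance of $\textup{FV}_k$ are both correctly placed.

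The step you state too casually is the product one. You assert that ``a cube embedded in a product $A\times B$ projects to cubes in $A$ and in $B$ whose dimensions sum to at least that of the cube.'' The coordinate projections of a bi-Lipschitz copy of $\RR^m$ in $A\times B$ are merely Lipschitz images of $\RR^m$; they need not be bi-Lipschitz, need not be cubes, and need not a priori contain embedded Euclidean pieces of any particular dimension. What you actually need is a rank-additivity (splitting) statement for bi-Lipschitz flats in products of the relevant metric spaces, and that is a theorem requiring its own proof, via topological dimension theory, a Foertsch--Lytchak-type decomposition, or the hierarchical machinery directly. You do flag the induction as ``the main obstacle,'' which is the right instinct, but the difficulty is not bookkeeping: the inequality about dimensions adding under projection is not a formal consequence of the product structure and must be established on its own. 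There is also a quieter translation issue in matching ``no bi-Lipschitz $\RR^m$ in an asymptotic cone'' to Wenger's actual definition of asymptotic rank (limits of rescaled subsets to convex bodies in normed spaces). Fixing the product step, the architecture of your argument is sound.
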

It is known that below the geometric rank, the filling functions are asymptotically equal to the Euclidean filling functions.
\begin{thm}[\cite{leuzinger2014optimal}]
\hfill\\
Let $X= \mathbb{R}^d\times S \times B$ be a model space. Then

\noindent \textup{(i)} \ \  $X$ has  Euclidean  filling functions  below the rank:
$$\textup{FV}_k^{X}(\ell)\sim  \ell^\frac{k}{k-1}\ \ \textup{ if}\ \ 2\leq  k \leq \textup{rank}\ (X);$$
\textup{(ii)} \ \  $X$ has linear filling functions above the rank:
$$\textup{FV}_{k}^{X}(\ell)\sim \ell\ \ \textup{ if}\ \  \textup{rank}\ (X) < k\leq \dim (X).$$
\end{thm}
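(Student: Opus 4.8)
We sketch the strategy; the two ranges of $k$ require genuinely different arguments, and essentially all of the content sits in the upper bound in (ii).

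\emph{Regime (i).} Here everything is essentially formal. For the upper bound $\textup{FV}_k^X(\ell)\precsim\ell^{k/(k-1)}$ I would simply invoke Wenger's sub-Euclidean isoperimetric inequality for complete CAT(0) spaces \cite{wenger2003isoperimetric} recalled above, since model spaces are CAT(0). For the lower bound, $k\le\textup{rank}(X)$ means $X$ contains an isometrically embedded $k$-flat $F\cong\mathbb R^k$; being totally geodesic in a CAT(0) space, $F$ is convex, so the nearest-point projection $\pi_F\colon X\to F$ is $1$-Lipschitz. Take $\Sigma$ to be the boundary of a round $k$-ball in $F$ with $\textup{Vol}_{k-1}(\Sigma)=\ell$. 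Any Lipschitz $k$-chain $\Omega$ filling $\Sigma$ in $X$ pushes forward to a $k$-chain $(\pi_F)_\#\Omega$ filling $\Sigma$ in $F$ (because $\pi_F$ fixes $F$ pointwise) with $\textup{Vol}_k\bigl((\pi_F)_\#\Omega\bigr)\le\textup{Vol}_k(\Omega)$; since the Euclidean isoperimetric inequality for currents (Federer--Fleming) gives $\textup{FillVol}_k^F(\Sigma)\succsim\ell^{k/(k-1)}$, this forces $\textup{FV}_k^X(\ell)\succsim\ell^{k/(k-1)}$.

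\emph{Regime (ii), lower bound.} Write $r=\textup{rank}(X)$ and let $r<k\le\dim X$, so the symmetric factor $S$ is nontrivial. For $k=r+1$ I would pick a singular flat $F_0\subset X$ of dimension $k-2$ whose cross section $CS_X(F_0)$ is, by Section~\ref{parallel sets}, a rank-one symmetric space, hence contains a totally geodesic hyperbolic plane $\mathbb H^2$; the product $F_0\times\mathbb H^2\subset X$ is convex, so carries a $1$-Lipschitz retraction from $X$. Slicing, via the coarea formula, a $k$-chain filling $\partial\bigl((\text{a cube in }F_0)\times(\text{a large hyperbolic disc})\bigr)$ along the $F_0$-coordinates produces, for almost every fibre, a loop in $\mathbb H^2$ whose length is comparable to the side of the disc, and the \emph{linear} isoperimetric inequality of $\mathbb H^2$ then forces the filling $k$-chain to have volume $\succsim\ell$. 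For larger $k$ one uses higher-dimensional rank-one symmetric subspaces of suitable cross sections (or simply quotes Leuzinger's explicit cycles); this is not where the difficulty lies.

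\emph{Regime (ii), upper bound --- the crux.} The model computation is $\mathbb H^n$: a $(k-1)$-cycle $\Sigma$ is filled by the geodesic cone $C_p\Sigma$ over a point $p$, the image of the homotopy $h(u,x)=$ the point at parameter $u$ on $[p,x]$. Since $h_u$ contracts every non-radial direction by a factor $\le C\,e^{-(1-u)d(p,\cdot)}$, one gets $\textup{Vol}_k(C_p\Sigma)\le\int_\Sigma\int_0^1 d(p,x)\,C\,e^{-c(1-u)d(p,x)}\,du\,d\Sigma(x)\le (C/c)\,\textup{Vol}_{k-1}(\Sigma)$, a linear bound with constant independent of $\Sigma$. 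In a general model space $X=\mathbb R^d\times S\times B$ this fails as stated because of flat directions: the Euclidean factor, the piecewise-flat building $B$, and the maximal flats of $S$ --- $r$ directions in all --- contract only linearly under coning, and a cycle spread along them fills only Euclideanly. The plan is therefore to first \emph{kill the flat directions}: fix a regular point at infinity $\xi$ and the $r$ Busemann functions $b_1,\dots,b_r$ of the walls of a Weyl sector pointing at $\xi$, and decompose $\Sigma$ by applying the Ambrosio--Kirchheim Slicing Theorem (Theorem~\ref{slicing thm}) successively to $b_1,\dots,b_r$, choosing the slice levels by a pigeonhole argument so that the resulting $(k-1)$-cycles are supported in bounded neighbourhoods of horospherical regions of $\xi$ on which every direction transverse to $\xi$ contracts exponentially --- here one uses the Flat Strip Theorem and the splitting $P_X(F)=F\times CS_X(F)$ of parallel sets, and the hypothesis $k>r$ is precisely what guarantees that at least one contracting direction survives at every point. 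Each reduced cycle $\Sigma'$ is then filled by sweeping along the geodesic flow $\phi_t$ towards $\xi$: one has $\textup{Vol}_{k-1}\bigl((\phi_t)_\#\Sigma'\bigr)\precsim e^{-ct}\,\textup{Vol}_{k-1}(\Sigma')$, so $(\phi_t)_\#\Sigma'\to0$, the swept $k$-chain is a genuine filling, and its volume is $\precsim\int_0^\infty e^{-ct}\,dt\cdot\textup{Vol}_{k-1}(\Sigma')$. Summing over the pieces and over the correction $k$-chains produced by slicing gives $\textup{FV}_k^X(\ell)\precsim\ell$.

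The step I expect to be the main obstacle is exactly this slicing decomposition: one must carry out the $r$ successive slices with pigeonhole level selection while keeping \emph{uniform} control --- in terms of $\ell$ alone, not of the diameter or location of $\Sigma$ --- of the volumes of all the pieces \emph{and} of the correction $k$-chains, and one must verify rigorously that after removing the $r$ flat directions the surviving cycles genuinely lie in exponentially contracting position. This combinatorial-geometric bookkeeping (the same flavour of Weyl-sector-wall decomposition the present paper carries out elsewhere) is the real work; once it is in place, the two analytic inputs --- the universal-constant cone estimate near $\xi$ and the coarea inequality of the Slicing Theorem --- close the argument.
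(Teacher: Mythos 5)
The paper does not prove this statement; it cites it as Leuzinger's theorem \cite{leuzinger2014optimal} and merely remarks that Gromov proposed a proof of the upper bound by projecting the cycle onto a maximal flat, while Leuzinger's actual argument projects onto a suitable horosphere. There is therefore no in-paper proof to compare against, and I can only assess your sketch on its own terms and against what the paper says about Leuzinger's approach.

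Your regime (i) is correct and standard: Wenger's CAT(0) cone inequality gives the upper bound, and the $1$-Lipschitz retraction onto an isometrically embedded $k$-flat gives the lower bound. Your regime (ii) upper-bound strategy --- control the flat directions via Busemann functions of the walls of a Weyl sector and then cone towards a regular point at infinity --- is essentially the horosphere approach the paper attributes to Leuzinger, and you are right that the decomposition is the crux. But the slicing step is incorrect as written. Applying the Ambrosio--Kirchheim Slicing Theorem successively to $b_1,\dots,b_r$ produces a $(k{-}1{-}r)$-dimensional current, not a family of $(k{-}1)$-cycles supported in horospherical slabs, as you assert. What you actually need is a \emph{same-dimensional} decomposition: restrict $\Sigma$ to slabs $\{j\le b_i< j+1\}$, cap off the resulting $(k{-}1)$-chains (which now have boundary) using the codimension-one slices $\slice{\Sigma}{b_i}{j}$ together with fillings of those slices, and then control the volumes of all the pieces \emph{and} of all the correction $k$-chains uniformly in $\ell$. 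This wedding-cake construction is genuinely different from dimension-dropping slicing, and it imports new filling estimates that have to be justified; the step as you phrased it does not go through. You correctly flagged this as the main obstacle, but the fix is not just more careful bookkeeping of what you wrote --- the slicing mechanism itself has to be replaced. A smaller remark: your lower bound in (ii) via embedded copies of $F_0\times\mathbb H^2$ works, but it is heavier than needed; $\textup{FV}_k^X(\ell)\succsim\ell$ already follows from exhibiting, for each large $\ell$, a $(k{-}1)$-cycle of volume $\simeq\ell$ consisting of $\simeq\ell$ far-separated unit spheres in $X$, using bounded geometry to separate their fillings.
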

When $X$ is a symmetric space of non-compact type, this result has been correctly asserted by Gromov in \cite{gromov1993asymptotic}, section 5.D, and proposed a 
possible different proof for the upper bound by projecting the cycle to a maximal flat, see \cite{leuzinger2014optimal} for more details. The proof of Leuzinger uses a different approach by projecting to a suitable horosphere.
\\
\\Note that by the push-forward of metric currents, every Lipschitz chain in X induces an integral current. Moreover, if $\sigma$ is a Lipschitz $k$-chain $\textup{Vol}_X^k(\sigma) = \mass{\sigma}$. Therefore, the the chains that we will consider will be seen as currents, so it is convenient to define filling functions in this setting. 
\begin{defn}
Let $(X,d)$ be a complete metric space and $k\geq 0$ an integer. For a $k$-cycle $\Sigma \in \mathbf{I}_k(X)$, we define its \textit{$(k+1)$-dimensional current-filling volume} to be $$\textup{FillVol}_{k+1}^{X,\mathrm{cr}}(\Sigma):= \inf \{ \mass{\Omega}  \mid \Omega \in \textbf{I}_{k+1} \textup{ with } \partial \Omega = \Sigma            \} . $$ 
\end{defn}
Since every Lipschitz chain induces a metric current, it is clear that if $\Sigma$ is a Lipschitz $k$-cycle seen in $ \mathbf{I}_k(X)$, then $\textup{FillVol}_{k+1}^{X,\mathrm{cr}}(\Sigma) \leq \textup{FillVol}_{k+1}^{X}(\Sigma)$.
\subsection{Connect the dots argument}\label{sect connect dots}
\begin{defn}
A metric space $Y$ is \textit{Lipschitz-connected} if there exists $c\geq 1$ such that for any $d\in \mathbb{N}$ and any
$K$–Lipschitz map $f : S^d \to Y$ , there is a $cK$–Lipschitz extension $\Tilde{f} : D^{d+1} \to Y$.
\end{defn}
In particular, metric spaces admitting a convex bicombing are Lipschitz-connected \cite{schlichenmaier2005quasisymmetrically}, proposition 6.2.2. It is the case for CAT(0) spaces, because they are geodesic with convex distance function, and for mapping class groups because they are quasi-isometric to CAT(0) spaces \cite{petyt2021mapping}.
\begin{lem} \label{connect the dots}
Let $f:X\to Y$ be a large-scale Lipschitz map, $X$ a finite dimensional CW complex where the size of the cells is globally bounded, and $Y$ is Lipschitz-connected.
Then $f$ is a bounded distance from a Lipshitz map 
$g:X\to Y$. That is, $\sup_{x\in X}d_Y(f(x),g(x)) \leq C$ for 
some constant $C>0$. 
\end{lem}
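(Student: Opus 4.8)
The plan is to perform a skeleton-by-skeleton induction on the dimension of $X$, extending the definition of $g$ one cell at a time while controlling its Lipschitz constant and its distance to $f$. First I would fix a CW structure on $X$ in which every cell has diameter at most some $D>0$ (this is the hypothesis that the size of cells is globally bounded), and let $\lambda, b$ be constants with $d_Y(f(x),f(x'))\le \lambda\, d_X(x,x')+b$ for all $x,x'$, which exist since $f$ is large-scale Lipschitz. I would also use that $X$, being a CW complex with globally bounded cell size, is large-scale geodesic, so that distances in $X$ are comparable to combinatorial distances in the $1$-skeleton.

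The base of the induction is the $0$-skeleton: set $g = f$ on $X^{(0)}$, so that $d_Y(f(v),g(v))=0$ for every vertex $v$. For the inductive step, suppose $g$ has been defined on $X^{(n)}$, is $L_n$-Lipschitz there, and satisfies $d_Y(f(x),g(x))\le C_n$ for all $x\in X^{(n)}$. Given an $(n+1)$-cell with attaching map from $S^n$ into $X^{(n)}$, the restriction of $g$ to the boundary sphere of this cell is $K$-Lipschitz for some $K$ bounded in terms of $L_n$ and $D$ (the cell has bounded diameter, so the attaching map has bounded Lipschitz constant after rescaling); by Lipschitz-connectedness of $Y$ there is a $cK$-Lipschitz extension over the $(n+1)$-disk. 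Doing this for every $(n+1)$-cell and using that $X$ is locally finite-dimensional with bounded geometry of cells, the resulting map on $X^{(n+1)}$ has Lipschitz constant $L_{n+1}$ bounded in terms of $L_n, c, D$. For the distance estimate on a point $x$ in the interior of an $(n+1)$-cell $\sigma$: pick a vertex $v$ of $\sigma$; then $d_Y(f(x),g(x)) \le d_Y(f(x),f(v)) + d_Y(f(v),g(v)) + d_Y(g(v),g(x)) \le (\lambda D + b) + C_n + L_{n+1}D =: C_{n+1}$, where the middle term is controlled by the inductive distance bound at the vertex and the last by the new Lipschitz constant on $\sigma$. Since $X$ is finite dimensional, the induction terminates after $\dim X$ steps, producing a globally Lipschitz $g$ with $\sup_x d_Y(f(x),g(x)) \le C := C_{\dim X}$.

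The main obstacle is bookkeeping the Lipschitz constant across adjacent cells of all dimensions simultaneously: a point in a high-dimensional cell may lie near points in many lower-dimensional cells, and one must check that the piecewise definition glues to a genuinely (globally) Lipschitz map rather than merely cell-wise Lipschitz. This is where finite-dimensionality and the global bound $D$ on cell size are essential — they give a uniform bound, independent of the cell, on how many cells meet in a neighborhood and on the geometry of each cell, so the local Lipschitz constants can be amalgamated into one global constant. I expect the verification that two points in different top cells sharing a face satisfy the Lipschitz bound (by routing through the common face and using the bound there) to be the only genuinely delicate point; everything else is the routine induction sketched above. This lemma will then let us replace any large-scale Lipschitz coarse embedding by a Lipschitz one at bounded distance, which is exactly what is needed to work with Lipschitz chains and metric currents in the sequel.
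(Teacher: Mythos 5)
Your proposal is correct and follows essentially the same skeleton-by-skeleton induction as the paper: set $g=f$ on the $0$-skeleton, extend over each higher cell using the Lipschitz-connectedness of $Y$ and the global bound $D$ on cell diameters, and bound $d_Y(f,g)$ on each cell by routing through a vertex via the triangle inequality. The worry you flag about promoting the cell-wise Lipschitz bound to a global one is a genuine technicality that the paper's proof also leaves implicit; it is resolved because a CW complex with uniformly bounded cell size is a quasi-geodesic space, so a map that is $L$-Lipschitz on every cell (hence on every edge-path) is globally Lipschitz with a comparable constant.
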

\begin{proof}
Let us prove it by induction on the dimension of the cells of $X$, as suggested in \cite{block1997large}. Let $D = \sup_{\sigma} \textup{diam}(\sigma)$ over all the cells of $X$, let $(\lambda,C)$ be the large-scale Lipschitz constants of $f$, and let $c$ be the Lipschitz-connectivity constant of $Y$. Without loss of generality, we can assume that all the edges in $X^{(1)}$ have length $1$. For any vertex $v \in X^{(0)}$, define $g(v) := f(v)$. Given an edge $e =(u,v) \in X^{(1)} $, $d_Y(g(u),g(v)) \leq \lambda + C$. So $g$ is defined on $S^0 = \partial e$ and is $(\lambda + C)$-Lipschitz, therefore it can be extended to $e$ as a $(c(\lambda + C))$-Lipschitz map. $g$ is defined at this stage on $X^{(1)}$, and is uniformly close to $f$ on $X^{(1)}$. Indeed, if $x \in X^{(1)} $ is in an edge $e=(u,v)$,
\begin{equation*}
    \begin{split}
        d_Y(f(x),g(x)) & \leq d_Y(f(x),f(u)) + d_Y(g(u),g(x))
        \\ & \leq \lambda + C + c(\lambda + C).
    \end{split}
\end{equation*}
Suppose now that $g$ is defined on $X^{(k)}$ and is $K$-Lipschitz. Let $\sigma$ be a $(k+1)$-cell in $X$. The restriction of $g$ to $\partial \sigma$ can then be extended to $\sigma$ as $cK$-Lipschitz map. Same as before, $g$ that is now defined on $X^{(k+1)}$ is Lipschitz and a bounded distance from $f$. If $x\in \sigma$, and $u \in \partial \sigma$,
\begin{equation*}
    \begin{split}
        d_Y(f(x),g(x)) & \leq d_Y(f(x),f(u)) + d_Y(g(u),g(x))
        \\ & \leq \lambda D + C + cK D.
    \end{split}
\end{equation*}
Since $X$ is finite dimensional, the induction stops.
\end{proof}
This lemma applies to all the coarse embeddings that we will be considering, so we can always assume that they are actually Lipschitz.
\section{The domain is a product of spaces with exponential growth}
The goal of this section is to prove Theorem \ref{Thm 1 v2}, which implies Theorem \ref{Thm 1 v1}.
\subsection{Preliminary lemmas}
Let us give some useful lemmas first. One crucial property that we are going to use is that the $\varepsilon$-volume is coarsely preserved by coarse embeddings:
\begin{lem}\label{coarse volume}
Let $X$ and $Y$ be metric spaces with bounded geometry, $f : X \to Y$ a coarse embedding, and let $\varepsilon >0$ big enough. There exist $\alpha , \beta >0 $ such that for all $A \subset X$ with $\textup{Vol}_X^{\varepsilon}(A) < \infty $,
$$  \alpha \, \textup{Vol}_X^{\varepsilon} ( A)  \leq      \textup{Vol}_Y^{\varepsilon} ( f( A ) ) \leq \beta \, \textup{Vol}_X^{\varepsilon} ( A )  .      $$
\end{lem}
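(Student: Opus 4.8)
The plan is to exploit the two-sided control $\rho_-(d_X(x,y)) \leq d_Y(f(x),f(y)) \leq \rho_+(d_X(x,y))$ together with bounded geometry of both spaces, choosing $\varepsilon$ large enough that it dominates the relevant control-function thresholds. For the upper bound, I would start with an optimal cover of $A$ by $N = \textup{Vol}_X^\varepsilon(A)$ balls $B(x_i,\varepsilon)$. The image $f(B(x_i,\varepsilon))$ has diameter at most $\rho_+(2\varepsilon)$, hence lies in a single ball of radius $\rho_+(2\varepsilon)$ in $Y$; by bounded geometry of $Y$ this ball is covered by some fixed number $M = M(\rho_+(2\varepsilon), \varepsilon)$ of balls of radius $\varepsilon$. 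Thus $f(A) = \bigcup_i f(B(x_i,\varepsilon))$ is covered by $MN$ balls of radius $\varepsilon$, giving $\textup{Vol}_Y^\varepsilon(f(A)) \leq M \cdot \textup{Vol}_X^\varepsilon(A)$, so we may take $\beta = M$.

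For the lower bound, I would instead start with an optimal cover of $f(A)$ by $N' = \textup{Vol}_Y^\varepsilon(f(A))$ balls $B(y_j,\varepsilon)$ in $Y$. Pull these back: $A = \bigcup_j f^{-1}(B(y_j,\varepsilon)) \cap A$. The key point is that each piece $f^{-1}(B(y_j,\varepsilon)) \cap A$ has bounded $\varepsilon$-volume in $X$: if $x,x' \in A$ both map into $B(y_j,\varepsilon)$, then $d_Y(f(x),f(x')) \leq 2\varepsilon$, so $\rho_-(d_X(x,x')) \leq 2\varepsilon$, hence $d_X(x,x')$ is bounded by some constant $R = R(\varepsilon) := \sup\{r : \rho_-(r) \leq 2\varepsilon\}$ (finite once $\varepsilon$ is large enough that this set is nonempty, using $\rho_-(r)\to\infty$). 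So $f^{-1}(B(y_j,\varepsilon)) \cap A$ has diameter $\leq R$, and by bounded geometry of $X$ it is covered by at most $M' = M'(R,\varepsilon)$ balls of radius $\varepsilon$. Therefore $\textup{Vol}_X^\varepsilon(A) \leq M' \cdot \textup{Vol}_Y^\varepsilon(f(A))$, giving $\alpha = 1/M'$.

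The only real subtlety — the ``main obstacle'' — is making the role of ``$\varepsilon$ big enough'' precise and ensuring the constants are uniform over all $A$. Concretely, $\varepsilon$ must be large enough that $\rho_-^{-1}([0,2\varepsilon])$ is bounded (so that $R(\varepsilon)<\infty$); this is exactly where $\rho_-(r)\to\infty$ is used. Once $\varepsilon$ is fixed, all four quantities $\rho_+(2\varepsilon)$, $R(\varepsilon)$, $M$, $M'$ depend only on $f$, on the bounded-geometry data of $X$ and $Y$, and on $\varepsilon$ — not on $A$ — so $\alpha$ and $\beta$ are genuinely uniform. I would also remark that the finiteness hypothesis $\textup{Vol}_X^\varepsilon(A) < \infty$ is needed only to make the statement non-vacuous (and the upper bound then automatically forces $\textup{Vol}_Y^\varepsilon(f(A)) < \infty$ as well, so the lower bound is meaningful). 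No Lipschitz assumption on $f$ is needed here; the argument is purely about coarse control and counting.
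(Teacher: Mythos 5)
Your proof is correct and follows essentially the same approach as the paper's: cover $A$ (resp.\ $f(A)$) by $\varepsilon$-balls, push forward (resp.\ pull back) using the upper (resp.\ lower) control function, and invoke bounded geometry to bound the number of $\varepsilon$-balls needed to cover the resulting balls of larger radius. The only cosmetic difference is that for the upper bound the paper uses $\rho_+(\varepsilon)$ (radius of the ball around $f(x_i)$ containing $f(B(x_i,\varepsilon))$) rather than the diameter bound $\rho_+(2\varepsilon)$, and your treatment of the well-definedness of $R(\varepsilon)$ via $\rho_-(r)\to\infty$ is if anything slightly more careful than the paper's informal use of $\rho_-^{-1}(2\varepsilon)$.
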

\begin{proof}
Let $f$ be such coarse embedding with control functions $\rho_-$ and $\rho_+$. $X$ and $Y$ have bounded geometry, so there exist $R_0$ and $R_0'$ as in definition \ref{bounded geom}. Take $\varepsilon \geq \max(R_0,R_0')$. 
\\ • Let $n \in \mathbb{N}$ and $x_1,\dots, x_n \in X$ such that 
$$ A \subset B_X ( x_1 , \varepsilon  ) \cup ... \cup B_X ( x_n , \varepsilon  )        .$$
Then 
$$ f(A)  \subset f(B_X ( x_1 , \varepsilon  )) \cup ... \cup f(B_X ( x_n , \varepsilon  ))  \subset B_Y ( f(x_1) , \rho^+ (\varepsilon)  ) \cup ... \cup B_Y ( f(x_n) , \rho^+ (\varepsilon)  )      .$$
Since $Y$ has bounded geometry, there exists $
\beta = \textup{Vol}_X^{\varepsilon}(B(\rho^+ (\varepsilon) ) ) \in \mathbb{N}$ such that every ball of radius $\rho^+ (\varepsilon)$ is in the union of $p$ balls of radius $\varepsilon$. Therefore $f(A)$ is in the union of $\beta \times n$ balls of radius $\varepsilon$. Since its volume is the minimum among such number of balls, we have
$$    \textup{Vol}_Y^{\varepsilon} ( f( A  ) ) \leq \beta n .  $$
By taking $n$ to be minimal, we have 
$$   \textup{Vol}_Y^{\varepsilon} ( f( B_X (x,R)  ) ) \leq  \beta \, \textup{Vol}_X^{\varepsilon} ( B_X ( x , R      ) ) .    $$
\\ • Now let $n \in \mathbb{N}$ and $y_1,\dots, y_n \in Y$ such that 
$$ f(A)  \subset B_Y ( y_1 , \varepsilon  ) \cup \dots \cup B_Y ( y_n , \varepsilon  )        .$$
Then 
$$ A \subset f^{-1}\left(f( A)\right) \subset f^{-1}\left( B_Y ( y_1 , \varepsilon  )) \cup \dots \cup f^{-1} ( B_Y ( y_n , \varepsilon  ) \right)     .$$
Note that for any $i = 1, \dots, n$, $f^{-1} ( B_Y ( y_i , \varepsilon  ) )$ has diameter less than $\rho_-^{-1}(2 \varepsilon)$ : $\forall a,b \in f^{-1} ( B_Y ( y_i , \varepsilon  ) ) $, 
$ d_Y(f(a), f(b)) \leq 2 \varepsilon$, so $ d_X(a,b) \leq  \rho_-^{-1}(2 \varepsilon) $. For any $i = 1,\dots,n$ such that  $f^{-1} ( B_Y ( y_i , \varepsilon  ) )$ is non-empty, take some $x_i$ in it. So $f^{-1} ( B_Y ( y_i , \varepsilon  ) ) \subset    B_X(x_i, \rho_-^{-1}(2 \varepsilon))         $. 
\\Let us denote $D = \rho_-^{-1}(2 \varepsilon)$. Therefore
$$ A \subset B_X ( x_1 , D  ) \cup ... \cup B_X ( x_n , D  )        .$$
$X$ has bounded geometry, so there exists $
\gamma = \textup{Vol}_X^{\varepsilon}(B( D ) ) \in \mathbb{N}$ such that every ball of radius $D$ is in the union of $\gamma$ balls of radius $D$. So $A$ is in the union of $\gamma \times n$ balls of radius $\varepsilon$. Since its volume is the minimum among such number of balls, we have 
$$    \textup{Vol}_X^{\varepsilon} (A) \leq \gamma n.   $$
By taking $n$ to be minimal and by denoting $\alpha = \gamma^{-1}$, we have 
$$   \textup{Vol}_Y^{\varepsilon} ( f(A) ) \geq  \alpha \, \textup{Vol}_X^{\varepsilon} (A) .  $$ 
\end{proof}
Let us also prove the following lemma. 
\begin{lem}\label{volume neighb}
Let $X$ be a metric space with bounded geometry, $\varepsilon>0$ big enough, and let $\delta>0$. If $A \subset X$ with $\textup{Vol}_X^{\varepsilon}(A) < \infty $, then
 $$\textup{Vol}_X^{\varepsilon}( \textup{N}_{\delta}(A)   ) \leq \beta_X^{\varepsilon}(\delta+\varepsilon) \times \textup{Vol}_X^{\varepsilon}(A).  $$
\end{lem}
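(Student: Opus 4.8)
The plan is a direct covering argument; there is essentially nothing subtle here. First I would take a minimal cover of $A$ by balls of radius $\varepsilon$: by definition of $\textup{Vol}_X^{\varepsilon}$ there are points $x_1,\dots,x_N \in X$ with $N = \textup{Vol}_X^{\varepsilon}(A) < \infty$ and $A \subset \bigcup_{i=1}^N B(x_i,\varepsilon)$. The elementary observation that makes the proof work is that, by the triangle inequality, $\textup{N}_{\delta}\big(B(x_i,\varepsilon)\big) \subset B(x_i,\delta+\varepsilon)$ for each $i$: if $d_X(y,z) \le \delta$ for some $z$ with $d_X(z,x_i)\le\varepsilon$, then $d_X(y,x_i)\le\delta+\varepsilon$. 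Hence $\textup{N}_{\delta}(A) \subset \bigcup_{i=1}^N B(x_i,\delta+\varepsilon)$.

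Next I would cover each of the enlarged balls. Since $\varepsilon$ is taken large enough (at least the constant $R_0$ from Definition \ref{bounded geom}), the $\varepsilon$-growth function $\beta_X^{\varepsilon}$ is finite, and by its very definition every ball $B(x_i,\delta+\varepsilon)$ admits a cover by at most $\beta_X^{\varepsilon}(\delta+\varepsilon)$ balls of radius $\varepsilon$. Concatenating these $N$ covers yields a cover of $\textup{N}_{\delta}(A)$ by at most $N\cdot\beta_X^{\varepsilon}(\delta+\varepsilon)$ balls of radius $\varepsilon$. Since $\textup{Vol}_X^{\varepsilon}(\textup{N}_{\delta}(A))$ is by definition the minimal cardinality of such a cover, we obtain
$$\textup{Vol}_X^{\varepsilon}(\textup{N}_{\delta}(A)) \le \beta_X^{\varepsilon}(\delta+\varepsilon)\cdot N = \beta_X^{\varepsilon}(\delta+\varepsilon)\cdot \textup{Vol}_X^{\varepsilon}(A),$$
which is exactly the claimed inequality.

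As for the main obstacle: there genuinely isn't one. The only role of the hypothesis ``$\varepsilon$ big enough'' is to ensure $\beta_X^{\varepsilon}(\delta+\varepsilon) < \infty$, which is precisely what bounded geometry guarantees, and the finiteness of $\textup{Vol}_X^{\varepsilon}(A)$ is used only so that $N$ is a genuine integer and the right-hand side is finite. One minor bookkeeping point worth stating carefully is that the inequality $\textup{Vol}_X^{\varepsilon}(B(x,\delta+\varepsilon)) \le \beta_X^{\varepsilon}(\delta+\varepsilon)$ holds uniformly in $x$, which is immediate from the definition of $\beta_X^{\varepsilon}$ as a supremum over centers.
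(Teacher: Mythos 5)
Your proof is correct and is essentially identical to the paper's: both take a minimal $\varepsilon$-ball cover of $A$, use the triangle inequality to show $\textup{N}_{\delta}(A)$ lies in the union of the balls $B(x_i,\delta+\varepsilon)$, and then re-cover each enlarged ball by at most $\beta_X^{\varepsilon}(\delta+\varepsilon)$ balls of radius $\varepsilon$. No discrepancies to report.
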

\begin{proof}
Let $\varepsilon \geq R_0$ as in the previous lemma. $X$ has bounded geometry, so there exist $n \in \mathbb{N}$ and $ x_1,\dots,x_n \in X$ such that $A \subset B_X(x_1,\varepsilon) \cup \dots \cup B_X(x_n,\varepsilon) $.
\\Let $z \in N_{\delta}(A) $, i.e.\ there exists $ a \in A$ such that $d_X(z,a) \leq \delta$. So there exists $i \in \{1,\dots,n\}$ such that $d_X(z,x_i) \leq \delta + \varepsilon$, therefore $N_{\delta}(A) \subset B_X(x_1,\varepsilon + \delta) \cup .. \cup B_X(x_n,\varepsilon + \delta)   $.
\\There exists $p= \beta_X^{\varepsilon}(\delta+\varepsilon) $ such that every ball of radius $\varepsilon + \delta$ can be covered by $p$ balls of radius $\varepsilon$. Therefore $N_{\delta}(A)$ can be covered by $p \times n$ balls of radius $\varepsilon$, so
$$\textup{Vol}_X^{\varepsilon}( \textup{N}_{\delta}(A)   ) \leq \beta_X^{\varepsilon}(\delta +\varepsilon) \times n . $$
By taking $n$ to be minimal, we have the result.
\end{proof}
We will also need the following result.
\begin{lem} \label{filling current distance}
Let $X$ be a complete metric space, and $x,y \in X$. Let $T \in \mathbf{I}_1(X)$ such that $\partial T = [\![ x ]\!]-[\![ y ]\!]$, then
$$ \mass{T} \geq d_X(x,y) ,          $$
and $$   \textup{FillVol}^{X,\mathrm{cr}}_{1}([\![ x ]\!]-[\![ y ]\!]) \geq d_X(x,y). $$
If moreover there is a geodesic segment from $x$ to $y$, then 
$$   \textup{FillVol}^{X,\mathrm{cr}}_{1}([\![ x ]\!]-[\![ y ]\!]) = d_X(x,y).            $$
\end{lem}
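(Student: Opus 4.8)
The plan is to obtain all three assertions by testing the current against the $1$-Lipschitz function $\pi := d_X(\cdot,x)$, or rather against its bounded truncations.

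For the first inequality, I would fix $R>0$ and set $\pi_R := \min\bigl(d_X(\cdot,x),R\bigr)$, a bounded $1$-Lipschitz function; being bounded and Lipschitz it is an admissible argument both for the $0$-current $\partial T$ and, in the $\pi_1$ slot, for $T$. I then compute $T(1,\pi_R)$ in two ways. On one hand, by the definition of the boundary operator together with the hypothesis $\partial T=[\![ x ]\!]-[\![ y ]\!]$,
$$T(1,\pi_R)\;=\;\partial T(\pi_R)\;=\;\pi_R(x)-\pi_R(y)\;=\;-\min\bigl(d_X(x,y),R\bigr).$$
On the other hand, the defining mass inequality \eqref{equation:mass-def} gives $|T(1,\pi_R)|\le\textup{Lip}(\pi_R)\int_X|1|\,d\|T\|\le\mass{T}$. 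Choosing $R\ge d_X(x,y)$ and comparing the two estimates yields $d_X(x,y)\le\mass{T}$.

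The lower bound on the current-filling volume is then immediate: every competitor $\Omega\in\mathbf{I}_1(X)$ with $\partial\Omega=[\![ x ]\!]-[\![ y ]\!]$ has $\mass{\Omega}\ge d_X(x,y)$ by the step above, so $\textup{FillVol}^{X,\mathrm{cr}}_{1}([\![ x ]\!]-[\![ y ]\!])\ge d_X(x,y)$. For the matching upper bound when a geodesic segment from $x$ to $y$ exists, I would reparametrise it as a Lipschitz $1$-simplex $\sigma\colon\Delta^1\to X$; this is a Lipschitz $1$-chain with $\partial\sigma=[\![ x ]\!]-[\![ y ]\!]$ (orientation chosen so the signs match) and, the geodesic being isometric, $\textup{Vol}_1\sigma=d_X(x,y)$. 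Regarded as the integral current it induces, $\sigma$ is a competitor of mass $d_X(x,y)$ in the definition of the current-filling volume, so $\textup{FillVol}^{X,\mathrm{cr}}_{1}([\![ x ]\!]-[\![ y ]\!])\le d_X(x,y)$, and equality follows.

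I do not expect a genuine obstacle here, since this is meant as a warm-up estimate. The two points needing minor care are that $d_X(\cdot,x)$ is unbounded — handled by truncating to $\pi_R$ and letting $R\to\infty$ — and keeping the orientation convention for $\partial$ consistent so that $[\![ x ]\!]$ and $[\![ y ]\!]$ carry the correct signs. An alternative route to the first inequality is to push $T$ forward by $\pi$: then $\pi_\#T\in\mathbf{I}_1(\RR)$ has the form $[\![\theta]\!]$ with $\theta\in L^1(\RR,\ZZ)$ whose distributional derivative is $\delta_{\pi(y)}-\delta_{\pi(x)}$, forcing $|\theta|\ge 1$ almost everywhere on the interval spanned by $\pi(x)$ and $\pi(y)$; hence $\mass{T}\ge\mass{\pi_\#T}=\int_\RR|\theta|\,d\mathcal L^1\ge|\pi(x)-\pi(y)|=d_X(x,y)$.
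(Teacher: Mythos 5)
Your proposal is correct and follows essentially the same route as the paper: test the current against a distance function via the boundary identity $\partial T(f)=T(1,f)$ and the defining mass inequality to get $\mass{T}\geq d_X(x,y)$, pass to the infimum for the filling bound, and use the geodesic as a Lipschitz $1$-chain of mass $d_X(x,y)$ for the equality. Your truncation $\pi_R=\min(d_X(\cdot,x),R)$ is in fact slightly more careful than the paper, which tests directly against the unbounded function $d_X(\cdot,y)$; the pushforward remark is a valid alternative but not needed.
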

\begin{proof}
$\partial T = [\![ x ]\!]-[\![ y ]\!]$ means that for all $ f : X \to \mathbb{R}$ Lipschitz and bounded, we have 
$$ \partial T (f) = f(x) - f(y).           $$
By definition, we have $\partial T (f) = T(1,f)$. So $ T(1,f) = f(x) - f(y). $
\\T being a 1-current, there exists a finite Borel measure $\mu$ on $X$ such that for all $g : X \to \mathbb{R}$ Lipschitz and bounded and for all $f : X \to \mathbb{R}$ Lipschitz
$$    | T(g,f)| \, \leq \, \textup{Lip}(f) \int_X | g(x) | \, d\mu (x).         $$
Let $\mu = \|T\|$ be the minimal Borel measure. In particular, we have that $\mu (X) = \|T\|(X) = \mass{T} $.
\\For $g=1$ and for all $f : X \to \mathbb{R}$ Lipschitz, we have 
$$  | f(x) - f(y)   |   =  | T(1,f)   | \leq    \textup{Lip}(f) \int_X | 1(x) | \, d\mu (x)   =   \textup{Lip}(f) \mass{T}.                   $$
So $\mass{T}$ satisfies : $\forall f : X \to \mathbb{R}$ Lipschitz, 
  $| f(x) - f(y)   |   \leq       \textup{Lip}(f) \mass{T}                   $. This implies that
  $$ \sup_{f \, 1-\textup{Lip}}   \frac{| f(x) - f(y)   |}{\textup{Lip}(f)} \leq \mass{T}.                      $$
Consider the map $d_y : X \to \mathbb{R}$, $\forall z \in X$, $d_y(z) := d_X(y,z) $. It is a $1$-Lipschitz map that satisfies:
$$   \frac{| d_y(x) - d_y(y)   |}{\textup{Lip}(f)}    = d_X(x,y)  .                    $$
Therefore
$$\sup_{f \, 1-\textup{Lip}}   \frac{| f(x) - f(y)   |}{\textup{Lip}(f)} = d_X(x,y),  $$
and 
$$ \mass{T} \geq d_X(x,y).           $$
By taking the infimum of the mass of all such $1$-currents, 
$$   \textup{FillVol}^{X,\mathrm{cr}}_{1}([\![ x ]\!]-[\![ y ]\!]) \geq d_X(x,y). $$
If there is a geodesic segment $\gamma$ from $y$ to $x$, consider the Lipschitz $1$-chain realized by $\gamma$. Then $\partial \gamma = [\![ x ]\!]-[\![ y ]\!] $ and $\mass{\gamma} = d_X(x,y)$, and the infimum is attained.
\end{proof}
\subsection{Proof of Theorem \ref{Thm 1 v2}}
Let us prove a more general result where we do not actually require the $k$-filling function of the target space to be sub-Euclidean, but we only need a weaker condition on the filling of the boundaries of some $k$-rectangles.
\\For all that follows, if $f$ and $g$ are functions from $\mathbb{R}$ to $\mathbb{R}$, we will denote $f \ll g$ if $f=o(g)$ at $+\infty$, and $f \gg g$ if $g=o(f)$ at $+\infty$.
\begin{thm} \label{CE of product}
Let $(\varphi_i)_{i\in \mathbb{N}^*}$ be a sequence of functions from $\mathbb{R}_+$ to $\mathbb{R}_+$ such that $\varphi_1(d)=d$ for all $d$, and for all $i\in \mathbb{N}^*$ $\varphi_i \gg \varphi_{i+1} $, and $\varphi_i(d)$ tends to $+\infty$ at $+\infty$.
\\Let $k\geq 1$ be an integer, and let $X = X_1 \times ... \times X_k$ be a product of geodesic metric spaces with bounded geometry, and let $Y$ be a Lipschitz-connected complete metric space with at most exponential growth. Let $f : X \to Y$ be a large-scale Lipschitz map. If
\\ • for all $i = 1,\dots,k$, $X_i$ has exponential growth,
\\ • there exists a sublinear function $\phi$ such that for every $d>0$ big enough and every $k$-dimensional rectangle $R_{d}$, whose sides are geodesic segments in the $X_i \,'s$, with side lengths $l_1, \dots, l_k$ that satisfy $l_1 = \varphi_1(d), l_2 \leq \varphi_2(d),\dots,l_k \leq \varphi_k(d)$, we have
$$
\textup{FillVol}_{k}^{Y, \mathrm{cr}}\left(f(\partial R_{d})\right) \leq \phi ( \varphi_1(d) \times \dots\times\varphi_k(d)),
$$
then $f$ is not a coarse embedding.
\end{thm}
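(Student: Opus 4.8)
The plan is to argue by contradiction and by induction on $k$, following the sketch in the introduction. Suppose $f$ is a coarse embedding; by the connect-the-dots argument (Lemma \ref{connect the dots}) we may assume $f$ is Lipschitz. The base case $k=1$ is the volume obstruction: if $f$ were a coarse embedding with $\varphi_1(d)=d$, then since $X_1$ has exponential growth and $Y$ has at most exponential growth, Lemma \ref{coarse volume} forces the image of a ball to have exponential $\varepsilon$-volume, so $f$ restricted to $X_1$ cannot be sublinearly distorted; in particular there are pairs $(a_n,b_n)$ in $X_1$ with $d_X(a_n,b_n)\to\infty$ sent quasi-isometrically — these are the \emph{undistorted directions}. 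For the inductive step, assume the statement for $k-1$. First I would show that the restriction of $f$ to a copy $X_1\times\cdots\times X_{k-1}$ (with the last coordinate fixed) still satisfies the hypotheses at level $k-1$: the sub-Euclidean/controlled filling for $(k-1)$-rectangles comes from capping off the $k$-rectangle $R_d$ with a thin slab in the $X_k$-direction of width $\varphi_k(d)$ and using the hypothesis at level $k$ together with the sublinearity of $\phi$. This produces, by the induction hypothesis applied in contrapositive form, a sequence of $(k-1)$-rectangles $R'_d\subset X_1\times\cdots\times X_{k-1}$ whose boundaries are mapped with filling volume comparable to their size in $X$ — i.e. $f$ does \emph{not} compress these $(k-1)$-cycles.

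Next, the core of the argument: take such a $(k-1)$-rectangle $R'_d$ and extend it orthogonally by a geodesic segment of length $\varphi_k(d)$ in a copy of $X_k$, obtaining the $k$-rectangle $R_d$. Promote $f(\partial R_d)$ to an integral current and take a filling $\Omega_d\in\mathbf{I}_k(Y)$ with $\mass{\Omega_d}\leq\phi(\varphi_1(d)\cdots\varphi_k(d))=o(\varphi_1(d)\cdots\varphi_k(d))$, using the hypothesis. Now apply the Slicing Theorem \ref{slicing thm} to $\Omega_d$ with respect to the Lipschitz function $\pi = t\circ f$, where $t$ measures position along the $X_k$-direction of $R_d$ pushed through $f$ (more precisely one slices by a coordinate adapted to the undistorted $X_k$-direction). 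By the co-area inequality (item (3) of Theorem \ref{slicing thm}), for a positive-measure set of levels $r$ the slice $\slice{\Omega_d}{\pi}{r}$ is an $(k-1)$-current of mass $\ll \varphi_1(d)\cdots\varphi_{k-1}(d)$ that fills (a translate of) $f(\partial R'_d)$ up to controlled error terms. Averaging over $r\in[0,\varphi_k(d)]$ and comparing with the non-compression of the $(k-1)$-cycles established in the previous paragraph gives the first tension; pushing this further, one extracts from $\Omega_d$ a family of subsets of $Y$ whose preimages under $f$ are sets in $X$ of large $\varepsilon$-volume (of order $\varphi_1(d)\cdots\varphi_k(d)$, using that all $X_i$ have exponential growth and Lemma \ref{volume neighb}) but whose own $\varepsilon$-volume in $Y$ is $o(\varphi_1(d)\cdots\varphi_k(d))$. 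This contradicts Lemma \ref{coarse volume}, which says $f$ preserves $\varepsilon$-volume up to multiplicative constants.

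To make the slicing step rigorous one must be careful that the undistorted direction in $X_k$ survives: because $(a_n,b_n)$ in the relevant factor are sent quasi-isometrically, the "height" of $f(\partial R_d)$ in the slicing direction is $\asymp\varphi_k(d)$, so the slices really do range over an interval of length proportional to $\varphi_k(d)$ and each slice, after filling, detects the $(k-1)$-cycle at the appropriate height. One also needs Lemma \ref{filling current distance} to control the $1$-dimensional pieces (the "vertical edges" of the rectangle), ensuring the boundary of a slice of the filling is, up to the slices of $(\partial\Omega_d)=f(\partial R_d)$, exactly $f$ of the $(k-1)$-face at height $r$. I expect the \textbf{main obstacle} to be bookkeeping in this last point: organizing the slicing so that the error terms $(\partial\Omega_d)\rstr\{\pi\leq r\}$ in item (1) of the Slicing Theorem are genuinely negligible and that the resulting family of $(k-1)$-currents can be converted into honest subsets of $Y$ with small $\varepsilon$-volume — essentially a quantitative isoperimetric-to-covering translation — while keeping all constants uniform in $d$. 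The exponential growth of each $X_i$ is used precisely to guarantee that the coarse volume of the $k$-rectangle $R_d$ in $X$ is as large as $\varphi_1(d)\cdots\varphi_k(d)$ (up to constants), which is what makes the contradiction with Lemma \ref{coarse volume} bite.
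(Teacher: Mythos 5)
Your high-level scheme — induction on $k$, undistorted directions, slicing a small $k$-filling, and a final coarse-volume comparison — matches the paper's. But there are two substantive gaps and one significant imprecision.

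\textbf{The inductive step is self-contradictory and its ``capping off'' lemma has a gap.} You first claim to show that the restriction of $f$ to a copy of $X_1\times\cdots\times X_{k-1}$ satisfies the level-$(k-1)$ hypothesis by extending a $(k-1)$-rectangle to a $k$-rectangle, taking a small $k$-filling, and slicing. But slicing a filling of mass $\phi(\varphi_1\cdots\varphi_k)$ only gives slices of average mass $\phi(\varphi_1\cdots\varphi_k)/D$, where $D$ is a distance \emph{in $Y$} between the images of the bottom and top faces; to conclude that this is $o(\varphi_1\cdots\varphi_{k-1})$ you would need $D\gtrsim\varphi_k(d)$, but for a general coarse embedding $D$ is only bounded below by $\rho^-(\varphi_k(d))$, where $\rho^-$ may grow arbitrarily slowly. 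So the derivation fails. Moreover, if the restriction did satisfy the level-$(k-1)$ hypothesis you would be done by the induction hypothesis (the restriction is a coarse embedding), so your next sentence --- ``by the induction hypothesis applied in contrapositive form'' you get \emph{non-compressed} $(k-1)$-rectangles --- contradicts what you just asserted. The paper's logic is the clean version of your second sentence: the restriction is a coarse embedding, so by the contrapositive of the level-$(k-1)$ statement it must \emph{fail} the second hypothesis, directly yielding a sequence of $(k-1)$-rectangles with $\textup{FillVol}_{k-1}^{Y,\mathrm{cr}}(f(\partial R'_{d_n}))\geq\lambda\,\varphi_1(d_n)\cdots\varphi_{k-1}(d_n)$. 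Delete the first claim; the contrapositive alone is what is needed and is all that is true.

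\textbf{The final volume comparison uses the wrong growth estimate.} You state that the preimage sets have $\varepsilon$-volume ``of order $\varphi_1(d)\cdots\varphi_k(d)$'' (polynomial) and that the corresponding sets in $Y$ have $\varepsilon$-volume $o(\varphi_1(d)\cdots\varphi_k(d))$. Neither bound is right. The set $C_n$ constructed in the paper projects onto a ball $B_{X_k}(x_k,\varphi_k(d_n))$, and it is the \emph{exponential} growth of $X_k$ that gives $\textup{Vol}^\varepsilon_X(C_n)\geq e^{\alpha\varphi_k(d_n)}$. On the $Y$-side one only obtains $\textup{Vol}^\varepsilon_Y(f(C_n))\leq e^{2\beta\psi(\varphi_k(d_n))}\cdot\mathrm{const}\cdot\varphi_1(d_n)\cdots\varphi_{k-1}(d_n)$ with $\psi$ sublinear (via Lemma \ref{volume neighb} applied to the neighborhood of the base), which is \emph{not} $o(\varphi_1\cdots\varphi_k)$. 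The contradiction with Lemma \ref{coarse volume} is exponential versus polynomial-times-small-exponential, and it genuinely requires the exponential growth of the last factor; the polynomial comparison you write down does not close the argument.

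\textbf{Slicing function.} $\pi=t\circ f$ is not a function on $Y$ ($f$ goes from $X$ to $Y$). The correct choice is $\pi(y)=d_Y\bigl(y,f(\text{base})\bigr)$, and the co-area integral runs over $Y$-distance $t\in[0,D]$ rather than over the $X$-height $r\in[0,\varphi_k(d)]$. This is not a cosmetic point: the whole mechanism is that the lower bound on each slice's mass (from the non-compressed $(k-1)$-cycle) combined with the upper bound on the total mass forces $D$ to be sublinear in $\varphi_k(d)$, and it is this compression of $D$ that feeds into the volume contradiction.
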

\begin{rem}
This condition means that the filling of the image of the boundary of such rectangles in $Y$ is sublinear when compared to their filling in $X$.
\end{rem}
\begin{rem}
This result can be seen as a generalization of the fact, for $k=1$, that a space with exponential growth cannot be coarsely embedded with a sublinear $\rho^+$ in a space with at most exponential growth, because the second condition means that $f$ admits a sublinear $\rho^+$. Indeed, $R_d$ is just a geodesic segment in $X$. Let $x$ and $y$ be its extremities. So the second condition can be rewritten as
$$
\textup{FillVol}_{1}^{Y, \mathrm{cr}}([\![ f(x) ]\!]-[\![ f(y) ]\!]) \leq \phi ( d_X(x,y)).
$$
By lemma \ref{filling current distance}, we have
$$
d_Y(f(x),f(y)) \leq   \textup{FillVol}_{1}^{Y, \mathrm{cr}}([\![ f(x) ]\!]-[\![ f(y) ]\!]). 
$$
Therefore
$$
d_Y(f(x),f(y)) \leq  \phi ( d_X(x,y)).
$$
\end{rem}
\begin{proof}
Let us prove the theorem by induction on $k$.
\\• For $k=1$, let $X$ be a metric space with exponential growth, and suppose that such a coarse embedding $f : X \to Y$ exists for a sublinear $\phi$.
\\Let $x\in X$ and $R>0$. Then
 $$f( B_X (x,R)  ) \subset B_Y ( f(x) , \phi(R)      ) . $$
Let $\varepsilon >0$. By taking the  $\varepsilon$-volumes :
\begin{center}
    $  \textup{Vol}_Y^{\varepsilon} ( f( B_X (x,R)  ) ) \leq  \textup{Vol}_Y^{\varepsilon} ( B_Y ( f(x) , \phi(R)      ) )   .        $
\end{center}
$Y$ has at most exponential growth so there exists $\lambda \geq 0$ such that for all $ y \in Y$ and for all $R>0$,
\begin{center}
    $  \textup{Vol}_Y^{\varepsilon} (  B_Y (y,R)   ) \leq  e^{ \lambda R  }   .        $
\end{center}
But, on one hand $f$ is a coarse embedding, so by lemma \ref{coarse volume} it preserves the volume coarsely. In particular there exists $\alpha >0$ such that $\forall x \in X$
$$  \alpha \, \textup{Vol}_X^{\varepsilon} ( B_X ( x , R      ) )  \leq      \textup{Vol}_Y^{\varepsilon} ( f( B_X (x,R)  ) )       .$$
On the other hand, $X$ has exponential growth, so there exists $\gamma >0$ such that for all $x \in X$ and for all $R>0$
$$\textup{Vol}_X^{\varepsilon} (  B_X (x,R)   ) \geq  e^{ \gamma R  } .          $$
So we finally get
$$ \alpha e^{ \gamma R  } \leq \alpha \, \textup{Vol}_X^{\varepsilon} ( B_X ( x , R      ) )  \leq \textup{Vol}_Y^{\varepsilon} ( f( B_X (x,R)  ) ) \leq \textup{Vol}_Y^{\varepsilon} ( B_Y ( f(x) , \phi(R)      ) )  \leq  e^{ \lambda \phi(R)  } .   $$
Which is not possible for a sublinear function $\phi$.
\\Note that for $k=1$ we did not require $X$ to be geodesic.
\\
\\Before proving it for all $k\geq 2$, and just for the sake of clarity, let us first prove it for $k=2$. Note that it already contains most of the conceptual difficulties of the general case.
\\
\\• Let $X = X_1 \times X_2$ be a product of two geodesic metric spaces of exponential growth and $Y$ be a metric space with at most exponential growth. Suppose that $f : X \to Y$ is a coarse embedding that satisfies the second condition, i.e.\ there exists a sublinear function $\phi$ such that for every $d>0$ and every $2$-dimensional rectangle $R_{d}$, whose sides are geodesic segments in the $X_i \,'s$, of side lengths $l_1,l_2$ with $l_1 = d$ and $l_2 \leq \varphi_2(d)$, we have
$$
\textup{FillVol}_{2}^{Y, \mathrm{cr}}(f(\partial R_{d})) \leq \phi \left( d \times \varphi_2(d)\right).
$$
Following the case $k=1$, for all $x_2 \in X_2$, the copy $X_1 \times \{x_2\} $ cannot be sent sublinearly. Therefore, if we fix some $x_2 \in  X_2$, there exist $\lambda >0$ and two sequences $(a_n)_{n \in \mathbb{N}}$ and $(b_n)_{n \in \mathbb{N}}$ in $X_1 \times \{x_2\} $ such that
$$
d_n := d_X(a_n,b_n) \underset{n\to +\infty}{\longrightarrow} +\infty
\qquad
\text{and}
\qquad
d_Y(f(a_n),f(b_n)) \geq \lambda \, d_X(a_n,b_n).
$$
Let $n \in \mathbb{N}$ and take $x_2' \in X_2$ such that $d_{X_2}(x_2,x_2') \leq \varphi_2(d_n)$. Consider $a_n' = (\textup{proj}_{X_1}(a_n), x_2')$, and $b_n' = (\textup{proj}_{X_1}(b_n), x_2')$.
\\ Let $c_1$ be a geodesic segment in $X_1$ going from $\textup{proj}_{X_1}(a_n)$ to $\textup{proj}_{X_1}(b_n)$, and $c_2$ be a geodesic segment in $X_2$ from $x_2$ to $x_2'$.
\\Now consider the four following geodesic segments in $X$ : 
$$
\gamma_1 = ( c_1, x_2 ),
\qquad
\gamma_2 = ( \textup{proj}_{X_1}(b_n), c_2 ),
\qquad
\gamma_3 = ( -c_1, x_2 ),
\qquad
\gamma_4 = ( \textup{proj}_{X_1}(a_n), -c_2 ).
$$
By concatenating them, i.e.\ taking their formal sum as Lipschitz chains, we get a Lipschitz 1-cycle that we will denote $R_n$.
\\The second condition on the filling volume implies that 
$$
\textup{FillVol}_{2}^{Y, \mathrm{cr}}(f(R_n)) \leq \phi \left( d_n \,  \varphi_2(d_n)\right) .
$$
This implies that there exists $S_n \in \mathbf{I}_2(Y)$ in $Y$ such that $\partial S_n = f(R_n)$ and 
$$
\mass{S_n} \leq \phi \left( d_n \, \varphi_2(d_n)\right).
$$
Now consider the 1-Lipschitz map $\pi : Y \to \mathbb{R}$, $\pi(y) = d_Y(y,f(\gamma_1)) $.
\\By the Slicing Theorem, we have that for a.e.\ $t \in \mathbb{R}$, there exists $<S_n,\pi,t> \, \in \mathbf{I}_1(Y)$ such that $\slice{S_n}{\pi}{t}= \partial(S_n\rstr\{\pi \leq t\}) - (\partial S_n)\rstr\{\pi\leq t\}$, and by integrating the co-area formula over the distance $t$, we have
$$\mass{\slice{S_n}{\pi}{t}}\leq\frac{d}{dt}\mass{S_n\rstr\{\pi\leq t\}},$$
$$ \int_0^{+\infty} \mass{\slice{S_n}{\pi}{t}} dt \leq \int_0^{+\infty} \frac{d}{dt}\mass{S_n\rstr\{\pi\leq t\}},$$
$$ \int_0^{+\infty} \mass{\slice{S_n}{\pi}{t}} dt \leq \mass{S_n}.$$
Since $\mass{S_n} \leq \phi \left( d_n \, \varphi_2(d_n)\right) $, we get
\begin{equation}\label{equation 1}
\int_0^{D} \mass{\slice{S_n}{\pi}{t}} dt \leq \int_0^{+\infty} \mass{\slice{S_n}{\pi}{t}} dt \leq \phi \left( d_n \, \varphi_2(d_n)\right),
\end{equation}
where $D=d_Y(f(\gamma_1),f(\gamma_3))$. However, for a.e.\ $t \in \left]0,D\right[$, $\mass{\slice{S_n}{\pi}{t}}$ cannot be too small since the current $\slice{S_n}{\pi}{t}$ almost gives a filling for the $0$-cycle $[\![ f(b_n) ]\!] - [\![ f(a_n) ]\!]$.
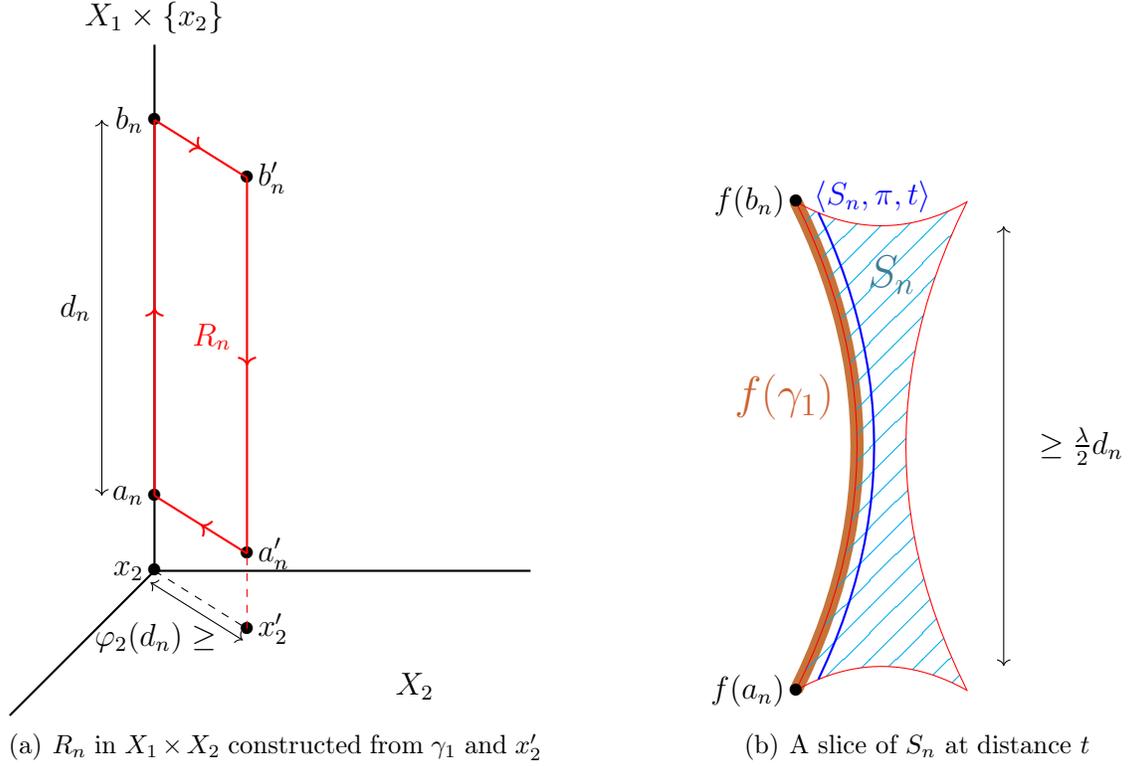
\begin{figure}%
\begin{center}%
\subfigure[$R_n$ in $X_1 \times X_2$ constructed from $\gamma_1$ and $x_2'$]{%
\label{fig:first}%
\begin{tikzpicture}
\draw[thick] (0,0,0)--(0,0,5); 
\draw[thick] (0,0,0)--(0,7,0);
\draw[thick] (0,0,0)--(5,0,0);

\coordinate (a_n) at (0,1,0);
\node at (a_n) [left] {$a_n$};
\node at (a_n) {\textbullet};

\coordinate (a_n') at (2,1,2);
\node at (a_n') [right] {$a_n'$};
\node at (a_n') {\textbullet};

\coordinate (b_n) at (0,6,0);
\node at (b_n) [left] {$b_n$};
\node at (b_n) {\textbullet};

\coordinate (b_n') at (2,6,2);
\node at (b_n') [right] {$b_n'$};
\node at (b_n') {\textbullet};

\coordinate (x_2) at (0,0,0);
\node at (x_2) [left] {$x_2$};
\node at (x_2) {\textbullet};

\coordinate (x_2') at (2,0,2);
\node at (x_2') [right] {$x_2'$};
\node at (x_2') {\textbullet};

\node[above] at (0,7,0) {$X_1 \times \{x_2\}$};

\node at (5,0,4) {$X_2$};

\node[right, color=red] at (0.75,3.5,1) {$R_n$};
\node[below, color=black] at (0.4,-0.15,1) {$\varphi_2(d_n) \geq$};

\draw[dashed] (x_2) -- (x_2');
\draw[dashed, color=red] (x_2') -- (a_n');
\begin{scope}[thick, decoration={markings, mark=at position 0.5 with {\arrow{>}}}]

    \draw[postaction={decorate}, color=red] (a_n) -- (b_n);
    \draw[postaction={decorate}, color=red] (b_n) -- (b_n');
    \draw[postaction={decorate}, color=red] (b_n') -- (a_n');
    \draw[postaction={decorate}, color=red] (a_n') -- (a_n);
\end{scope}

 \draw[<->] (-0.7,1,0) to (-0.7,6,0);
  \node[left] at (-0.7,3.5,0) {$d_n$};
  
   \draw[<->] (0.1,0,0.4) to (2.1,0,2.4);
\end{tikzpicture}
}%
\goodgap
\goodgap
\goodgap
\goodgap
\goodgap
\goodgap
\subfigure[A slice of $S_n$ at distance $t$]{%
\label{fig:second}%
\begin{tikzpicture}[scale=0.65]
    \draw[domain = 5:-5, color=red!10!brown, line width=5pt] plot ({-0.5-0.05*\x*\x}, {\x});
    \fill[pattern=my north east lines, pattern color=cyan, draw=red]
        plot [domain = 5:-5] ({-0.5-0.05*\x*\x}, {\x})
        --
        plot [domain = -1.75:1.75] (\x,{-4.51-0.16*\x*\x})
        --
        plot [domain = -5:5] ({0.5+0.05*\x*\x}, {\x})
        --
        plot [domain = 1.75:-1.75] (\x,{4.51+0.16*\x*\x});

        \draw [domain = 4.77658:-4.77658, thick, color=blue] plot ({-0.15-0.05*\x*\x}, {\x});

        \coordinate (fb_n) at (-1.75,5);
        \node at (fb_n) [left] {$f(b_n)$};
        \node at (fb_n) {\textbullet};

        \coordinate (fa_n) at (-1.75,-5);
        \node at (fa_n) [left] {$f(a_n)$};
        \node at (fa_n) {\textbullet};

        \node[color=cyan!50!black, right] at (-0.5,3.5) {\Large{$S_n$}};

        \draw[<->] (2.5,-4.5) to (2.5,4.5);
        \node[right] at (3,0) {$\geq \frac{\lambda}{2} d_n$};

        \node[right,color=blue] at (-1.6,5.1) {{$\langle S_n,\pi,t \rangle$}};

        \node[left,color=red!25!brown] at (-0.75,1) {\Large{$f(\gamma_1)$}};
\end{tikzpicture}}%
\end{center}
\caption{The cycle $R_n$ in $X$, and a slice of the filling of its image in $Y$}
\end{figure}
\begin{claim}
For $n$ big enough and for a.e.\ $t \in \left]0,D\right[$, $\mass{\slice{S_n}{\pi}{t}} \geq \frac{\lambda}{2} d_n$.
\end{claim}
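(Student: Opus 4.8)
The idea is that for $t \in \left]0,D\right[$, the slice $\slice{S_n}{\pi}{t}$ is, up to a controlled error, a current filling the $0$-cycle $[\![ f(b_n) ]\!] - [\![ f(a_n) ]\!]$, so its mass must be at least roughly $d_Y(f(a_n),f(b_n)) \geq \lambda d_n$. To make this precise, recall that $\slice{S_n}{\pi}{t} = \partial(S_n \rstr \{\pi \leq t\}) - (\partial S_n) \rstr \{\pi \leq t\}$, and $\partial S_n = f(R_n)$. The plan is first to understand $f(R_n) \rstr \{\pi \leq t\}$: the cycle $R_n$ is the concatenation $\gamma_1 + \gamma_2 - \gamma_3 - \gamma_4$ (a rectangle), and $\pi(y) = d_Y(y, f(\gamma_1))$, so $f(\gamma_1) \subset \{\pi = 0\} \subset \{\pi \leq t\}$ entirely, while $f(\gamma_3)$ lies in $\{\pi \geq D\}$ hence (for $t < D$) outside $\{\pi \leq t\}$ up to the endpoints. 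So $(\partial S_n)\rstr\{\pi\leq t\}$ consists of all of $f(\gamma_1)$ together with the portions of $f(\gamma_2)$ and $f(\gamma_4)$ that happen to fall in $\{\pi \leq t\}$.

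\medskip

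\noindent Then I would take the boundary: $\partial \slice{S_n}{\pi}{t}$ is supported in $\pi^{-1}(\{t\})$ by the Slicing Theorem, but the key point is to locate the boundary of the ``$f(R_n)$-part''. Since $f(\gamma_1)$ is a path from $f(a_n)$ to $f(b_n)$ contained entirely in $\{\pi \leq t\}$, cutting $f(R_n)$ by $\{\pi \leq t\}$ produces a $1$-current whose boundary includes $[\![ f(b_n)]\!] - [\![ f(a_n)]\!]$ plus whatever endpoints are created by slicing $f(\gamma_2)$ and $f(\gamma_4)$ through the level set $\{\pi = t\}$ --- and those extra endpoints lie on $f(\gamma_2) \cup f(\gamma_4)$. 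The upshot is: $\slice{S_n}{\pi}{t}$ is a $1$-current whose boundary is $[\![ f(b_n)]\!] - [\![ f(a_n)]\!]$ plus a $0$-current supported on $f(\gamma_2) \cup f(\gamma_4)$. Now $\gamma_2$ and $\gamma_4$ are geodesic segments in a copy of $X_2$ of length $\leq \varphi_2(d_n)$, while the ``height'' is $d_n$; since $f$ is large-scale Lipschitz, $\operatorname{diam} f(\gamma_2), \operatorname{diam} f(\gamma_4) \leq \lambda' \varphi_2(d_n) + C'$, which is $o(d_n)$ because $\varphi_2 \ll \varphi_1$. So the spurious endpoints are within $o(d_n)$ of $f(a_n)$ (resp.\ $f(b_n)$), hence for $n$ large they are at distance $\leq \frac{\lambda}{4} d_n$ from the ``true'' endpoints. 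Applying Lemma \ref{filling current distance} (or rather the mass bound in it, via the $1$-Lipschitz function $d_Y(\cdot, f(\gamma_1 \cap \{\pi \leq t\}))$ or simply $d_Y(\cdot, f(a_n))$) to the $1$-current $\slice{S_n}{\pi}{t}$, whose boundary pairs points that are still at distance $\geq d_Y(f(a_n),f(b_n)) - o(d_n) \geq \lambda d_n - \frac{\lambda}{2}d_n = \frac{\lambda}{2} d_n$ apart (after absorbing the displacement of both endpoints), gives $\mass{\slice{S_n}{\pi}{t}} \geq \frac{\lambda}{2} d_n$.

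\medskip

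\noindent \textbf{Main obstacle.} The delicate point is the bookkeeping for the boundary of the restricted current $f(R_n) \rstr \{\pi \leq t\}$: one must argue carefully that the only $0$-dimensional mass appearing, beyond the two points $f(a_n), f(b_n)$ coming from $\partial f(\gamma_1)$, is carried by $f(\gamma_2) \cup f(\gamma_4)$ --- not by $f(\gamma_1)$ or $f(\gamma_3)$. This uses that $f(\gamma_1) \subset \{\pi \leq t\}$ wholesale (so slicing does not cut it) and $f(\gamma_3)$ is essentially excluded (so it contributes nothing but its two endpoints, which themselves sit on $f(\gamma_2) \cup f(\gamma_4)$). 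Once the slice's boundary is pinned to $[\![ f(b_n)]\!] - [\![ f(a_n)]\!] + (\text{error on } f(\gamma_2)\cup f(\gamma_4))$, the estimate $\operatorname{diam} f(\gamma_i) = o(d_n)$ for $i=2,4$ and the lower-control-free mass bound of Lemma \ref{filling current distance} finish the claim; the quantitative ``$\frac{\lambda}{2}$'' just requires choosing $n$ large enough that the $o(d_n)$ errors are $\leq \frac{\lambda}{2} d_n$ in total.
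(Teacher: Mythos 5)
Your intuition is right and the bound does hold, but there is a genuine gap in the way you pass from the boundary of the slice to the mass estimate. You correctly identify that $\partial\slice{S_n}{\pi}{t}$ is $\pm([\![ f(b_n)]\!] - [\![ f(a_n)]\!])$ plus a $0$-current $E$ supported on $f(\gamma_2)\cup f(\gamma_4)$. But you then treat $E$ as ``a small displacement of the two endpoints,'' which implicitly requires the mass of $E$ to be controlled. It is not: $E = -\partial\bigl(f(\gamma_2)\rstr\{\pi\leq t\}\bigr) - \partial\bigl(f(\gamma_4)\rstr\{\pi\leq t\}\bigr)$ is an integral $0$-current and, for a fixed $t$, the number of points in the slice of a $1$-current can be arbitrarily large (think of a curve oscillating across the level set $\{\pi=t\}$); it is only its integral in $t$ that is controlled by the co-area inequality. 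So the estimate ``$|E(g)|\leq \operatorname{diam}(f(\gamma_2)\cup f(\gamma_4)) = o(d_n)$'' you invoke via Lemma \ref{filling current distance} does not close, because it would require $\mass{E}$ to be bounded. The correct bound comes from the $1$-mass, not the diameter: for any $1$-Lipschitz $g$, $|E(g)| = |(f(\gamma_2)\rstr\{\pi\leq t\})(1,g)+(f(\gamma_4)\rstr\{\pi\leq t\})(1,g)| \leq \mass{f(\gamma_2)}+\mass{f(\gamma_4)}\leq 2\,\textup{Lip}(f)\,\varphi_2(d_n)$, using $\partial T(g)=T(1,g)$. Here diameter and mass happen to give the same numerical bound, but only the mass bound is actually justified.

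The paper's proof sidesteps the need to compute $\partial\slice{S_n}{\pi}{t}$ at all: it observes from the slicing identity that $\slice{S_n}{\pi}{t} + f(\gamma_1) + H_t$ is a cycle, where $H_t := (\partial S_n)\rstr\{0<\pi\leq t\} = \bigl(f(\gamma_2)+f(\gamma_4)\bigr)\rstr\{0<\pi\leq t\}$, so that $-(\slice{S_n}{\pi}{t}+H_t)$ is a legitimate filling of $[\![ f(b_n)]\!] - [\![ f(a_n)]\!]$, with boundary exactly one pair of points. Then Lemma \ref{filling current distance} applies as stated to give $\mass{\slice{S_n}{\pi}{t}} + \mass{H_t}\geq \lambda d_n$, and $\mass{H_t}\leq 2\,\textup{Lip}(f)\,\varphi_2(d_n)$ follows directly from $H_t$ being a restriction of the $1$-current $f(\gamma_2)+f(\gamma_4)$. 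This is equivalent to your approach once it is made rigorous, but it packages the ``error'' as a $1$-chain whose mass is manifestly controlled, instead of as a $0$-current whose mass is not.
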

\begin{proof}
For a.e.\ $t \in \left]0,D\right[$, $$\partial(S_n)\rstr\{\pi \leq t\} = \partial(S_n)\rstr\{\pi =0\} + \partial(S_n)\rstr\{ 0 < \pi \leq t\}$$
$$ = f(\gamma_1) + H_t .$$
Where $H_t$ is the 1-current $\partial (S_n) \rstr\{ 0 < \pi \leq t\}$.
\\Since $\|\slice{S_n}{\pi}{t}\|$ is concentrated on $\pi^{-1}(\{t\})$,
$$\partial(S_n\rstr\{\pi \leq t\}) = \slice{S_n}{\pi}{t} + f(\gamma_1) + H_t.$$
Which means that $\slice{S_n}{\pi}{t} + f(\gamma_1) + H_t$ is a 1-current that is actually a cycle.
\\So $-(\slice{S_n}{\pi}{t} + H_t)$ is a 1-chain that fills the 0-cycle $\partial f(\gamma_1) = f(\partial \gamma_1) = [\![ f(b_n) ]\!] - [\![ f(a_n) ]\!]$.
\\Therefore
$$\mass{ -(\slice{S_n}{\pi}{t} + H_t) } \geq \textup{FillVol}^{Y,\mathrm{cr}}_{1}([\![ f(b_n) ]\!] - [\![ f(a_n) ]\!]).$$
\\Since, by the lemma \ref{filling current distance} , $\textup{FillVol}^{Y,\mathrm{cr}}_{1}([\![ f(b_n) ]\!] - [\![ f(a_n) ]\!]) \geq d_Y(f(a_n),f(b_n)) $, we have 
$$\mass{ -(\slice{S_n}{\pi}{t} + H_t) } \geq \lambda \, d_n.$$
So 
$$\mass{\slice{S_n}{\pi}{t}} + \mass{H_t} \geq \lambda \, d_n.$$
Note that
\begin{equation*} 
\begin{split}
H_t & =  \partial S_n\rstr\{ 0 < \pi \leq t\} \\
 & =   \big(f(\gamma_2)+f(\gamma_4)\big) \rstr\{ 0<\pi \leq t \},                   \\
\end{split}
\end{equation*}
because $0< \pi < D$. So
\begin{equation*} \label{eq2}
\begin{split}
H_t & =  f(\gamma_2)\rstr\{ 0<\pi \leq t \}   + f(\gamma_4)\rstr\{ 0<\pi \leq t \}   .               \\
\end{split}
\end{equation*}
By taking the mass
\begin{equation*} 
\begin{split}
\mass{H_t} & \leq  \mass{f(\gamma_2)\rstr\{ 0<\pi \leq t \}} + \mass{f(\gamma_4)\rstr\{ 0<\pi \leq t \}}  \\
& \leq \mass{f(\gamma_2)} + \mass{f(\gamma_4)}    \\
 & \leq \textup{Lip}(f) \big( \mass{\gamma_2} + \mass{\gamma_4}\big) .    \\
\end{split}
\end{equation*}
Since the $\gamma_i' \, s$ are Lipschitz chains, their mass is equal to their volume. 
\\So $\mass{\gamma_2} = \mass{\gamma_4} \leq \varphi_2(d_n)$, and $\mass{H_t} \leq 2\textup{Lip}(f) \varphi_2(d_n)$.
\\
\\Therefore, since $\varphi_2(d) \ll d$, there exists $N \in \mathbb{N}$ such that for all $n\geq N$ and for a.e.\ $t \in \left]0,D\right[$, $$\mass{\slice{S_n}{\pi}{t}} \geq \lambda d_n - 2 \textup{Lip}(f) \varphi_2(d_n) \geq  \frac{\lambda}{2} d_n. \qquad\qedhere $$
\end{proof}
Thus, by \eqref{equation 1},
\begin{equation*}
\begin{split}
    \phi \left( d_n \, \varphi_2(d_n)\right)  & \geq \int_0^{D} \mass{\slice{S_n}{\pi}{t}} \, dt\\ 
& \geq D \, \frac{\lambda}{2} d_n.
\end{split}
\end{equation*}
Which implies that
\begin{equation*}
 D \leq \frac{2}{\lambda}  \frac{\phi \left( d_n \, \varphi_2(d_n)\right)}{d_n}.
\end{equation*}
Let us denote $\psi(\varphi_2(d_n)) = \frac{2}{\lambda}  \frac{\phi \left( d_n \, \varphi_2(d_n)\right)}{d_n}$. Note that $\psi$ is sublinear. 
\\
\\Since $D=d_Y(f(\gamma_1),f(\gamma_3))$, the last inequality implies that there exists $z \in X$ such that $z \in \gamma_3$ and $d_Y(f(\gamma_1),f(z)) \leq \psi(\varphi_2(d_n))$.
\\ But $z \in \gamma_3 $ implies that $d_X(z,\gamma_1)=d_X(x_2,x_2') $.
\\
\\By doing this process for all $n\geq N$ and all $x_2' \in B_{X_2}(x_2,\varphi_2(d_n))$, we get subsets $C_n \subset X$ that projects onto $B_{X_2}(x_2,\varphi_2(d_n))$, i.e.\ 
\begin{equation*} \label{eq7}
    B_{X_2}(x_2,\varphi_2(d_n)) \subset \textup{proj}_{X_2}(C_n) ,
\end{equation*}
and such that
\begin{equation} \label{equation 2}
    f(C_n) \subset N_{\psi(\varphi_2(d_n))}   (f(\gamma_1)),
\end{equation} 
since $\forall z \in C_n$, $ d_Y(f(z),f(\gamma_1)) \leq \psi(\varphi_2(d_n))$.
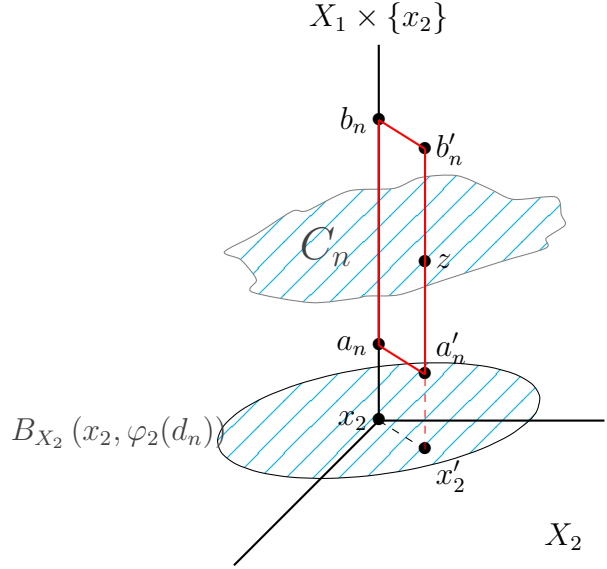
\begin{figure}[htbp]
  \centering
\begin{tikzpicture}
    \fill[pattern=my north east lines, pattern color=gray, draw=gray]
        plot[domain=0:350, smooth cycle] ({2*cos(\x)+rnd*0.5},2.5,{2*sin(\x)+rnd*0.5});

    \fill[pattern=my north east lines, pattern color=green, draw=black]
        plot[domain=0:350, smooth cycle] ({2*cos(\x)},0,{2*sin(\x)});

    \draw[thick] (0,0,0)--(0,0,5); 
    \draw[thick] (0,0,0)--(0,5,0);
    \draw[thick] (0,0,0)--(3,0,0);

    \coordinate (a_n) at (0,1,0);
    \node at (a_n) [left] {$a_n$};
    \node at (a_n) {\textbullet};

    \coordinate (a_n') at (1,1,1);
    \node at (a_n') [above right] {$a_n'$};
    \node at (a_n') {\textbullet};

    \coordinate (b_n) at (0,4,0);
    \node at (b_n) [left] {$b_n$};
    \node at (b_n) {\textbullet};

    \coordinate (b_n') at (1,4,1);
    \node at (b_n') [right] {$b_n'$};
    \node at (b_n') {\textbullet};

    \coordinate (x_2) at (0,0,0);
    \node at (x_2) [left] {$x_2$};
    \node at (x_2) {\textbullet};

    \coordinate (x_2') at (1,0,1);
    \node at (x_2') [below right] {$x_2'$};
    \node at (x_2') {\textbullet};

    \node[above] at (0,5,0) {$X_1 \times \{x_2\}$};

    \node at (4,0,4) {$X_2$};
    \node[color=gray!50!black] at (-2.3,1,3)  {$B_{X_2}\left(x_2,\varphi_2(d_n)\right)$};

    \node[right, color=gray!50!black] at (-1,2.5,0.5) {\Large{$C_n$}};

    \node[right] at (1,2.5,1) {$z$};
    \node at (1,2.5,1) {\textbullet};

    \draw[dashed] (x_2) -- (x_2');
    \draw[dashed, color=red] (x_2') -- (a_n');
    \begin{scope}[thick, decoration={markings, mark=at position 0.5 with }]

        \draw[postaction={decorate}, color=red] (a_n) -- (b_n);
        \draw[postaction={decorate}, color=red] (b_n) -- (b_n');
        \draw[postaction={decorate}, color=red] (b_n') -- (a_n');
        \draw[postaction={decorate}, color=red] (a_n') -- (a_n);
    \end{scope}
\end{tikzpicture}
  \caption{The subsets $C_n$ in $X$ that are highly compressed by $f$ }
  \label{fig:CubeNormal}
\end{figure}
\\The projection onto $X_2$ is 1-Lipschitz, so if we fix $\varepsilon >0$,
\begin{equation*} 
    \textup{Vol}_X^{\varepsilon}(C_n) \geq \textup{Vol}_X^{\varepsilon}(\textup{proj}_{X_2}(C_n)) \geq \textup{Vol}_X^{\varepsilon}\left(  B_{X_2}\left(x_2,\varphi_2(d_n)\right) \right).
\end{equation*}
$X_2$ has exponential growth, so there exists $\alpha >0$ such that $\forall R >0$, $\textup{Vol}_{X_2}^{\varepsilon}(  B_{X_2}(R)) \geq e^{\alpha R}$. So 
\begin{equation*} 
    \textup{Vol}_X^{\varepsilon}(C_n) \geq e^{\alpha \, \varphi_2(d_n)}.
\end{equation*}
While the equation \eqref{equation 2} implies that
\begin{equation*} 
    \textup{Vol}_Y^{\varepsilon}(f(C_n)) \leq \textup{Vol}_Y^{\varepsilon}( N_{\psi(\varphi_2(d_n))}   (f(\gamma_1)) ).
\end{equation*}
But, like we saw for the case $k=1$, $f$ coarsely preserves volumes, i.e.\ there exist $\delta, \delta' >0$ such that 
$$  \delta \, \textup{Vol}_X^{\varepsilon} ( C_n )  \leq      \textup{Vol}_Y^{\varepsilon} ( f( C_n ) ) \leq  \delta' \, \textup{Vol}_X^{\varepsilon} ( C_n )   .    $$
Also, by lemma \ref{volume neighb}
\begin{equation*}
    \textup{Vol}_Y^{\varepsilon}\left( N_{\psi(\varphi_2(d_n))}   (f(\gamma_1)) \right) \leq \beta_Y^{\varepsilon}\left( \varepsilon + \psi(\varphi_2(d_n) \right) \times \textup{Vol}_Y^{\varepsilon}(  f(\gamma_1) ).
\end{equation*}
$Y$ has at most exponential growth, so there exists $\beta >0$ such that for all $R>0$, 
\\$  \beta_Y^{\varepsilon}\left( R \right)  \leq      e^{\beta R}    $.
Therefore, we have on one hand
$$   \beta_Y^{\varepsilon}\left( \varepsilon + \psi(\varphi_2(d_n) \right) \leq \textup{exp} \left( \varepsilon + \psi(\varphi_2(d_n))   \right).  $$
On the other hand, by taking a partition of the geodesic segment $\gamma_1$ into sub-intervals of length $\varepsilon$, we get :
$$ \textup{Vol}_X^{\varepsilon} (\gamma_1)         \leq   \frac{ d_n }{\varepsilon} +1    \leq   \frac{ 2 d_n}{\varepsilon}.            $$
Hence
$$  \textup{Vol}_Y^{\varepsilon} (f(\gamma_1))    \leq \delta' \, \textup{Vol}_X^{\varepsilon} (\gamma_1)        \leq   \frac{ 2 \delta'}{\varepsilon} d_n .   $$           
So we have 
$$   \textup{Vol}_Y^{\varepsilon}\left( N_{\psi(\varphi_2(d_n))}   (f(\gamma_1)) \right) \leq \textup{exp}\left( \varepsilon + \psi(\varphi_2(d_n))   \right) \times  \frac{ 2 \delta'}{\varepsilon} d_n.$$
We conclude from the previous inequalities that 
\begin{equation*}
   \delta e^{\alpha \, \varphi_2(d_n)} \leq \delta \, \textup{Vol}_X^{\varepsilon} ( C_n ) \leq  \textup{Vol}_Y^{\varepsilon} ( f( C_n ) ) \leq \textup{exp}\left( \varepsilon + \psi(\varphi_2(d_n))   \right) \times  \frac{ 2 \delta'}{\varepsilon} d_n.
\end{equation*}
Which implies finally that for all $n\geq N$
\begin{equation*}
   \delta \, \textup{exp}\left(\alpha \, \varphi_2(d_n)\right) \leq \textup{exp}\left( \varepsilon + \psi(\varphi_2(d_n))   \right) \times  \frac{ 2 \delta'}{\varepsilon} d_n.
\end{equation*}
Which is not possible when $d_n \to \infty$ because $\psi$ is sublinear. This completes the proof for $k=2$.
\\
\\• Suppose now that the result holds for some $k \geq 2$, and let us prove it for $k+1$.
\\Suppose that there is a coarse embedding $f : X = X_1 \times \cdots \times X_{k+1}  \to Y$ that satisfies the second condition of the theorem, and let us fix some $x_{k+1} \in X_{k+1}$.
\\ The induction hypothesis implies that the restriction of $f$ to the copy $X_1 \times \dots \times X_{k} \times \{x_{k+1}\}$ of $X_1 \times \dots \times X_{k}$ does not satisfy the second condition of the theorem. Which means that there exists a sequence $(d_n)_n$ such that $d_n \to \infty$ and a sequence of  
$k$-dimensional rectangles $R_{d_n}$,whose sides are geodesic segments in the $X_i \,'$s, of lengths $l_1, \dots, l_k$, such that $l_1 = d_n, l_2 \leq \varphi_2(d_n),\dots,l_k \leq \varphi_k(d_n)$ and there exists $\lambda >0$ such that for all $n$ 
$$
\textup{FillVol}_{k}^{Y, \mathrm{cr}}(f(\partial R_{d_n})) \geq \lambda \, d_n \times \varphi_2(d_n) \times \dots \times \varphi_k(d_n).
$$
Let $n \in \mathbb{N}$ and let us take $x_{k+1}' \in X_{k+1}$ such that $d_{X_{k+1}}(x_{k+1},x_{k+1}') \leq \varphi_{k+1}(d_n)$, and consider the $(k+1)$-dimensional rectangle $R_{d_n}' = R_{d_n} \times [x_{k+1},x_{k+1}']  $.
\\Since $f$ satisfies the second condition of the theorem, there exists a sublinear function $\phi$ such that
$$
\textup{FillVol}_{k+1}^{Y, \mathrm{cr}}(f(\partial R_{d_n}')) \leq \phi (d_n \times \varphi_2(d_n) \times \dots \times \varphi_{k+1}(d_n)).
$$
This implies that there exists $V_n \in \mathbf{I}_{k+1}(Y)$ in $Y$ such that $\partial V_n = f(\partial R_{d_n}')$ and 
$$
\mass{V_n} \leq \phi \left(\prod_{i=1}^{k+1}  \varphi_i(d_n)\right).
$$
Now consider the $1$-Lipschitz map $\pi : Y \to \mathbb{R}$, $\pi(y) = d_Y\big(y,f( R_{d_n} \times \{ x_{k+1}  \}      )\big) $, i.e.\ the map that gives the distance from the image of the basis of the $(k+1)$-dimensional rectangle $R_{d_n}'$.
\\Again, by the Slicing Theorem, we have that for a.e.\ $t \in \mathbb{R}$, there exists $\langle V_n,\pi,t \rangle \, \in \mathbf{I}_k(Y)$ such that $\slice{V_n}{\pi}{t}= \partial(V_n\rstr\{\pi \leq t\}) - (\partial V_n)\rstr\{\pi\leq t\}$, and by integrating the co-area formula over the distance $t$, we have
$$ \int_0^{+\infty} \mass{\slice{V_n}{\pi}{t}} dt \leq \mass{V_n}.$$
Since $\mass{V_n} \leq \phi \left(\prod_{i=1}^{k+1}  \varphi_i(d_n) \right) $, we get
\begin{equation}\label{equation 3}
\int_0^{D} \mass{\slice{V_n}{\pi}{t}} dt \leq \int_0^{+\infty} \mass{\slice{V_n}{\pi}{t}} dt \leq \phi \left(\prod_{i=1}^{k+1}  \varphi_i(d_n) \right),
\end{equation}
where $D=d_Y \big(f( R_{d_n} \times \{ x_{k+1}  \}      ),f( R_{d_n} \times \{ x_{k+1}'  \}      ) \big)$.
\\
\\However, for a.e.\ $t \in \left]0,D\right[$, $\mass{\slice{V_n}{\pi}{t}}$ cannot be too small since $\slice{V_n}{\pi}{t}$ almost gives a filling of the $(k-1)$-cycle $ \partial f( R_{d_n} \times \{ x_{k+1}  \}      ) =  f( \partial( R_{d_n} \times \{ x_{k+1}  \} )     )$, that we will just denote by $f(\partial R_{d_n})$. We will also denote $ R_{d_n} \times \{ x_{k+1}  \}$ by $R_{d_n}$.
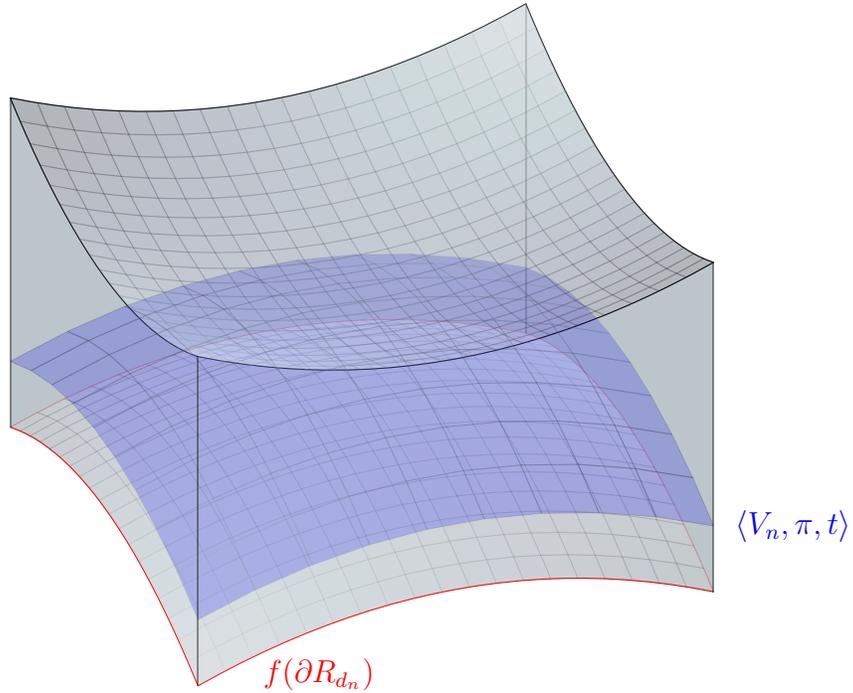
\begin{figure}[htbp]
  \centering
\begin{tikzpicture}[scale = 2]
  \begin{axis}[
    view={160}{50}, 
    hide axis, 
    xmin=-2,xmax=2, 
    ymin=-2,ymax=2, 
    zmin=-2,zmax=5, 
    enlargelimits=upper,
    colormap/bone, 
    trig format plots=rad,
    ]
  \addplot3[name path=A, domain=-1.5:1.5, variable=\u, samples y=1]
  ({1.5},{u},{3+(u^2+2.25)/2});
  \addplot3[name path=B, domain=-1.5:1.5, variable=\u, samples y=1]
  ({-1.5},{u},{3+(u^2+2.25)/2});
  \addplot3[name path=C, domain=-1.5:1.5, variable=\u, samples y=1]
  ({u},{-1.5},{3+(u^2+2.25)/2});
  \addplot3[name path=D, domain=-1.5:1.5, variable=\u, samples y=1]
  ({u},{1.5},{3+(u^2+2.25)/2});
  \addplot3[name path=AA, domain=-1.5:1.5, variable=\u, samples y=1,color=red]
  ({1.5},{u},{-(u^2+2.25)/2});
  \addplot3[name path=BB, domain=-1.5:1.5,opacity=0.2, variable=\u, samples y=1,color=red] 
  ({-1.5},{u},{-(u^2+2.25)/2});
  \addplot3[name path=CC, domain=-1.5:1.5,opacity=0.2, variable=\u, samples y=1,color=red]
  ({u},{-1.5},{-(u^2+2.25)/2});
  \addplot3[name path=DD, domain=-1.5:1.5, variable=\u, samples y=1,color=red]
  ({u},{1.5},{-(u^2+2.25)/2});
  \addplot3 [domain=-2.25:5.25, samples=20, opacity=0.2, 
  variable=\u, point meta=0]
  ({-1.5} , {-1.5},{u});
  \addplot3 [surf,domain=-1.5:1.5, domain y=-1.5:1.5,
  samples=20, samples y=20, opacity=0.1, 
  variable=\u, variable y=\v,
  point meta=u*v 
  ]
  ( {u}, {v}, {-(u^2+v^2)/2)} );
  \addplot3 [surf,domain=-1.5:1.5, domain y=-1.5:1.5,
  samples=10, samples y=10, opacity=0.5,
  variable=\u, variable y=\v,
  point meta=u*v,
  opacity=0.2,blue
  ]
  ( {u}, {v}, {1.5-(u^2+v^2)/2)} ); 
  \addplot3 [surf,domain=-1.5:1.5, domain y=-1.5:1.5,
  samples=20, samples y=20, opacity=0.2,
  variable=\u, variable y=\v,
  point meta=u*v
  ]
  ({u}, {v}, {3+(u^2+v^2)/2)} );
  \addplot3[opacity=0.2,cyan!30!black] fill between[of=A and AA];
  \addplot3[opacity=0.2,cyan!30!black] fill between[of=B and BB];
  \addplot3[opacity=0.2,cyan!30!black] fill between[of=C and CC];
  \addplot3[opacity=0.2,cyan!30!black] fill between[of=D and DD];
  \addplot3 [domain=-2.25:5.25, samples=20,  opacity=0.4,
  variable=\u, point meta=0]
  ({1.5} , {1.5},{u});
  \addplot3 [domain=-2.25:5.25, samples=20,  opacity=0.4,
  variable=\u, point meta=0]
  ({-1.5} , {1.5},{u});
  \addplot3 [domain=-2.25:5.25, samples=20,  opacity=0.4,
  variable=\u, point meta=0]
  ({1.5} , {-1.5},{u});
  
  \node[label={0:{\textcolor{blue}{$\slice{V_n}{\pi}{t}$}}},
  ] at (axis cs:-1.5,1.5,-0.75) {};
  \node[label={0:{\textcolor{red}{$f(\partial R_{d_n})$}}},
  ] at (axis cs:0.7,0,-5.5) {};
\end{axis}
\end{tikzpicture}
  \caption{A slice of $V_n$ at distance $t$ from $f(R_{d_n})$}
  \label{fig:CubeNormal}
\end{figure}
\\
\begin{claim}
For $n$ big enough and for a.e.\ $t \in \left]0,D\right[$, $$\mass{\slice{V_n}{\pi}{t}} \geq \frac{\lambda}{2} \prod_{i=1}^{k}  \varphi_i(d_n).$$
\end{claim}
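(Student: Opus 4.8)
The plan is to reproduce, with the obvious modifications, the argument used for the $k=2$ version of this claim, now using the induction hypothesis in place of Lemma~\ref{filling current distance} as the source of a lower bound for the filling of the sliced cycle. First I would fix $n$ and a regular value $t\in\left]0,D\right[$ of $\pi$ and write out $\partial V_n=f(\partial R_{d_n}')$ via $\partial\big(R_{d_n}\times[x_{k+1},x_{k+1}']\big)=(\partial R_{d_n})\times[x_{k+1},x_{k+1}']\pm R_{d_n}\times(\{x_{k+1}'\}-\{x_{k+1}\})$. This exhibits $\partial V_n$ as a sum of the image of the ``bottom face'' $\pm f(R_{d_n}\times\{x_{k+1}\})$, the image of the ``top face'' $\pm f(R_{d_n}\times\{x_{k+1}'\})$, and the image of the lateral part $f\big((\partial R_{d_n})\times[x_{k+1},x_{k+1}']\big)$. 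On the bottom face $\pi\equiv 0$, while on the top face $\pi\geq D$ by the very definition of $D$ as the distance between these two image sets; so restricting to $\{\pi\leq t\}$ with $t<D$ removes the top face, keeps the bottom face, and leaves from the lateral part only $H_t:=f\big((\partial R_{d_n})\times[x_{k+1},x_{k+1}']\big)\rstr\{0<\pi\leq t\}$ (the trace of the lateral part on $\{\pi=0\}$ is a $(k-1)$-dimensional set, which carries no mass of this $k$-current). Combining this with property~(1) of the Slicing Theorem and the fact that $\|\slice{V_n}{\pi}{t}\|$ is concentrated on $\pi^{-1}(\{t\})$, I would get that $\slice{V_n}{\pi}{t}+H_t\pm f(R_{d_n}\times\{x_{k+1}\})$ is a $k$-cycle, and taking its boundary shows that $\pm(\slice{V_n}{\pi}{t}+H_t)$ is an element of $\mathbf{I}_k(Y)$ whose boundary is $f(\partial R_{d_n})$.

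Consequently,
\[
\mass{\slice{V_n}{\pi}{t}}+\mass{H_t}\;\geq\;\mass{\slice{V_n}{\pi}{t}+H_t}\;\geq\;\textup{FillVol}^{Y,\mathrm{cr}}_{k}\big(f(\partial R_{d_n})\big)\;\geq\;\lambda\,\prod_{i=1}^{k}\varphi_i(d_n),
\]
the last inequality being precisely the lower bound on the fillings of the $k$-rectangles $R_{d_n}$ supplied by the induction hypothesis. It then remains to show that $\mass{H_t}$ is negligible compared with $\prod_{i=1}^{k}\varphi_i(d_n)$. Since $H_t$ is a restriction of the pushforward, under the $\textup{Lip}(f)$-Lipschitz map $f$, of the $k$-current $(\partial R_{d_n})\times[x_{k+1},x_{k+1}']$, one has $\mass{H_t}\leq\textup{Lip}(f)^{k}\,\mass{\partial R_{d_n}}\;d_{X_{k+1}}(x_{k+1},x_{k+1}')\leq\textup{Lip}(f)^{k}\,\mass{\partial R_{d_n}}\,\varphi_{k+1}(d_n)$. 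The boundary of the $k$-rectangle $R_{d_n}$ is a sum of $2k$ coordinate $(k-1)$-rectangles, the $j$-th pair having $(k-1)$-volume $\prod_{i\neq j}l_i\leq\prod_{i\neq j}\varphi_i(d_n)$; since $\varphi_i\gg\varphi_{i+1}$, for $d_n$ large this product is maximal for $j=k$, so $\mass{\partial R_{d_n}}\leq 2k\,\varphi_k(d_n)^{-1}\prod_{i=1}^{k}\varphi_i(d_n)$. Hence
\[
\mass{H_t}\;\leq\;2k\,\textup{Lip}(f)^{k}\,\frac{\varphi_{k+1}(d_n)}{\varphi_k(d_n)}\,\prod_{i=1}^{k}\varphi_i(d_n),
\]
and $\varphi_{k+1}\ll\varphi_k$ forces the coefficient to tend to $0$. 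Therefore there would be an $N$ (independent of $t$) such that for all $n\geq N$ and a.e.\ $t\in\left]0,D\right[$ one has $\mass{H_t}\leq\frac{\lambda}{2}\prod_{i=1}^{k}\varphi_i(d_n)$, and the first display then gives $\mass{\slice{V_n}{\pi}{t}}\geq\frac{\lambda}{2}\prod_{i=1}^{k}\varphi_i(d_n)$, which is the claim.

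I expect the only genuinely delicate point to be the current-theoretic bookkeeping, not any new idea: one must identify precisely which summands of $\partial V_n$ survive the restriction $\rstr\{\pi\leq t\}$ for $t<D$, check that the lateral face contributes no mass on $\{\pi=0\}$, and keep track of the signs so that $\pm(\slice{V_n}{\pi}{t}+H_t)$ really fills $f(\partial R_{d_n})$ rather than some other $(k-1)$-cycle. Once that is settled, the estimate on $\mass{H_t}$ is the same ``lateral faces are small'' computation as in the $k=2$ case, and the conclusion follows from $\varphi_{k+1}\ll\varphi_k$ exactly as there.
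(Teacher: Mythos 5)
Your argument is correct and follows essentially the same route as the paper: it decomposes $\partial V_n$ into bottom, top, and lateral faces, uses the Slicing Theorem to recognize $-(\slice{V_n}{\pi}{t}+H_t)$ as a filling of $f(\partial R_{d_n})$, lower-bounds its mass via the induction hypothesis, and then kills $\mass{H_t}$ by the side-face volume estimate together with $\varphi_{k+1}\ll\varphi_k$. Your packaging of the lateral surface as the product $(\partial R_{d_n})\times[x_{k+1},x_{k+1}']$ yields exactly the same mass bound the paper obtains by summing the $2k$ side faces of $R_{d_n}'$ directly.
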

\begin{proof}
For a.e.\ $t \in \left]0,D\right[$,  $$\partial(V_n) \rstr\{\pi \leq t\} = \partial(V_n) \rstr\{\pi =0\} + \partial(V_n) \rstr\{ 0 < \pi \leq t\}$$
$$ = f(R_{d_n}) + H_t. $$
Where $H_t$ is the $k$-current $\partial (V_n) \rstr\{ 0 < \pi \leq t\}$.
\\Since $\|\slice{V_n}{\pi}{t}\|$ is concentrated on $\pi^{-1}(\{t\})$,
$$\partial(V_n\rstr\{\pi \leq t\}) = \slice{V_n}{\pi}{t} + f(R_{d_n}) + H_t. $$
Which means that $\slice{V_n}{\pi}{t} + f(R_{d_n}) + H_t$ is actually a cycle.
\\So $-(\slice{V_n}{\pi}{t} + H_t)$ is a $k$-chain that fills the $(k-1)$-cycle $\partial f(R_{d_n}) = f(\partial R_{d_n})$. 
\\Therefore, by the induction hypothesis
$$\mass{ -(\slice{V_n}{\pi}{t} + H_t) } \geq \textup{FillVol}_{k}^{Y, \mathrm{cr}}(f(\partial R_{d_n}))\geq \lambda \, \prod_{i=1}^{k}  \varphi_i(d_n).$$
So 
$$\mass{\slice{V_n}{\pi}{t}} + \mass{H_t} \geq \lambda \, \prod_{i=1}^{k}  \varphi_i(d_n).$$
Note that
\begin{equation*} 
\begin{split}
H_t & =  \partial (V_n) \rstr\{ 0 < \pi \leq t\} 
     \\ & =\big( \sum_{F \in \Delta} f(F ) \big) \rstr\{ 0 <\pi \leq t \} ,
\end{split}
\end{equation*}
where $\Delta$ is the set of side faces of $ R_{d_n}'$, i.e.\ faces of $ R_{d_n}'$ except $R_{d_n} \times \{ x_{k+1}  \}$ and $R_{d_n} \times \{ x_{k+1}'  \}$, because $0< \pi < D$. So by taking the mass
\begin{equation*} 
\begin{split}
\mass{H_t} & = \mass{\big( \sum_{F \in \Delta} f(F ) \big) \rstr\{ 0 <\pi \leq t \} }  
\\ & \leq \sum_{F \in \Delta} \mass{ ( f(F) ) \rstr\{ 0<\pi \leq t \}}      \\
& \leq \sum_{F \in \Delta} \mass{  f(F) }    \\
& \leq \textup{Lip}(f) \sum_{F \in \Delta} \mass{ F }.
\end{split}
\end{equation*}
Since every side face $F$ satisfies $\mass{ F } \leq \frac{\prod_{i=1}^{k+1}  \varphi_i(d_n)}{\varphi_{s}(d_n)}$, where $s \in \{ 1,\dots,k\}$, so every $F \in \Delta$ satisfies $\mass{F} \leq \frac{\prod_{i=1}^{k+1}  \varphi_i(d_n)}{\varphi_{k}(d_n)} $. There are $2k$ side faces,
So
$$\mass{H_t} \leq   2 k \, \textup{Lip}(f) \prod_{\substack{i=1 \\ i \ne k}}^{k+1}  \varphi_i(d_n). $$
Hence
\begin{equation*}
    \begin{split}
        \mass{\slice{V_n}{\pi}{t}} &\geq \lambda \, \prod_{i=1}^{k}  \varphi_i(d_n) - \mass{H_t} 
        \\ &\geq \lambda \, \prod_{i=1}^{k}  \varphi_i(d_n) - 2 k \, \textup{Lip}(f) \prod_{\substack{i=1 \\ i \ne k}}^{k+1}  \varphi_i(d_n).  
    \end{split}
\end{equation*}
Since $\varphi_{k+1} \ll \varphi_k$, it implies that 
$$  \prod_{\substack{i=1 \\ i \ne k}}^{k+1}  \varphi_i(d_n) \ll    \prod_{i=1}^{k}  \varphi_i(d) .       $$
\\So by taking $n$ big enough we have 
$$\lambda \, \prod_{i=1}^{k}  \varphi_i(d_n) - 2 k \, \textup{Lip}(f) \prod_{\substack{i=1 \\ i \ne k}}^{k+1}  \varphi_i(d_n) \geq \frac{\lambda}{2} \prod_{i=1}^{k}  \varphi_i(d_n). $$
We conclude that for sufficiently big $n$ and for a.e.\ $t \in \left]0,D\right[$ $$\mass{\slice{V_n}{\pi}{t}} \geq \frac{\lambda}{2} \prod_{i=1}^{k}  \varphi_i(d_n). \qquad\qedhere $$
\end{proof}
Therefore, by \eqref{equation 3}
\begin{equation*}
\begin{split}
    \phi \left(\prod_{i=1}^{k+1}  \varphi_i(d_n) \right) & \geq \int_0^{D} \mass{\slice{V_n}{\pi}{t}} dt\\ 
& \geq D \,  \frac{\lambda}{2} \prod_{i=1}^{k}  \varphi_i(d_n).
\end{split}
\end{equation*}
This implies that
\begin{equation*}
  \frac{2}{\lambda}  \frac{\phi \left(\prod_{i=1}^{k+1}  \varphi_i(d_n) \right)}{\prod_{i=1}^{k}  \varphi_i(d_n)} \geq D.
\end{equation*}
\hfill
\\Denote $\psi(\varphi_{k+1}(d_n)) = \frac{2}{\lambda}  \frac{\phi \left(\prod_{i=1}^{k+1}  \varphi_i(d_n) \right)}{\prod_{i=1}^{k}  \varphi_i(d_n)}$. Note that $\psi$ is sublinear : $\psi(\varphi_{k+1}(d_n)) = o(\varphi_{k+1}(d_n))$ because $\phi$ is sublinear.
\\
\\Since $D=d_Y(f( R_{d_n} \times \{ x_{k+1}  \}      ),f( R_{d_n} \times \{ x_{k+1}'  \}      ))$, the last inequality implies that $\exists z \in X$ such that $z \in R_{d_n} \times \{ x_{k+1}'  \} $ and $d_Y(f(R_{d_n} \times \{ x_{k+1}  \} ),f(z)) \leq \psi(\varphi_{k+1}(d_n))$.
\\But $z \in R_{d_n} \times \{ x_{k+1}'  \} $ implies that $d_X(z,R_{d_n} \times \{ x_{k+1}  \})=d_{X_{k+1}}(x_{k+1},x_{k+1}') $.
\\
\\By doing this process for all $x_{k+1}' \in B_{X_{k+1}}\big(x_{k+1},\varphi_{k+1}(d_n)\big)$, we get a subset $C_n \subset X$ that projects onto $B_{X_{k+1}}\big(x_{k+1},\varphi_{k+1}(d_n)\big)$, i.e.\ 
\begin{equation}\label{equation 4}
   B_{X_{k+1}}\big(x_{k+1},\varphi_{k+1}(d_n)\big) \subset \textup{proj}_{X_{k+1}}(C_n), 
\end{equation}
and such that
\begin{equation} \label{equation 5}
    f(C_n) \subset N_{\psi(\varphi_{k+1}(d_n))}   \big(f(R_{d_n} \times \{ x_{k+1}  \}  )\big),
\end{equation} 
since $\forall z \in C_n$, $d_Y(f(z) , f(R_{d_n} \times \{ x_{k+1}  \} )) \leq \psi(\varphi_{k+1}(d_n))$.
\\The projection onto $X_{k+1}$ is 1-Lipschitz, so \eqref{equation 4} implies that, if we fix $\varepsilon >0$,
\begin{equation*} 
    \textup{Vol}_X^{\varepsilon}(C_n) \geq \textup{Vol}_X^{\varepsilon}(\textup{proj}_{X_{k+1}}(C_n)) \geq \textup{Vol}_X^{\varepsilon}( B_{X_{k+1}}(x_{k+1},\varphi_{k+1}(d_n)) ).
\end{equation*}
$X_{k+1}$ has exponential growth, so  $\exists \alpha >0$ such that $\forall R >0$, 
$\textup{Vol}_{X_{k+1}}^{\varepsilon}(  B_{X_{k+1}}(R)) \geq e^{\alpha R}.$ So 
\begin{equation*} 
    \textup{Vol}_X^{\varepsilon}(C_n) \geq \textup{exp}(\alpha \, \varphi_{k+1}(d_n)).
\end{equation*}
While \eqref{equation 5} implies that
\begin{equation*}
    \textup{Vol}_Y^{\varepsilon}(f(C_n)) \leq \textup{Vol}_Y^{\varepsilon}\left(N_{\psi \left(\varphi_{k+1}(d_n)\right)}   \big(f(R_{d_n} \times \{ x_{k+1}  \}  )\big) \right).
\end{equation*}
Since $f$ coarsely preserves volumes, there exist $\delta, \delta' >0$ such that 
$$  \delta \, \textup{Vol}_X^{\varepsilon} ( C_n )  \leq      \textup{Vol}_Y^{\varepsilon} ( f( C_n ) ) \leq   \delta' \, \textup{Vol}_X^{\varepsilon} ( C_n )    . $$
By lemma \ref{volume neighb}, we have
\begin{equation*}
    \textup{Vol}_Y^{\varepsilon}\left(N_{\psi \left(\varphi_{k+1}(d_n)\right)}   \big(f(R_{d_n} \times \{ x_{k+1}  \}  )\big) \right) \leq \beta_Y^{\varepsilon}\left( \varepsilon + \psi(\varphi_{k+1}(d_n) \right) \times \{ x_{k+1}  \}  ) ).
\end{equation*}
$Y$ has at most exponential growth, so there exists $\beta >0$ such that for all $ R>0$,
$$ \beta_Y^{\varepsilon}\left( R \right)  \leq      e^{\beta R} .   $$
In particular, we have on hand that
$$\beta_Y^{\varepsilon}\left( \varepsilon + \psi(\varphi_{k+1}(d_n) \right)  \leq   \textup{exp}\left(  \beta ( \varepsilon +    \psi ( \varphi_{k+1}(d_n))        )    \right)                        \leq    \textup{exp} \left( 2 \beta      \psi ( \varphi_{k+1}(d_n))            \right) . $$ 
On the other hand, by taking a partition of each side vector of $R_{d_n}$ into sub-intervals of length $\varepsilon$, we get :
$$ \textup{Vol}_X^{\varepsilon} \big( R_{d_n} \times \{ x_{k+1}  \}   \big)         \leq \prod_{i=1}^{k} \left( \frac{ \varphi_{i}(d_n)}{\varepsilon} +1    \right) \leq  \prod_{i=1}^{k} \left( \frac{2 \varphi_{i}(d_n)}{\varepsilon}     \right)  \leq \big(2/\varepsilon \big)^{k} \prod_{i=1}^{k}  \varphi_i(d_n)   .              $$
So, by denoting $A =  \big(2/\varepsilon \big)^{k} $, we have
\begin{equation*}
    \begin{split}
       \textup{Vol}_Y^{\varepsilon}(  f(R_{d_n} \times \{ x_{k+1}  \}  ) )
         &  \leq \delta' \, \textup{Vol}_X^{\varepsilon} \big( R_{d_n} \times \{ x_{k+1}  \}  \big)   
         \\ &\leq \delta' A \, \prod_{i=1}^{k}  \varphi_i(d_n).
    \end{split}
\end{equation*}
Therefore
\begin{equation*}
    \textup{Vol}_Y^{\varepsilon}\left(N_{\psi\left(\varphi_{k+1}(d_n) \right)}   (f(R_{d_n} \times \{ x_{k+1}  \}  )) \right)  \leq \textup{exp}( 2 \beta      \psi ( \varphi_{k+1}(d_n)  ) \times \delta' A \,  \prod_{i=1}^{k}  \varphi_i(d_n).
\end{equation*}
We conclude from all the previous inequalities that 
\begin{equation*}
  \delta \, \textup{exp}(\alpha \, \varphi_{k+1}(d_n)) \leq \delta \, \textup{Vol}_X^{\varepsilon} ( C_n ) \leq  \textup{Vol}_Y^{\varepsilon} ( f( C_n ) ) \leq \textup{exp}( 2 \beta      \psi ( \varphi_{k+1}(d_n)  ) \times \delta' A \,  \prod_{i=1}^{k}  \varphi_i(d_n).
\end{equation*}
Which implies finally that 
\begin{equation*}
  \delta \, \textup{exp}(\alpha \, \varphi_{k+1}(d_n)) \leq \textup{exp}( 2 \beta      \psi ( \varphi_{k+1}(d_n))   ) \times \delta' A \, \prod_{i=1}^{k}  \varphi_i(d_n) .
\end{equation*}
Which is not possible when $d_n \to \infty$ because $\psi$ is sublinear. This completes the proof.
\end{proof}
Let us show that Theorem \ref{Thm 1 v2} follows from Theorem \ref{CE of product}. 
\\Let $X= X_1 \times \dots \times X_k$ be a product of geodesic metric spaces with exponential growth, and let $Y$ be a Lipschitz-connected complete metric space with at most exponential growth such that $\textup{FV}_{k}^{Y}(\ell) = o\left(\ell^{\frac{k}{k-1}}\right)$. Let us show that there exists a sequence $(\varphi_i)_i$ as in Theorem \ref{CE of product} such that for every $k$-dimensional rectangle $R_{d}$, whose sides are geodesic segments in the $X_i \,'s$, with side lengths $l_1, \dots, l_k$ that satisfy $l_1 = \varphi_1(d), l_2 \leq \varphi_2(d),\dots,l_k \leq \varphi_k(d)$, we have
$$
\textup{FillVol}_{k}^{Y, \mathrm{cr}}\left(f(\partial R_{d})\right) \ll  \varphi_1(d) \times \dots\times\varphi_k(d).
$$
Actually, we do not need an infinite sequence $(\varphi_i)_{i \in \mathbb{N}^*}$ but only $\varphi_1, \dots , \varphi_k$ that satisfy the same conditions, because we proceed by induction from $p=1$ to $p=k$.  
\\
\\ $\textup{FV}_{k}^{Y}(\ell) = o\left(\ell^{\frac{k}{k-1}}\right)$ implies that there exists a function $a : \mathbb{R}_+ \to \mathbb{R}_+$ such that $a=o(1)$, and $\textup{FV}_{k}^{Y}(\ell) \leq  \ell^{\frac{k}{k-1}} a(\ell)$ for all $\ell>0$. Without loss of generality, we can assume that the function $a$ is slowly decreasing: it is deceasing and that $a(d) \geq \frac{1}{d} $ for all $d>0$.
\\Let us consider the following sequence: for all $i \in \mathbb{N}^*$, $\alpha_i = 1- \frac{1}{i}$. Let us denote its partial sums by $S_n = \sum_{i=1}^{n}\alpha_i = n - H_n$, where $H_n$ is the partial sum of the harmonic series. 
\\
\\We define the sequence $(\varphi_i)_{i \in \mathbb{N}^*}$ as follows: for all $i\in \mathbb{N}^*$, and for all $d>0$, $\varphi_i(d) = a(d)^{\alpha_i}d$.
\\Therefore $\varphi_1(d) = d$ for all $d>0$. It is clear that $\varphi_i \gg \varphi_{i+1}$ for all $i\in \mathbb{N}^*$. We also have that $\varphi_i(d)$ tends to $+ \infty$ for all $i\in \mathbb{N}^*$. Indeed, for all $d>0$, $a(d) \geq \frac{1}{d}$, so $a(d)^{\alpha_i}\geq \frac{1}{d^{\alpha_i}}$ . Therefore $\varphi_i(d) \geq d^{\frac{1}{i}}$ that tends to $+\infty$.
\\
\\Let us now show the inequality.
\begin{equation*}
\begin{split}
    \textup{FillVol}_{k}^{Y, \mathrm{cr}}\left(f(\partial R_{d})\right) &\leq \textup{FV}_{k}^{Y}\left( \textup{Vol}_{k-1}^{Y}  \left( f(\partial R_{d})  \right)      \right)
    \\&\leq \textup{FV}_{k}^{Y}\left( 2k \textup{Lip}(f) l_1 \times \dots \times l_{k-1}       \right)
    .
\end{split}
\end{equation*}
Without loss of generality, we can assume that $\textup{Lip}(f)= \frac{1}{2k}$. So
\begin{equation*}
\begin{split}
    \textup{FillVol}_{k}^{Y, \mathrm{cr}}\left(f(\partial R_{d})\right) &\leq \textup{FV}_{k}^{Y}\left( \varphi_1(d) \times \dots \times \varphi_{k-1}(d)     \right)
    \\&\leq \left( \varphi_1(d) \times \dots \times \varphi_{k-1}(d)  \right)^{\frac{k}{k-1}} a\left( \varphi_1(d) \times \dots \times \varphi_{k-1}(d)   \right).
\end{split}
\end{equation*}
\\ Note that $\varphi_1(d) \times \dots \times \varphi_{k-1}(d) = d^{k-1} a(d)^{S_{k-1}}$. So, we have on one hand
$$     \textup{FillVol}_{k}^{Y, \mathrm{cr}}\left(f(\partial R_{d})\right) \leq  d^k a(d)^{\frac{k}{k-1}S_{k-1}} a\left(  d^{k-1}a(d)^{S_{k-1}}  \right).             
$$
On the other hand
$$          \varphi_1(d) \times \dots \times \varphi_{k}(d) = d^{k} a(d)^{S_{k}}.    
$$
\\Therefore, to get the desired inequality, let us show that
$$ a\left(  d^{k-1}a(d)^{S_{k-1}}  \right) \ll a(d)^{S_{k}-\frac{k}{k-1}S_{k-1}} .
$$
\\ • Let us first consider the left hand side. For all $d>0$, $a(d)\geq \frac{1}{d}$, so $d^{k-1}a(d)^{S_{k-1}} \geq d^{k-1-S_{k-1}} = d^{H_{k-1}}$. For $k \geq 2$, $H_{k-1} \geq 1$. So $d^{k-1}a(d)^{S_{k-1}} \geq d$. Since $a$ is decreasing, we get for all $k\geq 2$ and for all $d>0$
$$          a\left( d^{k-1}a(d)^{S_{k-1}} \right) \leq a(d) .
$$
\\ • Let us now consider the right hand side. We have
\begin{equation*}
    \begin{split}
        S_{k}-\frac{k}{k-1}S_{k-1} &= k - H_k - \frac{k}{k-1}( (k-1)-H_{k-1}  )
        \\&= -H_k +\frac{k}{k-1}(H_k - \frac{1}{k})
        \\&= -H_k + \frac{k}{k-1}H_k - \frac{1}{k-1}
        \\&= \frac{1}{k-1}(H_k - 1).
    \end{split}
\end{equation*}
\\ For all $k\geq 2$, $H_k < k$. So $\frac{1}{k-1}(H_k - 1) < 1$. Therefore, for all $k \geq 2$, $S_{k}-\frac{k}{k-1}S_{k-1} <1$. This implies that 
$$ a(d) \ll  a(d)^{S_{k}-\frac{k}{k-1}S_{k-1}}.
$$
\\We conclude that 
$$ a\left(  d^{k-1}a(d)^{S_{k-1}}  \right) \ll a(d)^{S_{k}-\frac{k}{k-1}S_{k-1}} .
$$
This concludes the proof of the theorem.
\section{The domain is a model space with no Euclidean factor}
\subsection{Preliminary result on Weyl sectors}
The goal of this subsection is to show a structure result for Weyl sectors in a model space. In both symmetric spaces and Euclidean buildings, singular hyperplanes in a maximal flat give rise to a root system. Let us first define the Weyl sectors associated to a root system.
\begin{defn}
Let $\Phi$ be a root system in a Euclidean space $\left( E, \langle,\rangle  \right)$ (see \cite{bourbaki81} Chap.6), and let $\Delta$ be a basis of $\Phi$. Consider the hyperplane perpendicular to each root in $\Delta$. The complement of this finite set of hyperplanes is disconnected, and each connected component is called a \textit{Weyl sector}. Equivalently, we may define Weyl sectors as the equivalence classes of the following relation: 
\begin{center}
   $ u\sim v $ \, if \,  $\langle \alpha ,u \rangle \cdot \langle \alpha ,v \rangle > 0$ \, \, for every $\alpha \in \Delta$.
\end{center}
\end{defn}
\begin{rem}
They are also called Weyl chambers in the literature. We prefer Weyl sectors to avoid any confusion with chambers in Euclidean buildings.
\end{rem}
Let us now define the root system associated to a maximal flat in a model space.
\\
\\• Let $X$ be a Euclidean building of rank $p$. Let $x_0 \in X$ be a vertex, and let $A$ be an apartment containing it. Let $\mathcal{H}$ denote the set of walls in $A$ that contains $x_0$. Recall that $\mathcal{H}$ is stable under reflections by walls in $\mathcal{H}$. So if we take for each wall in $\mathcal{H}$ an orthogonal vector with a suitable size, we get a root system. The \textit{Weyl sectors with tip at $x_0$ of the apartment $A$} are defined as the Weyl sectors of this root system. They are also the connected components of $A - \cup_{h \in \mathcal{H}} h $.
\\
\\• Let $M$ is a symmetric space of non-compact type of rank $p$. A reference for what follows is \cite{eberlein1996geometry}, Chap.2. 
\\Let $x_0 \in M$ and let $F$ be a a maximal flat containing $x_0$. Let $G= \textup{Isom}_0(M)$ be the identity component of the isometry group of $M$, $\mathfrak{g}$ its Lie algebra and $K = \textup{Stab}_G(x_0)$. Let $\mathfrak{g} = \mathfrak{k} \oplus \mathfrak{p} $ be the Cartan decomposition determined by $x_0$, and $\theta = \theta_{x_0}$ be the Cartan involution of $\mathfrak{g}$ induced by $x_0$. \\Consider $\mathfrak{a} \subset \mathfrak{p} $ a maximal abelian subspace such that $F = \textup{exp}(\mathfrak{a}).x_0$, and the root space decomposition of $\mathfrak{g}$ determined by $\mathfrak{a}$ $$\mathfrak{g} = \mathfrak{g_0} \oplus \sum_{\alpha \in \Lambda} \mathfrak{g_{\alpha}}, $$
where $\Lambda \subset \mathfrak{a}^* $ is a finite subset called the restricted root system determined by $\mathfrak{a}$, and 
$$\mathfrak{g}_{\alpha} = \left\{ X \in \mathfrak{g} \mid [A,X] = \alpha(A)X,\ \forall A \in \mathfrak{a}          \right\} .$$
The bilinear form $\langle X,Y \rangle := -B(\theta(X),Y)$, where $B$ is the Killing form of $\mathfrak{g}$, defines a positive definite inner product on $\mathfrak{g}$. This inner product induces an isomorphism between $\mathfrak{a}$ and $\mathfrak{a}^*$, so for all $\alpha \in \Lambda$, let us denote $h_{\alpha}$ the unique vector in $\mathfrak{a}$ such that we have 
$$   \alpha(A) = \langle  h_{\alpha}, A \rangle , \qquad \forall A \in \mathfrak{a} .              $$
A \textit{wall} in $F$, or equivalently in  $\mathfrak{a}$, is a hyperplane of the form $\textup{ker}(\alpha)$. Note that we can identify $\mathfrak{a}$ with $F$ by the exponential map composed with the orbital map at $x_0$, which is an isometry when restricted to $\mathfrak{a}$. For every $\alpha \in \Lambda$, consider the reflection $S_\alpha$ in the hyperplane $\mathfrak{a}_\alpha = \textup{ker}(\alpha) $, that is orthogonal to $h_\alpha$. In particular, we have $S_\alpha (A) = A - 2 \frac{\langle  h_{\alpha}, A \rangle}{\langle  h_{\alpha}, h_{\alpha} \rangle} h_\alpha $, $\forall A \in \mathfrak{a}$.
\\The collection $\{h_\alpha, \alpha \in \Lambda       \}$ form a root system. We define the \textit{Weyl sectors with tip at $x_0$ of the maximal flat $F$} as the Weyl sectors of this root system. They are also the connected components of $\mathfrak{a} \, \textbackslash\cup_{\alpha \in \Lambda} \textup{ker}(\alpha) $.
\begin{prop} \label{Weyl cone acute}
Let X be a model space. Then its Weyl sectors are acute open simplicial cones generated by maximally singular vectors. In other words, if $x_0 \in X$ and $F$ is a maximal flat containing $x_0$, and if $C$ is a Weyl sector of $F$ with tip at $x_0$, then after identifying $x_0$ with the origin of $F$:
\\• There exist $u_1,\dots,u_p$ maximally singular vectors in $\overline{C}$ such that $\forall x \in C$, $\exists \, x_1,\dots,x_p \geq 0 $ such that $x = \sum_{i=1}^{p} x_i u_i  $, where $p = \textup{rank }X$.
\\ • For all $ x,y \in C, \quad \langle x,y \rangle > 0 $.
\end{prop}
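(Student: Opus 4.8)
The plan is to pass to the associated root system in a maximal flat and then read both assertions off the theory of fundamental (co)weights, the only genuinely geometric input being the identification of the edges of a Weyl sector with maximally singular geodesics.

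\emph{Reduction.} Fix $x_0$ and a maximal flat $F\ni x_0$, and identify $F$ isometrically with a Euclidean space $(E,\langle\,,\rangle)$ with $x_0$ the origin, so that $C$ becomes a cone with apex $0$. The walls of $F$ through $x_0$ form a central hyperplane arrangement whose normals constitute a root system $\Phi$ (the restricted roots in each symmetric--space factor, and the normals of the walls through the chosen vertex in each building factor), and since $X$ has no Euclidean factor, $\Phi$ spans $E$. Let $\Delta=\{\alpha_1,\dots,\alpha_p\}$ be the basis of $\Phi$ for which $C=\{v\in E:\langle\alpha_i,v\rangle>0,\ 1\le i\le p\}$, and let $u_1,\dots,u_p$ be the dual basis of fundamental coweights, i.e.\ $\langle u_i,\alpha_j\rangle=\tfrac12\delta_{ij}\langle\alpha_j,\alpha_j\rangle$.

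\emph{First bullet.} For $v=\sum_i x_iu_i$ one has $\langle v,\alpha_j\rangle=\tfrac12 x_j\langle\alpha_j,\alpha_j\rangle$, which is $\ge 0$ for every $j$ exactly when all $x_i\ge 0$; since the $u_i$ form a basis, this identifies $\overline C$ with the simplicial cone $\Cone(u_1,\dots,u_p)$, and in particular $C\subset\Cone(u_1,\dots,u_p)$, proving the cone statement. To see that each $u_i$ is a maximally singular direction, note that $u_i\in\ker\alpha_j$ for every $j\ne i$ and that these $p-1$ hyperplanes are linearly independent, so $\bigcap_{j\ne i}\ker\alpha_j=\mathbb{R}u_i$; hence the line $\ell_i:=\mathbb{R}u_i$ is the intersection of the corresponding $p-1$ walls of $X$, i.e.\ a one--dimensional singular flat. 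By the structure of parallel sets recalled in Section~\ref{parallel sets}, $P_X(\ell_i)=\ell_i\times CS_X(\ell_i)$ with $CS_X(\ell_i)$ a model space of rank $p-1$ and no Euclidean factor; such a cross section contains arbitrarily many maximal flats through a common point meeting pairwise only in that point (here thickness of the building factors enters), and crossing these with $\ell_i$ produces at least $p$ maximal-dimensional half--flats of $X$ containing the geodesic in direction $u_i$ with pairwise disjoint interiors, which is precisely maximal singularity. One may alternatively quote that edges of Weyl sectors are maximally singular from the structure theory of symmetric spaces and Euclidean buildings (see e.g.\ \cite{eberlein1996geometry}, \cite{kleiner1997rigidity}); matching this identification with the paper's definition simultaneously in all factors is the one step that requires real care.

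\emph{Second bullet.} Decompose $\Phi=\bigsqcup_\ell\Phi^{(\ell)}$ as an orthogonal sum of irreducible root systems, one per irreducible factor of $X$; correspondingly $E=\bigoplus_\ell E^{(\ell)}$, $C=\prod_\ell C^{(\ell)}$ with $C^{(\ell)}$ the open dominant chamber of $\Phi^{(\ell)}$, and $\langle x,y\rangle=\sum_\ell\langle x^{(\ell)},y^{(\ell)}\rangle$. For an irreducible root system the inverse Cartan matrix has all entries positive (see \cite{bourbaki81}), i.e.\ $\langle u^{(\ell)}_r,u^{(\ell)}_s\rangle>0$ for all $r,s$; expanding two elements of $\overline{C^{(\ell)}}$ in the coweights $u^{(\ell)}_r$ and using this, one gets $\langle v,w\rangle>0$ for any two \emph{nonzero} $v,w\in\overline{C^{(\ell)}}$. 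Now if $x,y\in C$ then each projection $x^{(\ell)}$ lies in the nonempty open cone $C^{(\ell)}$, hence is nonzero, so every term $\langle x^{(\ell)},y^{(\ell)}\rangle$ is strictly positive and therefore $\langle x,y\rangle>0$. That $C$ is open is essential here: on $\overline C$ the pairing degenerates when $x$ and $y$ are supported on different irreducible factors.
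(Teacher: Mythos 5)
Your proposal is correct and reaches the same conclusions, but the route for the second bullet is genuinely different from the paper's. The paper reduces to an irreducible root system, invokes Bourbaki to present $C$ as $\{x:\langle h_i,x\rangle>0\}$ for a basis $\{h_i\}$ of simple roots, and then exploits the obtuseness inequalities $\langle h_i,h_j\rangle\le 0$ ($i\ne j$) together with Bourbaki's Lemma 6 (p.~79) to conclude $\langle x,y\rangle>0$ on $C$. You instead work on the dual side: you take the fundamental coweights $u_i$, identify $\overline C=\mathrm{Cone}(u_1,\dots,u_p)$ by a one-line calculation, and deduce acuteness from the strict positivity of the entries of the inverse Cartan matrix, i.e.\ $\langle u_i,u_j\rangle>0$ for all $i,j$ in an irreducible factor. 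This is a clean alternative; it avoids Lemma 6 and actually yields a slightly stronger statement (strict positivity of the Gram matrix of the generators within each irreducible block, and a precise description of how the pairing degenerates only across blocks on $\overline C$), which is in fact what Corollary~\ref{corollary of acute} ultimately needs and which the paper obtains by a continuity argument.

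For the first bullet your argument is parallel to the paper's, but you spell out the maximal singularity of the $u_i$ more carefully: the paper simply observes that $\bigcap_{k\ne i}\ker\alpha_k$ is one-dimensional and declares the $e_i$ maximally singular, whereas you go on to produce the required $p$ maximal half-flats with disjoint interiors through $\mathbb{R}u_i$ via the parallel set decomposition $P_X(\ell_i)=\ell_i\times CS_X(\ell_i)$ and thickness of the building factors. That step is somewhat informal (you should say explicitly that $CS_X(\ell_i)$, being a model space of rank $p-1$ with no Euclidean factor, contains at least $p$ maximal flats through a point with pairwise trivial intersection, which follows e.g.\ from thickness / the multiple-apartment axiom in the building factors and from the corresponding density of maximal flats in the symmetric-space factors), but the idea is right and it fills a gap the paper leaves implicit. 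One small terminological caveat worth keeping in mind: with the normalization $\langle u_i,\alpha_j\rangle=\tfrac12\delta_{ij}\langle\alpha_j,\alpha_j\rangle$ your $u_i$ are scalar multiples of the standard fundamental coweights, which is harmless for the argument but should be stated to avoid confusion with Bourbaki's conventions.
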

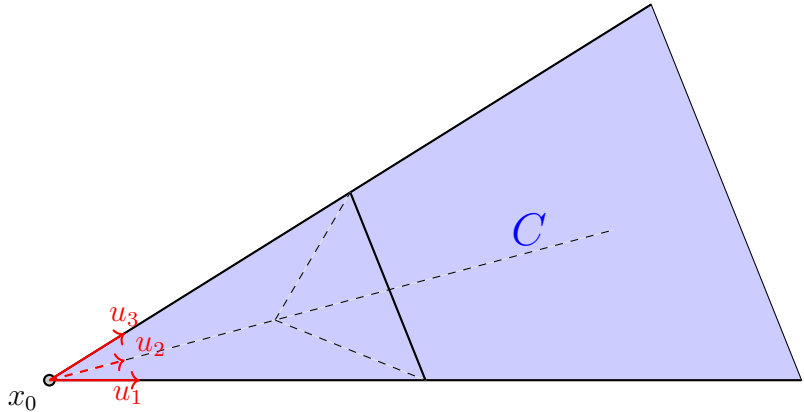
\begin{figure}[htbp]
  \centering
\begin{tikzpicture}

    \coordinate (a) at (4,2.5);
    \coordinate (a") at (1,0.625);
    \coordinate (a') at (8,5);
    
    \coordinate (b) at (3,.8);
    \coordinate (b') at (7.5,2);
    \coordinate (b") at (1,.266);

    \coordinate (c) at (5,0);
    \coordinate (c') at (10,0);
    \coordinate (c") at (1.2,0);
    
    \coordinate (e) at (0,0);

   \draw[fill=blue!20] (a') -- (c') -- (e) -- cycle;

    \draw[thick, fill=black!10] (e) -- (c');
    \draw[dashed, fill=black!10] (e) -- (b');
    \draw[thick, fill=black!10] (e) -- (a');

    \draw[dashed, fill=black!10] (b) -- (a);
    \draw[dashed, fill=black!10] (b) -- (c);
    
    \draw[thick, fill=black!50] (a) -- (c);

    \fill[black!20, draw=black, thick] (a) circle (0pt) node[black, above right] {};
    \fill[black!20, draw=black, thick] (b) circle (0pt) node[black, above left] {};
    \fill[black!20, draw=black, thick] (c) circle (0pt) node[black, below right] {};
    \fill[black!20, draw=black, thick] (e) circle (2pt) node[black, below left] {$x_0$};

    \begin{scope}[thick, decoration={markings, mark=at position 1 with {\arrow{>}}}]

    \draw[postaction={decorate}, color=red] (e) -- (a");
    \draw[dashed,postaction={decorate}, color=red] (e) -- (b");
    \draw[postaction={decorate}, color=red] (e) -- (c");
\end{scope}

\node[right, color=blue] at (6,2) {\Large{$C$}};

\node[right,color=red] at (0.7,-0.2) {$u_1$};
\node[right,color=red] at (1,0.45) {$u_2$};
\node[right,color=red] at (0.65,0.85) {$u_3$};

\end{tikzpicture}
  \caption{A Weyl sector $C$ generated by the maximal singular vectors $u_1,u_2,u_3$.}
  \label{fig:CubeNormal}
\end{figure}
\begin{proof}
Since maximal flats in a product space are products of maximal flats in the factors, Weyl sectors of a product space are products of Weyl sectors of the factors. Therefore, it is enough to show the result for a irreducible space, i.e.\ for a general root system in a Euclidean space.
\\• Let $\Phi$ be a root system in a Euclidean space $E = \mathbb{R}^p$. Let $W_r$ be its Weyl group, i.e.\ the group generated by reflections through the walls associated to the roots of $\Phi$.
\\Let $C$ be a Weyl sector with tip at the origin. By \cite{bourbaki81} Theorem 2 p.153 and section 6 p.64, there exist $\alpha_1,...,\alpha_p$ a basis of $\Phi^*$ (equivalently there exist $h_1,\dots,h_p$ a basis of $\Phi$) such that $C$ is an open simplicial cone, whose walls are exactly the hyperplanes $\{ \alpha_i = 0     \}$ for $i = 1 \dots p$ (i.e.\ the subsets $\{ x \in \mathbb{R}^p , \langle x,h_i \rangle = 0       \}$ for $i = 1 \dots p$). Which means that $C = \{ x= \sum_{i=1}^p x_i e_i \mid x_i \geq 0    \}$, where 
$$e_i \in   \bigcap_{\substack{k=1 \\ k \ne i}}^p \textup{ker} (\alpha_k).  $$
Since $\{ \alpha_1, \dots, \alpha_p  \}$ form a basis of $\Phi^*$, they form a basis of $\left( \mathbb{R}^p \right)^*$, and the intersection is one dimensional. So the $e_i$'s are maximally singular vectors.
\\
\\• Now let us show that $C$ is acute. Again by \cite{bourbaki81} Theorem 2 p.153, we can choose the basis $\{h_1,\dots,h_p\}$ in a unique way, up to permutation, such that the walls of $C$ are again $\textup{Ker}(\alpha_i)$ and such that $h_i$ and $C$ are both in the same side of the wall $\textup{Ker}(\alpha_i)$. In that case, we have 
$$ C = \cap_{i=1}^p ( \alpha_i > 0 ) =  \{ x \in E \mid \forall i \in \{ 1,\dots,p \}  , \quad  \langle h_i,x \rangle >0          \} .            $$
\\Since $\{h_1,\dots ,h_p\}$ is a basis of the root system $\Lambda$, then $\langle h_i , h_j \rangle \leq 0$ for all $ i \ne j$.
\\Therefore if $x,y \in C$, i.e.\ for every $ i \in \{ 1,\dots,p \}$, $\langle x , h_i \rangle >0$ and $\langle y , h_i \rangle >0$ , then by \cite{bourbaki81} p.79 lemma 6, we have $\langle x , y \rangle >0$. So $C$ is acute.
\end{proof}
\begin{cor} \label{corollary of acute}
Let $C$ be a Weyl sector in a model space of rank $p$, and $u_1,...,u_p \in \overline{C}$ maximally singular vectors generating it. Then if $x = \sum_{i=1}^{p} x_i u_i  \in C $, we have $\forall J \subset \{ 1,\dots,p \}$ $$\| x \| ^2 \geq \sum_{j \in J} \| x_j u_j \|^2.  $$
\end{cor}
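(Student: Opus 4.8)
The plan is to expand $\|x\|^2$ in the generating system $u_1,\dots,u_p$ of the Weyl sector and to use the acuteness of $C$ from Proposition \ref{Weyl cone acute}: in an acute cone all pairwise inner products of points are nonnegative, and since the coefficients $x_i$ are nonnegative, every term in the expansion is nonnegative; discarding a subset of nonnegative terms can only decrease the sum.

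First I would record that the acuteness statement of Proposition \ref{Weyl cone acute} passes from $C$ to its closure. Fix $i,j\in\{1,\dots,p\}$. Since $u_i,u_j\in\overline{C}$, choose sequences $v_n,w_n\in C$ with $v_n\to u_i$ and $w_n\to u_j$. By the second bullet of Proposition \ref{Weyl cone acute} we have $\langle v_n,w_n\rangle>0$ for every $n$, and letting $n\to\infty$ (continuity of the inner product) gives
\[
\langle u_i,u_j\rangle\ \geq\ 0\qquad\text{for all }i,j\in\{1,\dots,p\}.
\]
Next, because $x=\sum_{i=1}^{p}x_i u_i\in C$, the coefficients satisfy $x_i\geq 0$ by the description of $C$ as a simplicial cone in Proposition \ref{Weyl cone acute}. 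Expanding the squared norm,
\[
\|x\|^2=\langle x,x\rangle=\sum_{i=1}^{p}\sum_{j=1}^{p}x_i x_j\,\langle u_i,u_j\rangle,
\]
and each summand is $\geq 0$, being a product of the nonnegative numbers $x_i$, $x_j$ and $\langle u_i,u_j\rangle$. Hence, for any $J\subset\{1,\dots,p\}$, keeping only the diagonal terms indexed by $J$ yields
\[
\|x\|^2\ \geq\ \sum_{j\in J}x_j^{2}\,\langle u_j,u_j\rangle\ =\ \sum_{j\in J}\|x_j u_j\|^2,
\]
which is exactly the asserted inequality.

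I do not anticipate any real obstacle: the only point requiring a moment of care is the limiting argument that upgrades the strict positivity $\langle x,y\rangle>0$ on $C$ to $\langle u_i,u_j\rangle\geq 0$ on $\overline{C}$, and after that the corollary is the elementary observation that a sum of nonnegative terms dominates any of its partial subsums. Thus Corollary \ref{corollary of acute} is an immediate consequence of Proposition \ref{Weyl cone acute}.
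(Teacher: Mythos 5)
Your proof is correct and follows essentially the same route as the paper's: upgrade the strict acuteness $\langle\cdot,\cdot\rangle>0$ on $C$ to $\langle u_i,u_j\rangle\geq 0$ on $\overline{C}$ by continuity, expand $\|x\|^2$ in the generating vectors, and observe that every summand is nonnegative so discarding terms outside $J$ only decreases the sum. The only cosmetic difference is that you spell out the limiting argument with explicit sequences, where the paper just invokes continuity.
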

\begin{proof}
Since $u_1,...,u_p \in \overline{C}$, by the proposition and the continuity of the inner product, we have $\forall i,j \in \{ 1,...,p \} $ $ \langle u_i, u_j \rangle \geq 0$.
\\If $x = \sum_{i=1}^{p} x_i u_i \in C $, then 
$$     \| x \| ^2 = \sum_{i=1}^p \| x_i u_i \|^2         + 2  \sum_{i,j=1}^p  x_i x_j \langle u_i,u_j \rangle   \geq  \sum_{i=1}^p \| x_i u_i \|^2,             $$
since for any $1\leq i \leq p$, $x_i \geq 0$.
\end{proof}
\subsection{Parallelepipeds and Parallelograms}
In the proof of Theorem \ref{Thm 1 v1}, we considered rectangles in the product space. However, in the case of irreducible symmetric space of non-compact type or Euclidean buildings, the space does not factorize and we can no longer consider rectangles. More precisely, maximally singular geodesics are not necessarily orthogonal in a maximal flat. Therefore, we will have to work with cycles that are higher dimensional parallelograms in a maximal flat. We will define them as boundaries of higher dimensional parallelepipeds by induction, as sum of simplices.
\\
\\Let $E$ be a Euclidean space and $x_1, \dots, x_n \in E$. We denote by $[x_1 \dots x_n] $ the $(n-1)$-simplex which is the convex hull of these points and the orientation is given by the order in which the vertices are listed. Its boundary $\partial [x_1 \dots x_n]$ is 
$$ \partial [x_1 \dots x_n] = \sum_{k=1}^n (-1)^{k+1}  [x_1, \dots, \hat{x_k}, \dots, x_n]   ,             $$ 
where $[x_1, \dots, \hat{x_k}, \dots, x_n]$ is the $(n-2)$-simplex given by omitting $x_k$.
\\Let us define the \textit{$0$-dimensional parallelepiped} in $x$, $C^0(x)$, as the $0$-simplex $[x]$.
\\A \textit{1-dimensional parallelepiped} is defined from a point $x$ and a vector $u \in E$ by
$$ C^1(x;u) := [x,x+u]     .       $$
Suppose that we defined $C^{n-1}(x;u_1, \dots, u_{n-1})$ as a sum of $(n-1)$-simplices. To define the $n$-dimensional parallelepiped $C^n(x;u_1,\dots,u_n)$, we use its standard triangulation: $x$ has $n$ hyper-faces opposite to it. Each one of them is an $(n-1)$-dimensional parallelepiped written as $C^{n-1}(x+u_k;u_1, \dots,\hat{u_k},\dots u_{n})$, for $k \in \{ 1, \dots, n   \}$, which is a sum of $(n-1)$-simplices. We take its convex hull with $x$, that we can write as $[x; C^{n-1}(x+u_k;u_1, \dots,\hat{u_k},\dots u_{n}) ]$, which is just the sum of the $n$-simplices that we get by taking the convex hull of $x$ with the $(n-1)$-simplices in $C^{n-1}(x+u_k;u_1, \dots,\hat{u_k},\dots u_{n})$.
\\
\\We define the \textit{$n$-dimensional parallelepiped} $C^n(x;u_1,\dots,u_n)$ as the alternating sum of these terms: 
$$  C^n(x;u_1,\dots,u_n) :=  \sum_{k=1}^n (-1)^{k+1}  [x, C^{n-1}(x+u_k;u_1, \dots,\hat{u_k},\dots u_{n})]   .                                $$
We define the \textit{$(n-1)$-dimensional parallelogram} as its boundary $$P^{n-1}(x;u_1,\dots,u_n) := \partial C^n(x;u_1,\dots,u_n).$$
\begin{figure}[htbp]
  \centering
\begin{tikzpicture}[scale=1.62]
    \tkzInit[xmin=0,xmax=6,ymin=0,ymax=3] 
    \tkzClip[space=.5] 
    \tkzDefPoint(0,0){x} 
    \tkzDefPoint(4,0){x+u_1} 
    \tkzDefPoint(1,3){x+u_2} 
    \tkzDefPoint(5,3){x+u_1+u_2} 
    
    \coordinate (x') at (0.1,0);

    \coordinate (u_3') at (5.1,3);

\node[left, color=red] at (4.1,0.4) {\large{$[x,x+u_1,x+u_1+u_2]$}};
\node[right, color=blue] at (0.95,2.67) {\large{$[x,x+u_1+u_2,x+u_2]$}};

    \tkzLabelPoints(x,x+u_1)
    \tkzLabelPoints[above right](x+u_1+u_2,x+u_2)
    
    \begin{scope}[thick, decoration={markings, mark=at position 0.5 with {\arrow{>}}}]

    \draw[postaction={decorate}, color=red] (x') -- (x+u_1);
        \draw[postaction={decorate}, color=red] (x+u_1) -- (u_3');
        \draw[postaction={decorate}, color=red] (u_3') -- (x');

      \draw[postaction={decorate}, color=blue] (x) -- (x+u_1+u_2);
      \draw[postaction={decorate}, color=blue] (x+u_1+u_2) -- (x+u_2);
       \draw[postaction={decorate}, color=blue] (x+u_2) -- (x);

\end{scope}

    \fill[black, draw=black, thick] (x+u_1) circle (2pt) node[black, above right]{};
    \fill[black, draw=black, thick] (0.05,0) circle (2pt) node[black, above right]{};
    \fill[black, draw=black, thick] (5.05,3) circle (2pt) node[black, above right]{};
    \fill[black, draw=black, thick] (x+u_2) circle (2pt) node[black, above right]{};

\end{tikzpicture}
  \caption{$P^1(x;u_1,u_2)$ as sum of two simplices}
  \label{fig:CubeNormal}
\end{figure}
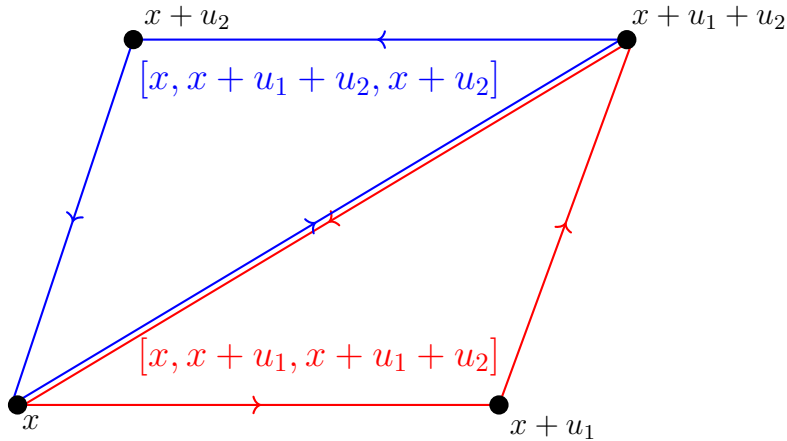
\begin{lem}\label{boundary or parallelepiped}
Let $y,u_1,\dots,u_n \in E$. 
\\• We have
$$ C^n(y;u_1,\dots, u_k ,\dots,u_n) = -C^n(y+u_k;u_1,\dots, -u_k ,\dots,u_n), $$
therefore
$$ C^n(y;u_1,\dots,u_n) = (-1)^n C^n(y+\sum_{k=1}^n u_k;-u_1,\dots,-u_n). $$
• For all $x \in E$,
$$ \partial [x,C^n(y;u_1,\dots,u_{n-1})] =    C^n(y;u_1,\dots,u_{n-1}) - [x,\partial C^n(y;u_1,\dots,u_{n-1}) ]                 $$
\end{lem}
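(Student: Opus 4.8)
The plan is to argue at the level of the metric currents induced by the Lipschitz chains $C^m(y;v_1,\dots,v_m)$, as is done throughout the paper once filling functions enter the picture (Sections~2.3--2.4). This is the correct reading of the first displayed identity: although both sides induce the same integer rectifiable current, they are distinct as Lipschitz chains — already for $n=2$ they triangulate the parallelogram along the two different diagonals — so it must be understood in $\mathbf I_n(E)$.

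\emph{Step 1 (a closed form).} First I would establish, by induction on $n$ and already as an identity of Lipschitz chains, the Freudenthal (``staircase'') formula
$$
C^n(y;u_1,\dots,u_n)=\sum_{\pi\in S_n}\operatorname{sgn}(\pi)\,[\,y,\ y+u_{\pi(1)},\ y+u_{\pi(1)}+u_{\pi(2)},\ \dots,\ y+u_{\pi(1)}+\dots+u_{\pi(n)}\,].
$$
The case $n=1$ is the definition of $C^1$; for the inductive step I would substitute this formula for each $C^{n-1}(y+u_k;u_1,\dots,\widehat{u_k},\dots,u_n)$ into the recursion $C^n(y;\vec u)=\sum_k(-1)^{k+1}[y,C^{n-1}(y+u_k;\dots)]$ and observe that coning $y$ over the $k$-th staircase simplex yields, with the right sign, the staircase simplices of $C^n$ indexed by the permutations $\pi$ with $\pi(1)=k$; the bookkeeping reduces to the elementary relation $\operatorname{sgn}(\pi)=(-1)^{k+1}\operatorname{sgn}(\tau)$ between such a $\pi$ and the induced permutation $\tau$ of the remaining indices. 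Reading each summand above as the pushforward, under the affine map $\Phi_{y,\vec u}\colon t\mapsto y+\sum_i t_iu_i$, of a staircase simplex of the cube $Q=[0,1]^n$, and using that the $n!$ staircase simplices tile $Q$, each carrying the sign $\operatorname{sgn}(\pi)$ relative to the standard orientation, I obtain the normal form
$$
C^n(y;u_1,\dots,u_n)=(\Phi_{y,u_1,\dots,u_n})_{\#}[\![\,Q\,]\!]\qquad\text{in }\mathbf I_n(E).
$$

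\emph{Step 2 (the two identities).} With the normal form in hand the first identity is just a change of variables. Put $\vec u'=(u_1,\dots,-u_k,\dots,u_n)$ and let $R_k\colon Q\to Q$ be the reflection $R_k(t)=(t_1,\dots,1-t_k,\dots,t_n)$; then $\Phi_{y+u_k,\vec u'}=\Phi_{y,\vec u}\circ R_k$, since both send $t$ to $y+\sum_{i\ne k}t_iu_i+(1-t_k)u_k$. As $R_k$ is an affine bijection of $Q$ of Jacobian determinant $-1$, we have $(R_k)_{\#}[\![\,Q\,]\!]=-[\![\,Q\,]\!]$, whence
$$
C^n(y+u_k;u_1,\dots,-u_k,\dots,u_n)=(\Phi_{y,\vec u})_{\#}\big((R_k)_{\#}[\![\,Q\,]\!]\big)=-\,C^n(y;u_1,\dots,u_n).
$$
The second identity then follows by applying the first one in turn for $k=1,2,\dots,n$ — each step negates one $u_k$, translates the base point by $u_k$, and contributes a factor $-1$ — or, equivalently, by the same argument applied directly to the central symmetry $R=R_1\circ\dots\circ R_n\colon t\mapsto\mathbf 1-t$, whose Jacobian is $(-1)^n$.

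\emph{Step 3 (the boundary identity) and the main difficulty.} The second bullet is the usual cone boundary formula and needs no currents: for any Lipschitz chain $Z$ in $E$ and any $x\in E$ one has $\partial[x,Z]=Z-[x,\partial Z]$, which by linearity reduces to a single simplex $Z=[x_0,\dots,x_m]$, where it is the elementary computation
$$
\partial[x,x_0,\dots,x_m]=[x_0,\dots,x_m]+\sum_{i=0}^{m}(-1)^{i+1}[x,x_0,\dots,\widehat{x_i},\dots,x_m]=[x_0,\dots,x_m]-[x,\partial[x_0,\dots,x_m]];
$$
applying it with $Z=C^{n-1}(y;u_1,\dots,u_{n-1})$ gives the stated equality. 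I expect the only step that requires real care to be the normal form of Step~1, and within it the orientation and sign bookkeeping needed to match the recursively defined triangulation with the pushforward of the fundamental class of the cube. Once that is secured, the two parallelepiped identities are one-line consequences of the functoriality $(\varphi\circ\psi)_{\#}=\varphi_{\#}\psi_{\#}$ applied to orientation-reversing affine self-maps of $Q$, and the boundary identity is purely formal.
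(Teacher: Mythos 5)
Your proof is correct, but it takes a genuinely different route from the paper's. The paper disposes of the lemma in one line (``by the boundary formula and by linearity of the boundary operator''), i.e.\ the second bullet is the elementary cone computation $\partial[x,x_0,\dots,x_m]=[x_0,\dots,x_m]-[x,\partial[x_0,\dots,x_m]]$ extended by linearity — exactly your Step 3 — and the first bullet is implicitly a direct induction on the recursive definition, with the base case resting on the convention that reordering the vertices of a simplex multiplies it by the sign of the permutation. You instead first prove the closed ``staircase'' formula, identify $C^n(y;\vec u)$ with $(\Phi_{y,\vec u})_{\#}[\![Q]\!]$ for $Q=[0,1]^n$, and obtain both symmetry identities as one-line consequences of functoriality of the pushforward applied to the orientation-reversing reflections $R_k$ of the cube. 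What your approach buys is precision about the sense in which the first identity holds: as you correctly observe, already for $n=2$ the two sides triangulate the parallelepiped along different diagonals, so they are not equal as formal sums of Lipschitz simplices but only as integral currents (or as chains modulo the vertex-reordering convention). Since the paper ultimately regards all these chains as currents and only ever uses their masses, boundaries and filling volumes, your current-level reading is exactly the right one and in fact makes rigorous a point the paper leaves implicit; the price is the combinatorial bookkeeping of Step 1 (the sign relation $\operatorname{sgn}(\pi)=(-1)^{k+1}\operatorname{sgn}(\tau)$ and the orientation count $\det(e_{\pi(1)},e_{\pi(1)}+e_{\pi(2)},\dots)=\operatorname{sgn}(\pi)$), both of which you state correctly.
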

\begin{proof}
By the boundary formula and by linearity of the boundary operator.
\end{proof}
\begin{lem}\label{paralello sum}
Let $x,u_1,\dots,u_n \in E$. We can write 
the $(n-1)$-dimensional parallelogram as a sum of $(n-1)$-dimensional parallelepipeds : 
\begin{equation*}
    \begin{split}
    P^{n-1}(x;u_1,\dots,u_n) &=  \sum_{k=1}^n (-1)^{k}   C^{n-1}(x;u_1, \dots,\hat{u_k},\dots ,u_{n})
    \\ &+ (-1)^n  \sum_{k=1}^n (-1)^{k}  C^{n-1}(x+\sum_{i=1}^n u_i;-u_1 \dots,-\hat{u_k},\dots, -u_{n}). 
    \end{split}
\end{equation*}
\end{lem}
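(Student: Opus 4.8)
The plan is to reduce the statement to the more symmetric identity
$$\partial C^n(x;u_1,\dots,u_n) \;=\; \sum_{k=1}^n (-1)^{k+1}\Big( C^{n-1}(x+u_k;u_1,\dots,\hat{u_k},\dots,u_n) - C^{n-1}(x;u_1,\dots,\hat{u_k},\dots,u_n)\Big),$$
which says that the boundary of the parallelepiped is the alternating sum, over coordinate directions, of its two parallel $(n-1)$-dimensional faces (the ``near'' face through $x$ and the ``far'' face through $x+u_k$). The statement of the lemma then follows immediately: apply the first part of Lemma \ref{boundary or parallelepiped} to each far face $C^{n-1}(x+u_k;u_1,\dots,\hat{u_k},\dots,u_n)$ to recenter it at $x+\sum_i u_i$ and flip the signs of the generating vectors (this produces a global factor $(-1)^{n-1}$), and collect the signs, turning the far-face sum into $(-1)^n\sum_k(-1)^k C^{n-1}(x+\sum_i u_i;-u_1,\dots,-\hat{u_k},\dots,-u_n)$ while the near-face sum is already $\sum_k(-1)^kC^{n-1}(x;u_1,\dots,\hat{u_k},\dots,u_n)$.

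I would prove the displayed identity by induction on $n$. For $n=1$ it is just $\partial[x,x+u_1]=[x+u_1]-[x]=C^0(x+u_1)-C^0(x)$. For the inductive step I apply $\partial$ to the defining formula $C^n(x;u)=\sum_k(-1)^{k+1}[x,C^{n-1}(x+u_k;u_1,\dots,\hat{u_k},\dots,u_n)]$ and use the cone formula $\partial[x,\Omega]=\Omega-[x,\partial\Omega]$ from Lemma \ref{boundary or parallelepiped}, which peels off the far faces at once:
$$\partial C^n(x;u) = \sum_{k=1}^n (-1)^{k+1} C^{n-1}(x+u_k;u_1,\dots,\hat{u_k},\dots,u_n) \;-\; \sum_{k=1}^n (-1)^{k+1}\big[x,\; \partial C^{n-1}(x+u_k;u_1,\dots,\hat{u_k},\dots,u_n)\big].$$
It then remains to identify the second sum with the near-face sum $\sum_k(-1)^{k+1}C^{n-1}(x;u_1,\dots,\hat{u_k},\dots,u_n)$. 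For this I expand each $\partial C^{n-1}(x+u_k;\dots)$ using the induction hypothesis, cone it with $x$, and compare term by term with the defining expansion of $C^{n-1}(x;\dots)$. Every piece that appears is a cone from $x$ over an $(n-2)$-dimensional parallelepiped determined by a pair of deleted directions $\{j,k\}$ together with one of the base points $x+u_j$, $x+u_k$, or $x+u_j+u_k$; tracking the alternating sign through the two levels of recursion shows that the sign attached to such a piece is antisymmetric under swapping $j$ and $k$, so the ``diagonal'' pieces based at $x+u_j+u_k$ cancel in pairs, the pieces based at $x+u_j$ cancel against those based at $x+u_k$, and exactly the near faces survive.

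The main obstacle is this final sign bookkeeping: one must check that the signs generated by the nested cone construction are compatible with the permutation conventions implicit in the notation $u_1,\dots,\hat{u_k},\dots,u_n$, so that the pairing $(j,k)\leftrightarrow(k,j)$ really does yield opposite signs. This comes down to a single parity computation: if $j<k$, deleting $u_k$ and then $u_j$ carries sign $(-1)^{j+k}$, whereas deleting $u_j$ and then $u_k$ carries sign $(-1)^{j+k+1}$; every cancellation above is an instance of this. Equivalently, and perhaps more transparently, one may note that $C^n(x;u_1,\dots,u_n)$ is nothing but the ``pulling'' triangulation, from the vertex $x$, of the affine image of the unit cube $[0,1]^n$ under $t\mapsto x+\sum_i t_i u_i$; the identity above then expresses the standard fact that this triangulation restricts compatibly to the triangulations of the $2n$ facets $\{t_k=0\}$ and $\{t_k=1\}$, which again reduces to the same parity check.
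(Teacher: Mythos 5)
Your proposal is correct and follows essentially the same route as the paper: both apply $\partial$ to the defining cone expansion $C^n(x;u_1,\dots,u_n)=\sum_k(-1)^{k+1}[x,C^{n-1}(x+u_k;u_1,\dots,\hat{u_k},\dots,u_n)]$, use the cone formula $\partial[x,\Omega]=\Omega-[x,\partial\Omega]$ from Lemma \ref{boundary or parallelepiped}, expand $\partial C^{n-1}$ one dimension lower, cancel the doubly-deleted ``diagonal'' pieces in pairs via the antisymmetric parity you compute (the paper's $\varepsilon(p,k)=-\varepsilon(k,p)$), reassemble the surviving cones into the near faces, and recenter the far faces at $x+\sum_i u_i$ using the first part of Lemma \ref{boundary or parallelepiped}. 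The only difference is packaging: you isolate the symmetric near/far-face identity and recenter at the end, whereas the paper recenters inline within a single computation.
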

\begin{proof}
By the preceding lemma, we have
\begin{equation*}
    \begin{split}
         P^{n-1}(x;u_1,\dots,u_n) &= \partial C^n(x;u_1,\dots,u_n)  \\ &=  \partial \Big( \sum_{k=1}^n (-1)^{k+1}  [x, C^{n-1}(x+u_k;u_1, \dots,\hat{u_k},\dots, u_{n})] \Big)  
         \\ &= \sum_{k=1}^n (-1)^{k+1} C^{n-1}(x+u_k;u_1, \dots,\hat{u_k},\dots, u_{n})
         \\ &- \sum_{k=1}^n (-1)^{k+1} [x, \partial C^{n-1}(x+u_k;u_1, \dots,\hat{u_k},\dots, u_{n})].
    \end{split}
\end{equation*}
Also by lemma \ref{boundary or parallelepiped}, we have 
$$  C^{n-1}(x+u_k;u_1, \dots,\hat{u_k},\dots, u_{n}) = (-1)^{n-1} C^{n-1}(x+\sum_{i=1}^n u_i;-u_1,\dots,-\hat{u_k},\dots,,-u_n).                       $$
So
\begin{equation*}
    \begin{split}
         P^{n-1}(x;u_1,\dots,u_n) &= \sum_{k=1}^n (-1)^{k} (-1)^{n} C^{n-1}(x+\sum_{i=1}^n u_i;-u_1,\dots,-\hat{u_k},\dots,,-u_n)
         \\ &- \sum_{k=1}^n (-1)^{k+1} [x, \partial C^{n-1}(x+u_k;u_1, \dots,\hat{u_k},\dots, u_{n})].
    \end{split}
\end{equation*}
Let us denote 
\begin{equation*}
    \begin{split}
       & S_1 = (-1)^{n} \sum_{k=1}^n (-1)^{k}  C^{n-1}(x+\sum_{i=1}^n u_i;-u_1,\dots,-\hat{u_k},\dots,,-u_n),
        \\& S_2 = \sum_{k=1}^n (-1)^{k+1} [x, \partial C^{n-1}(x+u_k;u_1, \dots,\hat{u_k},\dots, u_{n})],
    \end{split}
\end{equation*}
so that $ P^{n-1}(x;u_1,\dots,u_n) = S_1 -S_2 $. Let us first develop $S_2$. We have 
\begin{equation*}
    \begin{split}
         &\partial C^{n-1}(x+u_k;u_1, \dots,\hat{u_k},\dots, u_{n})  =  \sum_{\substack{p=1 \\ p \ne k}}^n (-1)^{p} \varepsilon(p,k) C^{n-2}(x+u_k;u_1,\dots,\hat{u_p},\dots,\hat{u_k},\dots, u_n     )               
         \\ &+      (-1)^{n-1}  \sum_{\substack{p=1 \\ p \ne k}}^n (-1)^{p} \varepsilon(p,k) C^{n-2}(x+\sum_{i=1}^n u_i;-u_1,\dots,-\hat{u_p},\dots,-\hat{u_k},\dots, -u_n     ),        
    \end{split}
\end{equation*}
where $\varepsilon(p,k) = 1$ if $p < k $ and $\varepsilon(p,k) = -1$ if $p > k $. Therefore
\begin{equation*}
    \begin{split}
    S_2 &= \sum_{k=1}^n (-1)^{k+1} [x, \partial C^{n-1}(x+u_k;u_1, \dots,\hat{u_k},\dots, u_{n})]
    \\ &= \sum_{k=1}^n (-1)^{k+1} \Big[x, \sum_{\substack{p=1 \\ p \ne k}}^n (-1)^{p} \varepsilon(p,k) C^{n-2}(x+u_k;u_1,\dots,\hat{u_p},\dots,\hat{u_k},\dots, u_n     )                 \Big]
    \\ &+ \sum_{k=1}^n (-1)^{k+1} \Big[x, (-1)^{n-1}  \sum_{\substack{p=1 \\ p \ne k}}^n (-1)^{p} \varepsilon(p,k) C^{n-2}(x+\sum_{i=1}^n u_i;-u_1,\dots,-\hat{u_p},\dots,-\hat{u_k},\dots, -u_n     )      \Big]
    \\ &= \sum_{\substack{p=1,k=1 \\ p \ne k}}^n (-1)^{k+1} (-1)^{p} \varepsilon(p,k) \Big[x, C^{n-2}(x+u_k;u_1,\dots,\hat{u_p},\dots,\hat{u_k},\dots, u_n     )                 \Big]
    \\ &+ (-1)^{n-1}  \sum_{\substack{p=1,k=1 \\ p \ne k}}^n  (-1)^{k+1} (-1)^{p} \varepsilon(p,k)    \Big[x,  C^{n-2}(x+\sum_{i=1}^n u_i;-u_1,\dots,-\hat{u_p},\dots,-\hat{u_k},\dots, -u_n     )      \Big]
     \\ &= \sum_{p=1}^n (-1)^{p} \sum_{\substack{k=1 \\ k \ne p}}^n (-1)^{k+1}  \varepsilon(p,k) \Big[x, C^{n-2}(x+u_k;u_1,\dots,\hat{u_p},\dots,\hat{u_k},\dots, u_n     )                 \Big]
    \\ &+ (-1)^{n-1}  \sum_{\substack{p=1,k=1 \\ p \ne k}}^n  (-1)^{p+k+1} \varepsilon(p,k)    \Big[x,  C^{n-2}(x+\sum_{i=1}^n u_i;-u_1,\dots,-\hat{u_p},\dots,-\hat{u_k},\dots, -u_n     )      \Big].
    \end{split}
\end{equation*}
Remark that 
\begin{equation*}
    \begin{split}
        \sum_{\substack{k=1 \\ k \ne p}}^n (-1)^{k+1}  \varepsilon(p,k) \Big[x, C^{n-2}(x+u_k;u_1,\dots,\hat{u_p},\dots,\hat{u_k},\dots, u_n     )                 \Big] = -C^{n-1}(x;u_1,\dots,\hat{u_p},\dots,u_n).
    \end{split}
\end{equation*}
So the first sum is just equal to $  - \sum_{p=1}^n (-1)^{p}C^{n-1}(x;u_1,\dots,\hat{u_p},\dots,u_n)$.
\\By denoting $E(p,k) := (-1)^{p+k+1} \Big[x,  C^{n-2}(x+\sum_{i=1}^n u_i;-u_1,\dots,-\hat{u_p},\dots,-\hat{u_k},\dots, -u_n     )      \Big] $, the second sum can be written as
$$  (-1)^{n-1}  \sum_{\substack{p=1,k=1 \\ p \ne k}}^n  \varepsilon(p,k)    E(p,k).               $$
Since $E(p,k) = E(k,p)$, and $\varepsilon(p,k) = - \varepsilon(k,p)$, this second sum is equal to zero. So
\begin{equation*}
    \begin{split}
        P^{n-1}(x;u_1,\dots,u_n) &= S_1 -S_2
        \\ &= (-1)^{n} \sum_{k=1}^n (-1)^{k}  C^{n-1}(x+\sum_{i=1}^n u_i;-u_1,\dots,-\hat{u_k},\dots,,-u_n) 
        \\ &+ \sum_{k=1}^n (-1)^{k}C^{n-1}(x;u_1,\dots,\hat{u_k},\dots,u_n). \qquad\qedhere
    \end{split}
\end{equation*}
\end{proof}
\hfill
\\Now, let us show the following decomposition result for parallelograms:
\begin{prop}\label{decomp result}
Let $x,u_1,\dots,u_{n+1} \in E$. If $u_{n+1} = \sum_{i=1}^p a_i$, with $a_1, \dots, a_p \in E$, Then 
$       P^n(x;u_1,\dots,u_{n+1}) - \sum_{i=1}^p P^n(x+\sum_{k=1}^{i-1}a_k;u_1,\dots,u_n,a_i)$ is an $n$-cycle, sum of $2n(p+1)$ $n$-parallelepipeds which are of the form $C^n(y;w_1,\dots,w_n)$, where $w_n \in \{u_{n+1},a_1,\dots,a_p   \}$ and $(w_1,\dots,w_{n-1}) \in \{(u_1,\dots,\hat{u_k},\dots,u_n)   \mid k \in \{1,\dots,n   \}      \}$.
\end{prop}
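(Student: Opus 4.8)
The geometric idea is simply to cut the $(n{+}1)$-parallelepiped $C^{n+1}(x;u_1,\dots,u_{n+1})$ into $p$ slabs transverse to the last edge, the $i$-th slab being $C^{n+1}(x+\sum_{k=1}^{i-1}a_k;u_1,\dots,u_n,a_i)$; the difference of the boundaries should then reduce to lateral faces only, since the internal transverse faces cancel in pairs and the two outer transverse faces coincide with those of the big parallelepiped. To make this precise, set $y_0=x$ and $y_i=x+\sum_{k=1}^{i}a_k$, so that $y_p=x+u_{n+1}$ and the $i$-th summand in the statement is $P^n(y_{i-1};u_1,\dots,u_n,a_i)$. Apply Lemma~\ref{paralello sum} (with $n$ there replaced by $n{+}1$) to $P^n(x;u_1,\dots,u_{n+1})$ and to each $P^n(y_{i-1};u_1,\dots,u_n,a_i)$. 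In every such expansion, single out the two terms with $k=n{+}1$, i.e. the faces obtained by deleting the last generating vector; call these the \emph{transverse faces} and call the remaining $2n$ terms the \emph{lateral faces}. Using the second identity of Lemma~\ref{boundary or parallelepiped}, the transverse faces of $P^n(x;u_1,\dots,u_{n+1})$ become $(-1)^{n+1}C^n(x;u_1,\dots,u_n)$ and $(-1)^{n}C^n(x+u_{n+1};u_1,\dots,u_n)$, and those of $P^n(y_{i-1};u_1,\dots,u_n,a_i)$ become $(-1)^{n+1}C^n(y_{i-1};u_1,\dots,u_n)$ and $(-1)^{n}C^n(y_i;u_1,\dots,u_n)$.

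First I would show the transverse faces telescope away. Summing $(-1)^{n+1}C^n(y_{i-1};u_1,\dots,u_n)+(-1)^{n}C^n(y_i;u_1,\dots,u_n)$ over $i=1,\dots,p$, all intermediate terms cancel and the sum collapses to $(-1)^{n+1}C^n(y_0;u_1,\dots,u_n)+(-1)^{n}C^n(y_p;u_1,\dots,u_n)$, which is exactly the transverse contribution of $P^n(x;u_1,\dots,u_{n+1})$ because $y_0=x$ and $y_p=x+u_{n+1}$. Hence in $P^n(x;u_1,\dots,u_{n+1})-\sum_{i=1}^p P^n(y_{i-1};u_1,\dots,u_n,a_i)$ all transverse faces cancel, leaving only lateral faces: $2n$ from the big parallelogram and $2n$ from each of the $p$ small ones, for a total of $2n(p+1)$ terms.

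Next I would bring the lateral faces into the stated normal form. The lateral faces of $P^n(x;u_1,\dots,u_{n+1})$ are $\sum_{k=1}^{n}(-1)^k C^n(x;u_1,\dots,\hat{u_k},\dots,u_n,u_{n+1})$ together with $(-1)^{n+1}\sum_{k=1}^{n}(-1)^k C^n\big(x+\sum_{i=1}^{n+1}u_i;-u_1,\dots,-\hat{u_k},\dots,-u_n,-u_{n+1}\big)$; applying the second identity of Lemma~\ref{boundary or parallelepiped} to the latter family flips all $n$ generating vectors and moves the base point to $x+u_k$, so the lateral contribution of $P^n(x;u_1,\dots,u_{n+1})$ equals $\sum_{k=1}^{n}(-1)^k\big(C^n(x;u_1,\dots,\hat{u_k},\dots,u_n,u_{n+1})-C^n(x+u_k;u_1,\dots,\hat{u_k},\dots,u_n,u_{n+1})\big)$. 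The identical computation for $P^n(y_{i-1};u_1,\dots,u_n,a_i)$ (replace $x$ by $y_{i-1}$ and $u_{n+1}$ by $a_i$, using $y_{i-1}+a_i=y_i$) rewrites its lateral contribution as $\sum_{k=1}^{n}(-1)^k\big(C^n(y_{i-1};u_1,\dots,\hat{u_k},\dots,u_n,a_i)-C^n(y_{i-1}+u_k;u_1,\dots,\hat{u_k},\dots,u_n,a_i)\big)$. Every parallelepiped that occurs is of the form $C^n(y;w_1,\dots,w_n)$ with $w_n\in\{u_{n+1},a_1,\dots,a_p\}$ and $(w_1,\dots,w_{n-1})$ equal to $(u_1,\dots,u_n)$ with one entry omitted, as claimed, and the count is $2n+2np=2n(p+1)$. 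That the resulting chain is an $n$-cycle is automatic: it equals $\partial C^{n+1}(x;u_1,\dots,u_{n+1})-\sum_{i}\partial C^{n+1}(y_{i-1};u_1,\dots,u_n,a_i)$, so its boundary is a difference of terms $\partial\partial C^{n+1}(\cdot)=0$.

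The only genuine difficulty is the sign-and-base-point bookkeeping in Lemmas~\ref{paralello sum} and~\ref{boundary or parallelepiped}: one must verify that consecutive transverse faces in the telescoping sum carry opposite signs and shared base points, and that reversing the orientation of the ``outer'' lateral faces of each parallelogram lands the base point precisely at $x+u_k$ (respectively $y_{i-1}+u_k$). Both facts follow directly from the two displayed identities, but they must be tracked carefully; there is no conceptual obstacle beyond this.
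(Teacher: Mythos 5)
Your proposal is correct and follows essentially the same route as the paper: expand each parallelogram via Lemma~\ref{paralello sum}, observe that the faces with generating vectors exactly $(u_1,\dots,u_n)$ telescope and cancel in the difference (using the vector-flipping identity of Lemma~\ref{boundary or parallelepiped}), and what remains are the $2n(p+1)$ lateral parallelepipeds of the stated form. Your extra normalization of the lateral faces to base points $x+u_k$ (resp.\ $y_{i-1}+u_k$) with unflipped vectors, and the explicit remark that the chain is a cycle because it is a difference of boundaries, are cosmetic additions to the same argument.
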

In particular, if $\| u_1  \| > \| u_2  \| > \dots > \| u_{n+1}  \| $, which is the case that we will be interested in, then the cycle obtained has small volume since all its parallelepipeds have vectors $\ne (u_1,\dots,u_n)$. In other words, we decompose the cycle $P^n(x;u_1,\dots,u_{n+1})$ along the vectors $a_1,\dots,a_p$, plus a residual cycle with a very small volume compared to the other cycles. 

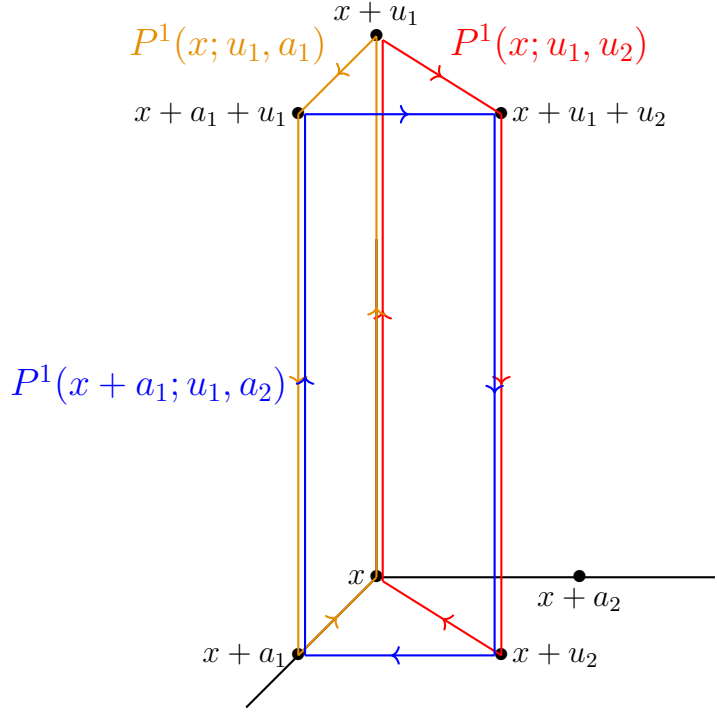
\begin{figure}[htbp]
  \centering
\begin{tikzpicture}[scale=0.9]
\draw[thick] (0,0,0)--(0,0,5); 
\draw[thick] (0,0,0)--(0,5,0);
\draw[thick] (0,0,0)--(5,0,0);

\coordinate (a_n) at (0,0,0);
\node at (a_n) [left] {$x$};
\node at (a_n) {\textbullet};

\coordinate (a_n") at (0.1,0,0.2);
\coordinate (b_n") at (0.1,8,0.2);

\coordinate (a_n') at (3,0,3);
\node at (a_n') [right] {$x+u_2$};
\node at (a_n') {\textbullet};

\coordinate (b_n) at (0,8,0);
\node at (b_n) [above] {$x+u_1$};
\node at (b_n) {\textbullet};

\coordinate (b_n') at (3,8,3);
\node at (b_n') [right] {$x+u_1+u_2$};
\node at (b_n') {\textbullet};

\coordinate (c_n) at (0,0,3);
\node at (c_n) [left] {$x+a_1$};
\node at (c_n) {\textbullet};

\coordinate (d_n) at (0,8,3);
\node at (d_n) [left] {$x+a_1+u_1$};
\node at (d_n) {\textbullet};

\coordinate (e) at (3,0,0);
\node at (e) [below] {$x+a_2$};
\node at (e) {\textbullet};

\node[left, color=blue] at (0,4,3)  {\large{$P^1(x+a_1;u_1,a_2)$}};
\node[above left, color=green!10!orange] at (0,8,1.5)  {\large{$P^1(x;u_1,a_1)$}};
\node[above right, color=red] at (1.5,8,1.5)  {\large{$P^1(x;u_1,u_2)$}};

\begin{scope}[thick, decoration={markings, mark=at position 0.5 with {\arrow{>}}}]

    \draw[postaction={decorate}, color=red] (0.15,0,0.15) -- (0.15,8,0.15);
    \draw[postaction={decorate}, color=red] (0.15,8,0.15) -- (b_n');
    \draw[postaction={decorate}, color=red] (b_n') -- (a_n');
    \draw[postaction={decorate}, color=red] (a_n') -- (0.15,0,0.15);
\end{scope}
\begin{scope}[thick, decoration={markings, mark=at position 0.5 with {\arrow{>}}}]

    \draw[postaction={decorate}, color=green!10!orange] (a_n) -- (b_n);
    \draw[postaction={decorate}, color=green!10!orange] (b_n) -- (d_n);
    \draw[postaction={decorate}, color=green!10!orange] (d_n) -- (c_n);
    \draw[postaction={decorate}, color=green!10!orange] (c_n) -- (a_n);
\end{scope}
\begin{scope}[thick, decoration={markings, mark=at position 0.5 with {\arrow{<}}}]

    \draw[postaction={decorate}, color=blue] (2.9,0,3) -- (2.9,8,3);
    \draw[postaction={decorate}, color=blue] (2.9,8,3) -- (0.1,8,3);
    \draw[postaction={decorate}, color=blue] (0.1,8,3) -- (0.1,0,3);
    \draw[postaction={decorate}, color=blue] (0.1,0,3) -- (2.9,0,3);
\end{scope}
\end{tikzpicture}
  \caption{A decomposition of $P^1(x;u_1,u_2)$ along the vectors $a_1$ and $a_2$. \\The residual cycle is the sum of the boundaries of the upper and lower triangles.}
  \label{fig:CubeNormal}
\end{figure}

\begin{proof}
Let $i \in \{1,\dots,p   \}  $, 
\begin{equation*}
    \begin{split}
    P^n(x+\sum_{k=1}^{i-1}a_k;u_1,\dots,u_n,a_i) &=  \sum_{s=1}^{n+1} (-1)^{s}   C^{n}(x+\sum_{k=1}^{i-1}a_k;u_1, \dots,\hat{u_s},\dots ,u_{n},a_i)
    \\ &+ (-1)^{n+1}  \sum_{s=1}^{n+1} (-1)^{s}  C^{n}(x+\sum_{k=1}^{i}a_k+\sum_{i=1}^n u_i;-u_1 \dots,-\hat{u_s},\dots, -u_{n},-a_i)
    \\ & =  \sum_{s=1}^{n} (-1)^{s}   C^{n}(x+\sum_{k=1}^{i-1}a_k;u_1, \dots,\hat{u_s},\dots ,u_{n},a_i)
    \\ &+ (-1)^{n+1}  \sum_{s=1}^{n} (-1)^{s}  C^{n}(x+\sum_{k=1}^{i}a_k+\sum_{i=1}^n u_i;-u_1 \dots,-\hat{u_s},\dots, -u_{n},-a_i)
    \\ & +(-1)^{n+1}   C^{n}(x+\sum_{k=1}^{i-1}a_k;u_1,\dots ,u_{n})
    \\ &+ (-1)^{n+1}   (-1)^{n+1}  C^{n}(x+\sum_{k=1}^{i}a_k+\sum_{i=1}^n u_i;-u_1,\dots, -u_{n}).
    \end{split}
\end{equation*}
By lemma \ref{boundary or parallelepiped}, we have
$$ C^{n}(x+\sum_{k=1}^{i}a_k+\sum_{i=1}^n u_i;-u_1,\dots, -u_{n}) =  (-1)^n C^{n}(x+\sum_{k=1}^{i}a_k;u_1 ,\dots, u_{n})   .        $$ 
So
\begin{equation*}
    \begin{split}
       & (-1)^{n+1}   C^{n}(x+\sum_{k=1}^{i-1}a_k;u_1,\dots ,u_{n})
    \\ &+ (-1)^{n+1}   (-1)^{n+1}  C^{n}(x+\sum_{k=1}^{i}a_k+\sum_{i=1}^n u_i;-u_1,\dots, -u_{n})
    \\  =      & (-1)^{n+1}   C^{n}(x+\sum_{k=1}^{i-1}a_k;u_1,\dots ,u_{n})
    \\ &+ (-1)^{n}  C^{n}(x+\sum_{k=1}^{i}a_k;u_1,\dots, u_{n}).
    \end{split}
\end{equation*}

By adding them, for $i \in \{1,\dots,p   \}  $, we get 
\begin{equation*}
    \begin{split}
      &  \sum_{i=1}^p  \Big( (-1)^{n+1}   C^{n}(x+\sum_{k=1}^{i-1}a_k;u_1,\dots ,u_{n})
    + (-1)^{n}  C^{n}(x+\sum_{k=1}^{i}a_k;u_1, \dots, u_{n}) \Big)
    \\& = (-1)^n \Big(   C^{n}(x+u_{n+1};u_1, \dots,u_{n}) -  C^{n}(x;u_1, \dots,u_{n})      \Big).
    \end{split}
\end{equation*}
So the sum of the $p$ parallelograms $ \sum_{i=1}^p  P^n(x+\sum_{k=1}^{i-1}a_k;u_1,\dots,u_n,a_i)$ gives
\begin{equation*}
    \begin{split}
     & \sum_{i=1}^p       \sum_{s=1}^{n} (-1)^{s}   C^{n}(x+\sum_{k=1}^{i-1}a_k;u_1, \dots,\hat{u_s},\dots ,u_{n},a_i) 
    \\ &+ (-1)^{n+1} \sum_{i=1}^p    \sum_{s=1}^{n} (-1)^{s}  C^{n}(x+\sum_{k=1}^{i}a_k+\sum_{i=1}^n u_i;-u_1 \dots,-\hat{u_s},\dots, -u_{n},-a_i)          
    \\&+  (-1)^n \Big(   C^{n}(x+u_{n+1};u_1, \dots,u_{n}) -  C^{n}(x;u_1, \dots,u_{n})      \Big).
    \end{split}
\end{equation*}
On the other hand, we have
\begin{equation*}
    \begin{split}
        P^n(x;u_1,\dots,u_{n+1}) &= \sum_{k=1}^{n+1} (-1)^{k}   C^{n}(x;u_1, \dots,\hat{u_k},\dots ,u_{n+1})
    \\ &+ (-1)^{n+1}  \sum_{k=1}^{n+1} (-1)^{k}  C^{n}(x+\sum_{i=1}^{n+1} u_i;-u_1 \dots,-\hat{u_k},\dots, -u_{n+1})
    \\ &= \sum_{k=1}^{n} (-1)^{k}   C^{n}(x;u_1, \dots,\hat{u_k},\dots ,u_{n+1})
    \\ &+ (-1)^{n+1}  \sum_{k=1}^{n} (-1)^{k}  C^{n}(x+\sum_{i=1}^{n+1} u_i;-u_1 \dots,-\hat{u_k},\dots, -u_{n+1})
    \\ &+ (-1)^{n+1}   C^{n}(x;u_1, \dots,u_{n})
    \\ &+(-1)^{n} C^{n}(x+u_{n+1};u_1 \dots, u_{n}).
    \end{split}
\end{equation*}
Since $(-1)^{n+1} (-1)^{n+1} C^{n}(x+\sum_{i=1}^{n+1} u_i;-u_1 \dots, -u_{n}) = (-1)^{n} C^{n}(x+u_{n+1};u_1 \dots, u_{n}) $.
\\Finally, by taking the difference, all the chains whose vectors are $(u_1, \dots, u_{n})$ simplifies and we get:
\begin{equation*}
  \begin{split}
       &  P^n(x;u_1,\dots,u_{n+1}) - \sum_{i=1}^p P^n(x+\sum_{k=1}^{i-1}a_k;u_1,\dots,u_n,a_i) 
     \\ & = \sum_{k=1}^{n} (-1)^{k}   C^{n}(x;u_1, \dots,\hat{u_k},\dots ,u_{n+1})
    \\ & + (-1)^{n+1}  \sum_{k=1}^{n} (-1)^{k}  C^{n}(x+\sum_{i=1}^{n+1} u_i;-u_1 \dots,-\hat{u_k},\dots, -u_{n+1})
    \\ & - \sum_{i=1}^p       \sum_{s=1}^{n} (-1)^{s}   C^{n}(x+\sum_{k=1}^{i-1}a_k;u_1, \dots,\hat{u_s},\dots ,u_{n},a_i) 
    \\ &- (-1)^{n+1} \sum_{i=1}^p    \sum_{s=1}^{n} (-1)^{s}  C^{n}(x+\sum_{k=1}^{i}a_k+\sum_{i=1}^n u_i;-u_1 \dots,-\hat{u_s},\dots, -u_{n},-a_i)       .   
  \end{split}
\end{equation*}
This is a sum of $2n(p+1)$ $n$-parallelepipeds, whose last vector is either $u_{n+1}$ or an $a_i$, and whose first $(n-1)$-vectors are in $\{(u_1,\dots,\hat{u_k},\dots,u_n)   \mid k \in \{1,\dots,n   \}      \}$.
\end{proof}
\subsection{Proof of Theorem \ref{Thm 2 v2}}\label{sect 4.3}
We saw in subsection \ref{parallel sets} that the cross section of a product is the product of cross sections. Therefore, to prove Theorem \ref{Thm 2 v2}, it is enough to prove it when $X$ is an irreducible symmetric space of non-compact type or a Euclidean building with no Euclidean factor. Let us prove the following more general result.
\begin{thm} \label{CE of model space}
Let $r\geq 2$ be an integer. Let $\varphi_1,\varphi_2,\dots,\varphi_r$ be functions from $\mathbb{R}_+$ to $\mathbb{R}_+$ such that $\varphi_1(d)=d$ for all $d$, and for all $i \,$  $\varphi_i \gg \varphi_{i+1} $, and $\varphi_i(d)$ tends to $+\infty$ at $+\infty$. Suppose moreover that for all $p=2, \dots , r$, $\left( \prod_{\substack{i=1 \\ i \ne p-1}}^{p}  \varphi_i(d) \right)^{\frac{p}{p-1}} \ll \prod_{i=1}^{p}  \varphi_i(d) $. 
\\Let $X$ be a symmetric space of non-compact type of rank $\geq r$, or a thick Euclidean building with cocompact affine Weyl group, with bounded geometry and no Euclidean factor, of dimension $\geq r$. Let $Y$ be a Lipschitz-connected complete metric space with  at most exponential growth, and let $f : X \to Y$ a large-scale Lipschitz map. 
If there exists a sublinear map $\phi$ such that 
\\$\forall x \in X$, for every $F$ maximal flat that contains $x$, $\forall d >0$, $\forall u_1,\dots,u_r \in T_x F \simeq F$ that satisfy
\\• $u_1,\dots,u_{r-1}$ are maximally singular,
\\• $\| u_1  \| = d$ and $\forall i = 2, \dots, r$, $\| u_i  \| \leq \varphi_i(d)$,
\\we have 
$$     \textup{FillVol}_{r}^{Y, \mathrm{cr}}(f(P^{r-1}(x;u_1,\dots,u_r)))            \leq \phi \left(\prod_{i=1}^{r}  \varphi_i(d)\right),         $$
\\then $f$ is not a coarse embedding.
\end{thm}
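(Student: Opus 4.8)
The plan is to run the inductive scheme of the proof of Theorem~\ref{CE of product} on $r$, making two structural substitutions dictated by the fact that $X$ does not split as a product. First, rectangles are replaced throughout by the parallelepipeds $C^{n}(x;u_1,\dots,u_n)$ and parallelograms $P^{n-1}(x;u_1,\dots,u_n)$ of Section~4.2, taken inside a single maximal flat. Second, the ``factor directions'' of Theorem~\ref{CE of product} are replaced by \emph{maximally singular directions} together with the cross sections of the singular flats they span. As in Lemma~\ref{connect the dots} we assume $f$ is Lipschitz, and throughout we use that a symmetric space of non-compact type with no Euclidean factor (resp.\ a thick Euclidean building with cocompact affine Weyl group, bounded geometry and no Euclidean factor) has exponential volume growth, \emph{uniformly} so on the cross sections of its singular flats: automatic by finiteness of the isometry classes in the symmetric case, and exactly Proposition~\ref{prop cross section growth} in the building case.

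The first new step produces \emph{maximally singular undistorted directions}, replacing the observation in Theorem~\ref{CE of product} that no copy of $X_1$ is sent sub-linearly. Assume $f$ is a coarse embedding. Given $a,b\in X$, pick a maximal flat $F\ni a,b$; by Proposition~\ref{Weyl cone acute}, $b-a$ lies in the closure of a Weyl sector of $F$ generated by maximally singular vectors $u_1,\dots,u_p$, so $b-a=\sum_i x_iu_i$ with $x_i\ge 0$, and by Corollary~\ref{corollary of acute} each $\|x_iu_i\|\le d_X(a,b)$. Concatenating the $p\le\mathrm{rank}(X)$ maximally singular segments realising the broken path $a\to a+x_1u_1\to\cdots\to b$ shows that if $f$ compressed every maximally singular geodesic segment sub-linearly (uniformly), then $f$ would admit a sublinear upper control on all of $X$, contradicting the $k=1$ growth obstruction (exponential growth cannot coarsely embed with sublinear $\rho_+$ into at-most-exponential growth). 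Hence there are $\lambda>0$ and a sequence of maximally singular segments of lengths $d_n\to\infty$ sent with $d_Y\ge\lambda d_n$; and a pigeonhole over the $\le p$ pieces of a long undistorted geodesic (bounding the ``bad'' pieces by $\mathrm{Lip}(f)\,\|x_iu_i\|\le\mathrm{Lip}(f)\,d_X(a,b)$) lets one take the undistorted piece of length $\to\infty$. Write $u_1^{(n)}$ for this maximally singular vector, $\|u_1^{(n)}\|=d_n$, based at $x_n$.

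The second step assembles the parallelograms transverse to $u_1^{(n)}$ and runs the co-area argument. Fix a maximal flat $F_n$ and a Weyl sector $C_n$ having $u_1^{(n)}$ as an edge, let $u_2^{(n)},\dots,u_{r-1}^{(n)}$ be other edges of $C_n$ rescaled so that $\|u_i^{(n)}\|=\varphi_i(d_n)$, and set $E_n=\mathbb{R}u_1^{(n)}\oplus\cdots\oplus\mathbb{R}u_{r-1}^{(n)}$, an $(r-1)$-dimensional singular flat; then $P_X(E_n)=E_n\times CS_X(E_n)$ with $CS_X(E_n)$ a model space of rank $p-(r-1)\ge 1$ with no Euclidean factor and $\mathrm{Vol}^\varepsilon\big(B_{CS_X(E_n)}(R)\big)\ge e^{\mu R}$ for a uniform $\mu$. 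For each $q$ in the ball of radius $\varphi_r(d_n)$ about a base point of $CS_X(E_n)$, the vector $w_q$ to $q$ lies in a geodesic which, together with $E_n$, spans a flat inside a maximal flat of $X$, so there one has the parallelogram $P^{r-1}(x_n;u_1^{(n)},\dots,u_{r-1}^{(n)},w_q)$ with $u_1^{(n)},\dots,u_{r-1}^{(n)}$ maximally singular and $\|w_q\|\le\varphi_r(d_n)$; the hypothesis bounds the mass of a filling $V_n$ of its image by $\phi(\prod_{i=1}^r\varphi_i(d_n))$. Slicing $V_n$ with $\pi=d_Y(\,\cdot\,,f(P^{r-2}(x_n;u_1^{(n)},\dots,u_{r-1}^{(n)})))$ via the Slicing Theorem~\ref{slicing thm}, and using Lemma~\ref{paralello sum} and Proposition~\ref{decomp result} to check that the ``side-face'' contributions to the sliced boundary are parallelepipeds whose vectors omit one of $u_1^{(n)},\dots,u_{r-1}^{(n)}$, hence of mass $o\big(\prod_{i=1}^{r-1}\varphi_i(d_n)\big)$, one gets $\mass{\slice{V_n}{\pi}{t}}\ge\tfrac{\lambda}{2}\prod_{i=1}^{r-1}\varphi_i(d_n)$ for a.e.\ $t\in\left]0,D\right[$ — the lower bound $\lambda\prod_{i=1}^{r-1}\varphi_i(d_n)$ on $\mathrm{FillVol}_{r-1}^{Y,\mathrm{cr}}(f(P^{r-2}(x_n;u_1^{(n)},\dots,u_{r-1}^{(n)})))$ coming from Lemma~\ref{filling current distance} when $r=2$ and from the induction hypothesis (applied to a re-indexed sequence of $\varphi_i$) when $r\ge 3$. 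The co-area formula then forces $D\le\psi(\varphi_r(d_n))$ with $\psi$ sublinear, so for each $q$ there is $z(q)$ on the $w_q$-side with $\mathrm{proj}_{CS_X(E_n)}z(q)=q$ and $f(z(q))\in N_{\psi(\varphi_r(d_n))}(f(P^{r-2}(x_n;\dots)))$. The set $C_n=\{z(q)\}$ projects onto the ball in $CS_X(E_n)$, so $\mathrm{Vol}^\varepsilon(C_n)\ge e^{\mu\varphi_r(d_n)}$, while $f(C_n)$ lies in a $\psi(\varphi_r(d_n))$-neighbourhood of a set of $\varepsilon$-volume $\precsim\prod_{i=1}^{r-1}\varphi_i(d_n)$; by Lemmas~\ref{coarse volume} and~\ref{volume neighb} and the at-most-exponential growth of $Y$ this yields $e^{\mu\varphi_r(d_n)}\precsim e^{\beta\psi(\varphi_r(d_n))}\cdot\prod_{i=1}^{r-1}\varphi_i(d_n)$, which is impossible for $d_n$ large since $\psi$ is sublinear.

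The hard part is the bookkeeping of the induction. On one hand, the co-area step must be fed a base parallelogram $P^{r-2}(x_n;u_1^{(n)},\dots,u_{r-1}^{(n)})$ whose $r-1$ vectors are all maximally singular \emph{in $X$} (and span a singular flat) yet has a large image-filling; since maximally singular directions of a cross section are in general \emph{not} maximally singular in $X$, one has to run the induction so that the parallelograms produced always have their first $r-1$ vectors edges of a common Weyl sector of a maximal flat of $X$ — this is where the structural Proposition~\ref{Weyl cone acute} and the behaviour of cross sections of (not necessarily maximally) singular flats are essential, and where ``maximally singular undistorted directions'' as in the introduction are forced upon us rather than merely ``undistorted'' ones. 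On the other hand, the length scales must be matched across levels: the new transverse direction $w_q$ enters at the shortest scale $\varphi_r$, so the induction hypothesis has to be invoked for the sequence obtained from $(\varphi_1,\dots,\varphi_r)$ by deleting $\varphi_{r-1}$, and one must check that the compatibility condition $\big(\prod_{i\ne p-1}\varphi_i\big)^{p/(p-1)}\ll\prod_i\varphi_i$ survives this deletion — precisely why that condition is imposed in the statement. Everything else (the slicing estimates, the coarse volume preservation, the neighbourhood-volume bound) is imported from the proof of Theorem~\ref{CE of product}.
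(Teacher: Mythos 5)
Your outline follows the paper's strategy: Weyl-sector pigeonhole to produce maximally singular undistorted data, parallel sets and cross sections to supply exponential growth transverse to the parallelogram, then slicing played against coarse volume preservation via Lemma~\ref{coarse volume}, Lemma~\ref{volume neighb}, and the uniform bound of Proposition~\ref{prop cross section growth}. Two points, however, are off in ways that matter. First, the slicing function should be $\pi=d_Y(\cdot,f(C^{r-1}(x_n;u_1^{(n)},\dots,u_{r-1}^{(n)})))$, distance to the image of the full $(r-1)$-parallelepiped, not to its boundary $P^{r-2}$; otherwise $\partial V_n\rstr\{\pi=0\}$ is not $f(C^{r-1}(\dots))$ and the slices do not give fillings of $\partial f(C^{r-1})=f(P^{r-2}(\dots))$ whose mass you can bound below.

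Second — and this is the real gap — the explanation that the hypothesis $\big(\prod_{i\ne p-1}^{p}\varphi_i\big)^{p/(p-1)}\ll\prod_{i=1}^p\varphi_i$ is there so that it ``survives deleting $\varphi_{r-1}$'' mis-describes both the induction and the role of the condition. The induction runs over $k=2,\dots,r$ on the initial segments $\varphi_1,\dots,\varphi_k$, and nothing is ever deleted. The condition enters a step your proposal gestures at but never carries out: the induction hypothesis gives $P^{k-1}(x_n;u_1^n,\dots,u_k^n)$ with undistorted image-filling and with $u_1^n,\dots,u_{k-1}^n$ maximally singular, but $u_k^n$ in general not. You must then decompose $P^{k-1}$ along the edges $e_1,\dots,e_p$ of a Weyl sector of $F_n$ containing $u_k^n$, via Proposition~\ref{decomp result}, into $p$ parallelograms whose last vector is maximally singular plus a residual $(k-1)$-cycle $R_n^{k-1}$ of $(k-1)$-volume $\lesssim\prod_{\substack{i=1\\ i\ne k-1}}^{k}\varphi_i(d_n)$; fill $R_n^{k-1}$ inside the Euclidean flat $F_n$ at cost $\lesssim\big(\prod_{\substack{i=1\\ i\ne k-1}}^{k}\varphi_i(d_n)\big)^{k/(k-1)}$ — this is exactly what the compatibility hypothesis makes $\ll\prod_{i=1}^{k}\varphi_i(d_n)$ — and pigeonhole over the $p$ pieces to obtain a fully maximally singular $P^{k-1}(x_n';{u_1^n}',\dots,{u_k^n}')$ with undistorted filling and $\|{u_i^n}'\|$ comparable to $\varphi_i(d_n)$. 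Without this step there is no singular $k$-flat $F_n'$ whose parallel set and cross section you can use, and the final volume contradiction cannot even be set up. Proposition~\ref{Weyl cone acute} and Corollary~\ref{corollary of acute} appear here only to guarantee the Weyl sector is an acute simplicial cone generated by maximally singular vectors; they do not by themselves make $u_k^n$ maximally singular. Once these two points are repaired, the remainder of your sketch matches the paper.
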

Before going into details, let us recall the main idea of the proof. The proof of this theorem is an adaptation of that of Theorem \ref{CE of product}. Since the domain is no longer a product space, we consider parallelograms instead of rectangles, and adapt the size of its sides to make the proof work. Similarly, the proof will be done by induction. When $r=2$, like in Theorem \ref{CE of product}, we start by looking for pairs of points $(a_n)_n$, $(b_n)_n$ in the domain such that $d_n := d_X(a_n,b_n) \to +\infty$ and which are mapped in $Y$ quasi-isometrically. Moreover, they must also be lying on a maximally singular geodesic in order to get the splitting of the domain. To find these pairs of points, we start by the pairs of points $(a_n)_n$, $(b_n)_n$ obtained by the exponential growth of the domain. Then we decompose the geodesic segment $[a_n,b_n]$ along the maximally singular geodesics generating a Weyl sector that contains $[a_n,b_n]$. We obtain the desired pairs of points by a pigeonhole-like argument. The rest of the proof is similar to that of Theorem \ref{CE of product}. When $r\geq2$, instead of looking for maximally singular geodesic segments which are undistorted, we look for maximally singular parallelograms (i.e.\ whose sides are maximally singular segments) with undistorted filling. To do that, we use the induction hypothesis to find undistorted, almost singular, parallelograms with undistorted filling. Then we decompose them along the walls of a Weyl sector using Proposition \ref{decomp result}, and we apply a pigeonhole-like argument.
\begin{proof}
Let $r \geq 2$ be fixed, and let $\varphi_1,\varphi_2,\dots,\varphi_r$ be functions satisfying the four conditions. We will prove the theorem by induction on $k$, from $k=2$ to $k=r$.
\\• Let us start by the case $k =2$.
\\Let $X$ be a symmetric space of non-compact type of rank $p \geq 2$, or a Euclidean building of rank $p \geq 2$. Suppose there exists a sublinear map $\phi$ such that $\forall x \in X$, for every $F$ maximal flat that contains $x$, $\forall d >0$, $\forall u_1,u_2 \in T_x F \simeq F$ that satisfy: $u_1$ is maximally singular, $\| u_1  \| = \varphi_1(d) = d$ and $\| u_2  \| \leq \varphi_2(d)$, we have 
$$     \textup{FillVol}_{2}^{Y, \mathrm{cr}}(f(P^{1}(x;u_1,u_2)))            \leq \phi (d \, \varphi_2(d)).  $$
Since $X$ has exponential growth and $Y$ has at most exponential growth, the case $k=1$ in Theorem \ref{CE of product} implies that $X$ is not sent sublinearly, i.e.\ there exist $\lambda >0$ and two sequences $(a_n)_{n \in \mathbb{N} }$ and $(b_n)_{n \in \mathbb{N} }$ in $X$ such that $d_n := d_X(a_n, b_n) \to \infty$, and $\forall n \in \mathbb{N}$
$$  d_Y(f(a_n),f(b_n)) \geq \lambda d_X(a_n, b_n)    .               $$
For all $n \in \mathbb{N}$, let $\gamma_n$ denote the geodesic segment in $X$ going from $a_n$ to $b_n$, which is unique since $X$ is CAT(0). If $\gamma_n$ is maximally singular, then there is nothing to do at this step. If not, consider a maximal flat $F_n$ containing $\gamma_n$ and we will denote $w_n := \frac{d_n}{\| \gamma_n'(0) \|} \gamma_n'(0) $ the directing vector of $\gamma_n$ with magnitude $d_n$.
\\Let $C$ be a Weyl sector containing $\gamma_n$, i.e.\ a Weyl sector of $F_n$ with tip at $a_n$, such that $b_n \in \overline{C}$. So there exist $u_1,\dots,u_p$ maximally singular vectors at $a_n$ generating $C$, in particular there exist $\delta_1,\dots,\delta_p \geq 0 $ such that $     w_n = \sum_{i=1}^p \delta_i u_i  $.
\\
\\Now let us consider the geodesic segment $\gamma_n$ as the $1$-chain $C^1(a_n;w_n)$ in $F_n$, and its boundary $\partial C^1(a_n;w_n) = P^0(a_n;w_n) $. We have that 
$$  P^0(a_n;w_n) =   P^0(a_n;\delta_1 u_1) +  P^0(a_n + \delta_1 u_1;\delta_2 u_2) + \dots +  P^0(a_n+ \sum_{i=1}^{p-1} \delta_i u_i;\delta_p u_p).                             $$
So, by applying $f$ which is functorial, we have 
$$ f( P^0(a_n;w_n)) =   \sum_{i=1}^p f(  P^0(a_n+ \sum_{s=1}^{i-1} \delta_s u_s;\delta_i u_i)   ).                          $$
So 
\begin{equation*}
    \begin{split}
         \textup{FillVol}_{1}^{Y, \mathrm{cr}} ( f( P^0(a_n;w_n)) ) &=   \textup{FillVol}_{1}^{Y, \mathrm{cr}}    \big(    \sum_{i=1}^p f(  P^0(a_n+ \sum_{s=1}^{i-1} \delta_s u_s;\delta_i u_i)   )        \big)   
     \\    & \leq      \sum_{i=1}^p \textup{FillVol}_{1}^{Y, \mathrm{cr}} \big( f(  P^0(a_n+ \sum_{s=1}^{i-1} \delta_s u_s;\delta_i u_i)   )  \big)  .   
    \end{split}
\end{equation*}
Since $f(P^0(a_n;w_n)) = [f(b_n)] - [f(a_n)] $, by lemma \ref{filling current distance} $$\textup{FillVol}_{1}^{Y, \mathrm{cr}} ( f( P^0(a_n;w_n)) ) = d_Y(f(a_n),f(b_n)). $$
So there exists $i_0 \in \{1,\dots,p       \} $ such that
\begin{equation*}
    \begin{split}
        \frac{1}{p}d_Y(f(a_n),f(b_n)) &\leq  \textup{FillVol}_{1}^{Y, \mathrm{cr}} \big( f(  P^0(a_n+ \sum_{s=1}^{i_0-1} \delta_s u_s;\delta_{i_0} u_{i_0})   )  \big)
        \\ &= d_Y(f(a_n+ \sum_{s=1}^{i_0-1} \delta_s u_s),f(a_n+ \sum_{s=1}^{i_0} \delta_s u_s)).
    \end{split}
\end{equation*}
Let us denote $a_n' = a_n+ \sum_{s=1}^{i_0-1} \delta_s u_s $ and $b_n' =  a_n+ \sum_{s=1}^{i_0} \delta_s u_s $. So we have
$$ \frac{\lambda}{p} d_X(a_n,b_n)  \leq     \frac{1}{p} d_Y(f(a_n),f(b_n)) \leq  d_Y(f(a_n'),f(b_n')) .              $$
This implies in particular that $ d_Y(f(a_n'),f(b_n')) \to \infty $, which implies that $d_X(a_n',b_n') \to \infty $ because $f$ is a coarse embedding.
\\Since $C$ is an acute simplicial cone, we use corollary \ref{corollary of acute}. We have $$d_X(a_n,b_n) = \| w_n  \| \geq \| \delta_{i_0} u_{i_0} \| = d_X(a_n',b_n'). $$
We finally get
$$ \frac{\lambda}{p} d_X(a_n',b_n') \leq  d_Y(f(a_n'),f(b_n')) .              $$
Now we have two sequences $(a_n')_n$ and $(b_n')_n$ in $X$ that satisfy  $d_X(a_n', b_n') \to \infty$, and $\forall n \in \mathbb{N}$
$$  d_Y(f(a_n'),f(b_n')) \geq \lambda' d_X(a_n', b_n')             . $$
Where $\lambda' = \frac{\lambda}{p}$, and more importantly, the geodesic between $a_n'$ and $b_n'$ is maximally singular.
\\
\\To simplify the notations for the rest of the proof, we will just denote $a_n'$ by $a_n$, $b_n'$ by $b_n$ and $\lambda'$ by $\lambda$.
\\For every $n \in \mathbb{N}$, let us denote $\gamma_n$ the bi-infinite geodesic in $X$ that contains $a_n$ and $b_n$, and consider $P_X(\gamma_n)$ the parallel set of $\gamma_n$. $P_X(\gamma_n)$ is a convex subset that splits metrically as 
$$  P_X(\gamma_n)  =    \mathbb{R} \times C_X(\gamma_n) .         $$
The cross section $C_X(\gamma_n)$ is either :
\\• A symmetric space of non-compact type of rank $\geq 1$, when $X$ is a symmetric space, 
\\• Or a Euclidean building with bounded geometry of dimension $\geq 1$, with no Euclidean factor, when $X$ is a Euclidean building. 
\\Let us denote $C_X(\gamma_n)$ by $X_n'$. Note that all such cross sections have exponential growth, and by Proposition \ref{prop cross section growth}, there is a uniform lower bound on their growth. Therefore, if we fix $\varepsilon >0$, there exists $\mu >0$ such that for all $n \in \mathbb{N}$ and for all $R>0$
$$        \textup{Vol}^{\varepsilon} \left( B_{X_n'}(R)  \right) \geq \exp(\mu R)  .              $$
\\Now we will work in this product space and apply the same strategy as when $X$ is a product space.
$$  P_X(\gamma_n)  =    \mathbb{R} \times X_n' .         $$
\\Note that the slices $ \mathbb{R} \times \{x'\}$, with $x' \in X'$, are the geodesics that are parallel to $\gamma$. Let $x_n' \in X'$ such that $\gamma_n =  \mathbb{R} \times \{x_n'\}$ .
\\For all $y \in B_{X_n'}\big( x_n',\varphi_2(d_n)      \big) $, consider the geodesic $ [ x_n',y  ]$ in $X_n'$. Denote $\gamma_n'$ the (unique since $X_n'$ is CAT(0)) bi-infinite geodesic extension of $ [ x_n',y  ]$, and $F_n'' = F_n' \times \gamma_n'$ the $2$-flat. Consider the $1$-parallelogram $ P^{1}(a_n;u_1^n,u_{2}^n) $ in $F_n''$, where $u_1$ is the (maximally singular) directing vector of $\gamma_n$ that satisfies $\| u_{1}^n  \| = d_{X}(a_n,b_n)  $, and $u_{2}^n$ is the unique vector in $T_{x_n}F_n''$ that is parallel to $\gamma_n'$ and satisfies $\| u_{2}^n  \| = d_{X_n'}(x_n',y)  $. This $1$-parallelogram satisfies the induction hypothesis: 
$$     \textup{FillVol}_{2}^{Y, \mathrm{cr}}(f(P^{1}(a_n;u_1^n,u_{2}^n)))            \leq \phi (d_n \,\varphi_2(d_n) ).         $$
Which implies that there exists $V_n \in \mathbf{I}_{2}(Y)$ in $Y$ such that $\partial V_n = f(P^{1}(a_n;u_1^n,u_{2}^n))$ and 
$$
\mass{V_n} \leq \phi (d_n \,\varphi_2(d_n)).
$$
Now consider the 1-Lipschitz map $\pi : Y \to \mathbb{R}$, $\pi(z) = d_Y(z,f(C^{1}(a_n;u_1^n))) $.
\\By the Slicing Theorem, we have that for a.e.\ $t \in \mathbb{R}$, there exists $<V_n,\pi,t> \, \in \mathbf{I}_1(Y)$ such that $\slice{V_n}{\pi}{t}= \partial(V_n\rstr\{\pi \leq t\}) - (\partial V_n)\rstr\{\pi\leq t\}$, and by integrating the co-area formula over the distance $t$, we have
$$\mass{\slice{V_n}{\pi}{t}}\leq\frac{d}{dt}\mass{V_n\rstr\{\pi\leq t\}},$$
$$ \int_0^{+\infty} \mass{\slice{V_n}{\pi}{t}} dt \leq \int_0^{+\infty} \frac{d}{dt}\mass{V_n\rstr\{\pi\leq t\}},$$
$$ \int_0^{+\infty} \mass{\slice{V_n}{\pi}{t}} dt \leq \mass{V_n}.$$
Since $\mass{V_n} \leq \phi (d_n \,\varphi_2(d_n)) $, we get
\begin{equation}\label{equation 7}
\int_0^{D} \mass{\slice{V_n}{\pi}{t}} dt \leq \int_0^{+\infty} \mass{\slice{V_n}{\pi}{t}} dt \leq \phi (d_n \,\varphi_2(d_n)),
\end{equation}
where $D=d_Y\big(f(C^{1}(a_n;u_1^n)),f(C^{1}(a_n+u_2^n;u_1^n))\big)$.
\\However, for a.e.\ $t \in \left]0,D\right[$, $\mass{\slice{V_n}{\pi}{t}}$ cannot be too small since $\slice{V_n}{\pi}{t}$ almost gives a filling of the $0$-cycle $\partial \left( f(C^{1}(a_n;u_1^n)) \right) =  [\![ f(b_n) ]\!] - [\![ f(a_n) ]\!] $.
\begin{claim}
 For $n$ big enough: for a.e.\ $t \in \left]0,D\right[$, $\mass{\slice{V_n}{\pi}{t}} \geq \frac{\lambda}{2} d_n$.
\end{claim}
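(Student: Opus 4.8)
The plan is to follow verbatim the structure of the analogous claim in the proof of Theorem~\ref{CE of product} for $k=2$, with the rectangle edge $\gamma_1$ there replaced by the maximally singular segment $C^{1}(a_n;u_1^n)$ here. Fix a value $t\in\left]0,D\right[$ for which the slice $\slice{V_n}{\pi}{t}$ is defined. First I would identify the part of the cycle $\partial V_n=f(P^{1}(a_n;u_1^n,u_2^n))$ sitting on the level set $\pi^{-1}(\{0\})$: since $\pi(\cdot)=d_Y(\cdot,f(C^{1}(a_n;u_1^n)))$ and the image $f(C^{1}(a_n;u_1^n))$ is compact (hence closed), $\pi^{-1}(\{0\})=f(C^{1}(a_n;u_1^n))$, so that
$$\partial(V_n)\rstr\{\pi\le t\}=f(C^{1}(a_n;u_1^n))+H_t,\qquad H_t:=\partial(V_n)\rstr\{0<\pi\le t\}.$$
Combining this with part~(1) of the Slicing Theorem~\ref{slicing thm} and the fact that $\|\slice{V_n}{\pi}{t}\|$ is concentrated on $\pi^{-1}(\{t\})$, one obtains $\partial(V_n\rstr\{\pi\le t\})=\slice{V_n}{\pi}{t}+f(C^{1}(a_n;u_1^n))+H_t$, so $-(\slice{V_n}{\pi}{t}+H_t)$ is a $1$-chain filling the $0$-cycle $\partial f(C^{1}(a_n;u_1^n))=[\![ f(b_n) ]\!]-[\![ f(a_n) ]\!]$.

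Next I would lower bound its mass via Lemma~\ref{filling current distance}:
$$\mass{\slice{V_n}{\pi}{t}}+\mass{H_t}\ \ge\ \mass{-(\slice{V_n}{\pi}{t}+H_t)}\ \ge\ \textup{FillVol}^{Y,\mathrm{cr}}_{1}\big([\![ f(b_n) ]\!]-[\![ f(a_n) ]\!]\big)\ \ge\ d_Y(f(a_n),f(b_n))\ \ge\ \lambda\,d_n,$$
using that $a_n,b_n$ were arranged so as to be sent with a linear lower bound and that $d_X(a_n,b_n)=\|u_1^n\|=d_n$. Then I would bound $\mass{H_t}$ from above: on $\{0<\pi\le t\}$ with $t<D$ only the two side edges $C^{1}(a_n;u_2^n)$ and $C^{1}(a_n+u_1^n;u_2^n)$ of the parallelogram survive, since the bottom edge sits at $\pi=0$ and every point of the image of the top edge lies at distance $\ge D$ from $f(C^{1}(a_n;u_1^n))$, hence at level $\ge D$. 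Thus $H_t=\big(f(C^{1}(a_n;u_2^n))+f(C^{1}(a_n+u_1^n;u_2^n))\big)\rstr\{0<\pi\le t\}$ up to orientation, and since restriction does not increase mass and $f$ is $\textup{Lip}(f)$-Lipschitz, $\mass{H_t}\le 2\,\textup{Lip}(f)\,\|u_2^n\|\le 2\,\textup{Lip}(f)\,\varphi_2(d_n)$.

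Finally, since $\varphi_2(d)\ll d$, there is $N$ such that $2\,\textup{Lip}(f)\,\varphi_2(d_n)\le\tfrac{\lambda}{2}d_n$ for all $n\ge N$, whence for such $n$ and a.e.\ $t\in\left]0,D\right[$ one gets $\mass{\slice{V_n}{\pi}{t}}\ge\lambda\,d_n-2\,\textup{Lip}(f)\,\varphi_2(d_n)\ge\tfrac{\lambda}{2}d_n$, as claimed. The one delicate point, and the step I would check most carefully, is the level-set bookkeeping in the second paragraph: one must verify that the image of the ``opposite'' edge $C^{1}(a_n+u_2^n;u_1^n)$ lies entirely in $\{\pi\ge D\}$ and so drops out of $\partial(V_n)\rstr\{\pi\le t\}$ for $t<D$ — this is exactly where the choice $D=d_Y\big(f(C^{1}(a_n;u_1^n)),f(C^{1}(a_n+u_2^n;u_1^n))\big)$ and the definition of $\pi$ as the \emph{distance} to $f(C^{1}(a_n;u_1^n))$ are used, and it is handled precisely as in the case $k=2$ of Theorem~\ref{CE of product}.
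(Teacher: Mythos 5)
Your argument is correct and matches the paper's proof of this claim step for step: the same decomposition $\partial(V_n)\rstr\{\pi\le t\}=f(C^{1}(a_n;u_1^n))+H_t$, the same use of part (1) of the Slicing Theorem to conclude that $-(\slice{V_n}{\pi}{t}+H_t)$ fills $[\![ f(b_n) ]\!]-[\![ f(a_n) ]\!]$, the same lower bound via Lemma~\ref{filling current distance} and the choice of the undistorted pair $(a_n,b_n)$, the same upper bound $\mass{H_t}\le 2\,\textup{Lip}(f)\,\varphi_2(d_n)$ from the two side edges, and the same conclusion using $\varphi_2\ll\mathrm{id}$. (If anything your phrasing is cleaner: the paper's notation $\big(f(b_n)-f(a_n)\big)$ for the current $\partial(V_n)\rstr\{\pi=0\}$ is the 1-current $f(C^{1}(a_n;u_1^n))$, exactly as you wrote it, and your closing remark correctly identifies where the definitions of $\pi$ and $D$ are used to make the top edge drop out of the restriction.)
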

\begin{proof}
For a.e.\ $t \in \left]0,D\right[$,
\begin{equation*}
    \begin{split}
       \partial(V_n) \rstr\{\pi \leq t\} &= \partial(V_n) \rstr\{\pi =0\} + \partial(V_n) \rstr\{ 0 < \pi \leq t\} 
       \\ &= \big( f(b_n)-f(a_n) \big) + H_t.
    \end{split}
\end{equation*}
Where $H_t$ is the $1$-current $\partial (V_n) \rstr\{ 0 < \pi \leq t\}$.
\\Since $\|\slice{V_n}{\pi}{t}\|$ is concentrated on $\pi^{-1}(\{t\})$,
$$\partial(V_n\rstr\{\pi \leq t\}) = \slice{V_n}{\pi}{t} + \big( f(b_n)-f(a_n) \big) + H_t. $$
Which means that $\slice{V_n}{\pi}{t} + \big( f(b_n)-f(a_n) \big) + H_t$ is a $1$-current that is actually a cycle.
\\So $-(\slice{V_n}{\pi}{t} + H_t)$ is a $1$-chain that fills the $0$-cycle $ [f(b_n)] - [f(a_n)]$. Therefore,
$$\mass{ -(\slice{V_n}{\pi}{t} + H_t) } \geq \textup{FillVol}_{k}^{Y, \mathrm{cr}}( f(b_n)-f(a_n)) \geq \lambda d_n.   $$
So 
$$\mass{\slice{V_n}{\pi}{t}} + \mass{H_t} \geq \lambda \, d_n .$$
Note that
\begin{equation*} 
\begin{split}
H_t & =  \partial (V_n) \rstr\{ 0 < \pi \leq t\} 
     \\ & =\big(  f(C^1(a_n;u_2^n) ) + f(C^1(b_n;u_2^n) ) \big) \rstr\{ 0 <\pi \leq t \} .
\end{split}
\end{equation*}
By taking the mass
\begin{equation*} 
\begin{split}
\mass{H_t} & = \mass{( \ f(C^1(a_n;u_2^n) ) + f(C^1(b_n;u_2^n) ) ) \rstr\{ 0 <\pi \leq t \} }  
\\ & \leq \mass{   f(C^1(a_n;u_2^n) ) \rstr\{ 0<\pi \leq t \}} + \mass{   f(C^1(b_n;u_2^n) ) \rstr\{ 0<\pi \leq t \}}      \\
& \leq \mass{   f(C^1(a_n;u_2^n) )} + \mass{   f(C^1(b_n;u_2^n) ) }  \\
& \leq \textup{Lip}(f) \big( \mass{   C^1(a_n;u_2^n)} + \mass{   C^1(b_n;u_2^n) }  \big).
\end{split}
\end{equation*}
Since $\mass{   C^1(a_n;u_2^n)} = \mass{   C^1(b_n;u_2^n) } \leq \varphi_2(d_n) $, so
$$\mass{H_t} \leq   2 \textup{Lip}(f) \, \varphi_2(d_n). $$
So 
$$\mass{\slice{V_n}{\pi}{t}} \geq \lambda \, d_n - \mass{H_t} \geq \lambda \, d_n - 2 \textup{Lip}(f) \,\varphi_2(d_n). $$
Since $\varphi_2(d_n) = o(d_n)$, there exists $N \in \mathbb{N}$ such that for all $n \geq N$
$$\lambda \, d_n - 2 \textup{Lip}(f)  \,\varphi_2(d_n) \geq \frac{\lambda}{2} \, d_n. $$
So we conclude that for all $n \geq N$ and for a.e.\ $t \in \left]0,D\right[$, $$\mass{\slice{V_n}{\pi}{t}} \geq \frac{\lambda}{2} d_n. \qquad\qedhere $$
\end{proof}
So, by \eqref{equation 7}, we have
\begin{equation*}
\begin{split}
    \phi ( d_n \,\varphi_2(d_n)) & \geq \int_0^{D} \mass{\slice{V_n}{\pi}{t}} dt\\ 
& \geq D \,  \frac{\lambda}{2} d_n.
\end{split}
\end{equation*}
Which implies that
\begin{equation*}
 D \leq \frac{2}{\lambda}  \frac{\phi ( d_n \,\varphi_2(d_n))}{d_n} .
\end{equation*}
Let us denote $\psi(\varphi_2(d_n)) = \frac{2}{\lambda}  \frac{\phi ( d_n \,\varphi_2(d_n))}{d_n}$. Note that $\psi$ is sublinear: $\frac{\psi(\varphi_2(d_n))}{\varphi_2(d_n)}$ tends to $0$.
\\Since $D=d_Y\big(f(C^{1}(a_n;u_1^n)),f(C^{1}(a_n+u_2^n;u_1^n))\big)$, the last inequality implies that there exists $z \in C^{1}(a_n+u_2^n;u_1^n)$ such that 
$$d_Y(f(C^{1}(a_n;u_1^n)),f(z)) \leq \psi(\varphi_2(d_n)).$$
But $z \in C^{1}(a_n+u_2^n;u_1^n)$ implies that $\textup{proj}_{X_n'}(z) = y$.
\\
\\If we choose another $y \in B_{X_n'}\big( x_n',\varphi_2(d_n)        \big)$, we get another $z$ such that 
$$\textup{proj}_{X_n'}(z) = y,$$ 
$$d_Y(f(C^{1}(a_n;u_1^n)),f(z)) \leq \psi(\varphi_2(d_n)).$$
\\By doing this process $\forall y \in B_{X_n'}\big( x_n',\varphi_2(d_n)        \big) $, we get a subset $C_n \subset X$ that projects onto $B_{X_n'}\big( x_n',\varphi_2(d_n)    \big)$, i.e.\ 
\begin{equation*}
    B_{X_n'}\big( x_n',\varphi_2(d_n)      \big) \subset \textup{proj}_{X_n'}(C_n) , 
\end{equation*} 
and such that
$$ f(C_n) \subset N_{\psi\left(\varphi_2(d_n)\right)}   (f(C^{1}(a_n;u_1^n))).$$
\\Since the projection onto $X_n'$ is 1-Lipschitz, it implies that 
\begin{equation*} 
    \textup{Vol}_Y^{\varepsilon}(f(C_n)) \leq \textup{Vol}_Y^{\varepsilon}\left(N_{\psi\left(\varphi_2(d_n)\right)}   (f(C^{1}(a_n;u_1^n)))\right).
\end{equation*}
$f$ coarsely preserves volumes, so there exist $\delta, \delta' >0$ such that 
$$  \delta \, \textup{Vol}_X^{\varepsilon} ( C_n )  \leq      \textup{Vol}_Y^{\varepsilon} ( f( C_n ) ) \leq \delta'  \, \textup{Vol}_X^{\varepsilon} ( C_n ).      $$
And 
\begin{equation*}
  \textup{Vol}_Y^{\varepsilon}\left(N_{\psi\left(\varphi_2(d_n)\right)}   (f(C^{1}(a_n;u_1^n)))\right) \leq \beta_Y^{\varepsilon}\left( \varepsilon + \psi(\varphi_2(d_n) \right)  \times  \textup{Vol}_Y^{\varepsilon}(   f(C^{1}(a_n;u_1^n))).
\end{equation*}
$Y$ has at most exponential growth, so $\exists \beta >0$ such that $ \forall R>0$,
$ \beta_Y^{\varepsilon}\left( R \right) \leq      e^{\beta R} $.
In particular, we have on hand that
$$\beta_Y^{\varepsilon}\left( \varepsilon + \psi(\varphi_2(d_n) \right) \leq    \textup{exp}( 2 \beta      \psi ( \varphi_2(d_n))            ) . $$ 
On the other hand, by taking a partition of the geodesic segment $[a_n,b_n]$ into sub-intervals of length $\varepsilon$, we get:
$$ \textup{Vol}_X^{\varepsilon} \big( C^{1}(a_n;u_1^n)\big)         \leq   \frac{ \| u_1^n  \|}{\varepsilon} +1    \leq   \frac{ 2\| u_1^n  \|}{\varepsilon}    \leq \big(2/\varepsilon \big)d_n.              $$
So, by denoting $A =  2/\varepsilon $, we have
\begin{equation*}
    \begin{split}
        \textup{Vol}_Y^{\varepsilon}(   f(C^{1}(a_n;u_1^n)))
         &  \leq \delta' \, \textup{Vol}_X^{\varepsilon} \big( C^{1}(a_n;u_1^n)\big)   
         \\ &\leq \delta' A \, d_n .
    \end{split}
\end{equation*}
Therefore
\begin{equation*}
    \textup{Vol}_Y^{\varepsilon}\left(N_{\psi\left(\varphi_2(d_n)\right)}   (f(C^{1}(a_n;u_1^n)))\right) \leq \textup{exp}( 2 \beta      \psi ( \varphi_2(d_n))   ) \times \delta' A \, d_n.
\end{equation*}
We conclude from all the previous inequalities that 
\begin{equation*}
   \delta \, \textup{exp}(\mu \varphi_2(d_n)) \leq \delta \, \textup{Vol}_X^{\varepsilon} ( C_n ) \leq  \textup{Vol}_Y^{\varepsilon} ( f( C_n ) ) \leq \textup{exp}( 2 \beta      \psi ( \varphi_2(d_n))   ) \times \delta' A \, d_n .
\end{equation*}
Which implies finally that for all $n\geq N$
\begin{equation*}
   \delta \, \textup{exp}(\mu \varphi_2(d_n)) \leq \textup{exp}( 2 \beta      \psi ( \varphi_2(d_n))   ) \times \delta' A \, d_n .
\end{equation*}
Which is not possible when $d_n \to \infty$ because $\psi$ is sublinear.
\\This completes the proof of the case "rank $X \geq 2$" .
\\
\\
\\Now suppose that it is true for rank $X \geq k$ for some $k \in \{2,\dots,r-1\}$, and let us prove it for rank $\geq k+1$.
\\Let $X$ be of rank $p \geq k+1$ and suppose that such a coarse embedding $f : X \to Y$ exists, i.e.\ there exists a sublinear function $\phi$ such that $\forall x \in X$, for every maximal flat $F$ that contains $x$, $\forall d >0$, $\forall u_1,\dots,u_{k+1} \in T_x F \simeq F$ that satisfy
\\• $u_1,\dots,u_{k}$ are maximally singular,
\\• $\| u_1  \| = \varphi_1(d) = d$ and $\forall i = 2, \dots, k+1$, $\| u_i  \| \leq \varphi_i(d)$, we have
$$     \textup{FillVol}_{k+1}^{Y, \mathrm{cr}}(f(P^{k}(x;u_1,\dots,u_{k+1})))            \leq \phi (\varphi_1(d)\times \dots \times \varphi_{k+1}(d)).         $$
$X$ is of rank $\geq k+1$, so it is of rank $\geq k$ and coarsely embeds into $Y$, therefore it does not satisfy the "rank $\geq k$" case. This means that there exist a constant $\lambda >0$, a sequence $(x_n)_n \in X$, maximal flats $F_n$ containing $x_n$, a sequence $d_n$ that goes to $+\infty$ and $u_1^n,\dots,u_k^n \in T_{x_n}F_n$ such that 
\\• $u_1^n,\dots,u_{k-1}^n$ are maximally singular,
\\• $\| u_1^n  \| = \varphi_1(d_n) = d_n$ and $\forall i = 2, \dots, k$, $\| u_i ^n \| \leq \varphi_i(d_n)$, and
$$     \textup{FillVol}_{k}^{Y, \mathrm{cr}}(f(P^{k-1}(x_n;u_1^n,\dots,u_{k}^n)))            \geq \lambda \prod_{i=1}^{k}  \varphi_i(d_n).         $$
The goal now is to find $\lambda'>0$ and for every $n \in \mathbb{N}$ : $x_n' \in F_n$ and maximally singular vectors ${u_1^{n}}',\dots,{u_{k}^{n}}' \in F_n$ such that  $\| {u_1^{n}} '  \| = d_n$ and $\forall i = 2, \dots, k$, $\| {u_i ^{n}} ' \| \leq \varphi_i(d_n)$, and they all tends to $+\infty$, that satisfy
\begin{equation*}
    \textup{FillVol}_{k}^{Y, \mathrm{cr}}(f(P^{k-1}(x_n ';{u_1^n} ',\dots,{u_k^n} ')))            \geq \lambda ' \prod_{i=1}^{k}  \varphi_i(d_n).    
\end{equation*}
Let $n \in \mathbb{N}$. If $u_{k}^n$ is maximally singular, there is nothing to do at this step. If not, take $F_n$ a maximal flat at $x_n$ that contains $u_1^n,\dots,u_k^n$, which is possible since rank $X = p \geq k+1$.
\\Let $C$ be a Weyl sector of $F_n$ with tip at $x_n$ that contains $u_k^n$, i.e.\ such that $x_n + u_k ^n \in \overline{C}$. Since $C$ is a simplicial open cone generated by maximally singular vectors, there exist $e_1,\dots,e_p \in F_n$ maximally singular vectors generating $C$ and $\delta_1,\dots,\delta_p \geq 0$ such that $u_k^n = \sum_{i=1}^p    \delta_i e_i$.      
\\Now let us consider the $(k-1)$-parallelogram $P^{k-1}(x_n;u_1^n,\dots,u_{k}^n)$ seen as a $(k-1)$-cycle in $F_n \simeq \mathbb{R}^p $. By the decomposition lemma \ref{decomp result}, we decompose our parallelogram along the walls of the Weyl sector:
\begin{equation*}
 P^{k-1}(x_n;u_1^n,\dots,u_{k}^n) = \sum_{i=1}^p          P^{k-1}(x_n+\sum_{s=1}^{i-1} \delta_s e_s;u_1^n,\dots,u_{k-1}^n,\delta_i e_i) + R_n^{k-1},   
\end{equation*}
where $R_n^{k-1}$ is a $(k-1)$-cycle in $F_n$, sum of $2(k-1)(p+1)$ $(k-1)$-chains, all of the form $C^{k-1}(y;w_1,\dots,w_{k-1})$, where $w_{k-1} \in \{u_k^n,\delta_1 e_1,\dots,\delta_p e_p \}$ and \\$(w_1,\dots,w_{k-2}) \in \{(u_1^n,\dots,\hat{u_j^n},\dots,u_{k-1}^n) \mid j \in \{1,\dots,k-1 \} \}$.
\\So every $(k-1)$-chain $\Sigma = C^{k-1}(y,w_1,\dots,w_{k-1})$ in $R_n^{k-1}$ satisfies 
\begin{equation*}
    \begin{split}
        \textup{Vol}_{F_n}^{k-1}(\Sigma) &\leq \| w_1  \|\cdots \| w_{k-2}  \|\cdot \| w_{k-1}  \|
        \\ &\leq \| u_1^n  \|\cdots\| u_{k-2}^n  \|\cdot\| u_{k}^n  \|
        \\& \leq \varphi_1(d_n) \times \dots \times \varphi_{k-2}(d_n) \times \varphi_k(d_n)
        \\& = \frac{\prod_{i=1}^{k}  \varphi_i(d_n)}{\varphi_{k-1}(d_n)}
    \end{split}
\end{equation*}
Therefore
$$   \textup{Vol}_{F_n}^{k-1}(R_n^{k-1}) \leq   2(k-1)(p+1) \,   \prod_{\substack{i=1 \\ i \ne k-1}}^{k}  \varphi_i(d_n).            $$
By applying $f$ to the parallelogram decomposition:
$$   f(P^{k-1}(x_n;u_1^n,\dots,u_{k}^n)) = \sum_{i=1}^p          f(P^{k-1}(x_n+\sum_{s=1}^{i-1} \delta_s e_s;u_1^n,\dots,u_{k-1}^n,\delta_i e_i)) + f(R_n^{k-1}   )  .                          $$
Since a filling of every cycle of the right-hand side gives a filling of the left-hand side, we have 
\begin{equation*}
    \begin{split}
       \textup{FillVol}_{k}^{Y, \mathrm{cr}}(f(P^{k-1}(x_n;u_1^n,\dots,u_{k}^n)))       & \leq \sum_{i=1}^p \textup{FillVol}_{k}^{Y, \mathrm{cr}} \big(          f(P^{k-1}(x_n+\sum_{s=1}^{i-1} \delta_s e_s;u_1^n,\dots,u_{k-1}^n,\delta_i e_i))\big) 
       \\ &+ \textup{FillVol}_{k}^{Y, \mathrm{cr}} \big( f(R_n^{k-1}   )  \big) .  
    \end{split}
\end{equation*}
However, $\textup{FillVol}_{k}^{Y, \mathrm{cr}} \big( f(R_n^{k-1}   )  \big)   $ is very small compared to $ \textup{FillVol}_{k}^{Y, \mathrm{cr}}(f(P^{k-1}(x_n;u_1^n,\dots,u_{k}^n)))$. Indeed, by considering the images by $f$ of fillings of $R_n^{k-1}$ in $F_n \subset X$, we have \begin{equation*}
    \begin{split}
        \textup{FillVol}_{k}^{Y, \mathrm{cr}} \big( f(R_n^{k-1}   )  \big) &\leq \textup{Lip}(f) \, \textup{FillVol}_{k}^{X, \mathrm{cr}} \big( R_n^{k-1}     \big)
        \\ & \leq \textup{Lip}(f) \, \textup{FillVol}_{k}^{F_n, \mathrm{cr}} \big( R_n^{k-1}     \big).
    \end{split}
\end{equation*}
By the Euclidean filling of $R_n^{k-1}$ in the flat $F_n \simeq \mathbb{R}^p$, we have
\begin{equation*}
    \begin{split}
        \textup{FillVol}_{k}^{F_n, \mathrm{cr}} \big( R_n^{k-1}     \big) & \leq \big( \textup{Vol}_{F_n}^{k-1}(R_n^{k-1}) \big)^{\frac{k}{k-1}}
        \\ & \leq \left( 2(k-1)(p+1) \,    \prod_{i \ne k-1}^{k}  \varphi_i(d_n) \right)^{\frac{k}{k-1}}.
    \end{split}
\end{equation*}
So 
\begin{equation*}
    \begin{split}
        \textup{FillVol}_{k}^{Y, \mathrm{cr}} \big( f(R_n^{k-1}   )  \big) &\leq \textup{Lip}(f) \, ( 2(k-1)(p+1))^{\frac{k}{k-1}} \,  \left(     \prod_{i \ne k-1}^{k}  \varphi_i(d_n)\right)^{\frac{k}{k-1}} .
    \end{split}
\end{equation*}
This is where the additional assumption on the functions is needed.
\\Indeed, since $\left( \prod_{i \ne k-1}^{k}  \varphi_i(d_n) \right)^{\frac{k}{k-1}} \ll \prod_{i=1}^{k}  \varphi_i(d_n) $, there exists $N \in \mathbb{N}$ such that for all $n \geq N$:
$$\lambda \prod_{i=1}^{k}  \varphi_i(d_n) -\textup{Lip}(f) \, [ 2(k-1)(p+1)]^{\frac{k}{k-1}} \,   \left(    \prod_{i \ne k-1}^{k}  \varphi_i(d_n)\right)^{\frac{k}{k-1}}  \geq \frac{\lambda}{2} \, \prod_{i=1}^{k}  \varphi_i(d_n).            $$
In particular, for $n \geq N$: 
$$   \textup{FillVol}_{k}^{Y, \mathrm{cr}}(f(P^{k-1}(x_n;u_1^n,\dots,u_{k}^n)))  -\textup{FillVol}_{k}^{Y, \mathrm{cr}} \big( f(R_n^{k-1}   )  \big) \geq   \frac{\lambda}{2} \, \prod_{i=1}^{k}  \varphi_i(d_n)        .          $$
i.e.\ for $n \geq N$: 
$$  \sum_{i=1}^p \textup{FillVol}_{k}^{Y, \mathrm{cr}} \big(          f(P^{k-1}(x_n+\sum_{s=1}^{i-1} \delta_s e_s;u_1^n,\dots,u_{k-1}^n,\delta_i e_i))\big) \geq    \frac{\lambda}{2} \, \prod_{i=1}^{k}  \varphi_i(d_n)    .                    $$
So there exists $i_0 \in \{1,\dots,p       \} $ such that
$$   \textup{FillVol}_{k}^{Y, \mathrm{cr}} \big(          f(P^{k-1}(x_n+\sum_{s=1}^{i_0-1} \delta_s e_s;u_1^n,\dots,u_{k-1}^n,\delta_{i_0} e_{i_0}))\big) \geq    \frac{\lambda}{2 p} \, \prod_{i=1}^{k}  \varphi_i(d_n)  .                        $$
Let us denote $x_n' = x_n+\sum_{s=1}^{i_0-1} \delta_s e_s$, ${u_1^n}' = u_1^n, \dots,{u_{k-1}^n}' = u_{k-1}^n, {u_k^n}' = \delta_{i_0} e_{i_0} $, which are all maximally singular.
\\Remark that $\forall i \in \{2,\dots,k-1       \} $, $\| {u_i^n}'  \| \to \infty $ with a speed comparable to that of $\varphi_i(d_n) $. Indeed, on one hand we have 
$$   \textup{FillVol}_{k}^{Y, \mathrm{cr}} \big(          f(P^{k-1}(x_n';{u_1^n}',\dots,{u_{k}^n}'))\big) \geq    \frac{\lambda}{2 p} \, \prod_{i=1}^{k}  \varphi_i(d_n)      .                  $$
Note also that
\begin{equation*}
    \begin{split}
        \textup{FillVol}_{k}^{Y, \mathrm{cr}} \big(          f(P^{k-1}(x_n';{u_1^n}',\dots{u_{k}^n}'))\big) 
        &\leq  \textup{Lip}(f) \, \textup{FillVol}_{k}^{X, \mathrm{cr}} \big(          P^{k-1}(x_n';{u_1^n}',\dots,{u_{k}^n}')\big) 
        \\ &\leq \textup{Lip}(f) \, \textup{FillVol}_{k}^{F_n, \mathrm{cr}} \big(          P^{k-1}(x_n';{u_1^n}',\dots,{u_{k}^n}')\big) 
        \\ &\leq \textup{Lip}(f) \,
        \| {u_1^n}'  \| \times \dots \times \| {u_k^n}'  \|.
    \end{split}
\end{equation*}
Therefore 
$$      \frac{\lambda}{2 p \, \textup{Lip}(f)} \, \prod_{i=1}^{k}  \varphi_i(d_n)                   \leq 
        \| {u_1^n}'  \| \times \dots \times \| {u_k^n}'  \|    .               $$
Since $ \| {u_1^n} '  \| = \varphi_1(d_n)  $ and $\forall i \in \{2,\dots,k      \} $, $ \| {u_i^n} '  \| \leq \varphi_i(d_n)  $, we get  $\forall i \in \{2,\dots,k       \} $:
$$      \frac{\lambda}{2 p \, \textup{Lip}(f)} \, \varphi_i(d_n)                      \leq  \| {u_i^n}'  \|  \leq \varphi_i(d_n)      .            $$
In particular, $  \| {u_k^n}'  \| \gg \varphi_{k+1}(d_n)          $.
\\
\\Now that we found the point $x_n' \in F_n$, the maximally singular vectors ${u_1^{n}}',\dots,{u_{k}^n}' \in F_n$, and the positive constant $\lambda' = \frac{\lambda}{2p}$ that satisfy the desired inequality, and to simplify the notations for the rest of the proof, we will just denote $x_n'$ by $x_n$, $\lambda'$ by $\lambda$ and ${u_{i}^{n}}'$ by $u_{i}^{n}$.
So that we have 
$$     \textup{FillVol}_{k}^{Y, \mathrm{cr}}(f(P^{k-1}(x_n;u_1^n,\dots,u_{k}^n)))            \geq \lambda  \prod_{i=1}^{k}  \varphi_i(d_n)  ,       $$
with $\| u_1^n  \| = \varphi_1(d_n)$ and $\forall i = 2, \dots, k$, $ \frac{\lambda}{\textup{Lip(f)}} \varphi_i(d_n) \leq \| u_i ^n \| \leq \varphi_i(d_n)$.
\\
\\Let $n \geq N$. Consider the singular $k$-flat $F_n' \subset F_n$ at $x_n$ generated by the maximally singular vectors $u_1^{n},\dots,u_{k}^{n}$, and let $P_X(F_n')$ be its parallel set. It splits metrically as
$$  P_X(F_n')  =    \mathbb{R}^k \times C_X(F_n') .         $$
Where the cross section $C_X(F_n')$, that we will denote $X_n'$, is either :
\\• a symmetric space of non-compact type of rank $\geq 1$, when $X$ is a symmetric space, 
\\• or a Euclidean building with bounded geometry of dimension $\geq 1$, with no Euclidean factor, when $X$ is a Euclidean building. 
\\By the same argument as before, if we fix $\varepsilon >0$, there exists $\mu >0$ such that for all $n \in \mathbb{N}$ and for all $R>0$
\begin{equation} \label{sections exp growth}
     \textup{Vol}^{\varepsilon} \left( B_{X_n'}(R)  \right) \geq \exp(\mu R)  .
\end{equation}
\\Now we will work in this product space and apply the same strategy as when $X$ is a product space. We have 
$$  P_X(F_n')  =    \mathbb{R}^k \times X_n' .         $$
Note that the slices $ \mathbb{R}^k \times \{x'\}$, with $x' \in X'$, are the flats that are parallel to $F_n'$. So the $k$-flat $F_n'$ is such a slice. Let $x_n' \in X'$ such that $F_n' =  \mathbb{R}^k \times \{x_n'\}$ .
\\For all $y \in B_{X_n'}\big( x_n',d_n^{\alpha_{k+1}}        \big) $, consider the geodesic $ [ x_n',y  ]$ in $X_n'$. Denote $\gamma_n'$ the (unique) bi-infinite geodesic extension of $ [ x_n',y  ]$, and $F_n'' = F_n' \times \gamma_n'$ the $(k+1)$-flat. Consider the $k$-parallelogram $ P^{k}(x_n;u_1^n,\dots,u_{k}^n,u_{k+1}^n) $ in $F_n''$, where $u_{k+1}^n$ is the unique vector in $T_{x_n}F_n''$ that is parallel to $\gamma_n'$ and satisfies $\| u_{k+1}^n  \| = d_{X_n'}(x_n',y)  $. This $k$-parallelogram satisfies the induction hypothesis: 
$$     \textup{FillVol}_{k+1}^{Y, \mathrm{cr}}(f(P^{k}(x_n;u_1^n,\dots,u_{k+1}^n)))            \leq \phi \left(\prod_{i=1}^{k+1}  \varphi_i(d_n)\right).         $$
Which implies that there exists $V_n \in \mathbf{I}_{k+1}(Y)$ in $Y$ such that $\partial V_n = f(P^{k}(x_n;u_1^n,\dots,u_{k+1}^n))$ and 
$$
\mass{V_n} \leq \phi \left(\prod_{i=1}^{k+1}  \varphi_i(d_n)\right).
$$
Now consider the 1-Lipschitz map $\pi : Y \to \mathbb{R}$, $\pi(z) = d_Y(z,f(C^{k}(x_n;u_1^n,\dots,u_{k}^n))) $.
\\Again, by the Slicing Theorem, we have that for a.e.\ $t \in \mathbb{R}$, there exists $\langle V_n,\pi,t \rangle \, \in \mathbf{I}_k(Y)$ such that $\slice{V_n}{\pi}{t}= \partial(V_n\rstr\{\pi \leq t\}) - (\partial V_n)\rstr\{\pi\leq t\}$, and by integrating the co-area formula over the distance $t$, we have that
$$ \int_0^{+\infty} \mass{\slice{V_n}{\pi}{t}} dt \leq \mass{V_n}.$$
Since $\mass{V_n} \leq \phi \left(\prod_{i=1}^{k+1}  \varphi_i(d_n)\right) $, we get
\begin{equation}\label{equation 8}
\int_0^{D} \mass{\slice{V_n}{\pi}{t}} dt \leq \int_0^{+\infty} \mass{\slice{V_n}{\pi}{t}} dt \leq \phi \left(\prod_{i=1}^{k+1}  \varphi_i(d_n)\right),
\end{equation}
where $D=d_Y\big(f(C^{k}(x_n;u_1^n,\dots,u_{k}^n)),f(C^{k}(x_n+u_{k+1}^n;u_1^n,\dots,u_{k}^n))\big)$.
\\However, for a.e.\ $t \in \left]0,D\right[$, $\mass{\slice{V_n}{\pi}{t}}$ cannot be too small since it gives a filling of the $(k-1)$-cycle $\partial f(C^{k}(x_n;u_1^n,\dots,u_{k}^n)) = f(P^{k-1}(x_n;u_1^n,\dots,u_{k}^n)) $.
\begin{claim}
For $n$ big enough: for a.e.\ $t \in \left]0,D\right[$, $\mass{\slice{V_n}{\pi}{t}} \geq \frac{\lambda}{2} \prod_{i=1}^{k}  \varphi_i(d_n)$.
\end{claim}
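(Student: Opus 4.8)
The plan is to transcribe, \emph{mutatis mutandis}, the two claims already proved above (the rank-$2$ base case and the inductive step of Theorem~\ref{CE of product}). First, I would fix a generic $t\in\left]0,D\right[$ for which the slice $\slice{V_n}{\pi}{t}$ exists and $\|\slice{V_n}{\pi}{t}\|$ is concentrated on $\pi^{-1}(\{t\})$; this holds for a.e.\ $t$ by the Slicing Theorem~\ref{slicing thm}. Since $\pi$ vanishes precisely on $f(C^{k}(x_n;u_1^n,\dots,u_{k}^n))$ and $0<t<D$, the restriction $(\partial V_n)\rstr\{\pi\le t\}$ decomposes as $f(C^{k}(x_n;u_1^n,\dots,u_{k}^n))+H_t$ with $H_t:=(\partial V_n)\rstr\{0<\pi\le t\}$. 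Feeding this into the identity $\partial(V_n\rstr\{\pi\le t\})=\slice{V_n}{\pi}{t}+(\partial V_n)\rstr\{\pi\le t\}$ of the Slicing Theorem shows that $\slice{V_n}{\pi}{t}+f(C^{k}(x_n;u_1^n,\dots,u_{k}^n))+H_t$ is a cycle, so $-(\slice{V_n}{\pi}{t}+H_t)$ is a $k$-chain filling the $(k-1)$-cycle $\partial f(C^{k}(x_n;u_1^n,\dots,u_{k}^n))=f(P^{k-1}(x_n;u_1^n,\dots,u_{k}^n))$. Hence
\[
\mass{\slice{V_n}{\pi}{t}}+\mass{H_t}\ \ge\ \textup{FillVol}_{k}^{Y,\mathrm{cr}}\bigl(f(P^{k-1}(x_n;u_1^n,\dots,u_{k}^n))\bigr)\ \ge\ \lambda\prod_{i=1}^{k}\varphi_i(d_n),
\]
the last inequality being exactly the property of the reindexed data $x_n,u_1^n,\dots,u_k^n$ established just before the claim.

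The second step is to bound the error term $\mass{H_t}$. Writing $\partial V_n=f\bigl(P^{k}(x_n;u_1^n,\dots,u_{k+1}^n)\bigr)=f\bigl(\partial C^{k+1}(x_n;u_1^n,\dots,u_{k+1}^n)\bigr)$ and using that $0<\pi<D$ on the support of $H_t$, the two facets of $C^{k+1}(x_n;u_1^n,\dots,u_{k+1}^n)$ on which $\pi\equiv 0$, respectively $\pi\ge D$, do not contribute, so $H_t$ is the restriction to $\{0<\pi\le t\}$ of the $f$-images of the $2k$ side facets. Each side facet is a $k$-parallelepiped $C^{k}(y;w_1,\dots,w_k)$ with $w_k\in\{\pm u_{k+1}^n\}$ and $(w_1,\dots,w_{k-1})$ obtained from $(u_1^n,\dots,u_k^n)$ by deleting one vector, hence of mass $\le\|w_1\|\cdots\|w_k\|\le \prod_{i=1}^{k+1}\varphi_i(d_n)/\varphi_k(d_n)$. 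Using that $f$ is Lipschitz (Lemma~\ref{connect the dots}) and that restriction does not increase mass, I get $\mass{H_t}\le 2k\,\textup{Lip}(f)\prod_{\substack{i=1\\ i\ne k}}^{k+1}\varphi_i(d_n)$.

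To conclude, the assumption $\varphi_{k+1}\ll\varphi_{k}$ gives $\prod_{\substack{i=1\\ i\ne k}}^{k+1}\varphi_i(d_n)=o\!\bigl(\prod_{i=1}^{k}\varphi_i(d_n)\bigr)$, so there is $N$ with $\mass{H_t}\le\tfrac{\lambda}{2}\prod_{i=1}^{k}\varphi_i(d_n)$ for every $n\ge N$; combined with the first display this yields $\mass{\slice{V_n}{\pi}{t}}\ge\tfrac{\lambda}{2}\prod_{i=1}^{k}\varphi_i(d_n)$ for a.e.\ $t\in\left]0,D\right[$, which is the claim. The one point that requires genuine care — and which I expect to be the only real obstacle — is the combinatorial bookkeeping in the middle step: identifying exactly which facets of $C^{k+1}(x_n;u_1^n,\dots,u_{k+1}^n)$ survive the truncation $0<\pi<D$ (discarding the top and bottom facets, as in the product case), and verifying that the coarsest surviving facet is the one omitting $u_k^n$, so that $\prod_{i\ne k}^{k+1}\varphi_i$ is genuinely the right error scale. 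Everything else is a verbatim adaptation of the arguments already carried out.
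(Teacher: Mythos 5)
Your proof is correct and follows the paper's argument essentially line by line: you produce the filling $-(\slice{V_n}{\pi}{t}+H_t)$ of $f(P^{k-1}(x_n;u_1^n,\dots,u_k^n))$ from the Slicing Theorem identity, invoke the lower bound $\lambda\prod_{i=1}^k\varphi_i(d_n)$ on $\textup{FillVol}_k^{Y,\mathrm{cr}}$ coming from the reindexed data just above the claim, and bound $\mass{H_t}$ by $2k\,\textup{Lip}(f)\prod_{i\neq k}^{k+1}\varphi_i(d_n)$ by identifying the side facets via Lemma~\ref{paralello sum}. In fact your intermediate display $(\partial V_n)\rstr\{\pi\le t\}=f(C^k(x_n;u_1^n,\dots,u_k^n))+H_t$ is the dimensionally consistent version of what the paper writes (the paper's line has $f(P^{k-1})$ where a $k$-current is needed; this is a slip, since the subsequent step correctly treats $f(P^{k-1})$ as the $(k-1)$-cycle being filled, exactly as you do).
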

\begin{proof}
For every $t \in \left]0,D\right[$,
\begin{equation*}
    \begin{split}
       \partial(V_n) \rstr\{\pi \leq t\} &= \partial(V_n) \rstr\{\pi =0\} + \partial(V_n) \rstr\{ 0 < \pi \leq t\} 
       \\ &= f(P^{k-1}(x_n;u_1^n,\dots,u_{k}^n)) + H_t
    \end{split}
\end{equation*}
Where $H_t$ is the $k$-current $\partial (V_n) \rstr\{ 0 < \pi \leq t\}$.
\\Since $\|\slice{V_n}{\pi}{t}\|$ is concentrated on $\pi^{-1}(\{t\})$,
$$\partial(V_n\rstr\{\pi \leq t\}) = \slice{V_n}{\pi}{t} + f(P^{k-1}(x_n;u_1^n,\dots,u_{k}^n)) + H_t. $$
Which means that $\slice{V_n}{\pi}{t} + f(P^{k-1}(x_n;u_1^n,\dots,u_{k}^n)) + H_t$ is a $k$-current that is actually a cycle.
\\So $-(\slice{V_n}{\pi}{t} + H_t)$ is a $k$-chain that fills the $(k-1)$-cycle $f(P^{k-1}(x_n;u_1^n,\dots,u_{k}^n))$. Therefore,
$$\mass{ -(\slice{V_n}{\pi}{t} + H_t) } \geq \textup{FillVol}_{k}^{Y, \mathrm{cr}}(f(P^{k-1}(x;u_1,\dots,u_{k}))) \geq \lambda \prod_{i=1}^{k}  \varphi_i(d_n).   $$
So 
$$\mass{\slice{V_n}{\pi}{t}} + \mass{H_t} \geq \lambda \, \prod_{i=1}^{k}  \varphi_i(d_n).$$
Note that
\begin{equation*} 
\begin{split}
H_t & =  \partial (V_n) \rstr\{ 0 < \pi \leq t\} 
     \\ & =\big( \sum_{F \in \Delta} f(F ) \big) \rstr\{ 0 <\pi \leq t \},
\end{split}
\end{equation*}
where $\Delta$ is the set of side faces of $ P^{k}(x_n;u_1^n,\dots,u_{k+1}^n)$, i.e.\ faces whose vectors are not $(u_1^n,\dots,u_{k}^n)$, because $0< \pi < D$. Indeed, by lemma \ref{paralello sum}
\begin{equation*}
    \begin{split}
    P^{k}(x_n;u_1^n,\dots,u_{k+1}^n) &=  \sum_{s=1}^{k+1} (-1)^{s}   C^{k}(x;u_1^n, \dots,\hat{u_s^n},\dots ,u_{k+1}^n)
    \\ &+ (-1)^{k+1}  \sum_{k=1}^{k+1} (-1)^{s}  C^{k}(x+\sum_{i=1}^{k+1} u_i^n;-u_1^n \dots,-\hat{u_s^n},\dots, -u_{k+1}^n). 
    \end{split}
\end{equation*}
Therefore
$$\Delta = \{  C^{k}(x;u_1^n, \dots,\hat{u_s^n},\dots ,u_{k+1}^n),C^{k}(x+\sum_{i=1}^{k+1} u_i^n;-u_1^n \dots,-\hat{u_s^n},\dots, -u_{k+1}^n) \mid s\in \{ 1, \dots,k     \}           \}.$$
\\So by taking the mass
\begin{equation*}
\begin{split}
\mass{H_t} & = \mass{\big( \sum_{F \in \Delta} f(F ) \big) \rstr\{ 0 <\pi \leq t \} }  
\\ & \leq \sum_{F \in \Delta} \mass{ ( f(F) ) \rstr\{ 0<\pi \leq t \}}      \\
& \leq \sum_{F \in \Delta} \mass{  f(F) }    \\
& \leq \textup{Lip}(f) \sum_{F \in \Delta} \mass{ F }.
\end{split}
\end{equation*}
Since every side face $F$ satisfies $\mass{ F } \leq \frac{\prod_{i=1}^{k+1}  \varphi_i(d_n)}{\varphi_{s}(d_n)}$, where $s \in \{ 1,\dots,k\}$, so every $F \in \Delta$ satisfies $\mass{F} \leq \frac{\prod_{i=1}^{k+1}  \varphi_i(d_n)}{\varphi_{k}(d_n)} $. And there are $2k$ side faces,
so
$$\mass{H_t} \leq   2 k \textup{Lip}(f) \prod_{i \ne k}^{k+1}  \varphi_i(d_n) . $$
So 
$$\mass{\slice{V_n}{\pi}{t}} \geq \lambda \, \prod_{i=1}^{k}  \varphi_i(d_n) - \mass{H_t} \geq \lambda \, \prod_{i=1}^{k}  \varphi_i(d_n) - 2 k \textup{Lip}(f) \prod_{i \ne k}^{k+1}  \varphi_i(d_n) . $$
Since for all $i \,$,  $\varphi_i \gg \varphi_{i+1} $, so $\prod_{i=1}^{k}  \varphi_i(d_n) \gg \prod_{i \ne k}^{k+1}  \varphi_i(d_n) $. There exists $N \in \mathbb{N}$ such that for all $n\geq N$,
$$\lambda \, \prod_{i=1}^{k}  \varphi_i(d_n) - 2 k \textup{Lip}(f) \prod_{i \ne k}^{k+1}  \varphi_i(d_n) \geq \frac{\lambda}{2} \prod_{i=1}^{k}  \varphi_i(d_n). $$
We conclude that for all $n\geq N$ and for a.e.\ $t \in \left]0,D\right[$, $$\mass{\slice{V_n}{\pi}{t}} \geq \frac{\lambda}{2} \prod_{i=1}^{k}  \varphi_i(d_n). \qquad\qedhere$$
\end{proof}
Therefore, by \eqref{equation 8}, we have 
\begin{equation*}
\begin{split}
    \phi \left(\prod_{i=1}^{k+1}  \varphi_i(d_n)\right) & \geq \int_0^{D} \mass{\slice{V_n}{\pi}{t}} dt\\ 
& \geq D \,  \frac{\lambda}{2} \prod_{i=1}^{k}  \varphi_i(d_n).
\end{split}
\end{equation*}
This implies that
\begin{equation*}
 D \leq \frac{2}{\lambda}  \frac{\phi \left(\prod_{i=1}^{k+1}  \varphi_i(d_n)\right)}{\prod_{i=1}^{k}  \varphi_i(d_n)} 
\end{equation*}
Let us denote $\psi(\varphi_{k+1}(d_n)) = \frac{2}{\lambda}  \frac{\phi \left(\prod_{i=1}^{k+1}  \varphi_i(d_n)\right)}{\prod_{i=1}^{k}  \varphi_i(d_n)}$. Note that $\psi$ is sublinear: $\frac{\psi(\varphi_{k+1}(d_n))}{\varphi_{k+1}(d_n)}$ tends to $0$.
\\
\\Since $D=d_Y\big(f(C^{k}(x_n;u_1^n,\dots,u_{k}^n)),f(C^{k}(x_n+u_{k+1}^n;u_1^n,\dots,u_{k}^n))\big)$, the last inequality implies that there exists $z \in C^{k}(x_n+u_{k+1}^n;u_1^n,\dots,u_{k}^n) $ such that 
$$d_Y(f(C^{k}(x_n;u_1^n,\dots,u_{k}^n)),f(z)) \leq \psi(\varphi_{k+1}(d_n)).$$
But $z \in C^{k}(x_n+u_{k+1}^n;u_1^n,\dots,u_{k}^n) $ implies that $\textup{proj}_{X_n'}(z) = y$.
\\
\\If we choose another $y \in B_{X_n'}\big( x_n',\varphi_{k+1}(d_n)        \big)$, we get another $z \in C^{k}(x_n+u_{k+1}^n;u_1^n,\dots,u_{k}^n) $ such that 
$$\textup{proj}_{X_n'}(z) = y,$$ 
$$d_Y(f(C^{k}(x_n;u_1^n,\dots,u_{k}^n)),f(z)) \leq \psi(\varphi_{k+1}(d_n)).$$
\\By doing this process for all $ y \in B_{X_n'}\big( x_n',\varphi_{k+1}(d_n)        \big) $, we get subsets $C_n \subset X$ that projects onto $B_{X_n'}\big( x_n',\varphi_{k+1}(d_n)       \big)$, i.e.\ 
\begin{equation*}
    B_{X_n'}\big( x_n',\varphi_{k+1}(d_n)        \big) \subset \textup{proj}_{X_n'}(C_n)  ,
\end{equation*} 
and such that
$$ f(C_n) \subset N_{\psi\left(\varphi_{k+1}(d_n)\right)}   (f(C^{k}(x_n;u_1^n,\dots,u_{k}^n))).$$
\\Since the projection onto $X_n'$ is 1-Lipschitz, if we fix $\varepsilon >0$
\begin{equation*} 
     \textup{Vol}_X^{\varepsilon}(B_{X_n'}\big( x_n',\varphi_{k+1}(d_n)        \big) )  
     \leq \textup{Vol}_X^{\varepsilon}(\textup{proj}_{X_n'}(C_n)  )   \leq \textup{Vol}_X^{\varepsilon}(C_n).
\end{equation*}
By the uniform lower bound on the volume growth of all such cross sections (\ref{sections exp growth}), we get
\begin{equation*}
  \textup{exp}(\mu \varphi_{k+1}(d_n)) \leq  \textup{Vol}_X^{\varepsilon}( B_{X_n'}\big( x_n',\varphi_{k+1}(d_n)        \big))  \leq  \textup{Vol}_X^{\varepsilon}(C_n) .
\end{equation*}
On the other hand, we get
\begin{equation*} 
    \textup{Vol}_Y^{\varepsilon}(f(C_n)) \leq \textup{Vol}_Y^{\varepsilon}(N_{\psi\left(\varphi_{k+1}(d_n)\right)}   (f(C^{k}(x_n;u_1^n,\dots,u_{k}^n)))).
\end{equation*}
$f$ coarsely preserves volumes, i.e.\ there exist $\delta, \delta' >0$ such that 
$$  \delta \, \textup{Vol}_X^{\varepsilon} ( C_n )  \leq      \textup{Vol}_Y^{\varepsilon} ( f( C_n ) ) \leq \delta'  \, \textup{Vol}_X^{\varepsilon} ( C_n )      $$
By lemma \ref{volume neighb}, we have
\begin{equation*}
   \textup{Vol}_Y^{\varepsilon}(N_{\psi\left(\varphi_{k+1}(d_n)\right)}   (f(C^{k}(x_n;u_1^n,\dots,u_{k}^n)))) \leq \beta_Y^{\varepsilon}\left( \varepsilon + \psi(\varphi_{k+1}(d_n) \right) \times  \textup{Vol}_Y^{\varepsilon}(   f(C^{k}(x_n;u_1^n,\dots,u_{k}^n))).
\end{equation*}
$Y$ has at most exponential growth, so there exists $\beta >0$ such that $\forall R>0$
$$  \beta_Y^{\varepsilon}\left( R \right) \leq      e^{\beta R}  .  $$
In particular, we have on hand that
$$\beta_Y^{\varepsilon}\left( \varepsilon + \psi(\varphi_{k+1}(d_n) \right)   \leq    \textup{exp}( 2 \beta      \psi ( \varphi_{k+1}(d_n) )            ) . $$ 
On the other hand, by taking a partition of each side vector into sub-intervals of length $\varepsilon$, we get a partition of the $k$-parallelepiped $C^{k}(x_n;u_1^n,\dots,u_{k}^n)$ :
$$ \textup{Vol}_X^{\varepsilon} \big( C^{k}(x_n;u_1^n,\dots,u_{k}^n)\big)         \leq \prod_{i=1}^{k} \big( \frac{ \| u_i^n  \|}{\varepsilon} +1    \big) \leq  \prod_{i=1}^{k} \big( \frac{ 2\| u_i^n  \|}{\varepsilon}    \big)  \leq \big(2/\varepsilon \big)^{k} \prod_{i=1}^{k}  \varphi_i(d_n)   .              $$
So, by denoting $A =  \big(2/\varepsilon \big)^{k} $, we have
\begin{equation*}
    \begin{split}
        \textup{Vol}_Y^{\varepsilon}(   f(C^{k}(x_n;u_1^n,\dots,u_{k}^n)))
         &  \leq \delta' \, \textup{Vol}_X^{\varepsilon} \big( C^{k}(x_n;u_1^n,\dots,u_{k}^n)\big)   
         \\ &\leq \delta' A \, \prod_{i=1}^{k}  \varphi_i(d_n) .
    \end{split}
\end{equation*}
Therefore
\begin{equation*}
    \textup{Vol}_Y^{\varepsilon}(N_{\psi\left(\varphi_{k+1}(d_n)\right)}   (f(C^{k}(x_n;u_1^n,\dots,u_{k}^n))))  \leq \textup{exp}( 2 \beta      \psi ( \varphi_{k+1}(d_n))   ) \times \delta' A \, \prod_{i=1}^{k}  \varphi_i(d_n).
\end{equation*}
We conclude from all the previous inequalities that 
\begin{equation*}
   \delta \, \textup{exp}(\mu d_n^{\alpha_{k+1}}) \leq \delta \, \textup{Vol}_X^{\varepsilon} ( C_n ) \leq  \textup{Vol}_Y^{\varepsilon} ( f( C_n ) ) \leq \textup{exp}( 2 \beta      \psi ( \varphi_{k+1}(d_n))   ) \times \delta' A \, \prod_{i=1}^{k}  \varphi_i(d_n) .
\end{equation*}
Which implies finally that for all $n\geq N$
\begin{equation*}
   \delta \, \textup{exp}(\mu \varphi_{k+1}(d_n)) \leq \textup{exp}( 2 \beta      \psi (\varphi_{k+1}(d_n))   ) \times \delta' A \, \prod_{i=1}^{k}  \varphi_i(d_n) .
\end{equation*}
Which is not possible when $d_n \to \infty$ because $\psi$ is sublinear. This completes the induction.
\end{proof}
Similarly, let us show that Theorem \ref{Thm 2 v2} follows from Theorem \ref{CE of model space}. 
\begin{proof}
Let $X= S \times B$ be a model space of rank $k\geq 2$, and let $Y$ be a Lipschitz-connected complete metric space with at most exponential growth such that $\textup{FV}_{k}^{Y}(\ell) = o\left(\ell^{\frac{k}{k-1}}\right)$. Let us show that there exist $\varphi_1,\varphi_2,\dots,\varphi_k$ functions as in Theorem \ref{CE of model space} that satisfies: $\forall x \in X$, for every $F$ maximal flat that contains $x$, $\forall d >0$, $\forall u_1,\dots,u_r \in T_x F \simeq F$ that satisfy
\\• $u_1,\dots,u_{r-1}$ are maximally singular,
\\• $\| u_1  \| = d$ and $\forall i = 2, \dots, r$, $\| u_i  \| \leq \varphi_i(d)$,
\\we have 
$$     \textup{FillVol}_{r}^{Y, \mathrm{cr}}(f(P^{r-1}(x;u_1,\dots,u_r)))            \ll \prod_{i=1}^{r}  \varphi_i(d).         $$
To do so, we can apply the same strategy to get $\varphi_1,\varphi_2,\dots,\varphi_k$ that satisfy the first three conditions, i.e.\ by taking for all $i = 1 \dots k$, $\varphi_i(d) = a(d)^{1-\frac{1}{i}}d$, where $a(d)$ is defined as in the proof of Theorem \ref{Thm 1 v2}. However, this sequence do not satisfy the fourth condition. Let us replace the sequence $((\frac{1}{i}))_i$ by a sequence $(\beta_i)_i$, i.e.\ $\varphi_i(d) = a(d)^{1-\beta_i}d$. To get the first three conditions, this sequence should satisfy $\beta_1 = 1$, it should be decreasing, and $\beta_i>0$ for all $i$. Let us do the computations to see what condition on this sequence does the fourth one imply. Let us denote $S_n = \sum_{i=1}^n \beta_i$. We have for all $p=2, \dots , k$, 
$$\left(\prod_{i\ne p-1}^{p}  \varphi_i(d) \right)^{\frac{p}{p-1}} \ll \prod_{i=1}^{p}  \varphi_i(d). $$
It implies that for all $p=2, \dots , k$, 
$$\left( \frac{d^{p-1}a(d)^{p-S_p} }{a(d)^{1-\beta_{p-1}}} \right)^{\frac{p}{p-1}} \ll d^p a(d)^{p-S_p} . $$
Hence 
$$ d^{p} \left( a(d)^{(p-1)-(S_p-\beta_{p-1})} \right)^{\frac{p}{p-1}} \ll d^p a(d)^{p-S_p} . $$
Therefore 
$$  a(d)^{p-{\frac{p}{p-1}}((S_p-\beta_{p-1}))}  \ll a(d)^{p-S_p} . $$
Since $a(d)$ tends to zero, this implies that for all $p=2, \dots , k$,
$$  {\frac{p}{p-1}}(S_p-\beta_{p-1}) < S_p.          $$
Equivalently, for all $p=2, \dots , k$,
$$  \frac{S_p}{p}< \beta_{p-1}.          $$
We can construct arbitrarily long finite sequences that satisfy the required properties. We will use the condition ${\frac{p}{p-1}}(S_p-\beta_{p-1}) < S_p$ to define our sequence by induction.
\\Let us consider the sequence $(\beta_n)_{n \in \mathbb{N^*}}$ defined by induction : 
$$ \beta_1 \in \mathbb{R}, \quad \textup{and} \quad \forall n \in \mathbb{N^*}, \quad \beta_{n+1} = (n+1) \beta_n - S_n - 1                   , $$
where $S_n = \sum_{k=1}^n \beta_k$. For all $n \in \mathbb{N^*}$
$$ \beta_{n+1}-\beta_{n} = (n+1)\beta_{n} - S_n -1 - n\beta_{n-1} +S_{n-1} +1 = n (\beta_{n}-\beta_{n-1}  )    .            $$
So 
$$ \beta_{n+1}-\beta_{n} = (\beta_{2}-\beta_{1}) \, n!          $$
Therefore
$$  \beta_{n} = \beta_{1} - (\beta_{1}-\beta_{2})  \sum_{k=1}^{n-1} k!       $$
Since $ \beta_{2} = \beta_{1} -  1         $, we get
$$  \beta_{n} = \beta_{1} - \sum_{k=1}^{n-1} k!       $$
So $(\beta_n)_{n \in \mathbb{N^*}}$ is strictly decreasing and tends to $-\infty$. For any $k\in \mathbb{N}^*$, there exists $\beta_1 \in \mathbb{N}$ such that the first $k$-terms of the sequence are positive. Up to re-normalizing, we can suppose that $\beta_1 = 1$. We conclude that $\beta_1, \dots , \beta_k$ satisfy the desired conditions and that the functions $\varphi_i(d) = a(d)^{1-\beta_i}d$, for $i=1, \dots , k$, satisfy the conditions of Theorem \ref{CE of model space}.\qquad
\end{proof} 
\begin{rem}
It turns out that such an infinite sequence does not exist. We thank Mingkun Liu for having pointed it out to us. That is why, unlike in Theorem \ref{CE of product}, we did not ask for an infinite sequence of functions $(\varphi_i)_i$, but only a finite one. Indeed, such infinite sequence $(\beta_n)_n$ must satisfy for all $n \in \mathbb{N}^*$
$$  (\beta_1-\beta_n)+ \dots + (\beta_{n-1}-\beta_n)+0+(\beta_{n+1}-\beta_n) <0             .  $$
Since the sequence is decreasing, for all $n \geq 2$
$$       (\beta_{n-1}-\beta_n)+(\beta_{n+1}-\beta_n) <0  .            $$
This implies that the sequence $(\beta_{n}-\beta_{n+1})_n$ is increasing, thus for all $n \geq 2$
$$ \beta_{1}-\beta_{2} < \beta_{n}-\beta_{n+1} .           $$
This is not possible since $\beta_{1}-\beta_{2} >0$, and $(\beta_{n}-\beta_{n+1})_n$ converges to $0$.
\end{rem}
\section{The domain has a one-dimensional Euclidean factor}
The goal of this section is to prove Theorem \ref{Thm 3 v2}, which implies Theorem \ref{Thm 3 v1}, and Theorem \ref{Thm 4}.
\subsection{Coarse embeddings of Euclidean spaces into lower rank}
Both theorems follow from the following one.
\begin{thm} \label{CE of euclidean spaces}
Let $p \geq k \geq 2$ be integers. Let $Y$ be a complete Lipschitz-connected metric space with linear $k$-dimensional filling function $\textup{FV}_k^{Y}(\ell)\sim \ell$, and let $f : \mathbb{R}^p \to Y $ be a coarse embedding. Then for any $x \in \mathbb{R}^p$, and every linearly independent vectors $u_1,\dots,u_{k-1} \in \mathbb{R}^p$,
$$  \frac{\textup{FillVol}_{k-1}^{Y, \mathrm{cr}}(f(P^{k-2}(x;u_1,\dots,u_{k-1})))   }{\| u_1  \| \times \dots \times \| u_{k-1}  \|}           \to 0, \textup{when min$\{ \| u_{i}  \| \} $ tends to $+\infty$ .} $$
\end{thm}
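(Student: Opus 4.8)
The plan is to argue by contradiction, running the slicing scheme of Theorem~\ref{CE of model space} with the ``exponential growth versus sub-Euclidean filling'' dichotomy replaced by ``lower control $\rho_-$ versus \emph{linear} $k$-filling of $Y$''. Suppose the conclusion fails: then there are $\lambda>0$ and, for each $n$, a point $x_n\in\mathbb{R}^p$ and linearly independent $u_1^n,\dots,u_{k-1}^n\in\mathbb{R}^p$ with $m_n:=\min_i\|u_i^n\|\to\infty$ and
\[
\textup{FillVol}_{k-1}^{Y,\mathrm{cr}}\big(f(P^{k-2}(x_n;u_1^n,\dots,u_{k-1}^n))\big)\ \geq\ \lambda\prod_{i=1}^{k-1}\|u_i^n\|.
\]
By Lemma~\ref{connect the dots} (with a CW structure on $\mathbb{R}^p$ with bounded cells; $f$ is large-scale Lipschitz since $\mathbb{R}^p$ is geodesic) I may assume $f$ is Lipschitz, and $\rho_-$ non-decreasing. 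Since $p\geq k$, I pick $u_k^n$ orthogonal to $\mathrm{span}(u_1^n,\dots,u_{k-1}^n)$ with $\|u_k^n\|=R_n$, where $R_n\to\infty$ is chosen with $R_n=o(m_n)$ (e.g.\ $R_n=\lfloor\sqrt{m_n}\rfloor$). Orthogonality is the key point: the two ``horizontal'' faces $Q_n$ and $Q_n+u_k^n$ of the box $C^{k}(x_n;u_1^n,\dots,u_k^n)$ then lie at Euclidean distance exactly $R_n$. I set $\Sigma_n:=P^{k-1}(x_n;u_1^n,\dots,u_k^n)=\partial C^{k}(x_n;u_1^n,\dots,u_k^n)$, write $B_n:=C^{k-1}(x_n;u_1^n,\dots,u_{k-1}^n)$ for the ``bottom'' face, $T_n$ for the ``top'' one, and note that by Lemma~\ref{paralello sum} the remaining $2(k-1)$ ``side'' faces each carry $u_k^n$ among their vectors, hence have $(k-1)$-volume $\leq R_n\prod_{i=1}^{k-1}\|u_i^n\|/m_n$.

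Next I bound the filling of $f(\Sigma_n)$. Listing the $2k$ faces, $\textup{Vol}_{k-1}^{\mathbb{R}^p}(\Sigma_n)\leq 2\prod_{i=1}^{k-1}\|u_i^n\|+2(k-1)R_n\prod_{i=1}^{k-1}\|u_i^n\|/m_n$, so for $n$ large $\textup{Vol}_{k-1}^{Y}(f(\Sigma_n))\leq 3\,\textup{Lip}(f)^{k-1}\prod_{i=1}^{k-1}\|u_i^n\|$. Because $\textup{FV}_{k}^{Y}(\ell)\precsim\ell$, there is $C_1>0$ with $\textup{FillVol}_{k}^{Y,\mathrm{cr}}(f(\Sigma_n))\leq\textup{FV}_{k}^{Y}\big(\textup{Vol}_{k-1}^{Y}(f(\Sigma_n))\big)\leq C_1\prod_{i=1}^{k-1}\|u_i^n\|$ for $n$ large, so I get $V_n\in\mathbf{I}_k(Y)$ with $\partial V_n=f(\Sigma_n)$ and $\mass{V_n}\leq C_1\prod_{i=1}^{k-1}\|u_i^n\|$. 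Taking the $1$-Lipschitz map $\pi(y)=d_Y(y,\textup{spt}\,f(B_n))$ and integrating the co-area inequality of the Slicing Theorem~\ref{slicing thm} gives $\int_0^{+\infty}\mass{\slice{V_n}{\pi}{t}}\,dt\leq\mass{V_n}\leq C_1\prod_{i=1}^{k-1}\|u_i^n\|$.

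The core step is a lower bound on $\mass{\slice{V_n}{\pi}{t}}$ throughout the interval $(0,\rho_-(R_n))$. For such $t$: $f(B_n)$ is supported where $\pi=0$, while $f(T_n)$ is supported where $\pi\geq\rho_-(R_n)>t$ (every point of $Q_n+u_k^n$ is at Euclidean distance $\geq R_n$ from $Q_n$ by orthogonality, so its $f$-image is at $Y$-distance $\geq\rho_-(R_n)$ from $f(B_n)$). Hence $(\partial V_n)\rstr\{\pi\leq t\}=f(B_n)+H_t$, with $H_t$ the restriction of the sum of the side faces to $\{0<\pi\leq t\}$; exactly as in the claims inside Theorems~\ref{CE of product} and \ref{CE of model space}, $\mp(\slice{V_n}{\pi}{t}+H_t)$ is then a $(k-1)$-chain filling $\partial f(B_n)=f(P^{k-2}(x_n;u_1^n,\dots,u_{k-1}^n))$, so $\mass{\slice{V_n}{\pi}{t}}+\mass{H_t}\geq\lambda\prod_{i=1}^{k-1}\|u_i^n\|$. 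Since $\mass{H_t}\leq 2(k-1)\,\textup{Lip}(f)^{k-1}R_n\prod_{i=1}^{k-1}\|u_i^n\|/m_n=o\big(\prod_{i=1}^{k-1}\|u_i^n\|\big)$, for $n$ large $\mass{\slice{V_n}{\pi}{t}}\geq\tfrac{\lambda}{2}\prod_{i=1}^{k-1}\|u_i^n\|$ for a.e.\ $t\in(0,\rho_-(R_n))$. Combining with the co-area bound, $\tfrac{\lambda}{2}\rho_-(R_n)\prod_{i=1}^{k-1}\|u_i^n\|\leq C_1\prod_{i=1}^{k-1}\|u_i^n\|$, i.e.\ $\rho_-(R_n)\leq 2C_1/\lambda$ for all large $n$ — impossible, since $R_n\to\infty$ forces $\rho_-(R_n)\to\infty$. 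This contradiction proves the theorem.

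The one genuinely delicate point is the choice of the auxiliary scale $R_n$: it must tend to infinity so that $\rho_-(R_n)$ blows up, yet be $o(m_n)$ so that the total side-face contribution $H_t$ stays negligible next to $\lambda\prod_i\|u_i^n\|$. This balancing is precisely what forces the hypothesis ``$\textup{FV}_k^Y(\ell)\sim\ell$'' rather than merely sub-Euclidean filling: a filling $\textup{FV}_k^Y(\ell)\approx\ell^{1+\alpha}$ with $\alpha>0$ would inflate $\mass{V_n}$ by a factor $\big(\prod_i\|u_i^n\|\big)^{\alpha}$, after which the co-area bound would only give $\rho_-(R_n)\precsim\big(\prod_i\|u_i^n\|\big)^{\alpha}$ and the argument would collapse. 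The rectifiability bookkeeping for $V_n\rstr\{\pi\leq t\}$ and $H_t$ is routine (restrictions of integer rectifiable currents to Borel sets are integer rectifiable, and for a.e.\ $t$ the relevant boundaries have finite mass), handled exactly as in the proofs already given.
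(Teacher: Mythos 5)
Your proof is correct and takes essentially the same route as the paper: you choose an auxiliary orthogonal vector $u_k$ of length roughly $\sqrt{\min_i\|u_i\|}$, fill the resulting $(k-1)$-cycle cheaply using the linear $k$-filling of $Y$, slice by the distance to the image of the bottom face, and observe that orthogonality forces the slicing interval to have length at least $\rho_-(\|u_k\|)$ while the side faces contribute negligibly. The only cosmetic difference is that the paper runs the estimate directly (bounding the ratio by an explicit quantity going to $0$ in $\|u_{k-1}\|$) whereas you phrase it as a contradiction with an assumed lower bound $\lambda$; these are interchangeable.
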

\begin{proof}
Let $C>0$ such that for any $(k-1)$-cycle $\Sigma$ we have  $\textup{FillVol}_{k}^{Y, \mathrm{cr}}(\Sigma) \leq C \, \textup{Vol}_Y^{\varepsilon}(\Sigma) $.
\\Let $x\in\mathbb{R}$, and $u_1,\dots,u_{k-1} \in \mathbb{R}^p$ linearly independent vectors. Without loss of generality, we can assume that $\| u_1  \| \geq  \dots \geq \| u_{k-1}  \|$. Let $u_k \in \mathbb{R}^p$ be a vector orthogonal to $\textup{span}\{u_1,\dots,u_{k-1}\}$ such that $\| u_k  \| = \| u_{k-1}  \|^{1/2}  $. Then the $(k-1)$-paralellogram $P^{k-1}(x;u_1,\dots,u_k)$ is a $(k-1)$-cycle. Therefore 
\begin{equation*}
    \begin{split}
        \textup{FillVol}_{k}^{Y, \mathrm{cr}}(f(P^{k-1}(x;u_1,\dots,u_{k}))) 
        &\leq C \, \textup{Vol}_{k-1}^{Y}(f(P^{k-1}(x;u_1,\dots,u_{k}))   )
        \\&  \leq  C \, \textup{Lip}(f) \, \textup{Vol}_{k-1}^{\mathbb{R}^p}(P^{k-1}(x;u_1,\dots,u_{k})   )
        \\&  \leq  C \, \textup{Lip}(f) \, 2(k+1) \, \| u_1  \| \times \dots \times \| u_{k-1}  \|,
    \end{split}
\end{equation*}
because $P^{k-1}(x;u_1,\dots,u_{k})$ contains $2(k+1)$ faces, and their volume is at most $\| u_1  \| \times \dots \times \| u_{k-1}  \|$ since $\| u_1  \| \geq  \dots \geq \| u_{k-1}  \|$.
\\Let $\Omega \in \mathbf{I}_{k}(Y)$ be a $k$-current in $Y$ such that $\partial \Omega = f(P^{k-1}(x_n;u_1,\dots,u_{k}))$ and 
$$
\mass{\Omega} \leq  C \, \textup{Lip}(f) \, 2(k+1) \, \| u_1  \| \times \dots \times \| u_{k-1}  \|.
$$
Consider the
$1$-Lipschitz map $\pi : Y \to \mathbb{R}$, $\pi(z) = d_Y(z,f(C^{k-1}(x;u_1,\dots,u_{k-1}))) $.
\\ By the Slicing Theorem, we have that for a.e.\ $t \in \mathbb{R}$, there exists $\langle \Omega,\pi,t \rangle \, \in \mathbf{I}_{k-1}(Y)$ such that $\slice{\Omega}{\pi}{t}= \partial(\Omega\rstr\{\pi \leq t\}) - (\partial \Omega)\rstr\{\pi\leq t\}$.
\\By integrating the co-area formula over the distance $t$, we have that
$$ \int_0^{+\infty} \mass{\slice{\Omega}{\pi}{t}} dt \leq \mass{\Omega}.$$
So
\begin{equation*}
\int_0^{D} \mass{\slice{\Omega}{\pi}{t}} dt \leq \int_0^{+\infty} \mass{\slice{\Omega}{\pi}{t}} dt \leq C \, \textup{Lip}(f)  \, 2(k+1) \, \| u_1  \| \times \dots \times \| u_{k-1}  \|.
\end{equation*}
Where $D=d_Y\big(f(C^{k-1}(x;u_1,\dots,u_{k-1})),f(C^{k-1}(x+u_k;u_1,\dots,u_{k-1}))\big)$. However:
$$D \geq \rho^{-}\left(d_{\mathbb{R}^p}( C^{k-1}(x;u_1,\dots,u_{k-1}),  C^{k-1}(x+u_k;u_1,\dots,u_{k-1})       )           \right).$$
$u_k$ is orthogonal to $\textup{span}\{u_1,\dots,u_{k-1}\}$, so $$d_{\mathbb{R}^p}( C^{k-1}(x;u_1,\dots,u_{k-1}),  C^{k-1}(x+u_k;u_1,\dots,u_{k-1})       )  =  \| u_{k}  \| = \| u_{k-1}  \|^{1/2}.    $$
Therefore $ D \geq \rho^{-}\left( \| u_{k-1}  \|^{1/2}  \right) $. Let us denote $D' = \rho^{-}\left( \| u_{k-1}  \|^{1/2} \right)$. So
\begin{equation*}
\int_0^{D'} \mass{\slice{\Omega}{\pi}{t}} dt \leq C \, \textup{Lip}(f)  \, 2(k+1) \, \| u_1  \| \times \dots \times \| u_{k-1}  \|.
\end{equation*}
So there exists $t_0 \in ]0,D'[$ that satisfies the Slicing Theorem and such that 
\begin{equation*}
\frac{D'}{2} \mass{\slice{\Omega}{\pi}{t_0}} \leq C \, \textup{Lip}(f)  \, 2(k+1) \, \| u_1  \| \times \dots \times \| u_{k-1}  \|.
\end{equation*}
However $\mass{\slice{\Omega}{\pi}{t_0}}$ almost gives a filling of the basis $f(P^{k-2}(x;u_1,\dots,u_{k-1}))$. Indeed, by (1) of the Slicing Theorem \ref{slicing thm}, we have
$$\partial(\Omega\rstr\{\pi \leq t_0\}) = \slice{\Omega}{\pi}{t_0} +f(P^{k-2}(x;u_1,\dots,u_{k-1})) + H_{t_0}, $$
where $H_{t_0}$ is the $(k-1)$-current $\partial (\Omega) \rstr\{ 0 < \pi < t_0\}$.
This means that $-(\slice{\Omega}{\pi}{t_0} + H_{t_0})$ is a $(k-1)$-chain that fills $f(P^{k-2}(x;u_1,\dots,u_{k-1}))$. Therefore,
\begin{equation*}
    \begin{split}
        \textup{FillVol}_{k-1}^{Y, \mathrm{cr}}(f(P^{k-2}(x;u_1,\dots,u_{k-1})))  & \leq     \mass{ -(\slice{\Omega}{\pi}{t_0} + H_t) }
        \\ &\leq  \mass{ (\slice{\Omega}{\pi}{t_0 }} + \mass{H_{t_0}}.
    \end{split}
\end{equation*}
Note that
\begin{equation*} 
\begin{split}
H_{t_0 } & =  \partial (\Omega) \rstr\{ 0 < \pi < {t_0 }\} 
     \\ & =\big( \sum_{F \in \Delta} f(F ) \big) \rstr\{ 0 <\pi < {t_0 } \} ,
\end{split}
\end{equation*}
where $\Delta$ is the set of side faces of $ P^{k-1}(x_n;u_1^n,\dots,u_{k}^n)$, i.e.\ faces whose vectors are not $(u_1^n,\dots,u_{k-1}^n)$, because $0< \pi < D$. Indeed, by lemma \ref{paralello sum}
\begin{equation*}
    \begin{split}
    P^{k-1}(x_n;u_1^n,\dots,u_{k}^n) &=  \sum_{s=1}^{k} (-1)^{s}   C^{k-1}(x;u_1^n, \dots,\hat{u_s^n},\dots ,u_{k}^n)
    \\ &+ (-1)^{k}  \sum_{s=1}^{k} (-1)^{s}  C^{k-1}(x+\sum_{i=1}^{k} u_i^n;-u_1^n \dots,-\hat{u_s^n},\dots, -u_{k}^n). 
    \end{split}
\end{equation*}
So $\Delta = \{  C^{k-1}(x;u_1^n, \dots,\hat{u_s^n},\dots ,u_{k}^n),C^{k-1}(x+\sum_{i=1}^{k} u_i^n;-u_1^n \dots,-\hat{u_s^n},\dots, -u_{k+1}^n) \mid s\in \{ 1, \dots,k-1     \}           \}$.
\\By taking the mass
\begin{equation*} 
\begin{split}
\mass{H_{t_0}} & = \mass{\big( \sum_{F \in \Delta} f(F ) \big) \rstr\{ 0 <\pi \leq t_0 \} }  
\\ & \leq \sum_{F \in \Delta} \mass{ ( f(F) ) \rstr\{ 0<\pi \leq t_0 \}}      \\
& \leq \sum_{F \in \Delta} \mass{  f(F) }    \\
& \leq \textup{Lip}(f) \sum_{F \in \Delta} \mass{ F }.
\end{split}
\end{equation*}
Since every side face $F$ satisfies $\mass{ F } \leq \frac{\| u_1  \| \times \dots \times \| u_{k}  \|}{\| u_{s}  \|}$, where $s \in \{ 1,\dots,k-1\}$, every $F \in \Delta$ satisfies $\mass{ F } \leq \| u_1  \| \times \dots \times \| u_{k-2}  \|\| u_{k}  \|$. There are $2k$ side faces,
So
$$\mass{H_{t_0}} \leq   2 k \textup{Lip}(f)  \| u_1  \| \times \dots \times \| u_{k-2}  \|\| u_{k}  \|. $$
So
$$ \mass{ (\slice{\Omega}{\pi}{t_0 }} + \mass{H_{t_0}} \leq \| u_1  \| \times \dots \times \| u_{k-1}  \| \Big( \frac{2C \, \textup{Lip}(f)  \, 2(k+1)}{D'}  + \frac{2 k \, \textup{Lip}(f)}{\| u_{k-1}  \|^{1/2}}         \Big) .  $$
Therefore
$$  \frac{\textup{FillVol}_{k-1}^{Y, \mathrm{cr}}(f(P^{k-2}(x;u_1,\dots,u_{k-1})))   }{\| u_1  \| \times \dots \times \| u_{k-1}  \|}   \leq \Big( \frac{2C \, \textup{Lip}(f)  \, 2(k+1)}{\rho^{-}\left( \| u_{k-1}  \|^{1/2} \right)}  + \frac{2 k \, \textup{Lip}(f)}{\| u_{k-1}  \|^{1/2}}         \Big) .$$
and the right hand side clearly tends to $+\infty$ when $\| u_{k-1}  \|$ tends to $+\infty$.
\end{proof}
\subsection{Proof of Theorem \ref{Thm 3 v2}}
This theorem is an immediate consequence of Theorem \ref{CE of euclidean spaces}.
\begin{proof}
Let $k\in \mathbb{N}$, $X,Y$ be as in Theorem \ref{Thm 3 v2}, and let $f : X \times \mathbb{R} \to Y$ be a coarse embedding. 
\\Let $\varphi_1,\varphi_2,\dots,\varphi_k$ be functions as in Theorem \ref{CE of model space} (therefore as in Theorem \ref{CE of product}). Let $x\in X$ and let $F$ be a maximal flat in $X$ containing $x$. Consider the maximal flat $E = F \times \mathbb{R}$ in $X \times \mathbb{R}$. By Theorem \ref{CE of euclidean spaces}, there exists a sublinear function $\phi$ such that for all $d>0$, for all $u_1,\dots,u_k$ linearly independent vectors in $T_{x}E \simeq E$ such that $ \| u_i  \| \leq \varphi_i(d) $,
$$     \textup{FillVol}_{k}^{Y, \mathrm{cr}}(f(P^{k-1}(x;u_1,\dots,u_k)))            \leq \phi \left(\prod_{i=1}^{k}  \varphi_i(d)\right) .     $$
In particular, $u_1,\dots,u_k$ can be chosen in $T_xF \subset T_xE$. Therefore, when restricted to a copy $X \times \{ z \}$ for some $z \in \mathbb{R}$, the coarse embedding $ \Tilde{f}:= f\restriction_X : X \to Y $ satisfies the condition of Theorem \ref{CE of model space}, or Theorem \ref{CE of product}, which is not possible. Therefore, a coarse embedding $f : X \times \mathbb{R} \to Y$ cannot exist.
\end{proof}
\subsection{Proof of Theorem \ref{Thm 4}}
This is another consequence of Theorem \ref{CE of euclidean spaces}.
\begin{proof}
Let us suppose that there exists a subspace $E \simeq \mathbb{R}^{k} \subset \mathbb{R}^{p} $ that is sent quasi-isometrically with constants $(\lambda,c)$. By Lemma \ref{connect the dots}, we may assume that $f$ is $\lambda$-Lipschitz. Let us also denote by $f$ its restriction to $E$. So $f(E)$ is a maximal quasi-flat in $Y$. By the quasi-flats Theorem in \cite{kleiner1997rigidity} and \cite{eskin1997quasi}, there exist $\delta>0$ and maximal flats $F_1,\dots,F_r$ in $Y$ such that $f(E) \subset N_{\delta}\left(F_1 \cup \dots \cup F_r     \right) $. We can assume that the union $F_1 \cup \dots \cup F_r $ is minimal in the sens that for all $i \in \{1,\dots,r  \} $, $f(E)$ is not in a bounded neighborhood of $F_1 \cup \dots \cup F_r \backslash F_i$ (elsewhere we can just remove $F_i$ and modify $\delta$). Therefore by minimality of this union, there exists a sequence $(x_n)_n \in E$ such that $d_Y\left(f(x_n),F_1 \cup \dots \cup F_{r-1}\right) \geq n^2 $, i.e.\ $(f(x_n))_n$ only stays in the $\delta$-neighborhood of $F_r$.
\\Let $z\in B_E(x_n,n)$, so $d_Y(f(z),f(x_n)) \leq \lambda \, n $. On one hand we have 
$$ d_Y\big(f(x_n),F_1 \cup \dots \cup F_{r-1} \big) \leq d_Y(f(z),f(x_n)) + d_Y\big(f(z),F_1 \cup \dots \cup F_{r-1} \big)   .                 $$
So, for $n$ big enough we have
$$ \delta < n^2 - \lambda n \leq  d_Y\big(f(z),F_1 \cup \dots \cup F_{r-1} \big) .           $$
On the other hand $f(z) \in N_{\delta}\left(F_1 \cup \dots \cup F_r     \right) $. So for $n$ big enough, we have $f(z) \in N_{\delta}(F_r) $. Thus there exists $N \in \mathbb{N}$ such that for all $n\geq N$, $f \left(B_E(x_n,n)\right)\subset N_{\delta}(F_r)$.
\\Let us consider the projection map $\pi : Y \to F_r$, which is $1$-Lipschitz since $Y$ is CAT(0) and $F_r$ is a closed convex subset. Let us denote $A := \cup_{n\geq N}B_E(x_n,n) $ and
consider the map $g : A \to F_r$, $g(a) := \pi \circ f (a).$ It is a quasi-isometric embedding. Indeed, it is clearly $\lambda$-Lipschitz, and for $x,y \in A$ we have
\begin{equation*}
    \begin{split}
  d_Y(f(x),f(y)) &\leq d_Y(f(x),\pi(f(x))) + d_Y(\pi(f(x)),\pi(f(y))) + d_Y(\pi(f(y)),f(y)))
        \\& \leq d_{F_r}(\pi(f(x)),\pi(f(y))) + 2 \delta.
    \end{split}
\end{equation*}
Therefore, for all $x,y \in A $
$$ \lambda \, d_A(x,y)-c - 2 \delta \leq d_{F_r}(\pi(f(x)),\pi(f(y))) \leq \lambda \, d_A(f(x),f(y)).  $$
Let us denote $c' = c+2\delta$. 
\\
\\For all $n \geq N$, let $u_1^n,\dots,u_{k}^n \in E $ be orthogonal vectors such that $\| u_i  \| = n^{1/i}$, and consider the $(k-1)$-parallelograms $P^{k-1}(x_n;u_1^n,\dots,u_{k}^n) $ in $E$, which are actually boundaries of $k$-dimensional rectangles in this case. Note that for all $n\geq N$, $P^{k-1}(x_n;u_1^n,\dots,u_{k}^n) $ is in $B_E(x_n,n)$, so all the parallelograms are in $A$. Moreover, by the previous theorem, there exists a sublinear function $\phi$ such that for all $n\geq N$
$$     \textup{FillVol}_{k}^{Y, \mathrm{cr}}(f(P^{k-1}(x_n;u_1^n,\dots,u_k^n)))            \leq \phi (n^{H_k}) ,     $$
where $H_k = \sum_{i=1}^k \frac{1}{i} $. $\pi$ is 1-Lipschitz so
$$     \textup{FillVol}_{k}^{F_r, \mathrm{cr}}(g(P^{k-1}(x_n;u_1^n,\dots,u_k^n)))            \leq \phi (n^{H_k}) .     $$
Let $V_n \in \textbf{I}_k(F_r)$ such that $\partial V_n = g(P^{k-1}(x_n;u_1^n,\dots,u_k^n)) $ and $\mass{V} \leq  \phi (n^{H_k})  $, and consider the 1-Lipschitz map $d_1 : F_r \to \mathbb{R}$, $d_1(z) = d_{F_r}(z,g(C^{k-1}(x_n;u_1^n,\dots,u_{k-1}^n)    )  )$. By the Slicing Theorem, 
$$        \int_0^{D} \mass{\slice{V_n}{\pi}{t}} dt \leq \int_0^{+\infty} \mass{\slice{V_n}{\pi}{t}} dt \leq  \mass{V_n} \leq  \phi (n^{H_k}).        $$
Where $D=d_{F_r}\big(g(C^{k-1}(x_n;u_1^n,\dots,u_{k-1}^n)),g(C^{k-1}(x_n+u_{k}^n;u_1^n,\dots,u_{k-1}^n))\big)$.
$g$ is a $(\lambda,c')$ quasi-isometric embedding and $u_k^n$ is orthogonal to $\textup{span}\{u_1^n,\dots,u_{k-1}^n\}$, so 
$$    D \geq  \lambda d_{X}\big(C^{k-1}(x_n;u_1^n,\dots,u_{k-1}^n),C^{k-1}(x_n+u_{k}^n;u_1^n,\dots,u_{k-1}^n)\big) -c' \geq \lambda n^{1/k} -c'.                            $$
Up to taking bigger $N$, we can assume that for all $n\geq N$ and for all $i \in \{ 1 \dots k \}$,  
\\$\lambda n^{1/i} -c' \geq \frac{\lambda}{2}n^{1/i} $.
\\Therefore there exists $t_1 \in ]0,\frac{\lambda}{2}n^{1/k}[$ such that 
$$ \frac{\lambda}{4}n^{1/k} \, \mass{\slice{V_n}{\pi}{t_1}}    \leq  \mass{V_n}.        $$
Again, by adding a small current to $\slice{V_n}{\pi}{t_1}$ of mass less than $2 \lambda (k-1) n^{H_k - \frac{1}{k-1}} $, we get a filling of the cycle $g(P^{k-2}(x_n;u_1^n,\dots,u_{k-1}^n))$. So
$$     \textup{FillVol}_{k-1}^{F_r, \mathrm{cr}}(g(P^{k-2}(x_n;u_1^n,\dots,u_{k-1}^n)))  \leq  \frac{4}{\lambda}\frac{\phi(n^{1/k})}{n^{1/k}} + 2\lambda (k-1) n^{H_k - \frac{1}{k-1}}.   $$
Since $H_k - \frac{1}{k-1} < H_{k-1} $, the right hand side is $\ll n^{H_{k-1}} $. Let us denote by $\psi$ the sublinear function such that 
$$     \textup{FillVol}_{k-1}^{F_r, \mathrm{cr}}(g(P^{k-2}(x_n;u_1^n,\dots,u_{k-1}^n)))  \leq  \psi( n^{H_{k-1}} ).   $$
We can apply the same strategy again by considering the 1-Lipschitz map $d_2 : F_r \to \mathbb{R}$, $d_2(z) = d_{F_r}(z,g(C^{k-2}(x_n;u_1^n,\dots,u_{k-2}^n)    )  )$, and using the Slicing Theorem. By repeating this process $(k-1)$ times, we get a sublinear function $\varphi$ such that for all $n\geq N$
$$     \textup{FillVol}_{1}^{F_r, \mathrm{cr}}(g(P^{0}(x_n;u_1^n)))  \leq  \varphi( n ).   $$
If we denote $a_n = x_n$ and $b_n = x_n + u_1^n$, this implies that
$$ \frac{\lambda}{2}    n   \leq  d_{F_r}(g(a_n),g(b_n))  \leq  \varphi( n ).   $$
Which contradicts the fact that $g$ is a quasi-isometric embedding.
\end{proof}
Finally, let us give a proof of Corollary \ref{cor of Thm 4}.
\begin{proof}
By a result of Bonk--Schramm \cite{bonk2011embeddings}, there exists $n \in \mathbb{N}$ such that $Y$ quasi-isometrically embeds into $\mathbb{H}^n$. Let $g : Y \to \mathbb{H}^n $ be such an embedding with constants $(\lambda,c)$, and consider the coarse embedding $h = g \circ f : \mathbb{R}^p \to \mathbb{H}^n $. By Theorem \ref{CE of euclidean spaces}, for all $x\in \mathbb{R}^p$, and every $u \in \mathbb{R}^p$
$$  \frac{\textup{FillVol}_{1}^{\mathbb{H}^n, \mathrm{cr}}(h(P^{0}(x;u)))   }{\| u  \|}           \to 0, \textup{when $ \| u  \| $ tends to +$\infty$ .} $$
In other words, and by lemma \ref{filling current distance}, there exists a sublinear function $\phi$ such that for all $x,y \in \mathbb{R}^p$, $d_{\mathbb{H}^n}(h(x),h(y)) \leq \phi(d_{\mathbb{R}^p}(x,y))$. However, $g$ is a $(\lambda,c)$-quasi-isometric embedding so
$$  \frac{1}{\lambda}d_Y(f(x),f(y))-c  \leq d_{\mathbb{H}^n}(h(x),h(y)) .     $$
Therefore, for all $x,y \in \mathbb{R}^p$
$$     d_Y(f(x),f(y))     \leq \lambda \,\phi(d_{\mathbb{R}^p}(x,y)) + \lambda c,   $$
and $\psi(t) := \lambda \phi(t) + \lambda c$ is sublinear.
\end{proof}
\begin{rem}
In higher rank, Theorem \ref{Thm 4} does not necessarily imply that the coarse embedding is uniformly compressing, as shown by the following example:
\begin{align*}
\psi \; \colon \; \; \; \; \;  \mathbb{R}^3 &\longrightarrow \mathbb{H}^3 \times \mathbb{R} \\
(x,y,z)&\longmapsto \begin{alignedat}[t]{2} & \big( (x,y,1),z  \big)
\end{alignedat}
\end{align*}
where $\mathbb{H}^3$ is the upper half-space model. $\psi$ is the product of a horospherical embedding with the identity map on $\mathbb{R}$. It is clearly a coarse embedding, $\textup{rank}(\mathbb{H}^3 \times \mathbb{R}) = 2$, but it is not uniformly compressing since the $z$-axis is sent isometrically.
\end{rem}
\section{Further questions}

\begin{question} Does every cocompact geodesic metric space with exponential growth admits a coarse embedding of a binary tree?
\end{question}
This question was already asked by Shalom in \cite{shalom2004harmonic}. Note that if the answer is positive, then Theorem \ref{Thm 1 v1} is just a consequence of Theorem \ref{Thm 2 v1}. Indeed, Theorem \ref{Thm 2 v1} already treats the case when the domain $X$ is a product of regular trees, since the $(p+1)$-regular tree can be seen as the Bruhat-Tits building of $\SL_2(\mathbb{Q}_p)$. Moreover, all regular trees de degree $\geq 3$ are quasi-isometric. So if $X = X_1\times \dots \times X_k$ is a product of geodesic metric spaces of exponential growth, then $X$ contains an isometric copy of a model space of rank $k$ with no Euclidean factor. Therefore Theorem \ref{Thm 1 v1} follows from Theorem \ref{Thm 2 v1}. This question in full generality is still open even for groups.
In \cite{de2008quasi}, Cornulier and Tessera showed that
if a group G is either a connected Lie group, or a finitely generated solvable group with
exponential growth, then it contains a quasi-isometrically embedded free
sub-semigroup on 2 generators. Thus it contains quasi-isometrically embedded binary tree. However, it is not
known whether every locally compact compactly generated (or finitely generated) group with exponential growth
contains a quasi-isometrically (or even coarsely) embedded copy of a binary tree.

\begin{question}
Let $X$ be either $X_1 \times \dots \times X_k$ as in Theorem \ref{Thm 1 v1}, or a model space $S \times B$ of rank $k$ as in Theorem \ref{Thm 2 v1}, and let $Y = \mathbb{R}^n \times S' \times B'$ be a model space of rank $= k-1$. We saw that there is no coarse embedding $X \to Y$. Can adding a Euclidean factor in the target make it possible? i.e.\ can we embed $X$ into $Y \times \mathbb{R}^p$ for some $p\in \mathbb{N}$?
\end{question}
We answered this question negatively in Corollary \ref{cor 2} when $\textup{rank}(Y) < k-1$. But we do not know if the case $\textup{rank}(Y) = k-1$ is different. For example, there is no coarse embedding from the symmetric space $\SL_{3}(\mathbb{R})/\textup{SO}_{3}(\mathbb{R})$ of rank $2$ into a hyperbolic space $\mathbb{H}^n$. Can we have a coarse embedding $\SL_{3}(\mathbb{R})/\textup{SO}_{3}(\mathbb{R}) \to \mathbb{H}^n \times \mathbb{R}^p$ for some $p\in \mathbb{N}$?
\begin{question}
What can we say about the limit set $\overline{f(X)}\cap \partial Y$ of a coarse embedding $ f :X \to Y$ between model spaces, and in particular about coarse embeddings of $\mathbb{R}^{n}$ into $\mathbb{H}^{n+1}$? For instance, the limit sets of horospherical embeddings are reduced to a point. Is it the case for all coarse embeddings from $\mathbb{R}^{n}$ into $\mathbb{H}^{n+1}$? Is the image of such embedding always contained in a horoball? 
\end{question}
Note that Bowditch \cite{bowditch2017limit} showed that when $Y$ is Gromov-hyperbolic with bounded geometry and $X$ is a geodesic metric space that has ``fast growth'', in particular if it has exponential growth, then the limit set does not contain isolated points.
\\
\\Our results extends to uniform lattices in symmetric spaces, since they are quasi-isometric to them. A natural question to ask is the following.
\begin{question}What can be said about nonuniform lattices? For example, can we embed $\textup{SL}_n(\mathbb{Z})$ into a symmetric space of rank $<n-1$? \\Fisher--Whyte \cite{fisher2018quasi} proved that $\textup{SL}_2(\mathbb{R})\times \textup{SL}_2(\mathbb{R})$ quasi-isometrically embeds into $\textup{SL}_3(\mathbb{R})$. Can we have a coarse embedding from $\textup{SL}_2(\mathbb{Z})\times \textup{SL}_2(\mathbb{Z})$ into $\textup{SL}_3(\mathbb{Z})$?
Note that Leuzinger and Young managed recently to give higher filling functions of some nonuniform lattices \cite{leuzinger2021filling}.
\end{question}

\begin{question}Can we extend Theorem \ref{Thm 3 v1} and Theorem \ref{Thm 3 v2} to all proper cocompact CAT(0) spaces in the target? \end{question}
To do so, we need an analogue of Leuzinger's result, i.e.\ is the filling of a proper cocompact CAT(0) space linear above the rank? This is still an open question. Note that Goldhirsh-Lang \cite{goldhirsch2021characterizations} recently proved that it holds for cycles with controlled density. 
\hfill
\\
\bibliographystyle{alpha}
\bibliography{biblio.bib}
\begin{flushright}
\small{\textsc{
Max Planck Institute for Mathematics \& Université Paris Cité, IMJ-PRG}\\
\texttt{bensaid@mpim-bonn.mpg.de}}
\end{flushright}
\end{document}